\newcommand{\eps}{\varepsilon}
\newcommand{\Z}{\mathbb{Z}}
\newcommand{\N}{\mathbb{N}}
\newcommand{\R}{\mathbb{R}}
\newcommand{\C}{\mathbb{C}}
\newcommand{\F}{\mathbb{F}}
\renewcommand{\phi}{\varphi}
\newcommand{\sign}{\text{sign}}
\newcommand{\mcl}{\mathcal{L}}
\newcommand{\El}{\mathcal{L}}
\renewcommand{\Re}{\mathrm{Re} \,}
\renewcommand{\Im}{\mathrm{Im} \,}
\def\XXint#1#2#3{{\setbox0=\hbox{$#1{#2#3}{\int}$ }
		\vcenter{\hbox{$#2#3$ }}\kern-.6\wd0}}
\newtheorem*{thm*}{Theorem}
\newtheorem{prop}{Proposition}
\newtheorem{lemma}[prop]{Lemma}
\newtheorem{corollary}[prop]{Corollary}
\newtheorem{thmlocal}[prop]{Theorem}
\newtheorem{hyp}{Hypothesis}
\newtheorem{remark}[prop]{Remark}
\numberwithin{equation}{section}
\numberwithin{prop}{section}
\newcommand{\Deff}{D_\mathrm{eff}}
\newcommand{\clin}{{c_\mathrm{lin}}}
\newcommand{\etalin}{\eta_\mathrm{lin}}
\newcommand{\uwt}{\mathbf{u}_\mathrm{wt}}
\newcommand{\ufr}{\mathbf{u}_\mathrm{fr}}
\newcommand{\lwt}{\mathcal{L}_\mathrm{wt}}
\newcommand{\afr}{\mathcal{A}_\mathrm{fr}}
\newcommand{\lfr}{\mathcal{L}_\mathrm{fr}}
\newcommand{\U}{\mathbf{U}}
\renewcommand{\u}{\mathbf{u}}
\newcommand{\g}{\mathbf{g}}
\newcommand{\G}{\mathbf{G}}
\newcommand{\nuwt}{\nu_\mathrm{wt}}
\newcommand{\lambdawt}{\lambda_\mathrm{wt}}
\newcommand{\Pwt}{P_\mathrm{wt}}
\newcommand{\Twt}{T^\mathrm{wt}}
\newcommand{\Pfr}{P_\mathrm{fr}}
\newcommand{\Tfr}{T^\mathrm{fr}}
\newcommand{\diag}{\mathrm{diag}}
\newcommand{\nufr}{\nu_\mathrm{fr}}
\newcommand{\e}{\mathbf{e}}
\newcommand{\w}{\mathbf{w}}
\newcommand{\q}{\mathbf{q}}
\renewcommand{\v}{\mathbf{v}}
\newcommand{\Gheat}{G^\mathrm{heat}}
\newcommand{\Godd}{G^\mathrm{odd}}
\newcommand{\alphaout}{\alpha_\mathrm{out}}
\newcommand{\uint}{\mathbf{u}_\mathrm{int}}
\newcommand{\alphaint}{\alpha_\mathrm{int}}
\newcommand{\re}{\mathrm{e}}
\newcommand{\ri}{\mathrm{i}}
\newcommand{\de}{\mathrm{d}}
\newcommand{\NT}{\widetilde{\mathcal{N}}}
\newcommand{\vt}{\widetilde{\mathbf{v}}}
\newcommand{\vf}{\mathring{\mathbf{v}}}
\newcommand{\uf}{\mathring{\mathbf{u}}}
\newcommand{\ut}{\widetilde{\mathbf{u}}}
\newcommand{\z}{\mathbf{z}}
\newcommand{\kwt}{k_\mathrm{wt}}
\newcommand\extrafootertext[1]{%
    \bgroup
    \renewcommand\thefootnote{\fnsymbol{footnote}}%
    \renewcommand\thempfootnote{\fnsymbol{mpfootnote}}%
    \footnotetext[0]{#1}%
    \egroup
}
\begin{document}
	\begin{center}
        {\fontsize{15}{15}\fontseries{b}\selectfont{Stability of coherent pattern formation through invasion in the FitzHugh-Nagumo system}}\\[0.2in] 
		Montie Avery$^1$, Paul Carter$^2$, Bj\"orn de Rijk$^3$, and Arnd Scheel$^4$ \\[0.1in]
		\textit{\footnotesize 
$^1$Department of Mathematics and Statistics, Boston University, 665 Commonwealth Ave, Boston, MA 02215, USA \\
$^2$Department of Mathematics, University of California, Irvine, 340 Rowland Hall, Irvine, CA, USA \\
$^3$Department of Mathematics, Karlsruhe Institute of Technology, Englerstra{\ss}e 2, 76131 Karlsruhe, Germany \\
$^4$School of Mathematics, University of Minnesota,  206 Church St. S.E., Minneapolis, MN 55455, USA
}
\end{center}

\begin{abstract}
We establish sharp nonlinear stability results for fronts that describe the creation of a periodic pattern through the invasion of an unstable state. The fronts we consider are {critical}, in the sense that they are expected to mediate pattern selection from compactly supported or steep initial data. 
We focus on pulled fronts, that is, on fronts whose propagation speed is determined by the linearization  about the unstable state in the leading edge, only. 
We present our analysis in the specific setting of the FitzHugh-Nagumo system, where pattern-forming uniformly translating fronts have recently been constructed rigorously~\cite{CarterScheel}, but our methods can be used to establish nonlinear stability of pulled pattern-forming fronts in general reaction-diffusion systems. 
This is the first stability result of critical pattern-selecting fronts and provides a rigorous foundation for heuristic, universal wave number selection laws in growth processes based on a marginal stability conjecture.
The main technical challenge is to describe the interaction between two separate modes of marginal stability, one associated with the spreading process in the leading edge, and one associated with the pattern in the wake. We develop tools based on far-field/core decompositions to characterize, and eventually control, the interaction between these two different types of diffusive modes. Linear decay rates are insufficient to close a nonlinear stability argument and we therefore need a sharper description of the relaxation in the wake of the front using a phase modulation ansatz. We control regularity in the resulting quasilinear equation for the modulated perturbation using nonlinear damping estimates.  
\end{abstract}

\extrafootertext{The authors gratefully acknowledge support from the US National Science Foundation, through NSF DMS-2205663 (A.S.), NSF-DMS-2202714 (M.A.), and NSF-DMS-2238127 (P.C.), and from the Deutsche Forschungsgemeinschaft (DFG, German Research Foundation) - Project-ID 491897824 (B.dR). The authors would also like to thank the Collaborative Research Center ``Wave Phenomena'' for hosting M.A. and P.C. while working on this project in August 2022, as well as the Institute for Computational and Experimental Research in Mathematics for hosting all authors in January 2023 through the Collaborate@ICERM program.}

\section{Introduction}

Invasion into unstable states plays an important role in the development of complex coherent structures in many physical systems. Unstable states can be observed as a transient for instance after a change in system parameters, or after the introduction of a novel external agent to which the system is unstable. In spatially extended systems, one expects localized fluctuations of the now unstable background state to grow and spread, leaving a new stable state in the wake of a propagating \emph{invasion front}. A fundamental problem then is to predict the speed of propagation of the invasion process as well as the new state selected in its wake. 

Predictions for invasion speeds in the mathematical literature have historically been restricted to systems with comparison principles. Since comparison principles are incompatible with complex pattern formation, invasion fronts in these systems typically select a new spatially homogeneous equilibrium in their wake. On the other hand, many interesting invasion phenomena have been observed in experiments, simulations, and formal analyses in {pattern-forming} models describing a large variety of physical systems; see~\cite{vanSaarloos} for a comprehensive review. Pattern-forming systems usually admit families of periodic patterns, parameterized by the wave number. An invasion process spreading into an unstable state typically selects one distinguished pattern and wave number out of this family; see~\cite{deelanger}. This wave number selection mechanism has been observed in many experiments across the sciences~\cite{vanSaarloos}, and has promising applications to nanoscale manufacturing technologies~\cite{NatureMaterials, Bradley}. However, beyond the heuristics~\cite{deelanger} and matched asymptotics~\cite{PhysRevE.61.R6063}, there do not appear to be mathematical results or techniques that describe these phenomena. 

In a first approach, one would hope to make such predictions rigorous by finding a unique speed and wave number for which there is a stable traveling front solution connecting the unstable state to a periodic pattern in the wake. Existence of pattern-forming fronts has been rigorously established near the onset of a Turing instability~\cite{ColletEckmann, EckmannWayne1, HaragusSchneider, Hexagons} and in phase separation problems~\cite{ScheelCoarsening1, ScheelCoarsening2}. However, such fronts typically exist and are stable for a continuum of speeds and associated wave numbers, so that this simple approach does not predict which of these speed-wave number pairs are selected by localized initial data. 

The \emph{marginal stability conjecture}~\cite{brevdo, bers1983handbook, deelanger, vanSaarloosMarginalstability, vanSaarloos, colleteckmannbook, AveryScheelSelection} asserts that speeds and associated wave numbers are determined by the distinguished front solutions which are \emph{marginally spectrally stable} in an appropriate sense, that is, their spectrum, in an appropriate space, touches the imaginary axis but is otherwise stable. There are two distinct scenarios for marginal spectral stability: the marginal stability may arise from marginal pointwise stability of the unstable state in the leading edge in a distinguished moving frame, or from marginally stable point spectrum of the invasion front itself. The former case is referred to as \emph{pulled}, or linearly determined, propagation, while the latter case is referred to as \emph{pushed}, or nonlinearly determined, propagation. Because the associated spectrum is marginally stable, or \emph{critical}, selected fronts are sometimes referred to as critical fronts. By contrast, faster-traveling \emph{supercritical} fronts arise for a continuum of speeds and associated wave numbers, and are often stable against restricted classes of perturbations. Stability of these supercritical fronts has been shown rigorously in some pattern-forming systems~\cite{Schneider1, Schneider2}, but they do not appear to be relevant to selection and propagation from compactly supported or steep initial data. 

The marginal stability conjecture was recently established for systems of parabolic equations~\cite{AveryScheelSelection, AverySelectionRD} based on a novel, conceptual approach that does not rely on comparison principles, thus providing a promising avenue towards the analysis of pattern-forming systems that inherently do not possess ordering properties. The analysis there does however assume that the marginally stable front selects a spatially constant, exponentially attracting state in its wake, rather than a periodic pattern.

The results in~\cite{AveryScheelSelection, AverySelectionRD} demonstrate that the key ingredient to establishing  the marginal stability conjecture is a sharp theory for nonlinear stability of invasion fronts against perturbations that do not alter the decay in the leading edge of the front. These sharp stability results predict the characteristic $\log$-shift of the front position when starting from compactly supported or steep initial data and allow to close the matched asymptotics arguments in~\cite{PhysRevE.61.R6063}. Our main result, informally stated below as Theorem~\ref{t: main}, provides precisely these sharp stability estimates in the case of a uniformly translating, pulled front that creates a pattern in its wake.  

To be concrete, we present our analysis in the setting of the FitzHugh-Nagumo system,
\begin{align}
\begin{split}
    u_t &= u_{xx} + u(u + a)(1-u-a)- w, \\
    w_t &= \eps (u - \gamma w), 
\end{split}    
\label{e: FHN not shifted}
\end{align}
which we write abstractly as a system for $\u = (u, w)^\top $,
\begin{align}
    \u_t = D \u_{xx} + F(\u; a, \gamma, \eps), \quad D = \begin{pmatrix}
    1 & 0 \\ 0 & 0 
    \end{pmatrix},\qquad 
    F(\u; a, \gamma, \eps) = \begin{pmatrix}
    u(u + a)(1-u-a) - w \\
    \eps(u - \gamma w) 
    \end{pmatrix} .
    \label{e: fhn}
\end{align}
The FitzHugh-Nagumo system~\eqref{e: fhn} models excitable and oscillatory media far from equilibrium and is ubiquitous across the sciences as both a phenomenological model and as a more rigorous simplification of more complex descriptions. It arose in this latter fashion first as a simplification of the Hodgkin-Huxley equations for signal propagation in nerve fibers and, together with variations and adaptions, has since been used to model, for instance, the onset of turbulence in fluids~\cite{Barkley}, carbon monoxide oxidation on platinum surfaces~\cite{Oxidation1, Oxidation2}, and cardiac arrhythmias~\cite{Cardiac}.  
Mathematically,~\eqref{e: fhn} is a scalar reaction-diffusion equation coupled to a linear ODE and, thus, one of the simplest models which could, and does exhibit spatio-temporal pattern formation.
\paragraph{Existence of pulled fronts.} 
We consider~\eqref{e: FHN not shifted} in the {oscillatory regime}, $ 0 < a < \frac{1}{2}$, and $0 < \gamma < 4$, with $0 < \eps \ll 1$. In this regime, $\u =(0,0)$ is the unique spatially constant equilibrium, but is unstable with perturbations growing in any translationally invariant norm. Linearizing at this unstable equilibrium, one finds a linear spreading speed $c_\mathrm{lin}(a,\gamma,\eps)=2\sqrt{a(1-a)}+\mathrm{O}(\eps)$ together with a characteristic exponential decay rate $-\eta_\mathrm{lin}(a,
\gamma,\eps)$, at which the linear equation is marginally stable; see~\cite[Lemma 2.1]{CarterScheel} or Lemma~\ref{l: linear spreading speed} below for details, as well as~\cite{HolzerScheelPG} for background on linear spreading speeds.
In addition to this spatially constant solution,~\eqref{e: FHN not shifted} admits stable time-periodic, $x$-independent solutions, commonly referred to as relaxation oscillations, and spatially periodic modulations of these oscillations, which are traveling waves $\u(x,t) = \u_\mathrm{wt}(x - ct)$ satisfying $\u_\mathrm{wt}(\xi)=\u_\mathrm{wt}(\xi+L)$ for some $L > 0$. These periodic traveling waves, or wave trains, exist for a range of periods $L_*<L<\infty$ with speeds $c=c(L)$.
Traveling front solutions $\u(x,t) = \u(x-ct)$ that connect $\u=(0,0)$ to a wave train $\u_\mathrm{wt}$ have been constructed in~\cite[Theorem 1.2]{CarterScheel} using dynamical systems techniques, in particular geometric singular perturbation theory.

\begin{figure}
	\centering
	\begin{subfigure}{0.24\textwidth}
		\includegraphics[width=1\textwidth]{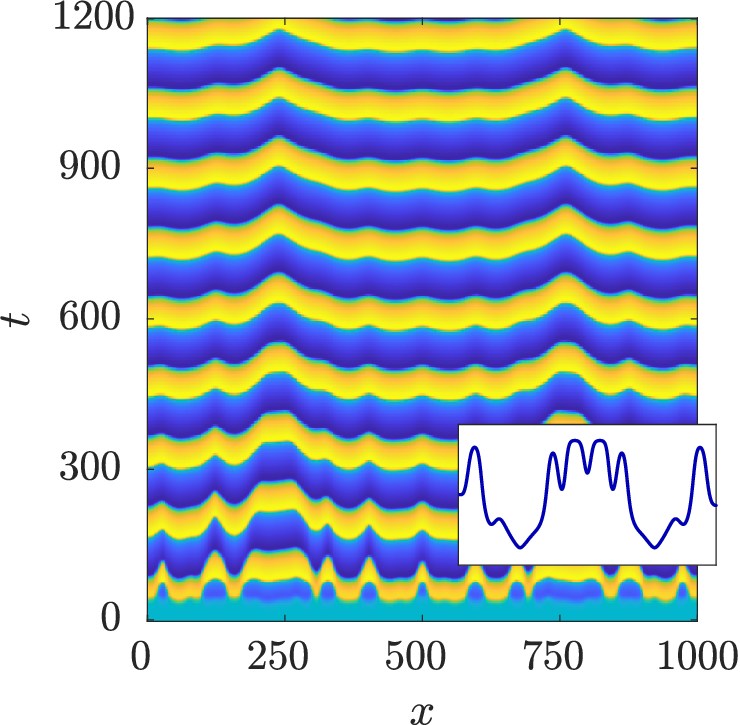}
	\end{subfigure}
	\hfill
	\begin{subfigure}{0.24\textwidth}
		\includegraphics[width=1\textwidth]{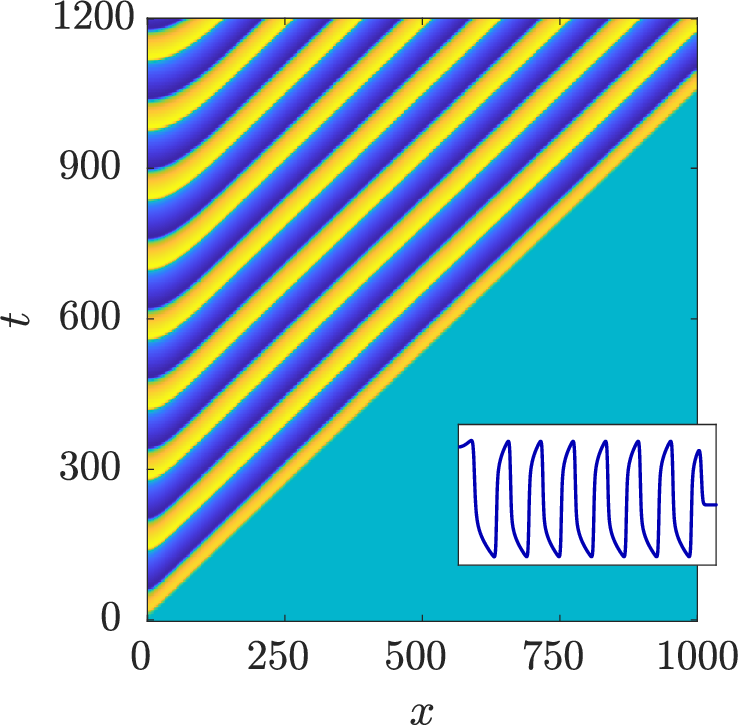}
	\end{subfigure}
	\hfill
	\begin{subfigure}{0.47\textwidth}
	\includegraphics[width=1\textwidth]{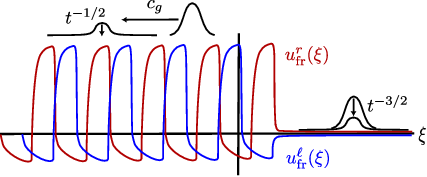}
	\end{subfigure}
	\caption{Left and center: Space-time plot of $u$ from direct simulations of~\eqref{e: fhn} with initial condition $u=w=0$ perturbed by small white noise (left) and a small perturbation on the left end (center); $a=0.4,\gamma=0.1,\eps=0.005$. Insets show the $u$-component at time $t=1000$.  Right: schematic of first component $u_\mathrm{fr}^r$ (red) and $u_\mathrm{fr}^\ell$ (blue) of invasion fronts $\ufr^{r/\ell} (\xi)$ in the co-moving frame, together with the dynamics of perturbations (black) in the leading edge and in the wake. Perturbations in the leading edge decay with improved rate $t^{-3/2}$, while perturbations in the wake are transported outward by the group velocity and decay with diffusive rate $t^{-1/2}$. Note also that the patterns in the wake are large amplitude, highly nonlinear structures which do not resemble a pure cosine wave.
 }
	\label{fig: fronts}
\end{figure}

\begin{thmlocal}[Existence of pulled pattern-forming fronts {\cite[Theorem 1.2]{CarterScheel}}]\label{t: existence}
Fix $(3-\sqrt{6})/6 < a < \frac{1}{2}$ and $0 < \gamma < 4$. There exists $\eps_0 > 0$ such for all $0<\eps<\eps_0$~\eqref{e: fhn}  admits two pulled pattern-forming front solutions $\u(x,t) = \ufr^\ell (x-\clin t)$ and $\u(x,t) = \ufr^r(x-\clin t)$, which satisfy
\begin{align}\label{eq:asymp_est_front}
\ufr^{\ell/r} (\xi) = \left[ \u_0^{\ell/r} \xi + \u^{\ell/r}_1 + b^{\ell/r} \u^{\ell/r}_0 \right] \re^{-\etalin \xi} + \mathrm{O}(\re^{-(\etalin + \eta) \xi}), \quad \xi \to \infty ,
\end{align}
and
\begin{align}\label{eq:asymp_est_wt}
    \ufr^{\ell/r}(\xi) = \uwt^{\ell/r} (\xi) + \mathrm{O}(\re^{\eta \xi}), \quad \xi \to -\infty, 
\end{align}
for some $b^{\ell/r} \in \R$, $\eta > 0$, $\u_0^{\ell/r} \in \R^2 \setminus \{0\}$, and $\u_1^{\ell/r} \in \R^2$, where $\uwt^{\ell/r}$ is a wave train solution with selected period $L > 0$ and wavenumber $\frac{2 \pi}{L}$, $c_\mathrm{lin}$ is the linear spreading speed, and $\eta_\mathrm{lin}$ is the characteristic exponential decay rate.  Both fronts select the same wave trains, $\uwt^{\ell}(\xi) = \uwt^{r}(\xi + \xi_0)$ for some $\xi_0\in\R$ but have opposite monotonicity in the leading edge, $\u_j^{\ell}=-\u_j^{r}$, $j=0,1$.
\end{thmlocal}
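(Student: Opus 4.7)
The plan is to construct both fronts as heteroclinic orbits of the traveling-wave ODE at the marginal speed $c = \clin$, exploiting the slow-fast structure for $\eps \ll 1$. In the co-moving frame $\xi = x - \clin t$,~\eqref{e: fhn} becomes the three-dimensional system
\begin{align*}
u' = v, \qquad v' = -\clin v - u(u+a)(1-u-a) + w, \qquad w' = \tfrac{\eps}{\clin}(u - \gamma w),
\end{align*}
and $\ufr^{\ell/r}$ are sought as trajectories from the origin to a periodic orbit (the wave train $\uwt^{\ell/r}$). By definition of $\clin$ as the pinched double root of the leading-edge dispersion relation, the linearization at $(0,0,0)$ has a double spatial eigenvalue $-\etalin$ with a $2\times 2$ Jordan block, plus one simple eigenvalue. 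This Jordan structure dictates the asymptotic form~\eqref{eq:asymp_est_front}: trajectories tangent to the generalized eigenspace necessarily behave as $[A\u_0 \xi + \u_1 + B\u_0]\re^{-\etalin\xi}$, and normalizing $A$ by a choice of generalized eigenvector together with the translation freedom $\xi\mapsto\xi+\tau$ reduces this to the displayed form with intrinsic constant $b$.

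For the construction I would use geometric singular perturbation theory. At $\eps=0$, $w$ is a parameter in the planar $(u,v)$ Nagumo-type subsystem; its critical manifold $\{v=0,\,u(u+a)(1-u-a)=w\}$ has three branches joined at two folds and admits planar traveling heteroclinics between outer branches at specific $w$-dependent speeds. Concatenating slow segments along the outer branches with these fast jumps produces (i) a singular periodic orbit -- the wave train $\uwt^{r}$ with a distinguished period $L$ determined by the requirement that both fast jumps occur at the common speed $\clin$, and (ii) a singular heteroclinic from the origin to this periodic orbit whose first segment is a fast excursion in the $(u,v)$-plane. The condition $a > (3-\sqrt{6})/6$ enters precisely as the threshold for $\clin$ to fall in the range where this construction closes up. Persistence to $\eps>0$ then follows from Fenichel's invariant-manifold theorems on the normally hyperbolic portions of the critical manifold together with the exchange lemma near the folds, yielding a genuine periodic orbit and the exponential convergence~\eqref{eq:asymp_est_wt}. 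The two fronts $\ufr^r$ and $\ufr^\ell$ arise because the initial fast jump from the origin can leave in either of the two directions $\pm \u_0$ within the generalized eigenspace, producing the opposite monotonicity of the leading edges.

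The main obstacle is that the origin is non-normally-hyperbolic, so neither standard Fenichel theory nor the usual stable-manifold construction applies there. I would handle this by introducing coordinates near the origin that block-diagonalize the linearization into the Jordan block and the simple eigenvalue, and then building a center-stable/strong-stable manifold adapted to the Jordan block -- for instance via a graph transform in an exponentially weighted space with weight $\re^{\etalin\xi}$, absorbing the algebraic $\xi$-prefactor into the ansatz. The two-dimensional strong-stable set so obtained must then be matched to the slow manifold that feeds into the first fold of the singular construction; this is a transverse intersection problem in phase space and is where the constants $\u_0^{\ell/r}$, $\u_1^{\ell/r}$, $b^{\ell/r}$ are pinned down. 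Once the intersection is verified to be transverse at the singular level, it persists for all sufficiently small $\eps$, completing the construction and delivering the stated asymptotics.
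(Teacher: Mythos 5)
Your overall plan --- passing to the three-dimensional traveling-wave ODE, identifying the cubic critical manifold, constructing a singular periodic orbit for the wave train and a singular heteroclinic joining it to the origin, then invoking Fenichel theory and the exchange lemma to persist both for $0<\eps\ll 1$ --- is in the same spirit as the construction in~\cite{CarterScheel}, which the paper describes as exhibiting the front inside the unstable manifold of a hyperbolic periodic orbit representing the wave train. The derivation of~\eqref{eq:asymp_est_front} from the Jordan-block structure of the spatial linearization at the pinched double root, and its normalization via a choice of generalized eigenvector plus translation, also matches the route the paper points to (an extension of~\cite[Section~4]{AronsonWeinberger}).

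The place where your argument goes wrong is the claim that the origin is ``non-normally-hyperbolic, so neither standard Fenichel theory nor the usual stable-manifold construction applies,'' and the consequent plan to build a stable manifold by a graph transform in the exponentially weighted space with weight $\re^{\etalin\xi}$. For $\eps>0$ and $c=\clin$, the linearization of the traveling-wave ODE at the origin has a \emph{repeated} eigenvalue $-\etalin<0$ with a $2\times 2$ Jordan block and one simple eigenvalue $\nu_3(\eps)>0$ that is bounded away from the imaginary axis (this is precisely the content of Corollary~\ref{c: leading edge spatial eval}, translated to the unweighted traveling-wave formulation). Hyperbolicity of a fixed point only requires that no eigenvalue lies on the imaginary axis, not that the eigenvalues be simple, so the origin is a hyperbolic equilibrium with a smooth two-dimensional stable manifold furnished by the \emph{classical} stable-manifold theorem; no weighted-norm device is needed. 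Likewise, in the singular limit the origin sits on the middle branch of the cubic nullcline where $f'(0)=a(1-a)>0$, so the layer linearization has trace $-\clin<0$ and positive determinant: both layer eigenvalues have negative real part (they coincide at $-\etalin$ precisely because $\clin^2=4a(1-a)$ at leading order), and normal hyperbolicity of the critical manifold there is \emph{not} violated. The Jordan block is a degeneracy within the stable subspace; it governs the asymptotics $(\u_0\xi+\u_1+b\u_0)\re^{-\etalin\xi}$ in~\eqref{eq:asymp_est_front} but is no obstruction to the existence of the invariant manifolds. You appear to have imported into the existence problem a concern --- the need for exponential weights to stabilize essential spectrum --- that is genuinely present only in the PDE stability analysis later in the paper.

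Two smaller points: in the co-moving traveling-wave reduction the slow equation reads $w'=-\tfrac{\eps}{\clin}(u-\gamma w)$, not $+\tfrac{\eps}{\clin}(u-\gamma w)$; and the assertion that $a>(3-\sqrt{6})/6$ ``enters precisely as the threshold for $\clin$ to fall in the range where this construction closes up'' is speculative --- the paper's discussion indicates that this bound is related to the pulled/pushed dichotomy (pushed fronts are constructed in~\cite{CarterScheel} for $0<a<\tfrac13$), not to a geometric closure condition for the singular orbit.
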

See Figure~\ref{fig: fronts} for a schematic of the fronts. We shall discuss below results that establish that both fronts $\ufr^{r}$ and $\ufr^{\ell}$ are marginally stable due to marginal stability in the leading edge, that is, they are pulled fronts.  
We emphasize that the wave trains generated in the wake are ``far-from-equilibrium patterns'', in the sense that they possess large amplitude and are highly nonlinear, in contrast to the low-amplitude, weakly nonlinear patterns selected by fronts near a Turing instability~\cite{ColletEckmann, EckmannWayne1, HaragusSchneider}.

\paragraph{Nonlinear stability of pulled fronts --- main result.} To study stability of the front solutions established in Theorem~\ref{t: existence}, we consider the frame $\xi = x - \clin t$ moving with the speed $\clin$, in which~\eqref{e: fhn} reads
\begin{align}
    \u_t = D \u_{\xi \xi} + \clin \partial_\xi \u + F(\u; a, \gamma, \eps), \label{e: fhn comoving}
\end{align}
so that the front is a stationary solution to~\eqref{e: fhn comoving}. Our main result may be informally stated as follows.

\begin{thmlocal}\label{t: main}
Let $\ufr$ be a uniformly translating pattern-forming front solution to~\eqref{e: fhn}, as established in Theorem~\ref{t: existence}. Assume that $\ufr$ satisfies the following marginal spectral stability assumptions:
\begin{itemize}
    \item The wave train generated by the front is diffusively spectrally stable, that is, its spectrum touches the origin in a single parabolic tangency and is otherwise stable;
    \item In the frame moving with the front speed $\clin$, the \emph{group velocity} of the wave train points to the left, away from the front interface;
    \item The linearization of~\eqref{e: fhn comoving} about $\ufr$ possesses no unstable point spectrum, and no embedded eigenvalue at $\lambda = 0$. 
\end{itemize}
Then, $\ufr$ is nonlinearly stable as a stationary solution to~\eqref{e: fhn comoving} against sufficiently localized perturbations, and perturbations decay pointwise in time with sharp diffusive rate $t^{-1/2}$, and with enhanced rate $t^{-3/2}$ in the leading edge of the front.
\end{thmlocal}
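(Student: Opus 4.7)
The plan is to linearize \eqref{e: fhn comoving} about $\ufr$, writing $\u = \ufr + \v$ with $\v_t = \mathcal{L} \v + \mathcal{N}(\v)$, and to derive pointwise, weighted-in-space linear estimates on the semigroup $\re^{\mathcal{L} t}$ that are sharp enough to close a Duhamel argument. The marginal spectral hypotheses imply that $\spec(\mathcal{L})$ touches the imaginary axis through \emph{two} distinct pieces: an essential spectrum curve in the leading edge associated with marginal stability of $\u=0$ at the critical exponential weight $\re^{-\etalin \xi}$, and a Bloch band tangent to the origin at the wave train $\uwt$ in the wake. My first step is a far-field/core decomposition of the resolvent $(\mathcal{L} - \lambda)^{-1}$ which isolates these two modes. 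For the leading edge, conjugating by $\re^{\etalin \xi}$ converts the pulled-front marginal stability into a Schr\"odinger-type operator with quadratic touching and, crucially, no zero mode (the translational kernel is generated by a derivative of a root function, consistent with the linear $\xi$-prefactor in \eqref{eq:asymp_est_front}); this yields the enhanced $t^{-3/2}$ decay in the weighted leading-edge norm. For the wake, a Bloch/Floquet analysis of the linearization about $\uwt$ yields diffusive $t^{-1/2}$ decay transported at the linear group velocity, which the assumption of leftward group velocity advects \emph{away} from the front interface.

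Next I would assemble these pieces into a pointwise Green's function bound by contour deformation in the inverse Laplace transform, along contours that pinch through the two marginal double roots but are otherwise strictly in the stable half-plane (using the no-unstable-point-spectrum and no-embedded-eigenvalue assumptions to exclude further singularities). The resulting template envelope is roughly
\begin{equation*}
|G(\xi, \zeta, t)| \lesssim \re^{-\etalin(\xi - \zeta)_+} \langle t \rangle^{-3/2} + \sum_{\pm} \langle t \rangle^{-1/2} \re^{-(\xi - \zeta - c_g^\pm t)^2/(M t)},
\end{equation*}
with appropriate modification of the wake kernel to project onto the translational Bloch mode of $\uwt$.

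The $t^{-1/2}$ wake decay is critical against quadratic nonlinearities, so a pure Duhamel iteration on $\v$ does not close. I would therefore introduce a phase modulation ansatz $\u(\xi,t) = \ufr(\xi - \psi(\xi, t)) + \vt(\xi, t)$ with $\psi$ spatially localized to the wake by a smooth cutoff, and derive the coupled system for $(\psi, \vt)$. The modulation $\psi$ obeys (to leading order) a viscous Burgers equation in the wake, while $\vt$ solves a quasilinear equation whose linear part now enjoys improved temporal decay because the marginal translational Bloch mode of $\uwt$ has been absorbed into $\psi$. Controlling the second-order derivatives that appear in the quasilinear terms requires a nonlinear damping estimate via a weighted $H^k$ energy argument; the degeneracy of $D$ in the $w$-component forces this estimate to treat the $w$-component through the ODE substructure rather than via parabolic smoothing.

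The main obstacle is the \emph{coupling} between the two marginal modes through the nonlinearity, a feature absent from \cite{AveryScheelSelection, AverySelectionRD} where the wake was spatially constant. Wake perturbations of size $t^{-1/2}$, squared, produce quadratic sources that must not excite the leading-edge pulled mode resonantly; conversely, the leading-edge tail $\re^{-\etalin \xi} t^{-3/2}$ overlaps spatially with the wake cutoff. The far-field/core decomposition is essential here: the leading-edge mode decays exponentially into the wake region, and the outward group velocity transports wake modes away from the interface, so that the cross-terms are effectively localized and time-integrable against the Green's function envelope. A contraction/bootstrap argument in a template norm combining $\re^{-\etalin \xi}\langle t \rangle^{-3/2}$ ahead of the front, Gaussian-modulated $\langle t \rangle^{-1/2}$ in the wake, and $H^k$-regularity for $(\psi, \vt)$ then closes the nonlinear stability argument and yields the asserted pointwise decay rates.
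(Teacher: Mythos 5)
Your architecture --- far-field/core resolvent decomposition; contour deformation adapted to the branched pulled mode and the outgoing wake mode; phase modulation of the wake pattern; nonlinear damping to control regularity --- matches the paper's overall strategy, and you have correctly identified the coupling between the two marginal modes as the new difficulty absent from~\cite{AveryScheelSelection, AverySelectionRD}. There is, however, a genuine gap in the modulation step. You introduce a \emph{single} modulated remainder $\vt$ via the forward ansatz $\u(\xi,t) = \ufr(\xi - \psi(\xi,t)) + \vt(\xi,t)$ and propose to obtain both Duhamel-based temporal decay and energy-based regularity control for this same $\vt$. This does not close for FitzHugh--Nagumo. The forward-modulated equation is in fact \emph{semilinear}, not quasilinear as you assert, which is exactly what makes the damping estimate tractable despite the degenerate diffusion (the $\partial_{\xi\xi}$ in the $u$-component and the $-\eps\gamma$ absorption in the $w$-component carry the energy argument). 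But the forward-modulated Duhamel formula contains terms that decay too slowly to close a quadratic iteration; see~\cite[Section~5.2]{ZUM22}. Conversely, the \emph{inverse}-modulated perturbation $\v = \omega\,\bigl(\u(\cdot - \psi,t) - \ufr\bigr)$ yields a Duhamel iteration that does close once $\psi$ absorbs the slowest wake mode, but its equation is genuinely quasilinear, and for degenerate $D$ --- unlike the strictly parabolic setting of~\cite{JONZ} --- the quasilinear derivative loss cannot be absorbed by a direct damping estimate on $\v$. The paper's resolution, following~\cite{ZUM22}, is to carry \emph{both} modulated perturbations simultaneously: the Duhamel iteration on the inverse-modulated $\v$ furnishes the decay rates, the nonlinear damping estimate on the forward-modulated $\vf$ furnishes regularity, and a norm-equivalence lemma showing $\|\v(t)\|_{W^{k,p}} \approx \|\vf(t)\|_{W^{k,p}}$ up to controllable errors in $\psi_\xi$ binds the two halves of the bootstrap together. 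Without this two-variable device your scheme stalls at whichever task the single ansatz does not support, and you will not be able to run the $H^3\times H^2$ damping estimate and the sharp Duhamel decay within one equation.

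A secondary, methodological divergence: you envision a pointwise temporal Green's function envelope $G(\xi,\zeta,t)$ for the full linearization, but the paper deliberately avoids constructing the spatial Green's function $G_\lambda(x,y)$; instead it decomposes data and solution of the resolvent \emph{equation} by a partition of unity, solves the core piece via Fredholm bordering in exponentially weighted spaces, and then applies different contour families to each term of the decomposition. This is not merely cosmetic: the interaction term $\chi_-\,\alpha_-(\sigma)\,\re^{\nuwt(\sigma^2)\cdot}$ carries a square-root branch point at $\lambda=0$, so the Zumbrun--Howard pointwise contours that pass through the origin cannot be applied to it; the paper instead subtracts $\alpha_-(0)$, estimates the remainder in norm along contours confined to the sector $\Omega^{\mathrm{wt}}_\delta$ (Proposition~\ref{p: wake norm estimates}), and exploits the extra order of vanishing to recover the lost sharpness. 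A single Green's-function envelope offers no natural place to make this term-by-term distinction between what is analytic in $\lambda$ and what is analytic only in $\sqrt{\lambda}$.
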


Before we prepare a more precise statement of this result in the next section (see Theorem~\ref{t: main detailed}), where we will also point out the conceptual nature of our methods with potential broader applicability, we mention some subtleties. 

The assumptions in Theorem~\ref{t: main} are made precise  in Hypotheses~\ref{hyp: wt spectral} through~\ref{hyp: point spectrum} below  and are established for~\eqref{e: fhn} in natural parameter ranges in the companion papers~\cite{SpectralFronts, spectral}; {\color{blue} see Remark \ref{rmk: spectral assumptions}}. The assumptions guarantee that the fronts studied here are pulled fronts, since they propagate at the linear spreading speed and do not possess marginally stable or unstable point spectrum. The diffusive spectral stability that we require for the wave trains in the wake is generic for wave trains in reaction-diffusion systems and a standard assumption for their nonlinear stability analysis~\cite{SchneiderAbstract, JONZ, SSSU}. The group velocity of the wave train describes the direction of transport of small perturbations; see~\cite{DSSS} for further background. We assume that the group velocity points away from the front interface so that the front naturally acts as a source of patterns in the sense of~\cite{SandstedeScheelDefects}. Spreading at the linear spreading speed guarantees diffusive decay in exponentially weighted spaces in the leading edge; see Figure~\ref{fig: fronts}.

We reiterate that the fronts studied here are uniformly translating, i.e.~equilibria in a comoving frame. In particular, they are not time-periodic in a comoving frame (or \emph{modulated}) as is the case for many other pattern-forming fronts.  Such modulated fronts have been constructed in the wake of a Turing instability~\cite{ColletEckmann} but also for large amplitudes in spinodal decomposition~\cite{ScheelCoarsening2}. We do not treat such modulated fronts in this paper, but we expect that our methods and proofs can be adapted to this case; see also~\cite{SandstedeScheelModulatedTW, BeckSandstedeZumbrun} for stability results of modulated waves in the presence of essential spectrum. In this context, it is worth noting the stability result in the complex Ginzburg-Landau equation in~\cite{AveryScheelGL}, which does give sharp stability estimates towards a front that describes pattern formation near a Turing instability, albeit in an amplitude equation setting that averages oscillations so that the front is in fact uniformly translating, and which allows for coordinate choices that are not available in more general situations, in particular the present one. 

The $u$-component of both fronts in Theorem~\ref{t: existence} are monotone in the leading edge: for left fronts, $u_\mathrm{fr}^\ell$ is increasing and for right fronts $u_\mathrm{fr}^r$ is  decreasing. Both fronts select the same state in the wake (up to a phase shift of the wave train), a curious phenomenon not observed in order-preserving systems. Initial conditions will converge to left or to right fronts depending on their behavior in the leading edge. One indeed observes this selected wave number in simulations when the unstable state is perturbed by ``shot'' noise, locally in space. In contrast, spatially distributed white noise perturbations lead to long-wavelength modulations of spatially homogeneous oscillations with a slow coarsening process toward synchrony. In this sense, the fronts studied here mediate a rapid frequency synchronization in an otherwise disorganized system; see Figure~\ref{fig: fronts}. This work can be seen as a first instance where this rapid frequency synchronization through growth is mathematically corroborated. 

Finally, we point out that the 
asymptotic estimate~\eqref{eq:asymp_est_front}, while not explicitly stated in~\cite{CarterScheel}, follows from an extension of the analysis of~\cite[Section~4]{AronsonWeinberger}; for a detailed proof, see for instance~\cite[Appendix A]{KellerSegelAHS} in the context of the Fisher-KPP equation. The  estimate~\eqref{eq:asymp_est_wt} follows directly from the existence construction in~\cite{CarterScheel} which finds the pattern-forming front  in the unstable manifold of a hyperbolic periodic orbit representing the wave train in the corresponding traveling wave formulation.

\section{Overview, challenges, setup, and main result}

A standard approach to studying nonlinear stability of a given coherent structure $\u_*$, say in a reaction-diffusion system $\u_t = D \u_{\xi \xi} + c \u_\xi + f(\u)$, is to derive an equation for the perturbation $\w(\xi, t) = \u(\xi, t) - \u_*(\xi)$, of the form
\begin{align*}
    \w_t = \mathcal{A} \w + \mathcal{N} (\w), 
\end{align*}
where $\mathcal{A}$ is the linearization about $\u_*$ and $\mathcal{N}$ is the resulting quadratic nonlinear remainder. One then studies the behavior of $\w$ by analyzing the associated \emph{variation of constants formula} 
\begin{align*}
    \w(t) = \re^{\mathcal{A} t} \w_0 + \int_0^t \re^{\mathcal{A} (t-s)} \mathcal{N} (\w(s)) \de s, 
\end{align*}
where $\w_0(\xi) = \u(\xi, 0) - \u_*(\xi)$ is the initial perturbation, and $\re^{\mathcal{A} t}$ is a strongly continuous semigroup generated by $\mathcal{A}$. In general, one needs to prove that $\mathcal{A}$ indeed generates a suitable semigroup, although this is automatic by standard results if, for instance, $\mathcal{A}$ is elliptic. One then hopes to establish decay of $\w(t)$ by combining decay estimates on $\re^{\mathcal{A} t}$ with a contraction mapping or iterative argument on the variation of constants formula. Hence, the key first step is to obtain sharp linear decay estimates on the associated semigroup $\re^{\mathcal{A} t}$. In Sections~\ref{s: intro linear} and~\ref{s: nonlinear diffusive stability}, we explain challenges to obtaining suitable linear and nonlinear estimates to close a nonlinear iteration argument in our present context, before setting up and precisely formulating our main result in Sections~\ref{s: function spaces} through~\ref{s: main result}. 

\subsection{Linear diffusive stability}\label{s: intro linear}

\paragraph{Classical semigroup estimates.} If the spectrum of the linearization $\mathcal{A}$ is strictly contained in the left half-plane and a spectral mapping theorem  holds, solutions to the linearized equation decay exponentially in time yielding nonlinear stability by classical arguments. If the spectrum is stable except for some discrete eigenvalues at the origin, typically related to translation invariance or other symmetries of the original equation, then one can define spectral projections which separate these neutral modes and recover nonlinear stability of the family of underlying traveling waves; see~\cite{KapitulaPromislow, Henry} and references therein. In some cases when the essential spectrum is unstable or marginally stable, exponential weights can be used to push the essential spectrum into the left half-plane and return to a setting where classical arguments give stability~\cite{Sattinger}. 

\paragraph{Exponential weights.} In our case, the essential spectrum of the linearization about $\ufr$ is unstable in $L^2(\R, \C^2)$ due to the instability of the background state $\u \equiv 0$ to which $\ufr$ converges at $\xi = +\infty$. Since the front is \emph{critical}, traveling with the linear spreading speed, the essential spectrum cannot be fully stabilized with an exponential weight. The optimal choice of weight renders the spectrum marginally stable, touching the imaginary axis at the origin but otherwise contained in the left half-plane. To make this precise, we let
\begin{align*}
    \afr = D \partial_\xi^2 + \clin \partial_\xi + F'(\ufr) 
\end{align*}
denote the linearization of~\eqref{e: fhn comoving} about the invasion front $\ufr$, and define a smooth, positive, monotonically increasing weight function $\omega$ satisfying
\begin{align*}
    \omega(\xi; a, \gamma, \eps) = \begin{cases}
    1, &\xi \leq 0, \\
    \re^{\etalin(a, \gamma, \eps) \xi}, &\xi \geq 1.  
    \end{cases}
\end{align*}
We will from now on suppress the dependence of $\omega$ and $\etalin$ on $a$, $\gamma$ and $\eps$. We will restrict to perturbations $\w$ of $\ufr$ for which $\omega \w \in L^2(\R)$. The spectral problem for perturbations of this type is equivalent to the spectral problem for the conjugate operator 
\begin{align} \label{e: lfr def} 
\lfr = \omega \afr \omega^{-1},
\end{align} 
acting on $L^2(\R)$. Here and everywhere after, $\omega^{-1}$ denotes the function $\xi \mapsto \frac{1}{\omega(\xi)}$, not the inverse of the function $\xi \mapsto \omega(\xi)$. 

\begin{figure}
	\centering
	\includegraphics[width = 0.8\textwidth]{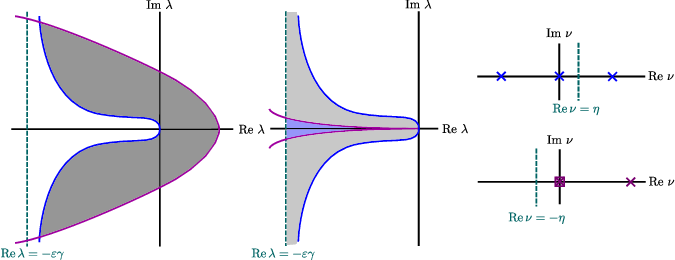}
	\caption{Spectrum of the unweighted linearization $\afr$ (left) and the weighted linearization $\lfr$ (middle). The purple and blue curves denote the essential spectrum of the limiting operators at $\pm\infty$, respectively. These curves are the Fredholm borders of $\afr/\lfr$: in the shaded regions, the operator is Fredholm but with nonzero index, with the index changing across the Fredholm borders. The Fredholm index is 1 in the dark grey region, -1 in the light grey region, and would be determined by the cubic term in the expansion of $\lambda_+(\nu)$ in the light blue region. Top right: spatial Floquet exponents of $\lwt$. Bottom right: spatial eigenvalues of $\mcl_+$; the box indicates a double spatial eigenvalue at the origin. See Section~\ref{s: spectral stability} for details about spatial Floquet exponents and eigenvalues.} 
	\label{fig: spectrum}
\end{figure}

\paragraph{Critical diffusive modes.} The essential spectrum of $\lfr$ touches the imaginary axis at the origin in two different ways: one curve of essential spectrum is associated to the dynamics at $+\infty$, and has the expansion
\begin{align}
    \lambda_+(\ri k) = -\Deff^+ k^2 + \mathrm{O}(k^3), \label{e: right curve}
\end{align}
while the other is associated to dynamics near the wave train at $-\infty$, and has the expansion
\begin{align}
    \lambdawt(\ri k) = -\ri c_g k - \Deff^\mathrm{wt} k^2 + \mathrm{O}(k^3), \label{e: left curve}
\end{align}
where $\Deff^+, \Deff^\mathrm{wt} > 0$, and $c_g < 0$. The expansion of the first curve naturally arises from marginal pointwise stability in the leading edge, which characterizes the linear spreading speed. The second expansion follows from standard diffusive spectral stability of the wave train in the wake with negative group velocity $c_g$ in a frame moving with the linear spreading speed; see Figure~\ref{fig: spectrum}, middle panel, for a depiction of these critical curves. These two curves, though both marginally stable, are quite different in that the first has a branch point at the origin while the second does not. We refer to the dynamics associated to the first curve as a \emph{branched diffusive mode}, and those associated to the second as an \emph{outgoing diffusive mode}. The fact that $c_g < 0$ means that perturbations to the wave train in the wake are, at least on the linear level, transported to the left, away from the front interface~\cite{DSSS}, which is why we refer to these modes as \emph{outgoing}. 

Branched diffusive modes occur naturally in the stability of pulled invasion fronts~\cite{AveryScheelSelection, AveryScheel, AverySelectionRD, FayeHolzer, FayeHolzerLV}, and also of stationary periodic patterns~\cite{SchneiderSH, SchneiderAbstract}, degenerate viscous shock waves~\cite{Howard1}, layer solutions in phase separation models~\cite{Howard2}, and contact defects~\cite{SandstedeScheelDefects, SandstedeScheelBlowup}. Outgoing diffusive modes arise naturally in the study of the stability of undercompressive shock waves~\cite{ZumbrunHoward}, traveling periodic waves~\cite{DSSS, JONZ, JNRZ_13_1}, and source defects~\cite{SandstedeScheelDefects, BeckNguyenSandstedeZumbrun}. 

The curve~\eqref{e: right curve} of essential spectrum associated to the branched diffusive mode cannot be stabilized with an exponential weight, since a stable and an unstable spatial eigenvalue must collide  at the branch point; in the language of~\cite{AbsoluteSpectrum}, there is \emph{absolute spectrum} at the origin. See Section~\ref{s: spectral stability} for further details on spatial eigenvalues and their relation to essential spectrum.  At the linear level, the curve~\eqref{e: left curve} of essential spectrum associated to the outgoing diffusive mode can be moved into the left half-plane with exponential weights, but these weights turn out to be incompatible with a nonlinear argument due to the outgoing group velocity. Indeed, conjugating the equation with an exponential weight which stabilizes this spectral curve introduces coefficients in the nonlinearity which grow exponentially as $\xi \to -\infty$, and so one is not able to control the nonlinearity with this approach. One therefore has no choice but to address a stability problem where continuous curves of essential spectrum touch the imaginary axis, and, as a result, work with algebraic decay of perturbations in the chosen function space.

When the underlying coherent structure is a purely periodic wave train, one can represent the corresponding semigroup via the Bloch wave transform, analogous to the Fourier representation of the heat semigroup, and use this representation to obtain algebraic time decay estimates for perturbations in the linearized equation~\cite{SchneiderAbstract, SchneiderSH, JONZ, JNRZ_13_1, JNRZInventiones}. This approach does not work for front solutions connecting two different end states, since no analogue of the Bloch or Fourier transform is available. An exception is in the study of supercritical pattern-forming fronts, that is,~fronts traveling faster than the selected speed predicted by the marginal stability conjecture. In this case, the dynamics associated to the front tail and interface are exponentially stable, and the stability problem can be reduced to the stability of the periodic pattern in the wake~\cite{Schneider1, Schneider2}. We emphasize that these supercritical fronts are not observed when invasion originates from highly localized initial conditions, and the methods of~\cite{Schneider1, Schneider2} do not appear to generalize to critical invasion fronts. 

\paragraph{Pointwise semigroup estimates.} Originally developed to analyze stability of undercompressive viscous shock waves~\cite{ZumbrunHoward}, pointwise semigroup methods are a powerful tool for establishing linear and nonlinear stability of front-like solutions in the presence of critical diffusive modes. Since the obstruction to obtaining linear stability estimates via a classical approach is the essential spectrum near the origin, the idea is to closely analyze the behavior of the resolvent operator near the critical essential spectrum. The hope is that with detailed enough information on this resolvent, one can deform the inverse Laplace integration contours arbitrarily close to or even {into} the essential spectrum, and recover algebraic decay estimates. In the now-seminal pointwise semigroup approach, the resolvent is analyzed by constructing its integral kernel (or spatial Green's function) $G_\lambda(x,y)$, which solves
\begin{align}
    (\mcl - \lambda) G_{\lambda} (x,y) = \delta(x-y) I, \label{e: resolvent kernel equation}
\end{align}
where $\mcl$ is the linearization about the coherent structure of interest, $\delta$ is the Dirac delta function, and $I$ is the identity. The solution to the linearized equation $u_t = \mcl u$, with initial data $u_0$, is then represented by
\begin{align*}
    u(x,t) = \int_\R G(x,y,t) u_0(y) \de y, 
\end{align*}
where $G(x,y,t)$ is the temporal Green's function, expressed through the inverse Laplace transform
\begin{align}
    G(x,y,t) = - \frac{1}{2 \pi \ri} \int_{\Gamma} \re^{\lambda t} G_\lambda (x,y) \de  \lambda, \label{e: pointwise greens function}
\end{align}
and $\Gamma$ is some contour initially lying to the right of the spectrum of $\mcl$. Of course, the resolvent operator $(\mcl - \lambda)^{-1}$ is singular near the spectrum of $\mcl$. However, if all critical modes are strictly outgoing or ingoing diffusive modes, then the resolvent kernel $G_\lambda(x,y)$ remains analytic or at least meromorphic in $\lambda$ for fixed $x$ and $y$ {even as $\lambda$ passes into the essential spectrum}. The loss of analyticity of the resolvent is due to a loss of spatial localization of $G_\lambda(x,y)$, so that the resolvent is no longer a bounded operator between fixed function spaces. Such a loss of spatial localization can be repaired by conjugating with an exponential weight and, thus, $G_\lambda(x,y)$ remains {pointwise}, that is, for fixed $x,y$,  analytic. Consequently, the integration contour $\Gamma$ in~\eqref{e: pointwise greens function} may be deformed into the essential spectrum of $\mcl$. Sharp linear decay estimates can then be obtained by deforming to pointwise contours $\Gamma_{x,y,t}$, conveniently chosen for each combination of $x, y,$ and $t$, to extract spatio-temporal behavior of the inverse Laplace integral~\eqref{e: pointwise greens function}, using sharp estimates on the resolvent kernel $G_\lambda (x,y)$. Estimates on the resolvent kernel are typically established by solving~\eqref{e: resolvent kernel equation} via a construction of the spatial Green's function through \emph{exponential dichotomies}~\cite{ZumbrunHoward}. 

In the presence of branched diffusive modes, the resolvent kernel $G_\lambda(x,y)$ itself has a branch point at $\lambda = 0$ for each fixed $x,y \in \R$, and so integration contours can no longer be deformed past this singularity. Nonetheless, pointwise semigroup methods have been successfully adapted to problems with branched diffusive modes, using pointwise defined contours which pass near the essential spectrum but remain to the right of the branch point~\cite{Howard1, Howard2, howard3, FayeHolzer, FayeHolzerLV}. 

From this perspective, our results rely on a linear problem with outgoing and branched modes as in~\cite{howard3}, albeit in a context where both the linear and the nonlinear argument are considerably more delicate as we shall explain in the remainder of this introduction. Closer to the problems that we encounter here are results on the stability of invasion fronts in the real Ginzburg-Landau equation~\cite{BricmontKupiainenGLfronts, EckmannWayneGL, AveryScheelGL}.  The pulled front there is special in the sense that, although it selects a spatially constant state rather than a periodic pattern, the selected constant state is only diffusively stable instead of exponentially stable. This relates to the fact that the real Ginzburg-Landau equation is a universal modulation equation for pattern-forming systems near a Turing instability, and the pulled front inherits spectral properties of pattern-forming invasion fronts. In~\cite{AveryScheelGL}, the stability of fronts and the interaction of branched and outgoing modes on the linear level are handled by a far-field/core decomposition of the resolvent, a technique that we rely on here as well, and that we describe next. 

\paragraph{Linear estimates via resolvent decompositions.}
As an alternative to pointwise estimates, we rely on a somewhat more direct and efficient  functional analytic approach to stability problems developed in~\cite{AveryScheel, AveryScheelGL}. Following the strategy outlined above based on an inverse Laplace transform, we carefully analyze the behavior of the resolvent near the origin, and then deform  contours close to the essential spectrum to extract temporal decay. We do not however construct the very detailed and at times cumbersome pointwise integral kernel $G_\lambda(x,y)$ and associated temporal Green's function, but rather solve the resolvent equation
\begin{align}
    (\lfr - \lambda) \u = \g \label{e: intro resolvent eqn}
\end{align}
by decomposing both the data $\g$ and the solution $\u$ using a partition of unity, into two far-field parts which are supported on the wake and on the leading-edge of the front, and a core part which is strongly localized. We then rely on somewhat straightforward Fredholm properties in exponentially localized spaces rather than the more subtle constructions based on for instance the Gap Lemma~\cite{GardnerZumbrun,KapitulaSandstede}; see for instance~\cite{ssrelativemorse} for comparisons. 

We naturally arrive at an explicit spatial decomposition of the solution to the resolvent equation~\eqref{e: intro resolvent eqn}, in which some terms only ``see'' the outgoing diffusive mode, some terms ``see'' the branched diffusive mode, and another, crucial term encodes their interaction. We can then adapt our integration contours specifically to each term in this decomposition, using classical pointwise semigroup contours for the terms which only see the outgoing mode, contours adapted to branched modes as in~\cite{AveryScheel, AveryScheelSelection} for the branched mode, and contours adapted to the interaction terms. 
Somewhat more explicitly, the  decomposition of the linear semigroup $\re^{\lfr t}$ that we obtain is of the form
\begin{align}
    [\re^{\lfr t} \g] (\xi) = \ufr'(\xi) [S_p(t) \g] (\xi) + [S_c(t) \g] (\xi) + [S_e(t) \g] (\xi), \label{e: intro semigroup decomp}
\end{align}
where, roughly speaking, we have
\begin{align}
    \| S_p(t) \g\|_{L^\infty} \sim (1+t)^{-\frac12}, \quad \| S_c (t) \g \|_{L^\infty} \sim (1+ t)^{-1}, \quad \| S_e(t) \g \|_{L^\infty} \sim \re^{-\mu t}
     \label{e: intro linear decay rates}
\end{align}
for some $\mu > 0$. We refer to Theorem~\ref{t: linear estimates} for precise linear estimates. For now, we point out that the first term in~\eqref{e: intro semigroup decomp} is the slowest decaying, with diffusive rate $(1+t)^{-1/2}$, and is in fact supported on the wake of the front, $\xi < 0$, only.

\subsection{Nonlinear diffusive stability} \label{s: nonlinear diffusive stability}
To study the nonlinear stability of the pulled pattern-forming front $\ufr$, we analyze the dynamics of a perturbed solution to~\eqref{e: fhn comoving} of the form $\u(\xi, t) = \ufr(\xi) + \w(\xi, t)$, with sufficiently localized initial perturbation $\w(\xi, 0) = \w_0 (\xi)$. To enforce the exponential localization needed to stabilize the unstable state in the leading edge, we define the weighted perturbation $\vt(\xi, t) = \omega(\xi) \w(\xi, t)$, with induced equation
\begin{align}
\vt_t = \El_{\mathrm{fr}} \vt + \NT(\vt), \label{e:umodpert intro}
\end{align}
where the nonlinearity $\NT$ is given through
\begin{align*}
\NT(\vt) = \omega \widetilde{N}\left(\omega^{-1} \vt\right), \qquad \widetilde{N}(\w) = F(\u_{\mathrm{fr}}+\w) - F(\u_{\mathrm{fr}}) - F'(\u_{\mathrm{fr}}) \w.
\end{align*}
A standard approach to nonlinear stability is to attempt to close an iteration argument based on a variation-of-constants formula for the perturbation equation~\eqref{e:umodpert intro}.

From~\eqref{e: intro linear decay rates}, we see that the decay rates exhibited by the full semigroup $\re^{\El_{\mathrm{fr}} t}$ are diffusive, that is, they coincide with the decay rates of the heat semigroup $\smash{\re^{\partial_\xi^2 t}}$. On the other hand, one finds that the nonlinearity $\NT(\vt)$ in~\eqref{e:umodpert intro} contains quadratic terms in $\vt$ which in general does not allow one to close a nonlinear argument, as is well known for instance in the case of the nonlinear heat equation $u_t = u_{xx} + u^2$ where all nonnegative nontrivial initial data blow up in finite time~\cite{FUJI}. This difficulty does not arise in the stability of viscous shocks~\cite{howard3} since quadratic terms involve derivatives that induce stronger decay. It also does not arise when establishing nonlinear stability of pulled fronts with an exponentially stable state in the wake since the linear diffusive decay in the leading edge is stronger, with rate $t^{-\frac32}$, sufficient to close a nonlinear stability argument; see~\cite{AveryScheel}. The improved decay results from the fact that the front interface provides an absorption mechanism similar to an absorbing boundary condition for the heat equation on the half line.

The weak stability in the wake at rate $t^{-\frac12}$ does arise in the analysis of pulled fronts in the Ginzburg-Landau equation~\cite{BricmontKupiainenGLfronts, EckmannWayneGL, AveryScheelGL}. In this situation, there is however an astute change of coordinates that exploits the symmetry from the gauge invariance and exhibits gradients in the relevant parts of the nonlinearity. Explicitly, after switching to polar coordinates, the linearization diagonalizes with the outgoing diffusive mode only manifesting itself in the phase variable, and the associated nonlinearities at the relevant quadratic and cubic order carry derivatives with stronger associated decay. Such an explicit transformation does not seem to be available or convenient in our setting; see~\cite{scheelwu} for an analysis from this perspective of normal forms.

\paragraph{Forward and backward modulation of perturbations.} Motivated by the fact that the terms in~\eqref{e: intro semigroup decomp} with the slowest decay stem from the diffusively stable pattern in the wake of the front, we take inspiration from the nonlinear stability analysis of wave trains, see e.g.~\cite{JNRZ_13_1,JONZ,SSSU, JNRZInventiones} and references therein. It was observed in~\cite{DSSS} that the most critical diffusive dynamics of perturbed wave trains can be captured by a spatio-temporal phase modulation. Intuitively speaking, such a phase modulation accounts for the translational invariance of the pattern, which causes spectrum to touch the imaginary axis at the origin. Such a phase modulation has been incorporated in the nonlinear stability analysis of wave trains, see~\cite{JNRZ_13_1,JONZ}, by considering the \emph{inverse-modulated perturbation}
\begin{align}
\v(\xi,t) = \omega(\xi)\left(\u(\xi - \psi(\xi,t),t) - \u_{\mathrm{fr}}(\xi)\right). \label{e:modpert intro}
\end{align}
Here, the phase modulation function $\psi(t)$ is chosen a posteriori such that it accounts for the most critical terms in the Duhamel formulation for $\v(t)$. Our analysis shows that the ansatz~\eqref{e:modpert intro} is also successful in the case of pulled pattern-forming fronts to control the diffusive dynamics of the periodic pattern in the wake. Key to this argument is the spatial decomposition~\eqref{e: intro semigroup decomp} of the linearized dynamics, which shows that the slowest decaying terms are supported only in the wake of the invasion front. The resulting spatio-temporal modulation argument is then quite similar to that used for the nonlinear stability of pure wave trains~\cite{JNRZ_13_1, JONZ, SSSU, JNRZInventiones}. 

A natural alternative to modulating the perturbed solution $\u(t)$ is to modulate the pattern-forming front $\u_{\mathrm{fr}}$ itself which leads us to the \emph{forward-modulated perturbation}
\begin{align*}
\vf(\xi,t) = \omega(\xi)\left(\u(\xi,t) - \u_{\mathrm{fr}}(\xi + \psi(\xi,t))\right).
\end{align*}
It has recently been shown in~\cite{ZUM22} that the $W^{k,p}$-norms of the forward- and inverse-modulated perturbations are equivalent up to  controllable norms of $\psi_\xi$. Thus, one finds that the forward- and inverse-modulated perturbations exhibit the same decay rates and it suffices to close a nonlinear argument for one of them.
Nevertheless, it turns out to be advantageous to use {both} the forward- and inverse-modulated perturbation variables in the nonlinear argument. The reason is twofold. On the one hand, the equation for the inverse-modulated perturbation is quasilinear, introducing an apparent loss of derivatives in the nonlinear iteration scheme. On the other hand, the equation for the forward-modulated perturbation is semilinear, but contains terms which are not sufficiently rapidly decaying to close a nonlinear argument through iterative estimates on the associated Duhamel formulation, see~\cite[Section~5.2]{ZUM22} for more details. We emphasize that these observations do not rely on the fact that the underlying solution is a periodic wave train and hold for any traveling wave, thus, in particular, for the pattern-forming fronts under consideration. Hence, we follow the approach, as proposed in~\cite{ZUM22}, of establishing sharp bounds through iterative estimates on the Duhamel formula for the inverse-modulated perturbation $\v(t)$, while controlling regularity through nonlinear damping estimates on $\vf(t)$, thereby using the equivalence of $W^{k,p}$-norms of $\v(t)$ and $\vf(t)$ modulo controllable errors.

\paragraph{Nonlinear damping estimates.} We use energy estimates to effectively control $H^k$-norms of the perturbation $\vf(t)$ in terms of its $L^2$-norm and the $H^k$-norm of its initial condition for $k \geq 0$. Such ``nonlinear damping estimates'' generally rely on damped high-frequency spectrum of the linearization. In fact, the same linear terms in the FitzHugh-Nagumo system~\eqref{e: fhn comoving} that yield high-frequency resolvent estimates in Appendix~\ref{s: contour shifting} are crucial for obtaining a nonlinear damping estimate. More concisely, high-frequency resolvent bounds are equivalent to \emph{linear} damping estimates, which readily yield associated nonlinear damping estimates as long as solutions stay small; see for instance~\cite{RZ16} for further discussion of this principle in the setting of the Saint-Venant equations. Here, the nonlinear damping estimate for $\vf(t)$ is induced by the second derivative $\partial_{\xi\xi} u_1$ in the first component and the term $-\epsilon \gamma u_2$ in the second component of~\eqref{e: fhn comoving}. 
We remark that in reaction-diffusion systems of the form $\u_t = D\u_{xx} + F(\u)$, with positive definite, nondegenerate diffusion matrix $D$, damping comes from the highest-order derivative $D\u_{xx}$, allowing control even of quasilinear terms so that, in that setting, one can directly obtain a nonlinear damping estimate for the inverse-modulated perturbation and it is not necessary to consider the forward-modulated perturbation, see~\cite{JONZ}.

\subsection{Function spaces and notation}\label{s: function spaces}
We introduce function spaces used in the analysis and in the precise statement of our main result. 
\paragraph{Exponentially weighted spaces.}
Given $\eta_\pm \in \R$, we define a smooth positive weight function $\omega_{\eta_-, \eta_+}$ satisfying
\begin{align*}
    \omega_{\eta_-, \eta_+} (\xi) = \begin{cases}
    \re^{\eta_- \xi}, &\xi \leq -1, \\
    \re^{\eta_+ \xi}, &\xi \geq 1. 
    \end{cases}
\end{align*}
Given additionally non-negative integers $k$ and $m$, a field $\mathbb{F} \in \{\R, \C\}$, and a real number $1 \leq p \leq \infty$, we define a corresponding weighted Sobolev space $W^{k,p}_{\mathrm{exp}, \eta_-, \eta_+}(\R, \mathbb{F}^m)$ through the norm
\begin{align*}
    \| f \|_{W^{k,p}_{\mathrm{exp}, \eta_-, \eta_+} } = \| \omega_{\eta_-, \eta_+} f \|_{W^{k,p}},
\end{align*}
where $W^{k,p}(\R, \mathbb{F}^m)$ is the standard Sobolev space with differentiability index $k$ and integrability index $p$. When $p = 2$, we write $W^{k,2}_{\mathrm{exp}, \eta_-, \eta_+} (\R, \F^m) = H^k_{\mathrm{exp},\eta_-,\eta_+} (\R, \F^m)$. When domain and codomain can be readily inferred, we write $W^{k,p}_{\mathrm{exp},\eta_-, \eta_+} (\R, \F^m) = W^{k,p}_{\mathrm{exp},\eta_-, \eta_+}$. Finally, we write $W^{0, p}_{\mathrm{exp}, \eta_-, \eta_+} = L^p_{\mathrm{exp}, \eta_-, \eta_+}$.

Given $\eta \in \R$ and $1 \leq p \leq \infty$, we define $X^p_{\eta} = L^p_{\mathrm{exp}, -\eta, \eta} (\R, \C^2)$ and let 
\begin{align*}
    Y^p_\eta = W^{2,p}_{\mathrm{exp}, -\eta, \eta} (\R, \C) \times W^{1,p}_{\mathrm{exp}, -\eta, \eta} (\R, \C) 
\end{align*}
denote the domain of the linearization $\lfr$, given by~\eqref{e: lfr def}, on $X^p_\eta$, so that $\lfr : Y^p_\eta \subset X^p_\eta \to X^p_\eta$ is a closed operator. 

\paragraph{Algebraically weighted spaces.} 
To exert finer control over the spatial localization of perturbations, we also introduce algebraically weighted spaces. Given $r_\pm \in \R$, we define a smooth positive weight function
\begin{align*}
    \rho_{r_-, r_+}(\xi) = \begin{cases}
    |\xi|^{r_-}, & \xi \leq -1, \\
    |\xi|^{r_+}, &\xi \geq 1. 
    \end{cases}
\end{align*}
As in the definition of exponentially weighted spaces, given additionally non-negative integers $k$ and $m$, a field $\F \in \{ \R, \C \}$, and a real number $1 \leq p \leq \infty$, we define a corresponding algebraically weighted Sobolev space $W^{k,p}_{r_-, r_+}(\R, \F^m)$ through the norm
\begin{align*}
    \| f \|_{W^{k,p}_{r_-, r_+}} = \| \rho_{r_-, r_+} f \|_{W^{k,p}}.
\end{align*}
We again suppress the notation for the domain and codomain when clear in context, and  we write $W^{0,p}_{r_-, r_+} = L^p_{r_-, r_+}$. 

\paragraph{Additional function spaces.}
An additional technical challenge we must address is that the FitzHugh-Nagumo system is not fully parabolic, so the linearization $\lfr$ is not a sectorial operator. Spectral mapping in such contexts is not automatic. We will first prove spectral mapping properties for initial data $\u_0 \in C^\infty_c (\R)$, and then extend the semigroup to larger function spaces. To carry out this extension, we consider spaces in which such test functions are dense. Therefore, given non-negative integers $k$ and $m$ and a field $\F \in \{ \R, \C\}$, we let $C^k_0(\R, \F^m)$ denote the space of bounded continuous functions $\u: \R \to \F^m$ satisfying
\begin{align*}
    \lim_{\xi \to \pm \infty} |\partial_\xi^\ell \u(\xi)| = 0
\end{align*}
for $\ell = 0, 1, ..., k$. Note that such functions (and their $\ell$-th derivatives) are automatically uniformly continuous. We equip this space with the $W^{k, \infty}$ norm and recall the well-known fact that test functions are dense in this space. 

We will use a mix of $C_0$ and $L^2$-based spaces in our nonlinear argument, and therefore define
the spaces
\begin{align*}
Z_k(\R, \F^m) := H^k(\R, \F^m) \cap C^k_0 (\R, \F^m),
\end{align*}
endowed with norm $\| \cdot \|_{H^k} + \| \cdot\|_{W^{k,\infty}}$, for any non-negative integers $k$ and $m$ and  $\F \in \{\R, \C\}$. 

\paragraph{Spaces involving time.} Given an interval $I \subseteq [0, \infty)$ and a Banach space $X$, we let $C^k(I, X)$ denote the space of $k$-continuously differentiable functions on $I$ taking values in $X$.

\paragraph{Additional notation.} We will sometimes abuse notation by writing a function $\u(\xi, t)$ of space and time as $\u(t)$, viewing it as a function of time taking values in a particular Banach space. Similarly, we may write a function $\u(\xi; \lambda)$ of a spectral parameter $\lambda$ as $\u(\lambda)$. 
We also let $(\chi_-, \chi_c, \chi_+)$ be a partition of unity on $\R$ such that 
\begin{align*}
	\chi_+(\xi) = \begin{cases}
		1, & \xi \geq 1 \\
		0, & \xi \leq 0, 
	\end{cases}
\end{align*}
and $\chi_-(\xi) = \chi_+(-\xi)$. 

\paragraph{Suppression of constants.} Let $S$ be a set, and let $A, B \colon S \to \R$. The expression ``$A(x) \lesssim B(x)$ for $x \in S$'', means that there exists a constant $C>0$, independent of $x$, such that $A(x) \leq CB(x)$ holds for all $x \in S$.

\subsection{Spectral assumptions}

We formulate spectral stability hypotheses that precisely describe pulled uniformly translating pattern-forming fronts. We expect these assumptions to hold in open classes of systems, generically for pulled uniformly translating pattern-forming fronts, and we summarize results on their validity for~\eqref{e: fhn} in Remark~\ref{rmk: spectral assumptions}. 

We decompose the spectrum $\Sigma(\mcl)$ of an operator $\mcl$ as follows. We say that $\lambda \in \C$ is in the essential spectrum $\Sigma_\mathrm{ess}(\mcl)$ if either $\mcl - \lambda$ is not Fredholm, or it is Fredholm with nonzero index. We say $\lambda \in \C$ is in the point spectrum $\Sigma_\mathrm{pt}(\mcl)$ if $\mcl - \lambda$ is Fredholm with index zero, but not invertible. The essential spectrum is invariant under compact perturbations. Thus, for the operator $\El_{\rm fr}$, the essential spectrum may be determined from the linearizations about the asymptotic end states at $\pm \infty$, which are
\begin{align}
    \mcl_+ = D(\partial_\xi - \etalin)^2 + \clin (\partial_\xi - \etalin) + F'(0), \label{e: L plus def}
\end{align}
at $\xi = +\infty$, and
\begin{align}  \label{e: L wt def}
    \lwt = D\partial_\xi^2 + \clin \partial_\xi + F'(\uwt(\xi)),
\end{align}
at $\xi = -\infty$. The spectrum of $\mcl_+$ is marginally stable, a fact that effectively characterizes the linear spreading speed; see Lemma~\ref{l: linear spreading speed}. We now state assumptions on the essential spectrum of $\lwt$ and on the point spectrum of $\lfr$. 

\paragraph{Essential spectrum of the wave train.} The limiting operator $\lwt$ in the wake has coefficients which are periodic in $\xi$ with period $L$. By Floquet-Bloch theory (see for instance~\cite{ReedSimon}), $\lwt$ is conjugate to a family of Bloch operators 
\begin{align*}
    \hat{\mcl}_\mathrm{wt} (\nu) : H^2 (\R / L \Z, \C) \times H^1 (\R / L \Z, \C) \subset L^2(\R/L\Z, \C^2) \to L^2 (\R / L \Z, \C^2)
\end{align*}
given by
\begin{align*}
    \hat{\mcl}_\mathrm{wt} (\nu) = D (\partial_\xi + \nu)^2 + \clin (\partial_\xi + \nu) + F'(\uwt(\xi))
\end{align*}
for $\nu$ purely imaginary, $\nu \in \left[-\ri\kwt, \ri \kwt \right)$, with $\kwt = \frac{\pi}{L}$. In particular, we have
\begin{align*}
    \Sigma \left( \lwt \right) = \bigcup_{\nu \in [-\ri\kwt, \ri \kwt)} \Sigma(\hat{\mcl}_\mathrm{wt} (\nu)). 
\end{align*}
We assume that $\lwt$ satisfies the following standard diffusive spectral stability assumptions for periodic wave trains~\cite{SchneiderAbstract, JONZ, SSSU}. 
\begin{hyp}[Diffusive spectral stability of the wave train]\label{hyp: wt spectral}
We assume that the following spectral stability conditions hold for the operator $\lwt : H^2(\R, \C) \times H^1(\R, \C) \subset L^2(\R,\C^2) \to L^2 (\R, \C^2)$ given by~\eqref{e: L wt def}:
\begin{enumerate}
    \item We have $\Sigma(\lwt) \subset \{ \lambda \in \C : \Re \lambda < 0 \} \cup \{ 0 \}$;
    \item There exists $\theta > 0$ such that for any $\kappa \in [-\kwt, \kwt)$, we have $\Re \Sigma(\hat{\mcl}_\mathrm{wt} (\ri \kappa)) \leq - \theta \kappa^2$;
    \item $\lambda = 0$ is an algebraically simple eigenvalue of $\hat{\mcl}_\mathrm{wt} (0)$. 
\end{enumerate}
\end{hyp}
We note that $0$ is always in the spectrum of $\hat{\El}_{\rm wt}(0)$ with associated eigenfunction $\partial_\xi \uwt$ by translation invariance of the wave train. The spectral information encoded in Hypothesis~\ref{hyp: wt spectral} is sufficient to establish nonlinear stability of pure wave trains~\cite{JONZ, SSSU}, for strictly parabolic systems. For degenerate systems such as~\eqref{e: fhn}, one has to additionally control high frequency modes to obtain a spectral mapping estimate; see Theorem~\ref{t: wave train stability}. Since the Bloch operators $\hat{\mcl}_{\mathrm{wt}}(\nu)$ depend analytically on the Floquet exponent $\nu$, Hypothesis~\ref{hyp: wt spectral}  yields open neighborhoods $U, V \subset \C$ of the origin and an analytic function $\lambdawt \colon U \to \C$ with the expansion
\begin{align} \label{e: wt spectral curve}
    \lambdawt(\nu) = - c_g \nu + \Deff^\mathrm{wt} \nu^2 + \mathrm{O}(\nu^3),
\end{align}
with $c_g \in \R$ and $\Deff^\mathrm{wt} > 0$ such that the spectrum of $\hat{\mcl}_{\rm wt}(\nu)$ in $V$ is given by the simple eigenvalue $\lambdawt(\nu)$ for $\nu \in U$, see~\cite{DSSS}. Consequently, the spectrum of $\lwt$ in $V$ is given by the curve $\{\lambdawt(\ri\kappa) : \ri\kappa \in U\} \cap V$ touching the origin in a quadratic tangency. The equation $\lambdawt(\nu) = 0$ is called the \emph{linear dispersion relation} and the coefficient $c_g$ is the group velocity of the wave train (in the frame moving with the speed $\clin$). With a standard Lyapunov-Schmidt reduction procedure~\cite{DSSS} one obtains
\begin{align} \label{e: group velocity}
c_g &= 2\left\langle \u_{\mathrm{ad}}, D \partial_{\xi\xi} \uwt\right\rangle_{L^2(\R/L\Z,\C^2)} + \clin = 2\left\langle u_{\mathrm{ad},1}, \partial_{\xi\xi} u_{\mathrm{wt},1}\right\rangle_{L^2(\R/L\Z,\C)} + \clin, \end{align}
where $\u_{\mathrm{ad}}$ spans the kernel of the adjoint operator $\hat{\mcl}_{\rm wt}(0)^*$ normalized such that $\langle\u_\mathrm{ad},\partial_\xi \uwt\rangle_{L^2(\R/L\Z,\C^2)} = 1$. 

For pattern-forming fronts, it is important to characterize the group velocity of the wave train in the wake, relative to the front speed, so that the front acts as a \emph{source} of patterns~\cite{SandstedeScheelDefects}. Thinking of the invasion process as a source of patterns, the group velocity of the wave train should be negative in the frame moving with the spreading speed $\clin$. We make this precise as follows.

\begin{hyp}[Outgoing group velocity for the wave train]\label{hyp: wake spectral curve expansion}
Assuming, in accordance with Hypothesis~\ref{hyp: wt spectral}, that $0$ is an algebraically simple eigenvalue of $\hat{\mcl}_{\rm wt}(0)$, we require  that the group velocity $c_g$, given by~\eqref{e: group velocity}, is negative. 
\end{hyp}

Hypothesis~\ref{hyp: wake spectral curve expansion} implies that the critical diffusive mode associated to the dynamics near the wave train is an outgoing diffusive mode, in the terminology of Section~\ref{s: intro linear}. 

\paragraph{Point spectrum.} Hypothesis~\ref{hyp: wt spectral}, together with the calculation of the linear spreading speed in Lemma~\ref{l: linear spreading speed} below, imply that the essential spectrum of $\lfr$ is marginally stable; see Proposition~\ref{p: fr essential spectrum}. We exclude unstable point spectrum as follows. 
\begin{hyp}[No unstable point spectrum]\label{hyp: point spectrum}
    We assume that the operator $\lfr \colon H^2(\R, \C) \times H^1(\R, \C) \subset L^2(\R, \C^2) \to L^2(\R, \C^2)$, given by~\eqref{e: lfr def} has no eigenvalues $\lambda$ with $\Re \lambda \geq 0$. Moreover, we assume that there is no nontrivial bounded solution $\u$ to the equation $\lfr \u = 0$. 
\end{hyp}
A nontrivial bounded solution to $\lfr \u = 0$ would indicate that the fronts are not strictly pulled, but rather at the boundary between pushed and pulled front propagation~\cite{pp}, and hence we exclude this possibility here. Such a bounded solution is sometimes referred to as a resonance and would correspond to a  zero of the associated Evans function, appropriately extended into the essential spectrum via the gap lemma~\cite{GardnerZumbrun, KapitulaSandstede}. The improved $t^{-3/2}$ decay rate in the leading edge, when compared with diffusive decay in one dimension,  is associated to this lack of a resonance at $\lambda = 0$~\cite{AveryScheel}. 

\begin{remark}\label{rmk: spectral assumptions}
    Hypothesis~\ref{hyp: wake spectral curve expansion}, that is, the fact that  wave trains considered in Theorem~\ref{t: existence} possess outgoing group velocity in the co-moving frame,  was established in~\cite{CarterScheel}. We show in~\cite{spectral} that Hypothesis~\ref{hyp: wt spectral} holds for the wave trains in Theorem~\ref{t: existence} within the parameter range $(3-\sqrt{5})/6 < a < \frac12$. In this regime, the wave trains are of `phase wave' type~\cite{CarterScheel}, i.e.,~they possess long wavelength while their amplitude is roughly independent of $a$, as opposed to the somewhat more narrowly spaced ‘trigger’ wave trains which are selected for $(3-\sqrt{6})/6 < a < (3-\sqrt{5})/6$. A spectral stability analysis of trigger wave trains in the FitzHugh-Nagumo system with $\gamma = 0$ can be found in~\cite{eszter1999evans}. The vanishing of linear damping in the $v$-equation, $\gamma = 0$, leads to marginally stable high-frequency spectrum, that is, a spectral branch converging to $\pm\ri\infty$, which we cannot allow in our diffusive stability analysis. We do expect however that the analysis in~\cite{eszter1999evans} can be extended to positive $\gamma$, resulting in a proof that the wave trains in Theorem~\ref{t: existence} also obey Hypothesis~\ref{hyp: wt spectral} for $(3-\sqrt{6})/6 < a < (3-\sqrt{5})/6$. Furthermore, we show in~\cite{SpectralFronts} that Hypothesis~\ref{hyp: point spectrum} is satisfied for all of the fronts described in Theorem~\ref{t: existence} in the phase wave regime $(3-\sqrt{5})/6 < a < \frac12$. Finally, we mention that in the regime of larger $\gamma$, dynamics are bistable, with wave trains emerging from heteroclinic loops.  Recent work \cite{li2025nonlinearstabilitylargeperiodtraveling} establishes stability of wave trains also in this regime. We however do not expect these wave trains to be selected by invasion fronts.
\end{remark}

\subsection{Main result --- precise statement}\label{s: main result}

Having introduced the necessary notation, we are ready to formulate the precise statement of our main result.

\begin{thmlocal}[Nonlinear stability --- precise formulation]\label{t: main detailed}
Assume $\ufr$ is a pulled front solution from Theorem~\ref{t: existence} which obeys Hypotheses~\ref{hyp: wt spectral} through~\ref{hyp: point spectrum}. Then, there exist constants $M, \delta > 0$ such that if we consider~\eqref{e: fhn comoving} with initial data $\u_0 = \ufr + \w_0$, where $\w_0 \in L^1_{0,1} (\R, \R^2) \cap (H^3(\R, \R) \times H^2 (\R, \R))$ satisfies
\begin{align}
    E_0 := \| \omega \w_0\|_{L^1_{0,1}} + \|\omega \w_0\|_{H^3 \times H^2} < \delta, \label{e: theorem 1 E0}
\end{align}
then there exist functions $\w: [0, \infty) \times \R \to \R^2$ and $\psi : [0, \infty) \times \R \to \R$ satisfying
\begin{align}
    \omega \w &\in C\left( [0, \infty), H^3 (\R, \R) \times H^2 (\R, \R) \right) \cap C^1 \left([0, \infty), H^1(\R, \R^2) \right), \qquad \psi \in C^\infty \left( [0, \infty) \times \R, \R \right)
\label{e:regularity}
\end{align}
and 
\begin{align*}
\partial_t^k \psi (\cdot, t) \in H^{\ell} (\R, \R), \qquad t > 0, \, k,\ell \in \N_0,
\end{align*}
with initial conditions $\w(0) = \w_0$ and $\psi(0) = 0$, such that $\u(t) = \ufr + \w(t)$ is the unique global classical solution of~\eqref{e: fhn comoving} with initial data $\u_0$, enjoying the following estimates
\begin{align}
\begin{split}
\left\|\omega\left(\u(t) - \ufr\right)\right\|_{H^3 \times H^2}+ \left\|\psi(t)\right\|_{L^2} &\leq \frac{ME_0}{(1+t)^{\frac14}}, \qquad \left\|\omega\left(\u(t) - \ufr\right)\right\|_{L^\infty}+ \left\|\psi(t)\right\|_{L^\infty} \leq \frac{ME_0}{(1+t)^{\frac12}},
\end{split} \label{e:MTmodder}
\end{align}
and
\begin{align}
\begin{split}
\left\|\omega\big(\u(t) - \ufr\left(\cdot + \psi(\cdot,t)\right)\!\big)\right\|_{H^3 \times H^2}+ \left\|\left(\psi_\xi(t),\partial_t \psi(t)\right)\right\|_{H^3 \times H^2} &\leq \frac{ME_0}{(1+t)^{\frac34}}, \\
\left\|\omega\big(\u(t) - \ufr\left(\cdot + \psi(\cdot,t)\right)\!\big)\right\|_{L^\infty}+ \left\|\left(\psi_\xi(t),\partial_t \psi(t)\right)\right\|_{L^\infty} &\leq \frac{ME_0}{1+t},
\end{split} \label{e:MTmodder2}
\end{align}
for all $t\geq 0$. Furthermore, the following refined estimate holds on the leading edge:
\begin{align}
    \sup_{\xi \geq 1} \left| \frac{\omega(\xi)}{1 + \xi} \left(\u(\xi, t) - \ufr(\xi) \right) \right| \leq \frac{M E_0}{(1+t)^{\frac32}} \label{e: improved leading edge decay}
\end{align}
for all $t \geq 0$. Finally, the phase modulation function is supported on the wake: we have $\mathrm{supp}(\psi(\cdot,t)) \subset (-\infty, 0]$ for all $t \geq 0$. 
\end{thmlocal}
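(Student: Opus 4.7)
My plan is to simultaneously propagate sharp diffusive decay of the inverse-modulated perturbation
\begin{align*}
\v(\xi,t) = \omega(\xi)\bigl(\u(\xi - \psi(\xi,t),t) - \ufr(\xi)\bigr)
\end{align*}
through a Duhamel iteration, while controlling its high-regularity norms by a nonlinear damping estimate on the forward-modulated perturbation
\begin{align*}
\vf(\xi,t) = \omega(\xi)\bigl(\u(\xi,t) - \ufr(\xi + \psi(\xi,t))\bigr).
\end{align*}
The $\v$-equation is quasilinear, so a direct Duhamel closure would lose derivatives; the $\vf$-equation is semilinear but contains slowly-decaying terms that prevent direct iterative estimates. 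Following the program of \cite{ZUM22}, I would exploit the equivalence (modulo $\psi_\xi$-controllable errors) of the $W^{k,p}$ norms of $\v$ and $\vf$ to get the best of both: sharp decay from $\v$, and $H^3 \times H^2$ regularity control from $\vf$.

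\textbf{Absorbing the critical mode via a modulation.} After deriving the $\v$-equation as $\v_t = \lfr \v + \mathcal{R}(\v, \psi)$ with $\mathcal{R}$ gathering quasilinear and quadratic terms (built from $\NT$, $\psi_\xi$, and $\psi_t$), I would invoke the semigroup decomposition
\begin{align*}
\re^{\lfr t} \g = \ufr'(\cdot)\,[S_p(t)\g] + [S_c(t)\g] + [S_e(t)\g],
\end{align*}
with linear decay rates $(1+t)^{-1/2}$, $(1+t)^{-1}$, and $\re^{-\mu t}$ respectively; crucially, $S_p(t)\g$ is supported in $\xi \leq 0$. The slowest-decaying term $\ufr'(\xi)[S_p(t)\g]$ encodes the neutral translation mode of the pattern and cannot be controlled by a quadratic iteration at the rate $(1+t)^{-1/2}$. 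I would therefore define $\psi$ implicitly by
\begin{align*}
\psi(\xi,t) := -[S_p(t)\w_0](\xi) - \int_0^t [S_p(t-s)\mathcal{R}(\v(s),\psi(s))](\xi)\,\de s,
\end{align*}
so that this critical term is absorbed exactly into the phase shift of $\ufr$ in the reconstruction of $\u$, leaving a reduced Duhamel formula for $\v$ involving only $S_c$ and $S_e$. The support of $\psi(\cdot,t)$ in $\xi \leq 0$ is then automatic. I would then define a template function $\Phi(t)$ tracking the expected rates
\begin{align*}
(1+t)^{1/2}\|\v\|_{L^\infty},\quad (1+t)^{1/4}\|\v\|_{H^3\times H^2},\quad (1+t)^{1/2}\|\psi\|_{L^\infty},\quad (1+t)\|(\psi_\xi,\psi_t)\|_{L^\infty},
\end{align*}
together with the enhanced leading-edge quantity
\begin{align*}
\sup_{\xi\geq 1}\,\tfrac{(1+t)^{3/2}\,\omega(\xi)}{1+\xi}|\w(\xi,t)|,
\end{align*}
and show $\Phi(t) \lesssim E_0 + \Phi(t)^2$ by combining the linear decay estimates on $S_c$, $S_e$, $S_p$ (including the refined leading-edge bound~\eqref{e: improved leading edge decay}) with the quadratic, localized structure of $\NT$ and the fact that the most dangerous nonlinear terms involving $\v$ in the wake are precisely those cancelled by the modulation.

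\textbf{Nonlinear damping and the main obstacle.} The Duhamel estimate on $\v$ alone cannot control $\|\v\|_{H^3\times H^2}$ because of quasilinear top-order terms, so in parallel I would perform an energy estimate on $\vf$. The high-frequency resolvent bounds from Appendix~\ref{s: contour shifting}, originating in the $\partial_{\xi\xi} u$ and $-\eps\gamma w$ terms of~\eqref{e: fhn comoving}, translate to a nonlinear damping estimate
\begin{align*}
\tfrac{\de}{\de t}\|\vf(t)\|_{H^3\times H^2}^2 \leq -\theta\|\vf(t)\|_{H^3\times H^2}^2 + C\bigl(\|\vf(t)\|_{L^2\times L^2}^2 + (\text{terms controlled by }\Phi(t))\bigr),
\end{align*}
valid as long as $\|\vf\|_{W^{1,\infty}}$ remains small. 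Integrating this bound and transferring via the norm equivalence feeds back into $\Phi$, closing a continuity argument for $E_0$ small, whence global existence and the stated decay follow. The main obstacle I anticipate is controlling the $S_c$-part of the semigroup: it encodes the interaction between the branched diffusive mode from the leading edge and the outgoing diffusive mode from the wake, and its $(1+t)^{-1}$ decay is barely sufficient when convolved against quadratic nonlinearities. Extracting the refined leading-edge rate and compatible algebraically-weighted bounds requires the full strength of the far-field/core resolvent decomposition, and the support property $\mathrm{supp}(\psi(\cdot,t))\subset(-\infty,0]$ is essential because it decouples the pattern modulation in the wake from the branched-mode dynamics in the leading edge.
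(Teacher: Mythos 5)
Your proposed scheme mirrors the paper's at the structural level: the inverse-modulated perturbation $\v$ for a Duhamel iteration, the forward-modulated perturbation $\vf$ for a nonlinear damping (energy) estimate, the norm equivalence from~\cite{ZUM22}, and the implicit choice of $\psi$ to absorb the slowest-decaying semigroup contribution $\ufr'[s_p(t)\,\cdot\,]$. That matches the paper's Section~\ref{sec:itscheme}--\ref{s: nonlinear argument}.

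There is, however, a genuine gap in your template function, and it is not merely cosmetic. You propose to track
$(1+t)^{1/2}\|\v\|_{L^\infty}$ and $(1+t)^{1/4}\|\v\|_{H^3\times H^2}$,
but once $\psi$ has absorbed the $s_p$-part, the reduced Duhamel formula for the \emph{inverse-modulated} $\v$ only sees $S_c$ and $S_e$, which decay at rates $(1+t)^{-1}$ (in $L^\infty$) and $(1+t)^{-3/4}$ (in $L^2$). The correct rates to put in the template are therefore $(1+t)\|\v\|_{L^\infty}$ and $(1+t)^{3/4}\|\v\|_{L^2}$ (with $H^3\times H^2$ control coming separately through damping at rate $(1+t)^{3/4}$). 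The rates you wrote down are those of the \emph{unmodulated} perturbation $\vt=\omega(\u-\ufr)$, and they are not strong enough to close: at $\|\v(s)\|_{L^\infty}\sim s^{-1/2}$ and $\|\v(s)\|_{L^2}\sim s^{-1/4}$, the nonlinearity satisfies $\|\NT(s)\|_{L^1_{0,1}\cap Z_0}\lesssim\|\v(s)\|_{Z_0}^2\sim s^{-1/2}$, and $\int_0^t(1+t-s)^{-1}(1+s)^{-1/2}\,\de s\sim t^{-1/2}\log t$ gives no contraction. The whole purpose of the modulation is to upgrade $\v$ to the $t^{-1}$/$t^{-3/4}$ rates, which makes $\|\NT(s)\|\sim s^{-3/2}$ integrable against $S_c$ (and against $S_p$ to reproduce $\psi$'s own decay). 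In addition, the sign in your implicit definition of $\psi$ is backwards relative to the Duhamel identity $\v+\omega\ufr'\psi=\re^{\lfr t}\v_0+\int_0^t\re^{\lfr(t-s)}\NT\,\de s+\psi_\xi\v$ (it should be $\psi(t)=s_p(t)\v_0+\int_0^t s_p(t-s)\NT(\cdot)\,\de s$, with $\v_0=\omega\w_0$, not $\w_0$). Finally, your damping estimate should be conditioned on smallness of $\|\mathring{v}_1\|_{W^{1,\infty}}+\|\psi_\xi\|_{W^{1,\infty}}$, not the full $\|\vf\|_{W^{1,\infty}}$, since the degenerate diffusion only provides damping through the first component.
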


Rephrasing, Theorem~\ref{t: main detailed} establishes the following:
\begin{enumerate}
    \item The weighted, unmodulated perturbation to the front, $\omega (\u(t) - \ufr)$, decays in $L^\infty(\R)$ with diffusive rate $(1+t)^{-1/2}$, provided the initial perturbation $\w_0$ is sufficiently small and localized.
    \item The leading-order behavior of the perturbation is described by a spatio-temporal phase modulation $\psi(\xi,t)$ of the pattern-forming front $\ufr$, where $\psi(\cdot,t)$ is supported on $(-\infty,0]$. That is, we only need to modulate the phase in the wake. 
    \item In the leading edge, $\xi \geq 1$, the perturbation decays with an improved rate $(1+t)^{-3/2}$ characteristic of pulled invasion fronts~\cite{AveryScheel, AveryScheelSelection, AverySelectionRD}. 
\end{enumerate}

\paragraph{Choice of function spaces.} We briefly explain the choices of function spaces we make in Theorem~\ref{t: main detailed}. The additional algebraic localization of the initial data enforced by requiring $\omega \w_0 \in L^1_{0,1}$ is needed to obtain optimal linear decay rates for the pulled front dynamics in the leading edge~\cite{AveryScheel, AveryScheelGL, AverySelectionRD}. A nonlinear stability argument could then be closed measuring the solution only in $L^2$ based spaces, but we choose to additionally measure in $L^\infty$ in order to capture the sharp pointwise decay rate of the perturbation --- notice that the $L^\infty$ decay rates in Theorem~\ref{t: main detailed} are faster than the $L^2$ decay rates. On the other hand, even if we are primarily interested in $L^\infty$ decay rates, we control regularity in the resulting quasilinear iteration scheme through energy estimates, for which we must additionally measure the solution in $L^2$-based spaces. From the structure of the equation, it is natural to expect to require $H^2 \times H^1$ regularity of the initial data, and indeed if we were only interested in $L^2$ decay rates we could relax the regularity requirement in~\eqref{e: theorem 1 E0} to $\omega \w_0 \in H^2 \times H^1$. However, in also controlling $L^\infty$ decay rates, we lose a degree of regularity through the embedding $H^1 \hookrightarrow L^\infty$, explaining the $H^3 \times H^2$ requirement in~\eqref{e: theorem 1 E0}. 

\subsection{Nonlinear stability of pure wave trains}
The periodic wave trains constructed in~\cite{CarterScheel} and references therein are interesting objects of their own right. We establish their spectral stability in the companion paper~\cite{spectral}, but their nonlinear stability is not quite included in general results on stability of wave trains in reaction-diffusion systems under spectral assumptions~\cite{JONZ, JNRZ_13_1, SSSU}, since the degenerate diffusion matrix in the FitzHugh-Nagumo system~\eqref{e: fhn} introduces additional technical difficulties in establishing spectral mapping properties of the linear semigroup and controlling regularity in the nonlinear argument. One could attack these issues by proceeding as in~\cite{HJPR21, RZ16}, that is, using a Bloch wave decomposition and the Gearhart-Pr{\"u}ss theorem to obtain high frequency resolvent bounds and associated linear decay estimates on the semigroup. These techniques are not available for the pattern-forming fronts, which are not purely periodic in space, preventing the use of the Bloch wave decomposition. A simplified version of our proof of Theorem~\ref{t: main detailed}, simply ignoring parts which have to do with dynamics in the leading edge, then gives an alternative proof of nonlinear stability of the wave trains, formulated as follows.

\begin{thmlocal}[Nonlinear stability of wave trains]\label{t: wave train stability}
Let $\uwt$ be a stationary periodic solution to~\eqref{e: fhn comoving} which is spectrally stable in the sense of Hypothesis~\ref{hyp: wt spectral}. Then, there exist constants $M, \delta > 0$ such that if we consider~\eqref{e: fhn comoving} with initial data $\u_0 = \uwt + \w_0$, where $\w_0 \in L^1(\R, \R^2) \cap (H^3 (\R, \R) \times H^2(\R, \R))$ satisfies
\begin{align*}
    E_0 := \| \w_0 \|_{L^1} + \| \w_0 \|_{H^3 \times H^2} < \delta, 
\end{align*}
then there exist functions $\w : [0, \infty) \times \R \to \R^2$ and $\psi: [0, \infty) \times \R \to \R$ satisfying
\begin{align*}
    \w &\in C \left( [0, \infty), H^3 (\R, \R) \times H^2 (\R, \R) \right) \cap C^1 \left( [0, \infty), H^1 (\R, \R^2) \right), \qquad \psi \in C^k \left([0, \infty), H^\ell(\R, \R) \right)
\end{align*}
for any $k, \ell \in \N$, with $\w(0) = \w_0$ and $\psi(0) = 0$ such that $\u(t) = \uwt + \w(t)$ is the unique global solution of~\eqref{e: fhn comoving} with initial data $\u_0$, and we have the following estimates
\begin{align*}
\begin{split}
\left\|\u(t) - \uwt \right\|_{H^3 \times H^2}+ \left\|\psi(t)\right\|_{L^2} &\leq \frac{ME_0}{(1+t)^{\frac14}}, \qquad \left\|\u(t) - \uwt \right\|_{L^\infty}+ \left\|\psi(t)\right\|_{L^\infty} \leq \frac{ME_0}{(1+t)^{\frac12}},
\end{split} 
\end{align*}
and
\begin{align*}
\begin{split}
\left\|\u(t) - \uwt\left(\cdot + \psi(\cdot,t)\right)\right\|_{H^3 \times H^2}+ \left\|\left(\psi_\xi(t),\partial_t \psi(t)\right)\right\|_{H^3 \times H^2} &\leq \frac{ME_0}{(1+t)^{\frac34}}, \\
\left\|\u(t) - \uwt\left(\cdot + \psi(\cdot,t)\right)\right\|_{L^\infty}+ \left\|\left(\psi_\xi(t),\partial_t \psi(t)\right)\right\|_{L^\infty} &\leq \frac{ME_0}{1+t},
\end{split} 
\end{align*}
for all $t\geq 0$.
\end{thmlocal}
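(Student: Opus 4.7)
The plan is to obtain Theorem~\ref{t: wave train stability} as a substantial simplification of the argument used for Theorem~\ref{t: main detailed}: all terms tied to the branched diffusive mode in the leading edge disappear, and because $\uwt$ is purely $L$-periodic one may replace the far-field/core resolvent decomposition by the direct Floquet--Bloch decomposition of $\lwt$. Following the combined forward/inverse-modulation strategy of~\cite{JONZ, JNRZ_13_1, ZUM22}, I will track the solution through two variables: the inverse-modulated perturbation $\v(\xi,t) = \u(\xi - \psi(\xi,t),t) - \uwt(\xi)$, whose quasilinear evolution yields a Duhamel formula well-suited for iterative diffusive estimates once $\psi$ is chosen a posteriori to cancel the projection onto the critical Bloch eigenspace, and the forward-modulated perturbation $\vf(\xi,t) = \u(\xi,t) - \uwt(\xi + \psi(\xi,t))$, whose semilinear structure is convenient for $H^3 \times H^2$ energy estimates. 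Since both variables have $W^{k,p}$-equivalent norms up to controllable $\psi_\xi$ terms, it suffices to iterate low regularity on $\v$ and transfer high regularity through a damping estimate on $\vf$.

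\textbf{Linear analysis.} Conjugating $\lwt$ to the analytic family $\hat{\mcl}_{\mathrm{wt}}(\nu)$ on $L^2(\R/L\Z,\C^2)$, Hypothesis~\ref{hyp: wt spectral} yields on a neighborhood $U$ of $\nu=0$ a simple eigenvalue $\lambdawt(\nu) = -c_g\nu + \Deff^{\mathrm{wt}}\nu^2 + \mathrm{O}(\nu^3)$ with smooth rank-one spectral projection $\hat{P}_c(\nu)$ whose leading eigenfunction at $\nu = 0$ is $\partial_\xi \uwt$. Splitting the Bloch integral into low and high frequencies produces a decomposition
\begin{align*}
\bigl[\re^{\lwt t}\g\bigr](\xi) = \partial_\xi \uwt(\xi)\,[S_p(t)\g](\xi) + [S_c(t)\g](\xi) + [S_e(t)\g](\xi),
\end{align*}
with $\|S_p(t)\g\|_{L^\infty} \lesssim (1+t)^{-1/2}\|\g\|_{L^1}$, $\|S_c(t)\g\|_{L^\infty} \lesssim (1+t)^{-1}\|\g\|_{L^1}$ from Gaussian bounds on the low-frequency part (using $\Deff^{\mathrm{wt}}>0$), and $\|S_e(t)\g\|_{H^k} \lesssim \re^{-\mu t}\|\g\|_{H^k}$ for some $\mu > 0$ from the high-frequency piece. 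Obtaining this last estimate is the one place where the degeneracy of $D$ matters: since $\lwt$ is not sectorial, classical analytic semigroup theory does not apply, so I would establish uniform resolvent bounds $\|(\hat{\mcl}_{\mathrm{wt}}(\nu) - \lambda)^{-1}\|_{L^2 \to L^2} \lesssim 1/(1+|\lambda|)$ on a half-plane $\{\Re\lambda \geq -\mu\}$ by exploiting the parabolic smoothing in the $u$-component together with the $-\eps\gamma w$ damping in the $w$-component, and conclude via Gearhart--Pr\"uss in the spirit of~\cite{HJPR21, RZ16}. This is the Bloch-side analogue of the high-frequency appendix used for $\lfr$ in the main theorem.

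\textbf{Nonlinear damping and iteration.} The usual quadratic obstruction to closing a pure diffusive iteration is removed by choosing $\psi$ so that the Duhamel integrand is projected off the critical eigenspace, which effectively replaces a leading quadratic term by a spatial derivative acting on a quadratic term and thus makes the associated time integrals integrable. Introducing the template
\begin{align*}
\eta(t) := \sup_{0\leq s\leq t}\Bigl[(1+s)^{1/4}\|\v(s)\|_{L^2} + (1+s)^{1/2}\|\v(s)\|_{L^\infty} + (1+s)^{1/4}\|\psi(s)\|_{L^2} + (1+s)^{1/2}\|\psi(s)\|_{L^\infty} + (1+s)^{3/4}\|(\psi_\xi,\psi_t)(s)\|_{L^2} + (1+s)\|(\psi_\xi,\psi_t)(s)\|_{L^\infty}\Bigr],
\end{align*}
and bounding the Duhamel integral for $\v$ and the Duhamel-type equation for $\psi$ against the linear bounds above yields a bootstrap inequality $\eta(t) \lesssim E_0 + \eta(t)^2$ as long as $\eta$ stays small, which closes for $E_0$ sufficiently small. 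To lift these low-regularity bounds to the claimed $H^3\times H^2$ rates I perform an energy estimate on $\vf$, which is semilinear: the principal dissipation terms $\|\partial_\xi^4 v^{\mathrm f}_1\|_{L^2}^2$ and $\eps\gamma\|\partial_\xi^k v^{\mathrm f}_2\|_{L^2}^2$ dominate the commutator and nonlinear remainders under the smallness assumption on $\vf$ and $\psi_\xi$, yielding a damping inequality
\begin{align*}
\frac{\de}{\de t}\|\vf(t)\|_{H^3\times H^2}^2 \leq -\alpha \|\vf(t)\|_{H^3\times H^2}^2 + C\bigl(\|\vf(t)\|_{L^2}^2 + \|(\psi_\xi,\psi_t)(t)\|_{H^3\times H^2}^2\bigr),
\end{align*}
from which Gr\"onwall gives the claimed $H^3\times H^2$ decay and, via Sobolev embedding, the $L^\infty$ rates.

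\textbf{Main obstacle.} The principal technical difficulty is again the degeneracy of $D$, which blocks two shortcuts available in the strictly parabolic case: sectoriality of $\lwt$ and uniform parabolic damping in the forward-modulated energy estimate. Both are recovered, but only by exploiting the specific algebraic structure of~\eqref{e: fhn}, namely that the $w$-equation contains the linear damping $-\eps\gamma w$. This is what dictates the asymmetric regularity requirement $\omega\w_0 \in H^3 \times H^2$ in the hypotheses, since one can only close the damping estimate with one fewer derivative on the $w$-component than on $u$. Once the high-frequency linear bound and the nonlinear damping estimate are in place, the remainder of the argument is a direct specialization of the one used for Theorem~\ref{t: main detailed}, with the branched-mode components $S_p^{\mathrm{branched}}$ and the leading-edge estimates simply deleted.
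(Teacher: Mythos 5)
Your proposal is correct and would yield a valid proof, but it takes a genuinely different route from the paper on the linear side, and this distinction is worth noting. For the high-frequency spectral mapping you invoke the Bloch--wave transform together with Gearhart--Pr\"uss resolvent bounds (as in~\cite{HJPR21, RZ16}); the paper explicitly flags that option as an alternative but deliberately does \emph{not} take it, instead stating that Theorem~\ref{t: wave train stability} follows as ``a simplified version of our proof of Theorem~\ref{t: main detailed}, simply ignoring parts which have to do with dynamics in the leading edge.'' Concretely, the paper's route re-uses the Floquet/Green's-function construction of Section~\ref{s: resolvent wavetrain} for the low-frequency resolvent decomposition $\bar{s}_p^{\mathrm{wt}}, \bar{s}_c^{\mathrm{wt}}, \bar{s}_e^{\mathrm{wt}}$, and the explicit large-$|\Omega|$ resolvent expansion of Appendix~\ref{s: contour shifting} (\`a la Mascia--Zumbrun) for the high-frequency contour shift, with all terms tied to $\mcl_+$, $\u_+$, $R_{\mathrm{fr}}$ and the far-field/core bordering simply deleted. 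What your approach buys is economy for the pure periodic problem — the Bloch decomposition directly diagonalizes $\lwt$, the Gearhart--Pr\"uss argument is shorter than the explicit expansion, and the $S_p,S_c,S_e$ splitting falls out of the low/high Bloch split in one step; this is essentially the path of the standard wave-train literature. What the paper's route buys is uniformity of machinery: because the Bloch transform is unavailable for front solutions, the authors prefer to present a single linear framework that specializes to both cases. One small detail you gloss over is the enforcement of $s_p(0)=0$ required so that $\psi(0)=0$ in the modulation ansatz; the paper handles this by inserting the temporal cutoff $\varsigma(t)$ into the definitions~\eqref{e: Se t def}--\eqref{e: sc t def}, and an analogous adjustment would be needed on the Bloch side. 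The nonlinear half of your argument — inverse-modulated Duhamel iteration, forward-modulated nonlinear damping exploiting $\partial_{\xi\xi}u$ and $-\eps\gamma w$, and the $W^{k,p}$-equivalence of the two modulated perturbations per~\cite{ZUM22} — coincides with the paper's Section~\ref{sec:itscheme} and Proposition~\ref{p: nonlinear damping} and is carried out identically.
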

Since the proof of Theorem~\ref{t: wave train stability} is a simplified version of the proof of Theorem~\ref{t: main detailed}, we do not present it in detail. 

We note that it is possible to relax the regularity requirement on the initial data in Theorem~\ref{t: wave train stability}, to requiring smallness in $L^1(\R, \R^2) \cap \big(W^{2, \infty}(\R, \R) \times W^{1, \infty}(\R, \R)\big)$, only, by using an $L^1$-$L^\infty$ iteration scheme and replacing the $L^2$ based nonlinear damping estimates with alternative arguments developed in~\cite{deRijkSandstede, HJPR21} to control regularity of the modulated perturbation. It is more difficult to relax the regularity requirement on the initial data for the nonlinear stability of the pattern-forming fronts in Theorem~\ref{t: main detailed}, since the alternative argument for controlling regularity in the iteration scheme relies on obtaining estimates on the operator $\partial_x^\ell S_p(t) \partial_x^m$, where $S_p(t)$ arises in the decomposition~\eqref{e: intro semigroup decomp}. In the pure wave train case, the leading-order terms in the resolvent decomposition roughly have the structure of a convolution, so it is easy to integrate by parts to pass derivatives from one side of $S_p(t)$ to another, while it is not trivial to obtain estimates on $S_p(t)$ composed with derivatives on the right through the far-field/core decomposition used here in the linear analysis of the pattern-forming fronts. We expect that this technical issue could be overcome with more effort, but do not pursue it further, instead focusing on the most interesting aspects of the dynamics. 

\subsection{Outline of the paper}
The remainder of the paper is organized as follows. In Section~\ref{s: spectral stability}, we determine the essential spectrum of $\lfr$ under Hypotheses~\ref{hyp: wt spectral} and~\ref{hyp: wake spectral curve expansion} and explore related Fredholm properties. In Section~\ref{s: contour rep}, we use standard results from semigroup theory to represent the semigroup $\re^{\lfr t}$ via the inverse Laplace transform of the resolvent.  In Section~\ref{s: abstract linear}, we state and prove some general linear estimates based on different possible types of behavior of the resolvent near the essential spectrum at the origin. With these abstract estimates informing what kind of behavior we want to extract from the resolvent, we perform a detailed analysis of the resolvent in Section~\ref{s: resolvent}. In Section~\ref{s: linear estimates}, we combine the resolvent analysis of Section~\ref{s: resolvent} with the abstract linear estimates of Section~\ref{s: abstract linear} to establish a detailed decomposition of the semigroup $\re^{\lfr t}$. In Section~\ref{sec:itscheme}, we set up our nonlinear iteration scheme and establish equivalence of the forward-modulated and inverse-modulated perturbations. Finally, we close our nonlinear stability argument in Section~\ref{s: nonlinear argument}, proving Theorem~\ref{t: main detailed}. We conclude in Section~\ref{s: discussion} by discussing applications of our methods to related problems. Throughout, we relegate some technical computations to the appendices. 

\section{Spectral and Fredholm properties}\label{s: spectral stability}
We compute the linear spreading speed and characterize the resulting essential spectrum and Fredholm properties of $\lfr$, under Hypotheses~\ref{hyp: wt spectral} and~\ref{hyp: wake spectral curve expansion}. 

\subsection{Essential spectrum in the wake} \label{sec: essential spectrum wake}

Recall that the limiting operator in the wake is the linearization $\lwt$ of~\eqref{e: fhn comoving} about the wave train $\uwt$, see~\eqref{e: L wt def}. Hypotheses~\ref{hyp: wt spectral} and~\ref{hyp: wake spectral curve expansion} yield that the spectrum of the operator $\lwt$ (on, for instance, $L^2(\R)$) is bounded away from the imaginary axis, except for a quadratic tangency at the origin. In a neighborhood of the origin the spectrum of $\lwt$ is given by the curve $\kappa \mapsto \lambdawt(\ri \kappa)$, where $\lambdawt$ is given by~\eqref{e: wt spectral curve} with $c_g < 0$ and $D_{\rm eff}^{\rm wt} > 0$. We consider the eigenvalue problem $(\lwt - \lambda) \u = 0$ as a first-order system of linear differential equations with periodic coefficients. We call the Floquet exponents $\nu$ associated to this system the \emph{spatial Floquet exponents} of $\lwt - \lambda$. They arise by solving the linear dispersion relation $\lambdawt(\nu) = 0$. An application of the implicit function theorem then readily yields the following result.

\begin{prop}[Configuration of spatial Floquet exponents] \label{prop: wake spectral curve expansion}
Assume Hypotheses~\ref{hyp: wt spectral} and~\ref{hyp: wake spectral curve expansion}. For $\lambda$ to the right of $\Sigma(\lwt)$, the operator $\lwt-\lambda$ has two unstable spatial Floquet exponents and one stable spatial Floquet exponent, which are analytic in $\lambda$ in a neighborhood of the origin. As $\lambda$ approaches the origin from the right, one spatial Floquet exponent $\nuwt(\lambda)$ crosses the imaginary axis from right to left, with expansion
\begin{align*}
    \nuwt(\lambda) = \nuwt^1 \lambda - \nuwt^2 \lambda^2 + \mathrm{O}(\lambda^3), \quad \nuwt^1 = - \frac{1}{c_g} > 0, \quad \nuwt^2 = - \frac{\Deff^\mathrm{wt}}{c_g^3} > 0. 
\end{align*}
The other Floquet exponents remain uniformly bounded away from the imaginary axis for $|\lambda|$ small.
\end{prop}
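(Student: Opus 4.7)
The plan is to reduce the proposition to an application of the analytic implicit function theorem on the linear dispersion relation, supplemented by a standard Morse index count for the spatial dynamics.

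First I would set up the spatial dynamics formulation. Writing $(\lwt - \lambda)\u = 0$ with $\u = (u,w)$ as a first-order system in $(u,u_\xi,w)$, I obtain a three-dimensional linear system with $L$-periodic coefficients, so there are exactly three spatial Floquet exponents counted with multiplicity. The correspondence between these exponents and Bloch spectrum is the usual one: $\nu$ is a spatial Floquet exponent of $\lwt-\lambda$ if and only if $\lambda$ is an eigenvalue of the Bloch operator $\hat{\mcl}_{\rm wt}(\nu)$ extended analytically to complex $\nu$. For $\lambda \notin \Sigma(\lwt)$, no spatial Floquet exponent lies on the imaginary axis.

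Next I would produce the crossing exponent. By Hypothesis~\ref{hyp: wt spectral}(3), zero is a simple eigenvalue of $\hat{\mcl}_{\rm wt}(0)$, and by the standard Lyapunov--Schmidt reduction leading to~\eqref{e: wt spectral curve}, there is an analytic function $\lambdawt(\nu)$ defined near $\nu = 0$ with $\lambdawt(0)=0$ and $\lambdawt'(0) = -c_g \neq 0$ by Hypothesis~\ref{hyp: wake spectral curve expansion}, such that the Bloch spectrum near the origin is $\{\lambdawt(\nu)\}$. Applying the analytic implicit function theorem to $F(\nu,\lambda):=\lambdawt(\nu)-\lambda$ at $(0,0)$, I obtain a unique analytic solution $\nu = \nuwt(\lambda)$ with $\nuwt(0)=0$ on a neighborhood of the origin. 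Differentiating $\lambdawt(\nuwt(\lambda))=\lambda$ once yields $\nuwt'(0) = 1/\lambdawt'(0) = -1/c_g =: \nuwt^1 > 0$. Differentiating twice and using $\lambdawt''(0) = 2D_\mathrm{eff}^\mathrm{wt}$ from~\eqref{e: wt spectral curve} gives
\begin{equation*}
\tfrac{1}{2}\nuwt''(0) = -\frac{\lambdawt''(0)(\nuwt'(0))^2}{2\lambdawt'(0)} = \frac{D_\mathrm{eff}^\mathrm{wt}}{c_g^3} = -\nuwt^2,
\end{equation*}
with $\nuwt^2 = -D_\mathrm{eff}^\mathrm{wt}/c_g^3 > 0$ since $c_g < 0$ and $D_\mathrm{eff}^\mathrm{wt}>0$, yielding the claimed expansion.

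To finish I would handle the Morse index count and the behavior of the other two exponents. Since the spectrum of $\hat{\mcl}_{\rm wt}(\nu)$ near the origin is exhausted by the simple branch $\lambdawt(\nu)$, any other spatial Floquet exponent $\nu$ of $\lwt - \lambda$ with $|\nu|$ small must satisfy $|\lambda| \gtrsim 1$; equivalently, for $|\lambda|$ small the remaining two spatial Floquet exponents are uniformly bounded away from the imaginary axis, and hence analytic in $\lambda$ by continuity of eigenvalues of the period map. To count them I would use that for $\lambda$ real and large, the spatial eigenvalue configuration of the constant-coefficient principal part of the first-order system asymptotes to two roots coming from the parabolic $u$-equation, scaling like $\pm\sqrt{\lambda}$, and one root coming from the ODE $w$-equation, scaling like $\lambda/\clin$. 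Since $\clin > 0$, this gives two unstable and one stable exponent for large real $\lambda$. By connectedness of the region to the right of $\Sigma(\lwt)$ and the fact that no spatial Floquet exponent crosses the imaginary axis there, the count $(2,1)$ persists. As $\lambda$ decreases through zero, the exponent $\nuwt(\lambda) = \nuwt^1 \lambda + \mathrm{O}(\lambda^2)$ changes sign from positive to negative, i.e., crosses the imaginary axis from right to left, while the other two stay off the axis.

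The main (and only real) technical wrinkle is the Morse index count: in a general reaction-diffusion system the principal part at $\lambda = +\infty$ immediately gives the dimensions of the stable/unstable subspaces, but here the degeneracy $D = \mathrm{diag}(1,0)$ requires the separate large-$\lambda$ analysis of the ODE component, which however is routine.
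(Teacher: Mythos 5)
Your proposal is correct and takes essentially the same route as the paper: the paper's proof is a one-line appeal to the implicit function theorem applied to the linear dispersion relation $\lambdawt(\nu)=\lambda$, and you have filled in precisely that inversion (with the correct coefficient computations giving $\nuwt^1 = -1/c_g$ and $\nuwt^2 = -\Deff^\mathrm{wt}/c_g^3$) together with a standard large-$\lambda$ Morse index count for the degenerate-diffusion spatial system. The one point worth stating slightly more carefully is your claim that the other two Floquet exponents avoid the imaginary axis for $|\lambda|$ small: at $\lambda = 0$ this follows because a purely imaginary Floquet exponent $i\kappa_*$ with $\kappa_* \neq 0$ would place $0 \in \Sigma(\hat{\mcl}_\mathrm{wt}(i\kappa_*))$, contradicting Hypothesis~\ref{hyp: wt spectral}(2), and one with $\kappa_* = 0$ would contradict algebraic simplicity in Hypothesis~\ref{hyp: wt spectral}(3); continuity of the Floquet exponents in $\lambda$ then propagates the gap to a neighborhood of the origin.
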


\subsection{Essential spectrum in the leading edge}
Recall that the limiting operator in the leading edge is given by the linearization $\El_+$ of~\eqref{e: fhn comoving} about the rest state $0$, see~\eqref{e: L plus def}. Since $\mcl_+$ has constant coefficients, its spectrum (on, for instance, $L^2(\R)$) can be readily computed via the Fourier transform as
\begin{align*}
    \Sigma(\mcl_+) = \{ \lambda \in \C : d_{\clin}(\lambda, ik-\etalin) = 0 \text{ for some } k \in \R \},
\end{align*}
where, for a given speed $c$, $d_{c} \colon \C \times \C \to \C$ is the \emph{linear dispersion relation} given by
\begin{align*}
    d_{c}(\lambda, \nu) = \det \left(D \nu^2 + c \nu I + F'(0) - \lambda I \right).
\end{align*}
We determine the essential spectrum of $\mcl_+$ in the following lemma, whose proof is a straightforward computation; see also~\cite[\S2]{CarterScheel} for some computational details. 
\begin{lemma}\label{l: linear spreading speed}
Fix $(3 - \sqrt{6})/6 < a < \frac{1}{2}$, $0 < \gamma < 4$, and $\eps > 0$ sufficiently small. Then the spreading speed $\clin(a, \gamma, \eps)$ and spatial decay rate $\etalin(a, \gamma, \eps) < 0$ from Theorem~\ref{t: existence} are smooth in all arguments and satisfy the following. 
\begin{enumerate}
    \item (Simple double root) For $\lambda$ and $\nu$ small, we have the expansion
    \begin{align*}
        d_\clin(\lambda, \nu-\etalin) = -d_{10} \lambda + d_{02} \nu^2 + \mathrm{O}(\nu^3, \lambda^2, \lambda \nu)
    \end{align*}
    for some $d_{10}, d_{02} \in \R$ with $d_{10} d_{02} > 0$. 
    \item (Minimal critical spectrum) If $d_{\clin} (\ri \omega, \ri k -\etalin) = 0$ for some $\omega, k \in \R$, then $\omega = k = 0$. 
    \item (No unstable spectrum) $d_{\clin} (\lambda, \ri k - \etalin) \neq 0$ for any $k \in \R, \lambda \in \C$ with $\Re \lambda > 0$. 
\end{enumerate}
\end{lemma}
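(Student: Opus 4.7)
The plan is to exploit the explicit polynomial structure of the dispersion relation. A direct computation of $\det(D\nu^2 + c\nu I + F'(0) - \lambda I)$, using the Jacobian of $F$ at the origin read off from~\eqref{e: fhn}, yields
\[
d_c(\lambda,\nu) = \bigl(\nu^2 + c\nu + a(1-a) - \lambda\bigr)\bigl(c\nu - \eps\gamma - \lambda\bigr) + \eps.
\]
At $\eps = 0$, this factors, and the Fisher--KPP type first factor $A(\lambda,\nu) := \nu^2 + c\nu + a(1-a) - \lambda$ has a pinched double root on the imaginary axis precisely at $c = 2\sqrt{a(1-a)}$, $\nu = -\sqrt{a(1-a)}$, $\lambda = 0$, identifying $\clin|_{\eps = 0}$ and $\etalin|_{\eps = 0} = \sqrt{a(1-a)}$. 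Substituting $\nu = \mathrm{i}k - \etalin$ and $\lambda = \mathrm{i}\omega$ at these parameter values gives $A = -k^2 - \mathrm{i}\omega$ and $B := c\nu - \lambda = -c\etalin + \mathrm{i}(ck - \omega)$; the first vanishes only at $(k,\omega) = (0,0)$, and the second stays bounded away from zero since $c\etalin > 0$. For $\Re\lambda \geq 0$, the zero sets of $A$ and $B$ on the line $\nu = \mathrm{i}k - \etalin$ produce $\Re\lambda = -k^2 \leq 0$ and $\Re\lambda = -c\etalin < 0$, respectively, so (2) and (3) both hold at $\eps = 0$.

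I next apply the implicit function theorem to the pinched double root system $d_c(\lambda,\nu) = 0$, $\partial_\nu d_c(\lambda,\nu) = 0$ together with $\Re\lambda = 0$ and $\Re\nu + \eta = 0$, viewed as four real equations in the four real unknowns $c, \eta, \Im\lambda, \Im\nu$. Nondegeneracy of the Jacobian at the base solution $(c,\eta,\lambda,\nu) = (2\etalin,\etalin,0,-\etalin)|_{\eps = 0}$ follows from simplicity of the Fisher--KPP double root ($\partial_\nu^2 A = 2 \neq 0$, $\partial_\lambda A = -1 \neq 0$), yielding smooth dependence of $\clin, \etalin$ on $(a,\gamma,\eps)$ and a smoothly continued pinched double root at $(0,-\etalin)$ for all small $\eps$. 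Taylor expanding $d_\clin(\lambda, \nu - \etalin)$ at the origin then gives (1): the constant and $\mathrm{O}(\nu)$ terms vanish by the double root property, while $-d_{10} = \partial_\lambda d_\clin(0,-\etalin)$ and $2d_{02} = \partial_\nu^2 d_\clin(0,-\etalin)$ are computed explicitly from the formula for $d_\clin$. At $\eps = 0$ these reduce to $2a(1-a)$ and $-4a(1-a)$ respectively (using $\clin = 2\etalin$, $\etalin^2 = a(1-a)$, and $A(0,-\etalin) = 0$), giving $d_{10} d_{02} = 4a^2(1-a)^2 > 0$, which persists for small $\eps$ by continuity.

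To upgrade (2) and (3) to small $\eps > 0$, I combine a local and a global argument. Locally near $(k,\omega) = (0,0)$, the expansion from (1) reads $d_\clin(\mathrm{i}\omega, \mathrm{i}k - \etalin) = -\mathrm{i} d_{10}\omega - d_{02} k^2 + \mathrm{O}(k^3, \omega^2, \omega k)$; separating real and imaginary parts and using $d_{10}, d_{02} \neq 0$ forces $(k,\omega) = (0,0)$, and an analogous computation for $\Re\lambda \geq 0$ yields $\Re\lambda = -(d_{02}/d_{10})k^2 + \mathrm{O}(k^3) \leq 0$ near $k = 0$, with equality only at $k = 0$. Globally, $d_\clin$ is polynomial in $(\lambda,\nu)$ with coefficients smooth in $\eps$, so the coercivity $|d_\clin(\lambda, \mathrm{i}k - \etalin)| \to \infty$ as $|k| + |\lambda| \to \infty$ confines zeros with $\Re\lambda \geq 0$ to a bounded set uniformly in small $\eps$. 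On any compact annulus bounded away from the origin within this set, the $\eps = 0$ analysis produces a uniform positive lower bound on $|d_\clin|$ that persists by continuity for small $\eps$, excluding additional critical or unstable spectrum.

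The main obstacle is organizing this global compactness-plus-continuity argument carefully, since one must simultaneously handle the possible emergence of spurious roots both in bounded compact sets (controlled by continuity and the strict $\eps = 0$ lower bound) and at spatial infinity in $(k,\omega)$ space (controlled by polynomial growth). However, both pieces are essentially routine given the explicit factorized base case at $\eps = 0$ and the smooth persistence provided by the implicit function theorem.
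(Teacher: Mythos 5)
Your proof is correct and follows the natural line the paper leaves implicit as a ``straightforward computation'' (with a reference to \cite[\S2]{CarterScheel} for details): compute the dispersion relation explicitly, exploit the factorization $d_c = AB + \eps$ at $\eps = 0$ to identify the Fisher--KPP pinched double root and verify (1)--(3) there, then perturb off $\eps = 0$ via the implicit function theorem (using conjugation symmetry to keep the double root at $\lambda = 0$, $\nu = -\etalin$ real) and upgrade (2)--(3) by a local Taylor expansion plus a compactness-and-continuity argument. Your constant evaluations at $\eps = 0$ ($\partial_\lambda d_\clin = 2a(1-a) = -d_{10}$, $\partial_\nu^2 d_\clin = -4a(1-a) = 2d_{02}$) check out, and the Jacobian nondegeneracy for the IFT ($\partial_\lambda d_\clin = -B \neq 0$, $\partial_\nu^2 d_\clin = 2B \neq 0$) is correct.

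One phrasing point worth tightening: the statement ``$|d_\clin(\lambda,\ri k - \etalin)| \to \infty$ as $|k| + |\lambda| \to \infty$'' is false without qualification, since for every $k$ the dispersion relation has roots $\lambda(k)$ (lying in $\Re\lambda < 0$). What you are using --- and what is true --- is coercivity \emph{restricted to the half-plane} $\Re\lambda \geq 0$: there one has $\Re A \leq -k^2 + \mathrm{O}(1)$ and $\Re B \leq -\clin\etalin - \eps\gamma < 0$, so $|AB| \geq (k^2 - \mathrm{O}(1))(\clin\etalin + \eps\gamma)$ uniformly in $\Re\lambda \geq 0$, while for bounded $k$ and $|\lambda| \to \infty$ the $\lambda^2$ term dominates. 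This gives the bounded exclusion zone uniformly in small $\eps$. Spelling out the restriction to $\Re\lambda \geq 0$ would remove the ambiguity. Also note that your explicit formula $\etalin|_{\eps = 0} = \sqrt{a(1-a)} > 0$ is the correct sign (needed both for the front to decay and for $\omega$ to be increasing); the ``$< 0$'' in the lemma statement appears to be a typo.
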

\begin{remark}
Lemma~\ref{l: linear spreading speed} together with~\cite{HolzerScheelPG} imply that the dynamics of the linearization in the leading edge at the speed $\clin$, which we call the \emph{linear spreading speed},  given by
\begin{align*}
    \u_t = D \partial_\xi^2 \u + \clin \partial_\xi \u + F'(0) \u,
\end{align*}
are marginally pointwise stable, that is, neither exponentially growing nor decaying in time at any fixed $\xi$. See~\cite[Section 1.2]{AveryScheelSelection} for a concise explanation of how Lemma~\ref{l: linear spreading speed} relates to marginal pointwise stability, and why this leads to a prediction for a selected front speed. 
\end{remark}

Lemma~\ref{l: linear spreading speed} implies that the spectrum of $\mcl_+$ is stable and uniformly bounded away from the imaginary axis except for a neighborhood of the origin, where it is given by the curve
\begin{align*}
    \lambda_+(\ri k) = - \Deff^+ k^2 + \mathrm{O}(k^3), 
\end{align*}
where $\Deff^+ = \frac{d_{02}}{d_{10}} > 0$, as suggested in Section~\ref{s: intro linear}. In particular, the spectrum of $\mcl_+$ is marginally stable, and the critical spectral curve is associated to a branched diffusive mode, in the terminology of Section~\ref{s: intro linear}.

We will also need information on the \emph{spatial eigenvalues} of $\mcl_+ - \lambda$, that is, the eigenvalues of the matrix $\mathcal{M}(\lambda)$ obtained from writing $(\mcl_+ - \lambda) \u = 0$ as a first-order system $\partial_\xi U = \mathcal{M}(\lambda) U$ in $U = (u_1, \partial_\xi u_1, u_2)^\top$. 
\begin{corollary}\label{c: leading edge spatial eval}
Assume $a, \gamma$ and $\eps$ are such that Lemma~\ref{l: linear spreading speed} holds. Let $\sigma = \sqrt{\lambda}$, with branch cut chosen along the negative real axis. If $\sigma^2$ is small, $\mathcal{M}(\sigma^2)$ has precisely two eigenvalues $\nufr^\pm(\sigma)$ in a neighborhood of the origin, which are analytic in $\sigma$ and satisfy the expansions
\begin{align}
    \nufr^\pm(\sigma) = \pm \nufr^1 \sigma + \mathrm{O}(\sigma^2), \label{e: leading edge spatial eval expansion intro}
\end{align}
where $\nufr^1 > 0$. The spatial eigenvalues $\nufr^\pm(\sigma)$ cross the imaginary axis precisely when $\lambda = \sigma^2$ crosses $\Sigma(\mcl_+)$. The remaining third eigenvalue $\nu_3(\sigma^2)$ of $\mathcal{M}(\sigma^2)$ is analytic in $\sigma^2$, satisfies $\Re \nu_3 (\sigma^2) > 0$ for $|\sigma|$ small, and is uniformly bounded away from the imaginary axis for $|\sigma|$ small. 
\end{corollary}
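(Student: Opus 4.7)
The plan is to read off all three spatial eigenvalues of $\mathcal{M}(\sigma^2)$ from the zeros of the cubic polynomial $\nu \mapsto d_{\clin}(\sigma^2, \nu - \etalin)$. Lemma~\ref{l: linear spreading speed} provides the nondegenerate expansion
\begin{equation*}
d_{\clin}(\sigma^2, \nu - \etalin) = -d_{10}\sigma^2 + d_{02}\nu^2 + \mathrm{O}(\nu^3, \sigma^4, \sigma^2\nu),
\end{equation*}
with $d_{10}d_{02} > 0$, exhibiting a double root at $\nu = 0$ when $\sigma = 0$. Since the polynomial has degree three in $\nu$ (from the $3\times 3$ structure of $\mathcal{M}$), a counting argument forces a simple third root at some $\nu_3^0 \neq 0$, and the work splits naturally into tracking the branched pair near the origin and the regular third root away from it.

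For the third root, the analytic implicit function theorem applied to the simple root $\nu_3^0$ of the polynomial, with $\lambda = \sigma^2$ as parameter, directly produces an analytic function $\nu_3(\sigma^2)$ with $\nu_3(0) = \nu_3^0$, analytic in $\sigma^2$ as claimed. The positivity $\Re \nu_3(0) > 0$ can be verified by direct inspection: at $\eps = 0$, the unweighted cubic $\det(D\mu^2 + \clin \mu I + F'(0))$ factors as $\clin\mu\,(\mu + \sqrt{a(1-a)})^2$, so the third root of $d_{\clin}(0,\cdot)$ sits at $\mu_3 = 0$, which translates to $\nu_3(0) = \mu_3 + \etalin = \etalin > 0$ in the weighted variable. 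This positivity, together with the uniform separation from the imaginary axis, is preserved by continuity for small $\eps$ and small $|\sigma|$.

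For the branched pair, the square-root structure is resolved by the blowup substitution $\nu = \sigma\tau$. Feeding this into the expansion above, factoring out $\sigma^2$, and using that $d_{\clin}$ is polynomial in both arguments so that higher-order terms collect into an analytic remainder, one obtains the reduced equation
\begin{equation*}
d_{02}\tau^2 - d_{10} + \sigma\cdot g(\sigma, \tau) = 0,
\end{equation*}
with $g$ polynomial in $\tau$ and analytic in $\sigma$. Since $d_{10}d_{02} > 0$, the $\sigma = 0$ equation has two simple roots $\tau = \pm\nufr^1$ with $\nufr^1 := \sqrt{d_{10}/d_{02}} > 0$, with nonvanishing $\tau$-derivative $2d_{02}\tau$. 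The analytic implicit function theorem then produces analytic branches $\tau^\pm(\sigma)$ with $\tau^\pm(0) = \pm\nufr^1$, and setting $\nufr^\pm(\sigma) := \sigma\,\tau^\pm(\sigma)$ yields the expansion~\eqref{e: leading edge spatial eval expansion intro}.

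The crossing property is then automatic from the construction: the essential spectrum curve $\lambda_+(\ri k)$ of $\El_+$ near the origin is defined by $d_{\clin}(\lambda, \ri k - \etalin) = 0$, which is the same equation governing the spatial eigenvalues, so the branched pair $\nufr^\pm(\sigma)$ touches the imaginary axis precisely when $\lambda = \sigma^2$ crosses $\Sigma(\El_+)$. The only real obstacle in the argument is the bookkeeping of powers of $\sigma$ in the blowup step, which must ensure the reduced equation at $\sigma = 0$ is nondegenerate; this, however, is precisely what the word ``simple'' in ``simple double root'' of Lemma~\ref{l: linear spreading speed} encodes, namely $d_{02} \neq 0$, so the argument closes cleanly.
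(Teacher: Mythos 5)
Your proof is correct and follows essentially the same route as the paper, which simply cites ``the Newton polygon'' for the branched pair $\nufr^\pm$ and ``a direct computation'' for the sign of $\Re\nu_3$. Your blowup $\nu = \sigma\tau$ followed by the implicit function theorem on the reduced quadratic is precisely the explicit unpacking of the Newton polygon step, and the factorization $\clin\mu(\mu+\sqrt{a(1-a)})^2$ at $\eps=0$ giving $\nu_3(0)=\etalin$ is the direct computation the paper leaves implicit.
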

\begin{proof}
The expansion~\eqref{e: leading edge spatial eval expansion intro} follows from Lemma~\ref{l: linear spreading speed} and the Newton polygon. The fact that $\Re \nu_3 (\sigma^2) > 0$ follows from a direct computation. 
\end{proof}

\subsection{Essential spectrum and Fredholm properties of \texorpdfstring{$\lfr$}{lfr}}

Fredholm properties of the linearization about a traveling front may be determined through properties of the asymptotic rest states through Palmer's theorem~\cite{Palmer2}; see for instance any of~\cite{KapitulaPromislow, SandstedeReview, FiedlerScheel} for a review of spectral properties of traveling waves. Specifically, in our setting, the operator $\lfr: W^{2,p} (\R, \C) \times W^{1,p}(\R, \C) \subset L^p(\R, \C^2) \to L^p (\R, \C^2)$ is Fredholm if and only if the limiting rest states $\u = 0$ and $\uwt$ are \emph{hyperbolic}, that is, if and only if $\mcl_+$ has no purely imaginary spatial eigenvalues and $\lwt$ has no purely imaginary spatial Floquet exponents. If this is the case, then the Fredholm index of $\lfr$ is
\begin{align}
    \mathrm{ind}(\lfr) = i_M (\lwt) - i_M(\mcl_+), \label{e: fredholm index}
\end{align}
where the asymptotic Morse indices $i_M(\lwt)$ and $i_M(\mcl_+)$ measure the number of unstable spatial Floquet exponents of $\lwt$ and the number of unstable spatial eigenvalues of $\mcl_+$, respectively. 

\begin{prop}\label{p: fr essential spectrum}
Assume $a, \gamma,$ and $\eps$ are such that Hypotheses~\ref{hyp: wt spectral} and~\ref{hyp: wake spectral curve expansion} and Lemma~\ref{l: linear spreading speed} hold. Then the essential spectrum of $\lfr$ is marginally stable, touching the imaginary axis only at the origin. In particular, in a neighborhood of the origin the rightmost boundary of $\Sigma_\mathrm{ess}(\lfr)$ coincides with $\Sigma(\lwt)$. 
\end{prop}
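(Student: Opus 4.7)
The plan is to reduce the spectral analysis to that of the asymptotic operators $\mcl_+$ and $\lwt$ via Palmer's theorem~\cite{Palmer2} (see also~\cite{KapitulaPromislow, FiedlerScheel}), as recalled in the discussion preceding the statement. Concretely, $\lfr - \lambda$ is Fredholm if and only if $\mcl_+ - \lambda$ has no purely imaginary spatial eigenvalue and $\lwt - \lambda$ has no purely imaginary spatial Floquet exponent, so the Fredholm borders are contained in $\Sigma(\mcl_+) \cup \Sigma(\lwt)$, and the Fredholm index off these borders is given by~\eqref{e: fredholm index}.

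For the marginal-stability claim, I first combine Hypothesis~\ref{hyp: wt spectral}, which places $\Sigma(\lwt)$ in the closed left half plane touching the imaginary axis only at the origin, with items (2) and (3) of Lemma~\ref{l: linear spreading speed}, which yield the analogous property for $\Sigma(\mcl_+)$. Consequently, for every $\lambda$ with $\Re \lambda \geq 0$ and $\lambda \neq 0$, the operator $\lfr - \lambda$ is Fredholm. To show the index is zero on this set, I use that it is locally constant on the connected complement of $\Sigma(\mcl_+) \cup \Sigma(\lwt)$ and evaluate it at a single large positive real $\lambda$: a direct asymptotic count there yields two unstable spatial eigenvalues for $\mcl_+ - \lambda$ (consistent with Corollary~\ref{c: leading edge spatial eval} continued away from the origin) and two unstable spatial Floquet exponents for $\lwt - \lambda$ (by Proposition~\ref{prop: wake spectral curve expansion} similarly continued), so that $i_M(\lwt) - i_M(\mcl_+) = 0$. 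Hence no $\lambda$ with $\Re \lambda > 0$ belongs to $\Sigma_\mathrm{ess}(\lfr)$, and, since $\lambda = 0$ is the only point of $\Sigma(\mcl_+) \cup \Sigma(\lwt)$ on $\ri \R$, the origin is the only essential-spectrum point on the imaginary axis.

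For the second claim, I compare the two Fredholm borders in a neighborhood of $0$. By Lemma~\ref{l: linear spreading speed}, $\Sigma(\mcl_+)$ near $0$ is parameterized by $k \mapsto \lambda_+(\ri k) = -\Deff^+ k^2 + \mathrm{O}(k^3)$, which at leading order lies on the negative real axis. By Hypotheses~\ref{hyp: wt spectral}--\ref{hyp: wake spectral curve expansion} together with~\eqref{e: wt spectral curve}, $\Sigma(\lwt)$ near $0$ is parameterized by $k \mapsto \lambdawt(\ri k) = -\ri c_g k - \Deff^\mathrm{wt} k^2 + \mathrm{O}(k^3)$. Eliminating $k$ shows that the latter curve is asymptotic to the leftward-opening parabola
\begin{align*}
\Re \lambda = -\frac{\Deff^\mathrm{wt}}{c_g^2}\, (\Im \lambda)^2,
\end{align*}
whose concave interior contains $\Sigma(\mcl_+) \setminus \{0\}$ in a punctured neighborhood of the origin, since $\Deff^+, \Deff^\mathrm{wt} > 0$ and $c_g < 0$. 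Combined with the vanishing of the Fredholm index on the right half plane and the fact that the index changes by $\pm 1$ across each Fredholm border, any essential-spectrum component arising from crossing $\Sigma(\mcl_+)$ lies inside this parabola, hence strictly to the left of $\Sigma(\lwt)$. Therefore $\Sigma(\lwt)$ is the rightmost boundary of $\Sigma_\mathrm{ess}(\lfr)$ near the origin.

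The main technical point is precisely the geometric comparison of the two critical curves together with the bookkeeping of the Fredholm-index regions near the origin: because the leading-edge curve has a branch point at $0$ (reflecting marginal pointwise stability at the linear spreading speed), while the wave-train curve crosses transversally with slope $-c_g$ against the imaginary axis (reflecting the outgoing group velocity), the former is enclosed by the latter and does not contribute to the rightmost boundary.
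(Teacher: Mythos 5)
Your proof is correct and follows essentially the same route as the paper: Palmer's theorem, the formula~\eqref{e: fredholm index} for the Fredholm index, and the counting of spatial eigenvalues of $\mcl_+$ and spatial Floquet exponents of $\lwt$ provided by Corollary~\ref{c: leading edge spatial eval} and Proposition~\ref{prop: wake spectral curve expansion}. One small remark: invoking ``large positive real $\lambda$'' to count Morse indices is an unnecessary detour, since Corollary~\ref{c: leading edge spatial eval} and Proposition~\ref{prop: wake spectral curve expansion} already give the counts directly for $\lambda$ small and to the right of the Fredholm borders (yielding $i_M(\lwt) = i_M(\mcl_+) = 2$ and hence index $0$); local constancy on the connected component of the complement containing the open right half-plane then propagates this index to all of $\{\Re\lambda \geq 0\}\setminus\{0\}$, giving marginal stability. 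Your geometric comparison of the two critical curves near the origin --- $\Sigma(\mcl_+)$ quadratically tangent to the negative real axis with $\Im\lambda = \mathrm{O}(|\Re\lambda|^{3/2})$, versus $\Sigma(\lwt)$ a leftward-opening parabola tangent to the imaginary axis --- is a nice explicit justification of the ``rightmost boundary'' claim that the paper leaves to Figure~\ref{fig: spectrum}, and it correctly identifies the role of the branch point versus the transversal crossing driven by the nonzero group velocity.
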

\begin{proof}
This follows from a short computation using the formula~\eqref{e: fredholm index}, Proposition~\ref{prop: wake spectral curve expansion} and Lemma~\ref{l: linear spreading speed}; see the right panel of Figure~\ref{fig: spectrum} for a depiction of the configuration of spatial Floquet exponents and spatial eigenvalues. 
\end{proof}

We can also use Palmer's theorem to determine Fredholm properties of $\lfr$ acting on exponentially weighted spaces, as follows. 
\begin{prop}\label{p: fredholm properties}
Fix $\eta > 0$ sufficiently small. Recall from Section~\ref{s: function spaces} the notation $X^p_\eta = L^p_{\mathrm{exp}, -\eta, \eta}(\R, \C^2)$ and $Y^p_\eta = W^{2,p}_{\mathrm{exp}, -\eta, \eta} (\R, \C) \times W^{1,p}_{\mathrm{exp}, -\eta, \eta} (\R, \C)$. The operator
\begin{align*}
    \lfr : Y^p_\eta \subset X^p_\eta \to X^p_\eta 
\end{align*}
is Fredholm with index -2. 
\end{prop}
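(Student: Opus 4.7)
The strategy is to conjugate the problem to an unweighted setting where the Fredholm index formula~\eqref{e: fredholm index} from the beginning of this section applies directly. Multiplication by the weight $\omega_{-\eta,\eta}$ provides an isomorphism $X^p_\eta \to L^p(\R,\C^2)$ that maps $Y^p_\eta$ onto $W^{2,p}(\R,\C) \times W^{1,p}(\R,\C)$, under which $\lfr$ is conjugate to $\widetilde{\mcl}_{\mathrm{fr}} := \omega_{-\eta,\eta}\, \lfr\, \omega_{-\eta,\eta}^{-1}$. Since $\omega_{-\eta,\eta}(\xi) = \re^{\eta\xi}$ for $\xi \geq 1$ and $\omega_{-\eta,\eta}(\xi) = \re^{-\eta\xi}$ for $\xi \leq -1$, the coefficients of $\widetilde{\mcl}_{\mathrm{fr}}$ converge exponentially at $\pm\infty$ to those of the shifted asymptotic operators
\begin{align*}
\widetilde{\mcl}_+ := \re^{\eta\xi}\, \mcl_+\, \re^{-\eta\xi}, \qquad \widetilde{\lwt} := \re^{-\eta\xi}\, \lwt\, \re^{\eta\xi},
\end{align*}
and a direct computation shows these conjugations shift every spatial eigenvalue of $\mcl_+$ by $+\eta$ and every spatial Floquet exponent of $\lwt$ by $-\eta$.

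The remaining task is to verify hyperbolicity of these asymptotic operators at $\lambda = 0$ and count unstable dimensions. At $+\infty$, Corollary~\ref{c: leading edge spatial eval} asserts that $\mcl_+$ at $\lambda = 0$ has a double spatial eigenvalue at the origin (the branch point encoded in Lemma~\ref{l: linear spreading speed}) together with a third eigenvalue $\nu_3(0)$ with $\Re\nu_3(0) > 0$. For sufficiently small $\eta > 0$, the shift by $+\eta$ moves all three spatial eigenvalues into the open right half-plane; hence $\widetilde{\mcl}_+$ is hyperbolic with asymptotic Morse index $i_M(\widetilde{\mcl}_+) = 3$. At $-\infty$, Proposition~\ref{prop: wake spectral curve expansion} shows that at $\lambda = 0$ the operator $\lwt$ has precisely one critical spatial Floquet exponent at the origin (the translational mode $\nuwt(0) = 0$), one unstable exponent, and one stable exponent uniformly bounded away from the imaginary axis. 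The shift by $-\eta$ moves the critical exponent to $-\eta$ while leaving the other two on their respective sides of the imaginary axis, so $\widetilde{\lwt}$ is hyperbolic with $i_M(\widetilde{\lwt}) = 1$.

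Applying the index formula~\eqref{e: fredholm index} to $\widetilde{\mcl}_{\mathrm{fr}}$ on $L^p$, whose Fredholm index agrees with that of $\lfr$ on $X^p_\eta$ by conjugation, yields
\begin{align*}
\mathrm{ind}(\lfr) \;=\; i_M(\widetilde{\lwt}) - i_M(\widetilde{\mcl}_+) \;=\; 1 - 3 \;=\; -2,
\end{align*}
as claimed. The one point requiring real care is to correctly account for the direction of the shift induced by the weight on either side, combined with the observation that the degenerate diffusion forces a three-dimensional first-order formulation in $(u_1, \partial_\xi u_1, u_2)^\top$, within which the Morse indices are counted and sum to the expected total.
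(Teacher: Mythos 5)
The proposal is correct and takes essentially the same approach as the paper: conjugate with the exponential weight $\omega_{-\eta,\eta}$ (reducing to an unweighted Fredholm problem via Palmer's theorem), shift the spatial eigenvalues/Floquet exponents of $\mcl_+$ and $\lwt$ by $+\eta$ and $-\eta$ respectively, and count the resulting asymptotic Morse indices using Corollary~\ref{c: leading edge spatial eval} and Proposition~\ref{prop: wake spectral curve expansion} to obtain $1 - 3 = -2$. Your verification of the shift directions and the dimension $3$ of the first-order formulation is careful and matches the paper's reasoning.
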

\begin{proof}
Conjugating with exponential weights and using Palmer's theorem, one finds $\lfr$ is Fredholm on these spaces, with index given by
\begin{align*}
    \mathrm{ind}(\lfr: Y^p_\eta \to X^p_\eta) = i_M^{-\eta} (\lwt) - i_M^{\eta} (\mcl_+),
\end{align*}
where $i_M^{-\eta}(\lwt)$ is the number of spatial Floquet exponents $\nu$ of $\lwt$ with $\Re \nu > \eta$, and $i_M^{\eta}(\mcl_+)$ is the number of spatial eigenvalues $\nu$ of $\mcl_+$ with $\Re \nu > -\eta$. Using Corollary~\ref{c: leading edge spatial eval} and Proposition~\ref{prop: wake spectral curve expansion}, we find
\begin{align*}
    \mathrm{ind}(\lfr: Y^p_\eta \to X^p_\eta) = 1 - 3 = -2, 
\end{align*}
provided $\eta > 0$ is sufficiently small, as desired; again, see the right panel of Figure~\ref{fig: spectrum} for a depiction of the configuration of spatial Floquet exponents and spatial eigenvalues. 
\end{proof}

Throughout the remainder of the paper, we assume that the parameters $a, \gamma,$ and $\eps$ are such that Hypotheses~\ref{hyp: wt spectral} through~\ref{hyp: point spectrum} together with Lemma~\ref{l: linear spreading speed} hold. 

\section{Contour integral representation of semigroup}\label{s: contour rep}

Standard results from semigroup theory imply that for any fixed non-negative integer $k$, $\lfr$ generates a strongly continuous semigroup on $H^k(\R,\C^2)$. 

\begin{prop}\label{p: Zk generation}
Fix a non-negative integer $k$. The operator 
\begin{align*}
\lfr \colon H^{k+2} (\R, \C) \times H^{k+1} (\R, \C) \to H^k (\R, \C^2)
\end{align*}
is closed and densely defined, and generates a strongly continuous semigroup $\re^{\lfr t}$ on $H^k(\R, \C^2)$. Furthermore, there exist constants $M, \mu_* > 0$ such that $\| \re^{\lfr t} \|_{H^k \to H^k} \leq M \re^{\mu_* t}$ holds for $t \geq 0$. 
\end{prop}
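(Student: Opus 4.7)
I would prove this by writing $\lfr$ as a bounded perturbation of a block-diagonal operator whose generation properties follow from standard results. Since $\omega$ is smooth and strictly positive with $\omega'/\omega$ uniformly bounded, a direct computation of the conjugation $\lfr = \omega\afr\omega^{-1}$ combined with the explicit form of $F'(\ufr)$ yields
\begin{align*}
\lfr \u = \begin{pmatrix} \partial_\xi^2 u_1 + a(\xi)\,\partial_\xi u_1 + b_{11}(\xi)\, u_1 + b_{12}(\xi)\, u_2 \\[2pt] \clin\,\partial_\xi u_2 + b_{21}(\xi)\, u_1 + b_{22}(\xi)\, u_2 \end{pmatrix},
\end{align*}
where $a,b_{ij}\in C^\infty(\R)$ are uniformly bounded together with all their derivatives. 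I would then split $\lfr = \mathcal{A}_0 + \mathcal{B}$, where
\begin{align*}
\mathcal{A}_0 := \begin{pmatrix} \partial_\xi^2 + a(\xi)\partial_\xi & 0 \\ 0 & \clin\,\partial_\xi \end{pmatrix}, \qquad \mathcal{D}(\mathcal{A}_0) = Z_{k+2}(\R,\C)\times Z_{k+1}(\R,\C),
\end{align*}
and $\mathcal{B}$ is the matrix of multiplications by the $b_{ij}$.

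\textbf{Step 1 — generation for $\mathcal{A}_0$.} Since $\mathcal{A}_0$ is block diagonal, it suffices to check that each diagonal entry generates a $C_0$-semigroup on $Z_k(\R,\C)$. The lower entry $\clin\partial_\xi$ is the generator of the translation group, which is a $C_0$-group on $H^k(\R,\C)$ and on $C_0^k(\R,\C)$, and hence on the intersection $Z_k(\R,\C)$ equipped with the sum norm. The upper entry $\partial_\xi^2 + a(\xi)\partial_\xi$ with smooth bounded drift $a$ generates an analytic semigroup on $H^k(\R,\C)$ (standard $L^2$ theory on the Sobolev scale) and on $C_0^k(\R,\C)$ (Lunardi's generation theorem for second-order elliptic operators on $\mathrm{BUC}$-type spaces). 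Both semigroups coincide on the common dense core $C_c^\infty(\R,\C)$, so by density they define a single $C_0$-semigroup on $Z_k(\R,\C)$ with the stated domain.

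\textbf{Step 2 — bounded perturbation and conclusion.} Each $b_{ij}$ is bounded and smooth, so $\mathcal{B}$ is a bounded operator on $Z_k(\R,\C^2)$. By the classical bounded perturbation theorem for $C_0$-semigroups, $\lfr = \mathcal{A}_0 + \mathcal{B}$ generates a $C_0$-semigroup on $Z_k(\R,\C^2)$ with unchanged domain $Z_{k+2}(\R,\C)\times Z_{k+1}(\R,\C)$. Density of this domain is immediate from $C_c^\infty(\R,\C^2) \subset Z_{k+2}(\R,\C)\times Z_{k+1}(\R,\C)$ being dense in $Z_k(\R,\C^2)$, and closedness is preserved under bounded perturbation. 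The exponential bound $\|\re^{\lfr t}\|_{Z_k\to Z_k}\leq M\re^{\mu_* t}$ then follows from the Hille--Yosida growth estimate applied to $\mathcal{A}_0$ together with the elementary perturbation inequality, which gives $\mu_* = \mu_0 + \|\mathcal{B}\|$ for the growth rate $\mu_0$ of $\re^{\mathcal{A}_0 t}$.

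\textbf{Main obstacle.} The principal subtlety is that $Z_k = H^k\cap C_0^k$ is not one of the classical spaces for which semigroup generation theorems are tabulated, and the full operator $\lfr$ is not sectorial on $Z_k$ because of the degenerate diffusion matrix $D=\mathrm{diag}(1,0)$. The workaround is to build the semigroup for each diagonal block on $H^k$ and $C_0^k$ separately, invoke agreement on the common core $C_c^\infty$, and verify that both leave $Z_k$ invariant; once this consistency is in place, the remaining manipulations (drift term, off-diagonal multiplicative coupling) are handled by standard bounded perturbation results.
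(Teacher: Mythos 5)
Your proposal follows the paper's own proof: the same decomposition of $\lfr$ into a block-diagonal principal part plus a bounded multiplication operator, the same diagonal-block-by-block generation argument, and the same appeal to bounded perturbation of generators. The only cosmetic difference is in how you verify generation for the principal part---you identify $\clin\partial_\xi$ with the translation group and glue the $H^k$ and $C_0^k$ semigroups along the common core $C_c^\infty$, whereas the paper computes the Green's function of $\clin\partial_\xi - \lambda$ and applies Hille--Yosida, and asserts sectoriality of the parabolic block directly on $Z_k$.
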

\begin{proof}
The principal part of $\lfr$ is the diagonal diffusion-advection operator
\begin{align*}
    L_0 = \begin{pmatrix}
   \partial_{\xi \xi} + (c+a_1) \partial_\xi & 0 \\
     0 & c \partial_\xi 
     \end{pmatrix}
\end{align*}
on $H^k(\R,\C^2)$ with dense domain $H^{k+2} (\R, \C) \times H^{k+1} (\R, \C)$, where we denote $a_1 = 2\omega(\omega^{-1})'$. Since $a_1$ is smooth and bounded, standard theory of analytic semigroups, cf.~\cite[Theorem~3.1.3]{Lunardi}, yields that $\partial_{\xi \xi} + (c+ a_1)\partial_\xi$ is sectorial on $H^k(\R)$ with domain $H^{k+2}(\R)$, and hence generates an analytic semigroup, which is in particular strongly continuous~\cite[p.~34]{Lunardi}. On the other hand, since $c\partial_\xi$ is a skew-adjoint operator on the Hilbert space $H^k(\R)$ with dense domain $H^{k+1}(\R)$, Stone's Theorem, cf.~\cite[Theorem~II.3.24]{EngelNagel}, implies that it generates a $C_0$-semigroup. We conclude that $L_0$ is the generator of a strongly continuous semigroup on $H^k(\R,\C^2)$. Since $\lfr$ is a bounded perturbation of $L_0$, the same holds for $\lfr$ by~\cite[Theorem~III.1.3]{EngelNagel}. The existence of the constants $M,\mu_* > 0$ follows from~\cite[Proposition~I.5.5]{EngelNagel}.
\end{proof}

From now on we consider $\El_{\rm fr}$ as an operator on $H^k(\R,\C^2)$ for a fixed nonnegative integer $k$. Using another standard result, we can write the semigroup $\re^{\lfr t}$ as the inverse Laplace transform of the resolvent $(\lfr - \lambda)^{-1}$, at least for regular enough initial data. 
\begin{corollary}[{\cite[Chapter 1, Corollary 7.5]{Pazy}}]\label{c: inverse laplace} There exists $\eta > 0$ such that for all test functions $\u_0 \in C^\infty_c (\R, \C^2)$, we have the representation
\begin{align}
    \re^{\lfr t} \u_0 = -\frac{1}{2 \pi \ri} \lim_{R \to \infty} \int_{\eta - \ri R}^{\eta+\ri R} \re^{\lambda t} (\lfr - \lambda)^{-1} \u_0 \de  \lambda, \qquad t > 0. \label{e: contour integral representation}
\end{align}
\end{corollary}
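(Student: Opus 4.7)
The plan is to reduce the statement directly to the cited result~\cite[Chapter 1, Corollary 7.5]{Pazy} by verifying its hypotheses. That result asserts that if $A$ generates a $C_0$-semigroup $\re^{At}$ on a Banach space $X$ satisfying $\|\re^{At}\|\leq M\re^{\mu_* t}$, then for every $u_0 \in D(A)$ and every $\eta > \mu_*$,
\begin{align*}
\re^{At} u_0 = \frac{1}{2\pi\ri}\,\mathrm{PV}\!\int_{\eta-\ri\infty}^{\eta+\ri\infty} \re^{\lambda t}(\lambda - A)^{-1} u_0 \, \de\lambda, \qquad t > 0,
\end{align*}
with the improper integral interpreted as the symmetric limit $R\to\infty$.

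First, I would invoke Proposition~\ref{p: Zk generation} with $X = Z_k(\R,\C^2)$ and $A = \lfr$, which gives precisely the required $C_0$-semigroup with exponential bound $\|\re^{\lfr t}\|_{Z_k\to Z_k} \leq M\re^{\mu_* t}$. Next, I would pick any $\eta > \mu_*$; by the Hille--Yosida theorem, all such $\lambda$ lie in the resolvent set, so $(\lfr - \lambda)^{-1}$ is bounded on $Z_k(\R,\C^2)$ along the vertical line $\Re\lambda = \eta$. Finally, for $\u_0 \in C^\infty_c(\R,\C^2)$, the compact support and smoothness imply $\u_0 \in Z_{k+2}(\R,\C)\times Z_{k+1}(\R,\C) = D(\lfr)$ on $Z_k(\R,\C^2)$, so Pazy's corollary applies and yields the stated formula with sign adjusted by the identity $(\lfr - \lambda)^{-1} = -(\lambda - \lfr)^{-1}$.

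There is no substantive obstacle: the statement is essentially a citation, and the only thing to check is the membership $\u_0 \in D(\lfr)$, which is immediate for test functions because all derivatives are smooth, compactly supported, and hence belong to every $Z_m$. If one wanted to present this as a self-contained argument rather than a citation, the standard route is to set $v(t) := \re^{\lfr t}\u_0$ for $\u_0\in D(\lfr)$ (so that $v\in C^1([0,\infty), Z_k)$ with $v'(t)=\lfr v(t)$) and compute the classical Laplace transform $\hat v(\lambda)=(\lambda-\lfr)^{-1}\u_0$ for $\Re\lambda > \mu_*$, then apply the Mellin--Bromwich inversion formula together with resolvent decay $\|(\lambda-\lfr)^{-1}\lfr \u_0\| = \mathrm{O}(|\Im\lambda|^{-1})$ on the line $\Re\lambda=\eta$ to justify convergence of the principal value integral.
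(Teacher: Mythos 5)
Your proposal is correct and matches the paper's (implicit) argument exactly: it invokes Proposition~\ref{p: Zk generation} to verify the Hille--Yosida hypotheses of Pazy's Corollary~7.5, checks that compactly supported smooth functions lie in the domain $Z_{k+2}\times Z_{k+1}$, and accounts for the sign via $(\lfr-\lambda)^{-1}=-(\lambda-\lfr)^{-1}$. The paper treats this corollary as a direct citation, so no further comparison is needed.
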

We will eventually relax the restriction $\u_0 \in C^\infty_c (\R, \C^2)$ via an approximation argument using that test functions are dense in $H^k(\R,\C^2)$. To extract decay from this contour integral representation, we would like to shift the contour as far left as possible. As a first step, we show that we can shift to a contour which is in the left half-plane except in a neighborhood of the origin. This allows us to extract exponential decay from the high frequency parts of the semigroup. 

\begin{prop}\label{p: first shift}
    For $\u_0 \in C^\infty_c (\R, \R^2)$ and $t > 0$, we have the representation
    \begin{align*}
        \re^{\lfr t} \u_0 = - \frac{1}{2 \pi \ri} \lim_{R \to \infty} \int_{\Gamma^1_R} \re^{\lambda t} (\lfr - \lambda)^{-1} \u_0 \de  \lambda,
    \end{align*}
    where the contour $\Gamma^1_R$ is depicted in the middle panel of Figure~\ref{fig:contour shifting}. Moreover, for each $1 \leq p \leq \infty,$ there exists a constant $C > 0$ such that
    \begin{align*}
        \left\| \lim_{R \to \infty} \int_{\Gamma^{\mathrm{1},+}_R \cup \Gamma^{\mathrm{1},-}_R} \re^{\lambda t} (\lfr - \lambda)^{-1} \g \de  \lambda \right\|_{L^p} \leq C \re^{- \frac{\eps \gamma}{4} t} \| \g \|_{L^p}
    \end{align*}
    for all $t > 0$ and $\g \in C^\infty_c (\R, \R^2)$, where the vertical segments $\Gamma_{R}^{1,\pm}$ are depicted in the middle panel of Figure~\ref{fig:contour shifting}. 
\end{prop}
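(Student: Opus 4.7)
The plan is to deform the vertical contour in Corollary~\ref{c: inverse laplace} into the contour $\Gamma^1_R$ via Cauchy's theorem. The key ingredients are: (i) analyticity of $(\El_{\rm fr} - \lambda)^{-1}$ in the region swept out by the deformation, which reduces to locating the spectrum of $\El_{\rm fr}$, and (ii) sufficient resolvent decay as $|\Im \lambda| \to \infty$ to discard the horizontal closing arcs.

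For (i), Proposition~\ref{p: fr essential spectrum} places $\Sigma_{\rm ess}(\El_{\rm fr})$ strictly inside $\{\Re \lambda < 0\}$ outside any fixed neighborhood of the origin, and Hypothesis~\ref{hyp: point spectrum} excludes point spectrum in $\{\Re \lambda \geq 0\}$. Between these two sets --- i.e.~in any bounded region where $\El_{\rm fr} - \lambda$ is Fredholm of index zero --- the point spectrum is discrete, so by possibly shrinking the vertical excursions of $\Gamma^1_R$ slightly we may arrange that $\Gamma^1_R$ and the region between it and the original line $\Re \lambda = \eta$ contain no spectrum. Thus the resolvent is analytic on the closed region bounded by the two contours, and by Cauchy's theorem the integrals agree provided the horizontal closing pieces at $\pm \ri R$ vanish as $R \to \infty$.

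For (ii), we invoke the high-frequency resolvent estimates developed in Appendix~\ref{s: contour shifting}, which exploit the specific structure of~\eqref{e: fhn comoving}: the $w$-equation carries a damping term $-\eps\gamma\, w$, and the $u$-equation is genuinely parabolic. These ingredients give, on any contour in $\{\Re \lambda \leq -\eps\gamma/4\} \cap \{|\Im \lambda| \gg 1\}$, an $L^p$-bound of the form $\|(\El_{\rm fr} - \lambda)^{-1}\|_{L^p \to L^p} \lesssim |\lambda|^{-1/2}$ (or any qualitative decay in $|\Im \lambda|$ together with the exponential factor $\re^{\Re\lambda\, t}$ will suffice for what follows). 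Combined with the factor $\re^{\lambda t} \u_0$, where $\u_0 \in C^\infty_c$ has rapidly decaying Fourier mass and so in particular lies in $H^N$ for any $N$, the horizontal arcs at height $|\Im \lambda| = R$ are of size $o(1)$ as $R \to \infty$, justifying the deformation.

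Finally, the exponential decay estimate on $\Gamma^{1,\pm}_R$ follows by choosing $\Gamma^1_R$ so that $\Re \lambda \leq -\eps\gamma/4$ on $\Gamma^{1,\pm}_R$, so that $|\re^{\lambda t}| \leq \re^{-\eps \gamma t/4}$ pointwise on these pieces; integrating this bound against the pointwise $L^p \to L^p$ resolvent estimates from Appendix~\ref{s: contour shifting} (which are integrable in $|\Im \lambda|$ along $\Gamma^{1,\pm}_R$) yields the claimed bound with $C$ independent of $t$ and $\g$. The main obstacle here is the degenerate diffusion: standard sectoriality fails, so the high-frequency resolvent bounds cannot be read off from generic semigroup theory and must instead be produced by hand using the hyperbolic--parabolic splitting of the FitzHugh--Nagumo linearization, as done in the referenced appendix. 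Once those bounds are in place, the contour-shifting and extension from $C^\infty_c$ data to $L^p$ via density is routine.
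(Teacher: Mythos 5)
There is a genuine gap in your treatment of the high-frequency regime. You assert that Appendix~\ref{s: contour shifting} supplies an operator-norm resolvent bound $\|(\El_{\rm fr}-\lambda)^{-1}\|_{L^p\to L^p}\lesssim |\lambda|^{-1/2}$ (or ``any qualitative decay in $|\Im\lambda|$''), integrable along $\Gamma_R^{1,\pm}$, and that once such a bound is in hand the rest is routine. No such bound holds, and this is exactly the subtlety the paper's proof is designed to circumvent. The $v$-component sees only the first-order transport operator $c\partial_\xi - \eps\gamma - \lambda$; restricted to a vertical line $\Re\lambda = -\eps\gamma/2$ with $|\Im\lambda|=|\Omega|$ large, its resolvent has $L^p\to L^p$ norm of order $1$ in $|\Omega|$ (this is Lemma~\ref{l: spectral mapping L0 inverse bounds}: $\|L_0^{22}(\mu)^{-1}\|_{L^p\to L^p}\lesssim 1/|\mu|$, which after undoing the $X=\sqrt{|\Omega|}\xi$ rescaling is $O(1)$). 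Consequently the full resolvent is neither $O(|\lambda|^{-1/2})$ nor absolutely integrable in $|\Im\lambda|$ in operator norm, and you cannot simply multiply by $\re^{-\eps\gamma t/4}$ and integrate over the unbounded contour.

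What the paper actually does is expand $(\El_{\rm fr}-\lambda)^{-1}\g$ to several explicit orders in $\mu=|\Omega|^{-1/2}$ (terms $u_0,v_0,u_1,v_1,v_2$) plus a remainder of size $|\Omega|^{-3/2}$, and only the remainder is estimated via an integrable operator-norm bound. The dangerous leading terms are controlled not through resolvent decay but by carrying out the inverse Laplace/Fourier integral in $\Omega$ explicitly \emph{after} acting on $\g$ and \emph{before} taking norms: $u_0$ and $u_1$ produce heat kernels via a contour-closing/Fourier identity; $v_0,v_1,v_2$ produce delta distributions from the principal-value identity $\mathrm{p.v.}\int_\R \re^{\ri\Omega(\xi-\zeta+ct)/c}\,\de\Omega = 2\pi c\,\delta(\xi-\zeta+ct)$, after which Young's inequality and the bounds on $\tilde{G}^{\rm tr}$ give the $L^p$ estimate. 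You gesture at the Riemann--Lebesgue phenomenon (``rapidly decaying Fourier mass''), which is indeed part of why the horizontal closing arcs vanish, but this pointwise decay for fixed $\xi$ is not a substitute for the integrated estimate on $\Gamma_R^{1,\pm}$, and in any case does not yield an operator-norm bound in $L^p$ for $p<\infty$. Your structural observations --- analyticity on the region swept by the deformation, the damping $-\eps\gamma v$, the parabolic smoothing in $u$, the failure of sectoriality --- are all correct, but the central step of the proof is missing.
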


The main difficulty in proving Proposition~\ref{p: first shift} is in uniformly controlling the resolvent $(\lfr - \lambda)^{-1}$ for $| \Im \lambda| \gg 1$. We do this by computing an expansion of the resolvent in this limit, identifying the first few terms explicitly until we can truncate with an error integrable in $\lambda$, an approach similar to~\cite{MasciaZumbrun}. Since the main idea is intuitive but the details are fairly technical, we carry out this procedure in Appendix~\ref{s: contour shifting}. 

\begin{figure}
    \centering
    \includegraphics[width=1\textwidth]{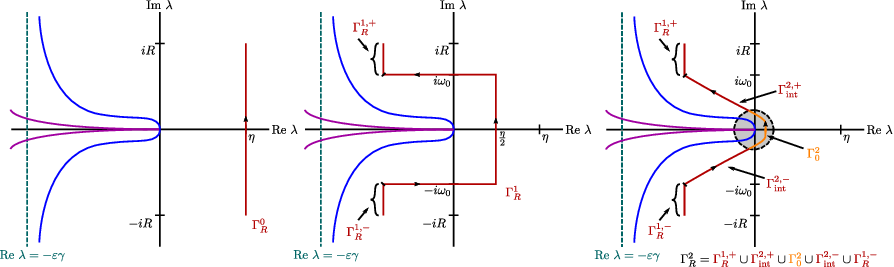}
    \caption{All three panels include the Fredholm borders of $\lfr$, associated to essential spectrum at $-\infty$ (blue) and $+\infty$ (purple). Left: the starting contour $\{ \Re \lambda = \eta, |\Im \lambda| \leq R\}$ from Corollary~\ref{c: inverse laplace}. Middle: the contour $\Gamma^1_R$ (dark red) of Proposition~\ref{p: first shift}, with high frequency parts shifted into the left half-plane. Right: the grey ball shows a neighborhood of the origin in which the resolvent analysis of Section~\ref{s: resolvent} holds; the contour $\Gamma^2_R$ is the disjoint union of the vertical segments $\Gamma_R^{1,\pm}$ (dark red), the diagonal segments $\Gamma_\mathrm{int}^{2,\pm}$ (dark red), and $\Gamma^2_0$ (orange). All parts of $\Gamma^2_R$ are contained in the open left half-plane except for $\Gamma^2_0$. } 
    \label{fig:contour shifting}
\end{figure}

\begin{corollary}\label{c: second shift}
    For $\u_0 \in C^\infty_c (\R, \R^2)$ and $t > 0$, we have the representation
    \begin{align}
        \re^{\lfr t} \u_0 = -\frac{1}{2 \pi \ri} \lim_{R \to \infty} \int_{\Gamma^2_R} \re^{\lambda t} (\lfr - \lambda)^{-1} \u_0 \de  \lambda, \label{e: shifting contour 1}
    \end{align}
    where the contour $\Gamma^2_R = \Gamma^{1,+}_R \cup \Gamma_\mathrm{int}^{2, +} \cup \Gamma^2_0 \cup \Gamma_\mathrm{int}^{2,-} \cup \Gamma_R^{1,-}$ is depicted in the right panel of Figure~\ref{fig:contour shifting}. Note that $\Gamma^{1,\pm}_R$ and $\Gamma_\mathrm{int}^{2, \pm}$ lie in the open left half-plane. Hence, for each $1 \leq p \leq \infty$ there exist constants $C, r > 0$ such that 
  \begin{align}
        \left\| \lim_{R \to \infty} \int_{\Gamma^{1,+}_R \cup \Gamma^{2,+}_\mathrm{int} \cup \Gamma^{2, -}_\mathrm{int} \Gamma^{1,-}_R} \re^{\lambda t} (\lfr - \lambda)^{-1} \g \de  \lambda \right\|_{L^p} \leq C \re^{-  r t} \| \g \|_{L^p} \label{e: shifting contour estimate}
    \end{align}
    for all $t > 0$ and $\g \in C^\infty_c (\R, \R^2)$.
\end{corollary}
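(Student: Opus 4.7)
The plan is to deform the contour $\Gamma^1_R$ produced by Proposition~\ref{p: first shift} further into the left half-plane, retreating only to a small ball around the origin where essential spectrum touches the imaginary axis. First I would extend analyticity of the resolvent onto the intermediate region: by Proposition~\ref{p: fr essential spectrum} the essential spectrum $\Sigma_{\mathrm{ess}}(\lfr)$ touches $\ri\R$ only at $\lambda=0$, and by Hypothesis~\ref{hyp: point spectrum} there is no point spectrum with $\Re\lambda \geq 0$. A standard compactness and continuity-of-Fredholm-index argument then yields $r>0$ and $\delta>0$ such that $(\lfr-\lambda)^{-1}$ exists, is analytic, and is uniformly bounded as an operator on $Z_k(\R,\C^2)$ (and on $L^p$) on the set $\{\lambda : \Re\lambda \geq -r,\ |\lambda| \geq \delta\}$, where $\delta$ is chosen so the ball enclosing $\Gamma^2_0$ lies inside the ball of radius $\delta$, and $r$ is the horizontal offset of $\Gamma^{2,\pm}_\mathrm{int}$.

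Next, I would apply Cauchy's integral theorem to the bounded region enclosed by the piece of $\Gamma^1_R$ that is not already shared with $\Gamma^2_R$, together with $\Gamma^{2,\pm}_\mathrm{int}$, short vertical segments at $\Im\lambda = \pm R$, and short segments near the boundary of the ball around the origin that glue to $\Gamma^2_0$. Since the two contours already agree on the high-frequency parts $\Gamma^{1,\pm}_R$, the only new deformation is over a fixed bounded middle region where the resolvent is analytic and uniformly bounded by Step~1. The vertical connectors at $|\Im\lambda| = R$ contribute negligibly as $R\to\infty$ by the high-frequency resolvent bounds already produced in Appendix~\ref{s: contour shifting} (the same bounds that made Proposition~\ref{p: first shift} work), while the junctions near the ball around the origin match exactly with the endpoints of $\Gamma^2_0$. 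This establishes~\eqref{e: shifting contour 1}.

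Finally, for the estimate~\eqref{e: shifting contour estimate} I would exploit that on each of $\Gamma^{2,\pm}_\mathrm{int}$ we have $\Re\lambda \leq -r$ and the contour length is finite and independent of $R$, while the resolvent is uniformly operator-bounded there by Step~1. Thus $\bigl\|\int_{\Gamma^{2,\pm}_\mathrm{int}} \re^{\lambda t} (\lfr - \lambda)^{-1} \g\, \de\lambda\bigr\|_{L^p} \lesssim \re^{-rt} \|\g\|_{L^p}$. Combining with the bound on $\Gamma^{1,\pm}_R$ from Proposition~\ref{p: first shift} (decay rate $\eps\gamma/4$) yields~\eqref{e: shifting contour estimate} with rate $\min(r, \eps\gamma/4)$.

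The main obstacle I expect is establishing uniform \emph{operator} bounds for $(\lfr-\lambda)^{-1}$ on the middle region, rather than just pointwise existence from the spectral hypotheses; in particular, extending such bounds from $L^2$ (where the spectral hypotheses apply most directly) to $L^p$ and to $Z_k$, uniformly in $\lambda$ on the chosen contour, may require combining interpolation with the structural high-frequency resolvent bounds of Appendix~\ref{s: contour shifting} and an explicit parametrix construction reflecting the mixed parabolic/ODE character of~\eqref{e: fhn comoving}.
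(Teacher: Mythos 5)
Your proposal takes essentially the same route as the paper: homotope $\Gamma^1_R$ to $\Gamma^2_R$ using that Proposition~\ref{p: fr essential spectrum} and Hypothesis~\ref{hyp: point spectrum} place the intermediate region in the resolvent set, then bound the contribution from the compact portion $\Gamma^{2,\pm}_\mathrm{int}$ by exponential decay since it lies strictly in the left half-plane with uniformly bounded resolvent, and combine with Proposition~\ref{p: first shift} for the rays. Your concluding worry about $L^p$ operator bounds is more caution than is needed here, since on a compact subset of the resolvent set the resolvent is automatically uniformly bounded by analyticity in any fixed Banach space, which the paper's proof takes as given.
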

\begin{proof}
    By the spectral stability captured in Proposition~\ref{p: fr essential spectrum} and Hypothesis~\ref{hyp: point spectrum}, we can homotope $\Gamma^1_R$ to $\Gamma^2_R$ while remaining in the resolvent set of $\lfr$, so~\eqref{e: shifting contour 1} follows. The estimate~\eqref{e: shifting contour estimate} follows from Proposition~\ref{p: first shift} together with the fact that the compact portion $\Gamma^{2,+}_\mathrm{int} \cup \Gamma^{2,-}_\mathrm{int}$ is contained both in the left half-plane, bounded away from the imaginary axis, and in the resolvent set of $\lfr$.
\end{proof}

\section{Abstract linear estimates and choice of contours near the origin}\label{s: abstract linear}
One observes from Corollary~\ref{c: second shift} that the only part of the semigroup that does not decay exponentially is the contribution from $\Gamma^2_0$. This is due to two curves of essential spectrum of $\lfr$ touching the origin, one associated to the branched diffusive mode in the leading edge, and the other associated to the outgoing diffusive mode in the wake. In Section~\ref{s: resolvent} we decompose the resolvent in a neighborhood of the origin, identifying terms associated to the branched mode, terms associated to the outgoing mode, and terms associated to the interaction of both modes. In this section, we choose integration contours which are adapted to the expected behavior of these three type of terms and use these contours to prove abstract linear decay estimates. We will combine these estimates with the resolvent analysis and decomposition, carried out in Section~\ref{s: resolvent}, to establish sharp bounds on the semigroup $\re^{\lfr t}$ in Section~\ref{s: linear estimates}. 

\subsection{Regions of analyticity near the origin}\label{s: analyticity regions}

We identify regions near the origin in which parts of the resolvent associated to the branched or outgoing diffusive mode, or parts associated to their interaction, can be rendered analytic in $\lambda$. Our choice of integration contours for the different type of terms arising in the resolvent decomposition will thus be determined by these regions. Some critical parts of the resolvent can be represented as a convolution with a suitable integral kernel (or spatial Green's function), allowing one to consider the analyticity properties of the integral kernel after swapping integrals.

We distinguish between regions of pointwise analyticity and regions of analyticity in a fixed function space (typically $L^2$- or $L^\infty$-spaces). In regions where certain parts of the resolvent (or their associated integral kernels) are pointwise analytic, but perhaps not analytic in fixed function spaces due to loss of spatial localization, we can deform our integration contour pointwise and employ pointwise estimates. We explain in Section~\ref{sec: estimates outgoing and interaction} below the need for pointwise estimates, i.e., why normed estimates are insufficient to obtain sharp bounds on certain parts of the resolvent. 

\begin{figure}
    \centering
    \includegraphics[width=1\textwidth]{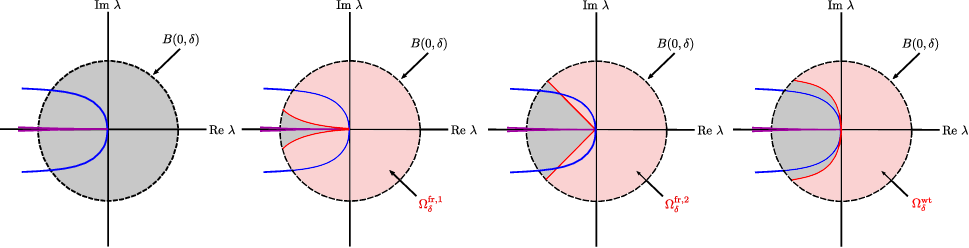}
    \caption{Far left: Fredholm borders of $\lfr$ (blue, purple) in a neighborhood $B(0,\delta)$ (shaded, grey) of the origin. Middle left: the region $\Omega^{\mathrm{fr},1}_\delta$ (shaded, red) defined in~\eqref{e: Omega fr 1 def} . Middle right: the region $\Omega^{\mathrm{fr},2}_\delta$ (shaded, red) defined in~\eqref{e: Omega fr 2 def}. Far right: the region $\Omega^\mathrm{wt}_\delta$ (shaded, red) defined in~\eqref{e: Omega wt def}.}
    \label{fig: resolvent regions}
\end{figure}

\paragraph{Terms associated to branched diffusive modes.} The essential spectrum in the leading edge gives rise to branched diffusive modes, for which the associated parts of the resolvent are roughly of the form
\begin{align*}
    \u_+(\xi; \sigma) \sim \alpha_+(\sigma) \chi_+ (\xi) \re^{\nufr^- (\sigma) \xi},
\end{align*}
where we denote $\sigma = \sqrt{\lambda}$ with branch cut along the negative real axis, and $\alpha_+(\sigma)$ is some analytic function of $\sigma$ defined on a neighborhood of the origin. We recall that $\chi_+(\xi)$ is a smooth positive cutoff function supported on $\{ \xi \geq 0 \}$, and $\nufr^-(\sigma)$ is the spatial eigenvalue from Corollary~\ref{c: leading edge spatial eval}, which is analytic in $\sigma$ on a neighborhood of the origin. The obstructions to analyticity of $\u_+$ are twofold:
\begin{enumerate}
    \item Due to the branch point in $\alpha_+(\sigma)$ and $\nufr^-(\sigma)$ at $\lambda = 0$, $\u_+(\xi; \sigma)$ is only pointwise analytic in $\lambda = \sigma^2$ away from the negative real axis.
    \item Since $\Re \nufr^-(\sigma)$ changes sign as $\lambda = \sigma^2$ crosses $\Sigma(\mcl_+)$, $\u_+(\xi; \sigma)$ loses spatial localization as $\lambda$ passes through $\Sigma(\mcl_+)$, and hence loses analyticity in a fixed function space. 
\end{enumerate}
The first obstruction prevents us, even in a pointwise sense, from using contours which pass through the branch point at the origin. The second obstruction prevents us from passing $\Sigma(\mcl_+)$ with our integration contours when measuring in a fixed function space. 

What contours we may use depends on what space we want to measure in. In our nonlinear argument, we will bound the solution (and some derivatives) in both the $L^2$- and $L^\infty$-norms. Taking the $L^2$-norm, we find
\begin{align}
    \| \u_+(\cdot; \sigma) \|_{L^2} \sim \frac{1}{\sqrt{\Re \nufr^-(\sigma)}} \sim \frac{1}{\sqrt{\Re \sigma}}. \label{e: leading edge blowup}
\end{align}
To extract temporal decay, we want to move the contours as far left as possible. Since we can't pass through the branch point, here this means we want to shrink the contour into as small of a neighborhood of the origin as possible. By~\eqref{e: leading edge blowup}, $\u_+(\cdot; \sigma)$ blows up in $L^2(\R)$ as $\sigma$ approaches zero. Nonetheless, if we restrict $\lambda = \sigma^2$ to a sector 
\begin{align}
    \Omega^{\mathrm{fr},2}_\delta = \left\{r \re^{\ri \phi} : 0 < r < \delta, -\theta_0 < \phi < \theta_0 \right\}, \label{e: Omega fr 2 def}
\end{align}
for some $\delta,\theta_0 > 0$, then there exists a constant $C > 0$ such that $\Re \sigma \geq C |\sigma|$ for all $\sigma \in \Omega^{\mathrm{fr},2}_\delta$; see Figure~\ref{fig: resolvent regions}. Then, for $\sigma \in \Omega^{\mathrm{fr},2}_\delta$ we can quantify the rate of blowup of the resolvent as 
\begin{align*}
    \| \u_+(\cdot; \sigma) \|_{L^2} \sim \frac{1}{ |\sigma|} = \frac{1}{|\lambda|^{1/2}}.
\end{align*}
Hence, when estimating branched diffusive modes in $L^2(\R)$, we will use contours which remain in $\Omega^{\mathrm{fr},2}_\delta$, but can now be shrunk arbitrarily small, since the blowup rate $|\lambda|^{-1/2}$ is integrable in $\lambda$. 

On the other hand, we find that $\| \u_+(\cdot; \sigma) \|_{L^\infty}$ remains bounded all the way up to $\Sigma(\mcl_+)$, at which point $\Re \nufr^-(\sigma)$ becomes zero. If we measure in the even weaker $L^\infty_{0, -1}$-norm, then by Taylor expansion, one sees that $\u_+$ obeys the Lipschitz estimate $\| \u_+ (\cdot; \sigma) - \u_+ (\cdot; 0) \|_{L^\infty_{0, -1}} \lesssim |\sigma|$ for $\sigma$ to the right of $\Sigma(\mcl_+)$. We demonstrate in Proposition~\ref{p: front estimate sharp decay} that such a Lipschitz estimate leads to faster decay compared to decay estimates in norms in which $\u_+(\cdot,\sigma)$ is merely bounded in $\sigma$. 

All in all, we will obtain the fastest decay estimates for branched modes when measuring in $L^\infty_{0,-1}(\R)$, and when measuring in this space, we can use contours which pass into a larger region than $\Omega^{\mathrm{fr},2}_\delta$, as long as we remain to the right of $\Sigma(\mcl_+)$. We therefore define the curve
\begin{align*}
    K^{\mathrm{fr},1} = \left\{ -k^2 + \ri C^{\mathrm{fr},3} k^3 : k \in \R \right\}.
\end{align*}
Choosing $C^{\mathrm{fr},3} > 0$ sufficiently large, we can guarantee that in a small enough neighborhood of the origin, $K^{\mathrm{fr},1}$ lies to the right of the essential spectrum of $\mcl_+$. Thus, given $\delta > 0$, we define
\begin{align}
    \Omega^{\mathrm{fr},1}_\delta = \left\{ \lambda \in B(0, \delta) : \lambda \text{ lies to the right of } K^{\mathrm{fr},1} \right\},  \label{e: Omega fr 1 def}
\end{align} 
see Figure~\ref{fig: resolvent regions}. When measuring in $L^\infty_{0,-1}(\R)$, we will use contours which remain in the larger region $\Omega^{\mathrm{fr},1}_\delta$. 

\paragraph{Terms associated to the outgoing diffusive mode.} The parts of the resolvent that are associated to the outgoing diffusive mode only are either roughly of the form
\begin{align} \label{e: model term outgoing mode 2}
    \u_\mathrm{out} (\xi; \lambda) \sim  \alpha_\mathrm{out} (\lambda) \chi_-(\xi) \re^{\nuwt(\lambda) \xi},
\end{align}
or can be represented as a convolution with a suitable integral kernel and are then roughly of the form
\begin{align} \label{e: model term outgoing mode}
    \u_\mathrm{out} (\xi; \lambda) \sim  \int_\R \alpha_\mathrm{out} (\lambda) \chi_-(\xi) \chi_-(\xi-\zeta) \re^{\nuwt(\lambda) (\xi-\zeta)} Q(\zeta) \de \zeta,
\end{align}
where $\alphaout(\lambda)$ is some function which is analytic in $\lambda$ in a full neighborhood of the origin and $Q(\zeta)$ is a bounded function of $\zeta$. We recall that $\chi_-(z)$ is a smooth cutoff function supported on $\{ z \leq 0 \}$ and $\nuwt(\lambda)$ is the spatial Floquet exponent from Proposition~\ref{prop: wake spectral curve expansion}, which is analytic in $\lambda$ in a neighborhood of the origin. The only obstruction to analyticity is that $\chi_-(z) \re^{\nuwt (\lambda) z}$ loses spatial localization as $\lambda$ crosses $\Sigma(\lwt)$, since $\Re \nuwt(\lambda)$ then changes sign by Proposition~\ref{prop: wake spectral curve expansion}. After swapping the convolution integral with respect to $\zeta$ with the complex line integral with respect to $\lambda$ in case of the representation~\eqref{e: model term outgoing mode}, we may deform integration contours pointwise, i.e., we may let the choice of integration contour depend on the values of $\xi$ and $t$ in case of~\eqref{e: model term outgoing mode 2} and on $\xi,\zeta$, and $t$ in case of~\eqref{e: model term outgoing mode}. Since this only requires pointwise analyticity, we can pass through $\Sigma(\lwt)$. Proceeding as in~\cite{ZumbrunHoward, MasciaZumbrun}, we make judicious choices for our pointwise integration contours to extract sharp spatio-temporal localization of the inverse Laplace transform of~\eqref{e: model term outgoing mode 2} or of the inverse Laplace transform of the integral kernel associated to~\eqref{e: model term outgoing mode}, which eventually yields sharp normed estimates on the corresponding parts of the semigroup. 

\paragraph{Interaction terms.} Terms which capture the interaction between the branched and outgoing modes are roughly of the form
\begin{align} \label{e: model interaction term}
    \uint (\xi; \sigma) \sim \alphaint(\sigma) \chi_-(\xi) \re^{\nuwt (\sigma^2) \xi}, 
\end{align}
where $\alphaint(\sigma)$ is some analytic function of $\sigma = \sqrt{\lambda}$ defined on a full neighborhood of the origin, which satisfies $\alphaint(0) = 0$ without loss of generality.\footnote{If $\alphaint(\sigma)$ does not vanish at $\sigma = 0$, we can always subtract $\alphaint(0)\chi_-(\xi) \re^{\nuwt (\sigma^2) \xi}$ from~\eqref{e: model interaction term}, which is a term of the form~\eqref{e: model term outgoing mode 2}, associated to the outgoing mode only.} We cannot estimate these terms using the pointwise contours of~\cite{ZumbrunHoward, MasciaZumbrun}. Indeed, these contours pass through the origin which is not allowed here due to the branch point of $\alphaint(\sigma)$ at $\lambda = 0$. Pointwise contours designed to estimate terms like~\eqref{e: model interaction term}, which thereby avoid passing through the branch point, were developed in~\cite{HowardDegenerateLinear}. However, instead of using the delicate pointwise contours of~\cite{HowardDegenerateLinear}, we obtain linear estimates sufficient to establish sharp nonlinear decay rates by measuring $\uint$ in norm using simple contours which remain to the right of $\Sigma(\lwt)$. 
We therefore define a curve
\begin{align*}
    K^\mathrm{wt} = \left\{ \ri k - C^\mathrm{wt} k^2 : k \in \R \right\}.
\end{align*}
Provided $C^\mathrm{wt} > 0$ is sufficiently small, this curve lies to the right of $\Sigma(\lwt)$, in a small enough neighborhood of the origin. Given $\delta > 0$ small, we then define the region
\begin{align}
    \Omega^\mathrm{wt}_\delta = \left\{ \lambda \in B(0,\delta) : \lambda \text{ lies to the right of } K^{\mathrm{wt}} \right\}. \label{e: Omega wt def}
\end{align}
We will restrict our contours to the region $\Omega^\mathrm{wt}_\delta$; see Figure~\ref{fig: resolvent regions}. A risk of our approach is that normed estimates do not always afford sharp bounds on terms associated to outgoing diffusive modes, as explained in~\S\ref{sec: estimates outgoing and interaction} below. Nevertheless, the observation that $\alphaint(\sigma)$ vanishes at $
\sigma = 0$ yields an additional factor $\sigma$ in~\eqref{e: model interaction term}, rendering additional temporal decay which we can exploit to compensate for the use of normed estimates.

\subsection{Estimates for branched modes} 
Fix $\delta > 0$ small, and let $\Omega^\mathrm{fr, 1}_\delta$, $\Omega^\mathrm{fr, 2}_\delta$, and $\Omega^\mathrm{wt}_\delta$ be defined by~\eqref{e: Omega fr 1 def},~\eqref{e: Omega fr 2 def}, and~\eqref{e: Omega wt def}, respectively. Moreover, let $\Delta^{\mathrm{fr},j}_\delta$ be the image of $\Omega^{\mathrm{fr},j}_\delta$ under the principal square root $\lambda \mapsto \sqrt{\lambda}$ for $j = 1,2$. We start by establishing normed estimates which will be used for parts of the resolvent associated to branched modes. 

\begin{prop}[Normed estimates with sharp decay rates for branched modes]\label{p: front estimate sharp decay}
Let $X$ be a Banach space, and suppose that for some small $\delta > 0$, we have a function $\sigma \mapsto u(\sigma) : \Delta_\delta^{\mathrm{fr},1} \to X$ which is analytic in $\sigma^2$ on $\Omega_\delta^{\mathrm{fr},1}$, and extends Lipschitz-continuously to $\sigma = 0$ with constant $L > 0$ such that
    \begin{align}
        \|{\color{black} u(\sigma)} - u(0)\|_X \leq L |\sigma|, \label{e: branched lipschitz}
    \end{align}
    for $\sigma \in \Delta_\delta^{\mathrm{fr},1}$.
Then, there exists a constant $C > 0$ such that for all $t > 0$, we have
    \begin{align*}
        \left\| \int_{\Gamma^2_0} \re^{\sigma^2 t} u(\sigma) \de  (\sigma^2) \right\|_X \leq \frac{C}{(1+t)^{\frac32}}. 
    \end{align*}
\end{prop}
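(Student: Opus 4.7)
The plan is to decompose $u(\sigma) = u(0) + [u(\sigma) - u(0)]$ and estimate each piece separately. The Lipschitz bound produces a gain of $|\sigma|$ which, combined with the Jacobian $\de(\sigma^2) = 2\sigma\,\de\sigma$ of the substitution $\lambda = \sigma^2$, gives an overall factor $|\sigma|^2$; integrated against a Gaussian $\re^{-c|\sigma|^2 t}$ this then yields the characteristic $t^{-3/2}$ branched-mode decay, while the constant piece $u(0)$ only generates exponentially decaying terms.

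For the $u(0)$-piece, using that $\re^{\lambda t}$ is entire, antidifferentiation gives
$$u(0)\int_{\Gamma^2_0} \re^{\lambda t}\, \de\lambda \;=\; \frac{u(0)}{t}\bigl(\re^{\lambda_+ t} - \re^{\lambda_- t}\bigr),$$
where $\lambda_\pm$ denote the endpoints of $\Gamma^2_0$. By the construction in Corollary~\ref{c: second shift}, these endpoints lie strictly in the open left half-plane, so $\Re\lambda_\pm \leq -c_0 < 0$, and the right-hand side is bounded in $X$-norm by $C/(1+t)^{3/2}$ uniformly in $t\geq 0$; the apparent singularity at $t = 0^+$ is removable by l'H\^{o}pital's rule.

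For the remainder, I will exploit analyticity of $u$ in $\sigma^2$ on $\Omega^{\mathrm{fr},1}_\delta$ --- equivalently, analyticity of $u$ in $\sigma$ on $\Delta^{\mathrm{fr},1}_\delta$ --- to deform $\widetilde{\Gamma}_0 := \{\sigma : \sigma^2 \in \Gamma^2_0\}$ to a wedge contour $\widetilde{\Gamma}^* \subset \Delta^{\mathrm{fr},1}_\delta$ consisting of two rays $\sigma = r\re^{\pm \ri\alpha}$ meeting at $\sigma = 0$, with a fixed angle $\alpha \in (\pi/4, \pi/2)$, say $\alpha = \pi/3$. Since the left boundary $\sqrt{K^{\mathrm{fr},1}}$ of $\Delta^{\mathrm{fr},1}_\delta$ is a rightward-opening parabola tangent to the imaginary axis at the origin, any such $\alpha$ keeps these rays inside $\Delta^{\mathrm{fr},1}_\delta$ for small $r$; short connecting arcs joining the tips of the rays to the actual endpoints $\sigma_\pm = \sqrt{\lambda_\pm}$ of $\widetilde{\Gamma}_0$ remain bounded away from $\sigma = 0$ with $\Re(\sigma^2)\leq -c' < 0$, and so contribute only exponentially decaying terms. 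Along the wedge itself, $\Re(\sigma^2) = |\sigma|^2\cos(2\alpha) \leq -c|\sigma|^2$ with $c = -\cos(2\alpha) > 0$, and combining the Lipschitz bound with $|\de(\sigma^2)| = 2|\sigma|\,|\de\sigma|$ yields
$$\left\|\int_{\widetilde{\Gamma}^*} \re^{\sigma^2 t}[u(\sigma) - u(0)]\,\de(\sigma^2)\right\|_X \;\leq\; 4L\int_0^{\infty} r^2 \re^{-c r^2 t}\,\de r \;\leq\; \frac{C}{t^{3/2}}.$$

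The main subtle point is that $u$ is merely Lipschitz at $\sigma = 0$, not analytic there, so one must be careful that a wedge contour with vertex at the branch point is a legitimate deformation of $\widetilde{\Gamma}_0$. This is handled by inserting a small semicircular detour of radius $\epsilon$ around $\sigma = 0$, using the analyticity of $u$ on $\Delta^{\mathrm{fr},1}_\delta\setminus\{0\}$ to deform within that slit domain, and then letting $\epsilon \to 0$: the detour contribution vanishes because $[u(\sigma) - u(0)]\,\de(\sigma^2) = O(|\sigma|^2\,|\de\sigma|)$ at the origin. Summing the two pieces yields $C/(1+t)^{3/2}$ for $t$ large, while boundedness at small $t$ follows at once from the uniform boundedness of the original integrand on the compact contour $\Gamma^2_0$.
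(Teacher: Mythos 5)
Your proposal is correct, and it carries out the same essential decomposition as the paper: split $u(\sigma) = u(0) + [u(\sigma)-u(0)]$, observe that the constant piece is exponentially negligible, and combine the Lipschitz gain $|\sigma|$ with the Jacobian factor $|2\sigma|$ from $\de(\sigma^2)$ to produce the $|\sigma|^2\,\re^{-c|\sigma|^2 t}$ integrand responsible for the $t^{-3/2}$ rate. The differences are technical and largely cosmetic. First, the paper deforms to a parabolic arc $\sigma(a) = \ri a + c_2 a^2$ tangent to the imaginary axis (so $\Re\sigma^2(a) = -a^2 + c_2^2a^4$), whereas you use a wedge of straight rays at angle $\pm\pi/3$, which gives the exact scaling identity $\Re(\sigma^2) = -\tfrac12|\sigma|^2$ and makes the Gaussian scaling argument immediate; the trade-off is that the wedge needs the observation that a fixed angular sector eventually escapes the quadratic-tangency region $\Delta^{\mathrm{fr},1}_\delta$, so a radius cut-off and connector arcs with $\Re(\sigma^2)\le -c'<0$ are needed, which you supply. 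Second, for the $u(0)$ piece the paper changes variables back to $\lambda=\sigma^2$ and pushes the contour into the open left half-plane using analyticity of $\re^{\lambda t}$, while you simply evaluate $\int_{\Gamma^2_0}\re^{\lambda t}\,\de\lambda = t^{-1}(\re^{\lambda_+t}-\re^{\lambda_-t})$ by antidifferentiation --- more elementary and equally valid since the endpoints of $\Gamma^2_0$ lie strictly in the open left half-plane. Finally, you are explicit about the topological legitimacy of deforming to a contour with vertex at the branch point via an $\epsilon$-detour whose contribution is $O(\epsilon^3)$; the paper relies on the same continuity-up-to-$\sigma=0$ without spelling it out. Both approaches reach the result; yours trades the paper's parabolic contour for a straight wedge and a slightly more self-contained treatment of the endpoint singularity.
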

\begin{proof}
    The proof is identical to~\cite[Proof of Proposition 4.1]{AveryScheel}, but we sketch it here for completeness. We will integrate in $\sigma$ rather than in $\lambda = \sigma^2$. Using analyticity of $\sigma^2 \mapsto {\color{black} u(\sigma)}$ on $\Omega^{\mathrm{fr},1}_\delta$ and continuity up to $\sigma = 0$, we can shift to an integration contour which runs tangent to the imaginary axis in the $\sigma$-plane, given by
    \begin{align*}
        \tilde{\Gamma}^{\mathrm{fr}, 1} = \tilde{\Gamma}^{\mathrm{fr},1}_- \cup \tilde{\Gamma}^{\mathrm{fr},1}_0 \cup \tilde{\Gamma}^{\mathrm{fr},1}_+,
    \end{align*}
    where
    \begin{align}
        \tilde{\Gamma}^{\mathrm{fr},1}_0 = \{\sigma(a) : a \in [-a_*, a_*] \}, \qquad  \sigma(a) := \ri a + c_2 a^2, \label{e: branched contour parameterization}
    \end{align}
    where $c_2 > 0$ is chosen sufficiently large and $a_* > 0$ is chosen sufficiently small so that the image $\Gamma^{\mathrm{fr,1}}_0$ of $\tilde{\Gamma}^{\mathrm{fr},1}_0$ under $\sigma \mapsto \sigma^2$ lies within $\Omega^{\mathrm{fr},1}_\delta$, touching its boundary only at the origin. Let $\Gamma^{\mathrm{fr,1}}_\pm$ be straight line segments which connect the ends of $\Gamma^{\mathrm{fr},1}_0$ to points on the boundary of $B(0, \delta)$ which are in the left half-plane but to the right of $\Sigma(\lfr)$. Let $\tilde{\Gamma}^{\mathrm{fr,1}}_\pm$ denote the images of $\Gamma^{\mathrm{fr,1}}_\pm$ under the map $\lambda \mapsto \sqrt{\lambda}$. See Figure~\ref{fig: small contours} for a depiction of the integration contour $\Gamma^{\mathrm{fr},1} = \Gamma^{\mathrm{fr},1}_- \cup \Gamma^{\mathrm{fr},1}_0 \cup \Gamma^{\mathrm{fr},1}_+$ in the $\lambda$-plane. 
    
    Integrating in $\sigma$ and deforming to this new contour, we then have
    \begin{align*}
        \int_{\Gamma^2_0} \re^{\sigma^2 t} u(\sigma) \de  (\sigma^2) = \int_{\tilde{\Gamma}^{\mathrm{fr},1}_0} \re^{\sigma^2 t} u(\sigma) 2 \sigma \de  \sigma + \sum_{\iota = \pm} \int_{\tilde{\Gamma}^{\mathrm{fr},1}_\iota} \re^{\sigma^2 t} u(\sigma) 2 \sigma \de  \sigma. 
    \end{align*}
    Since $\Gamma^{\mathrm{fr},1}_\pm$ lie in the left half-plane, and $u(\sigma)$ is uniformly bounded along these segments, the contributions from these portions of the contour decay exponentially in time, so we focus on the first integral. We use~\eqref{e: branched lipschitz} to expand
    \begin{align*}
        \int_{\tilde{\Gamma}^{\mathrm{fr},1}_0} \re^{\sigma^2 t} u(\sigma) 2 \sigma \de  \sigma = 2 \int_{\tilde{\Gamma}^{\mathrm{fr},1}_0} \re^{\sigma^2 t} u(0)  \sigma \de  \sigma + \int_{\tilde{\Gamma}^{\mathrm{fr},1}_0} \re^{\sigma^2 t} (u(\sigma)-u(0))  \sigma \de  \sigma.
    \end{align*}
    Since $u(0)$ does not depend on $\sigma$, we can change variables in the first integral to integrate again over $\sigma^2$, with integrand analytic in $\sigma^2$ in a full neighborhood of the origin, so that we may shift the integration contour fully into the left half-plane and thereby extract exponential decay. For the second integral, we use~\eqref{e: branched contour parameterization} to explicitly parameterize the integral as
    \begin{align*}
        \int_{\tilde{\Gamma}^{\mathrm{fr},1}_0} \re^{\sigma^2 t} (u(\sigma)-u(0))  \sigma \de  \sigma = \int_{-a_*}^{a_*} \re^{(-a^2 + c_2^2 a^4) t} \re^{2 \ri c_2 a^3 t} (u(\ri a + c_2 a^2) - u(0)) (\ri a + c_2 a^2) (\ri + 2 c_2 a) \de a. 
    \end{align*}
    Using the estimate~\eqref{e: branched lipschitz} and choosing $a_*$ sufficiently small, we arrive at
    \begin{align*}
        \left\| \int_{\tilde{\Gamma}^{\mathrm{fr},1}_0} \re^{\sigma^2 t} (u(\sigma)-u(0))  \sigma \de  \sigma \right\| \lesssim \int_{-a_*}^{a_*} \re^{- a^2 t/2} |a|^2 \de a \lesssim \frac{1}{(1+t)^{\frac32}}, 
    \end{align*}
    for $t > 0$, where the last inequality follows from applying the change of variables $z = a \sqrt{t}$ for $t > 1$, whereas for $t \in [0,1]$ we use that $|a|^2 \re^{-a^2 t/2} \leq a_*^2$ for $a \in (-a_*,a_*)$. 
\end{proof}

\begin{figure}
    \centering
    \includegraphics[width=1\textwidth]{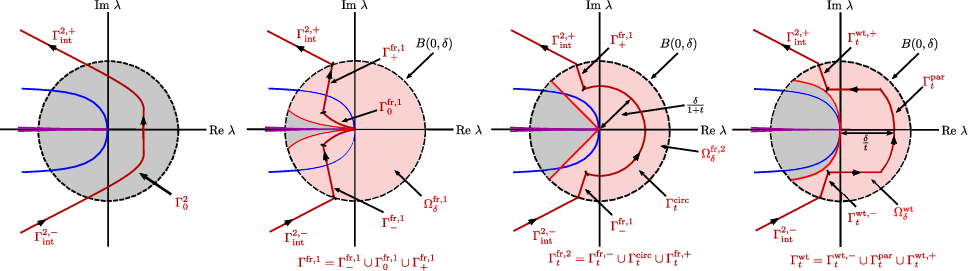}
    \caption{Far left: the original contour $\Gamma^2_0$ (dark red) in a neighborhood $B(0, \delta)$ (grey, shaded) of the origin. Middle left: the contour $\Gamma^{\mathrm{fr},1}$ (dark red) used in the proof of Proposition~\ref{p: front estimate sharp decay}. Middle right: the contour $\Gamma^{\mathrm{fr},2}_t$ used in the proof of Proposition~\ref{p: L2 front estimate}. Far right: the contour $\Gamma^\mathrm{wt}_t$ (dark red) used in the proof of Proposition~\ref{p: wake norm estimates}.}
    \label{fig: small contours}
\end{figure}

We will only be able to obtain the estimate~\eqref{e: branched lipschitz} and the associated $t^{-3/2}$ decay rate when measuring in fairly weak norms. When measuring in $L^2(\R)$, as needed for our nonlinear argument, the resolvent will instead blowup at $\sigma = 0$ as explained in~\S\ref{s: analyticity regions}, but with a quantified rate in the region $\Omega^{\mathrm{fr},2}_\delta$. The next estimate is adapted to precisely this scenario. 

\begin{prop}[Estimates for branched modes in stronger norms]\label{p: L2 front estimate}
	Let $X$ be a Banach space, and suppose that for some $\delta > 0$ small, we have a map $\sigma \mapsto u(\sigma) : \Delta_\delta^{\mathrm{fr},2} \to X$, which is analytic in $\sigma^2$ on $\Omega_\delta^{\mathrm{fr},2}$, and there exist $r \in [0,2)$ and $K > 0$ such that the blowup estimate
	\begin{align}
	\| {\color{black} u(\sigma)} \| \leq \frac{K}{|\sigma|^r} \label{e: blow up estimate} 
	\end{align}
	holds for $\sigma \in \Delta_ \delta^{\mathrm{fr},2}$. Then, there exists a constant $C > 0$ such that for all $t > 0$, we have
	\begin{align*} 
	\left\| \int_{\Gamma^2_0} \re^{\sigma^2 t} u(\sigma) d (\sigma^2) \right\|_X \leq \frac{C}{(1+t)^{1-\frac{r}{2}}}. 
	\end{align*}
\end{prop}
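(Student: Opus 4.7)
The plan is to follow the scheme of the proof of Proposition~\ref{p: front estimate sharp decay}, with two modifications dictated by the restricted analyticity region $\Omega^{\mathrm{fr},2}_\delta$ and the blow-up estimate~\eqref{e: blow up estimate}: I will deform to a $t$-dependent contour whose scale is tuned to balance the blow-up of $u$ against the region where the exponential factor $e^{\sigma^2 t}$ remains bounded. As in the previous proof, I first change variables by setting $\lambda = \sigma^2$:
$$\int_{\Gamma^2_0} e^{\sigma^2 t} u(\sigma)\, d(\sigma^2) = 2 \int_{\tilde{\Gamma}^2_0} e^{\sigma^2 t} u(\sigma)\, \sigma\, d\sigma,$$
where $\tilde{\Gamma}^2_0$ is the preimage of $\Gamma^2_0$ under the principal square root.

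Using analyticity of $u \circ (\cdot)^2$ on $\Omega^{\mathrm{fr},2}_\delta$, I next deform $\tilde{\Gamma}^2_0$ within $\Delta^{\mathrm{fr},2}_\delta$ to a $t$-dependent contour $\tilde{\Gamma}^{\mathrm{fr},2}_t$ whose central part $\tilde{\Gamma}^{\mathrm{fr},2}_{t,0}$ passes through $\sigma = \beta(t) := \min(t^{-1/2}, \beta_\ast)$ on the positive real axis, as depicted in the middle-right panel of Figure~\ref{fig: small contours}. This central part has length of order $\beta(t)$ in $\sigma$, satisfies $|\sigma| \sim \beta(t)$, and lies in the region where $|\sigma|^2 t \lesssim 1$, so that $|e^{\sigma^2 t}|$ is bounded by an absolute constant. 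Applying the blow-up estimate~\eqref{e: blow up estimate} then yields
$$\left\| \int_{\tilde{\Gamma}^{\mathrm{fr},2}_{t,0}} e^{\sigma^2 t} u(\sigma)\, 2\sigma\, d\sigma \right\|_X \lesssim K \beta(t)^{2-r} \lesssim \frac{1}{(1+t)^{1-r/2}},$$
since $r < 2$. The side pieces of $\tilde{\Gamma}^{\mathrm{fr},2}_t$ connect $\tilde{\Gamma}^{\mathrm{fr},2}_{t,0}$ to the fixed outer endpoints of $\tilde{\Gamma}^2_0$ along rays within $\Delta^{\mathrm{fr},2}_\delta$, where $|\sigma|$ is bounded below by a constant; there $\|u(\sigma)\|_X$ is uniformly bounded, and their contribution can be absorbed into the exponentially decaying terms already accounted for in Corollary~\ref{c: second shift} by simultaneously adjusting $\Gamma^{2,\pm}_{\mathrm{int}}$. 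For $t$ small, the desired $O(1)$ bound follows trivially since $\beta(t) = \beta_\ast$ is then bounded below.

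The main obstacle is that $\Re \sigma^2 > 0$ throughout $\Delta^{\mathrm{fr},2}_\delta$, so no temporal decay is available from the exponential factor itself; decay must be extracted entirely from the $\beta(t)^{2-r}$ prefactor arising from the length of the central piece combined with the blow-up rate. Consequently, the choice of contour scale is delicate: the radius $\beta(t) \sim t^{-1/2}$ optimally balances keeping $|e^{\sigma^2 t}|$ bounded against the competing blow-up of $u(\sigma)$ near the origin, yielding the claimed rate $t^{-(1-r/2)}$.
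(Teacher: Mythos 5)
Your overall geometry is right — deform to a $t$-dependent contour whose innermost piece is a small arc of radius $\sim t^{-1/2}$ (in $\sigma$, so $\sim t^{-1}$ in $\lambda = \sigma^2$), bound $\re^{\sigma^2 t}$ by a constant there, and let the length times the blow-up $|\sigma|^{-r}$ deliver $\beta(t)^{2-r} \lesssim (1+t)^{-(1-r/2)}$. That part agrees with the paper's treatment of the arc $\Gamma^{\mathrm{circ}}_t$. But there is a genuine gap in your treatment of the side pieces, and it stems from a false premise about the geometry.

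You assert that ``$\Re\sigma^2 > 0$ throughout $\Delta^{\mathrm{fr},2}_\delta$'' and conclude that no temporal decay is available from the exponential. This is not so: the sector $\Omega^{\mathrm{fr},2}_\delta$ in the $\lambda$-plane has opening half-angle $\theta_0 > \pi/2$ (the condition $\Re\sigma \geq C|\sigma|$ on $\Delta^{\mathrm{fr},2}_\delta$ only requires $\theta_0 < \pi$), so $\Omega^{\mathrm{fr},2}_\delta$ protrudes into the left half-plane, and the bounding rays at angle $\pm(\theta_0 - \tilde\delta)$ with $\theta_0 - \tilde\delta > \pi/2$ have $\Re\lambda < 0$, strictly. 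This negative real part is not incidental; it is the mechanism that controls the side pieces. Your side pieces run from $|\sigma| \sim \beta(t) \sim t^{-1/2}$ out to $|\sigma| \sim \sqrt{\delta}$, so $|\sigma|$ is \emph{not} bounded below by a $t$-independent constant along them, and consequently $\|u(\sigma)\|_X \sim |\sigma|^{-r}$ is \emph{not} uniformly bounded there — it grows like $t^{r/2}$ at the inner end. Because these pieces are themselves $t$-dependent, their contribution also cannot be ``absorbed into the exponentially decaying terms already accounted for in Corollary~\ref{c: second shift} by adjusting $\Gamma^{2,\pm}_{\mathrm{int}}$'': that corollary's contours and estimates are fixed and independent of $t$. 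Instead one must estimate the side integrals directly, using $\Re\lambda = s\cos(\theta_0-\tilde\delta) < 0$ along the ray, which gives
\begin{align*}
\left\| \int_{\Gamma^{\mathrm{fr},\pm}_t} \re^{\lambda t} u(\sigma)\, \de\lambda \right\|_X \lesssim \int_{\delta/(1+t)}^{\delta} \exp\!\bigl(\cos(\theta_0-\tilde\delta)\, s t\bigr)\, s^{-r/2}\, \de s \lesssim \frac{1}{(1+t)^{1-r/2}};
\end{align*}
the exponential decay in $s$ is exactly what tames the $s^{-r/2}$ blow-up and produces the same rate as the arc. Without it, the side-piece contribution is uncontrolled, and the argument does not close.
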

\begin{proof}
This is a standard argument, but we sketch a proof here for completeness. Using analyticity of $\sigma^2 \mapsto {\color{black} u(\sigma)}$ on $\Omega^{\mathrm{fr},2}_\delta$, for any fixed $t > 0$, we can deform the integration contour from $\Gamma^2_0$ to the $t$-dependent integration contour $\Gamma^{\mathrm{fr},2}_t =   \Gamma^{\mathrm{fr},+}_t \cup \Gamma^\mathrm{circ}_t \cup \Gamma^{\mathrm{fr,-}}_t$ lying in $\Omega^{\mathrm{fr},2}_\delta$, where
$\Gamma^{\mathrm{fr},\pm}_t$ are line segments connecting the ends of  $\Gamma^{2,\pm}_\mathrm{int}$ to points $\delta \re^{\pm\ri(\theta_0-\tilde{\delta})}/(1+t)$, where $\tilde{\delta} > 0$ is chosen such that the extension of $\Gamma^{\mathrm{fr},\pm}_t$ passes through the origin. We note here that $\tilde{\delta}$ only depends on $\Gamma^{2,\pm}_{\mathrm{int}}$ and is thus independent of $t$. Finally, $\Gamma^{\mathrm{circ}}_t$ is the circle segment
\begin{align*}
	   \Gamma^\mathrm{circ}_t = \left\{ \frac{\delta}{1+t} \re^{\ri \theta} : \theta \in \left[-\theta_0+\tilde{\delta}, \theta_0 - \tilde{\delta} \right] \right\};
\end{align*}
see Figure~\ref{fig: small contours}. Note that if $\sigma^2 \in \Gamma^\mathrm{circ}_t$, then we have $|\sigma|^2 = \frac{\delta}{1+t}$ and $\Re \sigma^2 \leq \frac{\delta}{1+t}$. Hence, using~\eqref{e: blow up estimate} and the fact that $\Gamma_t^\mathrm{circ}$ has length bounded by $2\pi \delta/(1+t)$, we establish
\begin{align*}
 \left\| \int_{{\Gamma}^{\mathrm{circ}}_t} \re^{\sigma^2 t} u(\sigma) \de( \sigma^2)\right\|_X \lesssim \exp \left(\frac{\delta t}{1+t}\right) \left(\frac{\delta}{1+t}\right)^{1-\frac{r}2} \lesssim \frac{C_2}{(1+t)^{1-\frac{r}{2}}},
\end{align*}
for $t > 0$. On the other hand, explicitly parameterizing the line segment $\Gamma_t^{\mathrm{fr},+}$ which lies in the open left half-plane and using~\eqref{e: blow up estimate} we obtain
\begin{align*}
 \left\| \int_{{\Gamma}^{\mathrm{fr},+}_t} \re^{\sigma^2 t} u(\sigma) \de( \sigma^2)\right\|_X \lesssim \int_{\frac{\delta}{1+t}}^\delta
 \exp\left(\cos(\theta_0 - \tilde{\delta})
  st\right) |s|^{-\frac{r}2} \de s \lesssim \frac{1}{(1+t)^{1-\frac{r}{2}}},
\end{align*}
for $t > 0$, where the last inequality follows for $t > 1$ from the change of variables $z = st$ and the fact that the integral $\int_0^\infty \re^{-az} z^{-r/2} \de z$ is bounded for $a > 0$ and $r \in [0,2)$, whereas for $t \in [0,1]$ we use $\exp(\cos(\theta_0 - \tilde{\delta}) st) \leq 1$. The integral over $\Gamma_t^{\mathrm{fr},-}$ is estimated analogously.
\end{proof}

\subsection{Estimates for the outgoing mode} \label{sec: estimates outgoing and interaction}

For estimating those parts of the resolvent associated to outgoing diffusive modes only, we may use either pointwise contours which pass fully into the essential spectrum, or contours which remain in the region $\Omega^\mathrm{wt}_\delta$. The latter turn out to give sharp estimates only in certain norms; see the discussion below. Nonetheless, for terms in the resolvent decomposition which capture the interaction between outgoing and branched modes, we must use contours which remain in $\Omega^\mathrm{wt}_\delta$ as pointed out in~\S\ref{s: analyticity regions}. We capture decay using contours of this type in the following result. 

\begin{prop}[Normed estimates for outgoing modes]\label{p: wake norm estimates}
	Let $X$ be a Banach space, and suppose that for some $\delta > 0$ small, we have an analytic function $\lambda \mapsto {\color{black}u(\lambda)} : \Omega^\mathrm{wt}_\delta \to X$, and there exists $r > -1$ such that
	\begin{align*}
	\| {\color{black}u(\lambda)} \|_X \leq C |\lambda|^{r}
	\end{align*}
	for all $\lambda \in \Omega^\mathrm{wt}_\delta$. Then, there exists a constant $C > 0$ such that for all $t > 0$,
	\begin{align*}
	\left\| \int_{\Gamma^2_0} \re^{\lambda t} u (\lambda) \de  \lambda \right\|_X \leq \frac{C}{(1+t)^{\frac12 + \frac{r}2}}.  
	\end{align*}
\end{prop}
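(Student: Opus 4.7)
The strategy parallels that of Proposition~\ref{p: L2 front estimate}: use analyticity of $u$ on $\Omega^\mathrm{wt}_\delta$ to deform $\Gamma^2_0$ to a contour inside $\Omega^\mathrm{wt}_\delta$, chosen to balance the exponential factor $\re^{\lambda t}$ against the blowup bound $\|u(\lambda)\|_X \leq C|\lambda|^r$. The key geometric difference is that $\Omega^\mathrm{wt}_\delta$ is bounded on the left by a parabola $K^\mathrm{wt}$ tangent to the imaginary axis at the origin, rather than by an angular sector; this dictates a parabolic rather than polygonal deformation near the origin.

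I would replace the portion of $\Gamma^2_0$ inside $B(0,\delta)$ by the parabolic contour $\lambda(s) = \ri s - cs^2$ for $s \in [-s_*, s_*]$, with $c \in (0, C^\mathrm{wt})$ fixed so that the contour lies strictly inside $\Omega^\mathrm{wt}_\delta$ except at $s = 0$, together with short connecting pieces in the open left half-plane joining $\lambda(\pm s_*)$ to the endpoints of $\Gamma^{2,\pm}_\mathrm{int}$. On the parabolic segment one has $|\re^{\lambda(s) t}| \leq \re^{-c s^2 t}$, $\|u(\lambda(s))\|_X \lesssim |s|^r$, and $|\de\lambda/\de s| \lesssim 1+|s|$, so
\begin{align*}
    \left\|\int_{-s_*}^{s_*} \re^{\lambda(s) t}\, u(\lambda(s)) \, \tfrac{\de\lambda}{\de s}\,\de s\right\|_X \lesssim \int_{-s_*}^{s_*} \re^{-c s^2 t} |s|^r \,\de s \lesssim (1+t)^{-(1/2+r/2)},
\end{align*}
where the final bound follows for $t \geq 1$ by the substitution $z = s\sqrt{t}$ together with $\int_\R \re^{-cz^2}|z|^r\,\de z < \infty$, and for $t \in [0,1]$ directly from $\int_{-s_*}^{s_*} |s|^r \,\de s < \infty$. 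Both integrals converge precisely because $r > -1$. The connecting pieces lie uniformly in the open left half-plane, contributing $\mathrm{O}(\re^{-\mu t})$ for some $\mu > 0$.

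The sharp point in the argument is the integrability of $|s|^r$ at $s = 0$, where the parabolic contour passes through the origin; this is exactly where the hypothesis $r > -1$ is needed, and where additional care is required compared with Proposition~\ref{p: L2 front estimate}, since the parabolic region $\Omega^\mathrm{wt}_\delta$ does not allow one to move strictly away from the origin in any direction without approaching the boundary $K^\mathrm{wt}$. An equivalent alternative, corresponding to the $t$-dependent contour $\Gamma^\mathrm{wt}_t$ depicted in Figure~\ref{fig: small contours}, excises a small neighborhood of the origin on the parabolic segment and replaces it by a circular arc of radius $\delta/(1+t)$ clipped to lie in $\Omega^\mathrm{wt}_\delta$: on this arc $|\re^{\lambda t}| \leq \re^\delta$, $\|u(\lambda)\|_X \lesssim (1+t)^{-r}$, and the arc length is $\lesssim (1+t)^{-1}$, so the arc contribution is $\lesssim (1+t)^{-(1+r)}$, which is absorbed by the claimed rate whenever $r > -1$.
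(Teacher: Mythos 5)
Your proof is correct, and the strategy is close to the paper's, but the contour you choose differs from the paper's $\Gamma^{\mathrm{wt}}_t$ in a way that is worth pointing out. The paper's parabolic piece $\Gamma^{\mathrm{par}}_t = \{\delta/t + \ri a - d a^2 : a \in [-a_*, a_*]\}$ is the parabola \emph{translated right} by $\delta/t$; it therefore lies strictly inside $\Omega^\mathrm{wt}_\delta$, so no limiting argument is needed to justify the deformation, but since $\Re\lambda$ is of size $\delta/t$ while $\Im\lambda$ is of size $|a|$, these two scales are not comparable, and the paper must split $|\lambda|^r \leq |a|^r$ for $r \in (-1,0)$ from $|\lambda|^r \lesssim t^{-r} + |a|^r$ for $r \geq 0$. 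Your fixed, $t$-independent parabola through the origin has $|\lambda(s)| \sim |s|$ uniformly, so the bound $\|u(\lambda(s))\|_X \lesssim |s|^r$ holds for all admissible $r$ at once and the scaling $z = s\sqrt{t}$ gives the decay rate in a single stroke. The trade-off is that your contour touches $\partial\Omega^\mathrm{wt}_\delta$ at $\lambda = 0$, where $u$ is not defined, so the deformation must be justified by a limiting argument: excise an $\varepsilon$-arc near the origin, apply Cauchy's theorem to the strictly interior contour, and note that the excised-arc contribution is $\mathrm{O}(\varepsilon^{1+r}) \to 0$ precisely because $r > -1$. You gesture at this, and your small-arc construction provides a fully rigorous version, so the argument is complete; the one quibble is that the arc-based contour you describe is \emph{not} the contour $\Gamma^\mathrm{wt}_t$ in Figure~\ref{fig: small contours} (which is a translated parabola with no arc), so that attribution is off, though it has no bearing on correctness.
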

\begin{proof}
Estimates of this type have been established and used in~\cite{Kapitula, HowardDegenerateLinear, AveryScheelGL}, but we sketch a proof for completeness. Estimates for small times may be obtained using the fixed contour $\Gamma_0^2$ which remains to the right of the spectrum, so we focus on $t > 1$. Using analyticity of $\lambda \mapsto {\color{black} u(\lambda)}$ on $\Omega^{\mathrm{fr},2}_\delta$, for any fixed $t > 1$, we can deform the integration contour from $\Gamma^2_0$ to the $t$-dependent integration contour $\Gamma^{\mathrm{wt}}_t = \Gamma^{\mathrm{wt},-}_t \cup \Gamma^{\mathrm{par}}_t \cup \Gamma^{\mathrm{wt},+}_t$ lying in $\Omega_\delta^{\mathrm{wt}}$ with
\begin{align*}
\Gamma^{\mathrm{par}}_t = \left\{\frac{\delta}{t} + \ri a - d a^2 : a \in [-a_*, a_*] \right\},
\end{align*}
for some some $t$-independent constants $a_*, d > 0$. Here, $a_* > 0$ is chosen sufficiently small and $d>0$ sufficiently large such that the the contours $\Gamma^{\mathrm{par}}_t$ lie in $B(0,\delta)$ and their end-points are in the open left half-plane for any $t \geq 1$. Moreover,  we let $\Gamma^{\mathrm{wt},\pm}_t$ be the line segments connecting the ends of $\Gamma^{\mathrm{par}}_t$ to $\Gamma_{\mathrm{int}}^{2,\pm}$; see Figure~\ref{fig: small contours}. Since the contours $\Gamma^{\mathrm{wt},\pm}_t$ are contained in the left half-plane and are $t$-uniformly bounded away from the imaginary axis, their contributions are exponentially decaying in time. So, we focus on the integral over $\Gamma^{\mathrm{par}}_t$. For a point $\lambda = \frac{\delta}{t} + \ri a - da^2 \in \Gamma^{\mathrm{par}}_t$ with $a \in [-a_*,a_*]$, we have $|\lambda|^r \leq |\Im \lambda|^r = |a|^r$ in case $r \in (-1,0)$. On the other hand, in case $r \geq 0$, there exists a $t$- and $a$-independent constant $C > 0$ such that we have $|\lambda|^r \leq ((\delta/t - da^2)^2 + a^2)^{r/2} \leq C (t^{-r} + |a|^r)$ for $a \in [-a_*,a_*]$ and $t \geq 1$. Thus, we arrive at
\begin{align*}
	    \left\| \int_{\Gamma^\mathrm{par}_t} \re^{\lambda t} u(\lambda) \de  \lambda \right\|_X \lesssim \int_{-a_*}^{a_*} \re^{\delta - d a^2 t } \left(t^{-\max\{r,0\}} + |a|^r\right) \de  a \lesssim \frac{1}{t^{\frac{1}{2}+\frac{r}{2}}},
	\end{align*}
for $t > 1$, where the last inequality follows from applying the change of variables $z = a\sqrt{t}$.
\end{proof}

\paragraph{Need for pointwise estimates.} We claim that Proposition~\ref{p: wake norm estimates} gives estimates with sharp decay rates when we measure the solution in $L^\infty(\R)$, but not when we measure it, for instance, in $L^2(\R)$. To see this heuristically, recall from~\S\ref{s: intro linear} that a good model for the dynamics in the wake is the advection-diffusion equation $u_t = D_\mathrm{eff} u_{\xi \xi} + c_g u_\xi$ with $D_\mathrm{eff} > 0$ and  $c_g < 0$. Since advection does not affect decay rates in translation invariant norms, we have for this equation the well-known diffusive decay estimates, i.e.,~for $k \in \N_0$ and $p \in [1,\infty]$ the solution $u(t)$ with initial condition $u(0) = u_0$ obeys the bound
\begin{align}
    \| \partial_\xi^k u(t) \|_{L^p} \lesssim \frac{1}{t^{\frac{1}{2}-\frac{1}{2p} + \frac{k}{2}}} \| u_0 \|_{L^1}, \label{e: heat eqn estimates}
\end{align}
for $t > 0$. On the other hand, the resolvent kernel (or spatial Green's function) $G_\lambda$ for this equation, which solves $(D_\mathrm{eff} \partial_\xi^2 + c_g \partial_\xi - \lambda) G_\lambda = - \delta_0$, resembles
\begin{align*}
    G_\lambda(\xi) \sim \chi_- (\xi) \re^{\nuwt(\lambda) \xi},
\end{align*}
cf.~\S\ref{sec: essential spectrum wake}. By simple scaling, we obtain, at best,\footnote{The $\xi$-derivatives may fall on $\chi_-(\xi)$ instead of on $\re^{\nuwt(\lambda) \xi}$, but terms where this happens are better behaved since $\chi_-'$ is compactly supported.}
\begin{align*}
    \| \partial_\xi^k G_\lambda \|_{L^p} \leq C\frac{|\nuwt(\lambda)|^k}{|\Re \nuwt(\lambda)|^{1/p}},
\end{align*}
for some $\lambda$-independent constant $C>0$. Along the parabolic contours used in Proposition~\ref{p: wake norm estimates}, we find $|\nuwt(\lambda)| \sim |\lambda|$ and $|\Re \nuwt(\lambda)| \sim |\lambda|^2$; see Proposition~\ref{prop: wake spectral curve expansion}. Using these bounds one arrives at the estimate
\begin{align*}
    \| \partial_\xi^k G_\lambda \|_{L^p} \lesssim |\lambda|^{k - \frac{2}{p}}, 
\end{align*}
for points $\lambda$ on these contours. Hence, if $\tilde{u}(\lambda)$ solves the resolvent equation $(D_\mathrm{eff} \partial_\xi^2 + c_g \partial_\xi - \lambda) \tilde{u} = g$, then using Young's convolution inequality, we find the estimate
\begin{align*}
    \|\partial_\xi^k \tilde{u}(\lambda) \|_{L^p} \lesssim |\lambda|^{k - \frac{2}{p}} \| g \|_{L^1},
\end{align*}
for points $\lambda$ on these contours. Using Proposition~\ref{p: wake norm estimates}, this then yields, for each  $k \in \N_0$ and $p \in [1,\infty]$,
\begin{align}
    \|\partial_\xi^k u(\cdot, t)\|_{L^p} \lesssim \frac{1}{t^{\frac{1}{2} + \frac{k}{2} - \frac{1}{p}}}, \label{e: model norm estimate}
\end{align}
for $t > 0$. When $p = \infty$, these estimates agree with the sharp estimates of~\eqref{e: heat eqn estimates}, but when $p < \infty$ the decay rates are worse by a factor of $t^{\frac{1}{p} - \frac{1}{2p}}$. The reason is that when we measure in $L^p(\R)$ for $p < \infty$, we must control spatial localization of the resolvent kernel $G_\lambda$. This spatial localization is controlled by $\Re \nuwt(\lambda) \sim \Re \lambda$, but along the parabolic contours used in Proposition~\ref{p: wake norm estimates} we have $\Re \lambda \sim (\Im \lambda)^2,$ which ultimately leads to the discrepancy in decay rates. To close a nonlinear argument, we will need sharp bounds on the solution in the wake at least in $L^p(\R)$ for some $p < \infty$, in order to leverage spatial localization of quadratic terms in the nonlinearity (in our nonlinear argument we choose $p = 2$). We will therefore rely on pointwise estimates for this particular aspect to prove sharp bounds on the solution in $L^2(\R)$. 

\paragraph{Pointwise estimates.} 
The estimates we need were essentially proven in~\cite[Section 8]{ZumbrunHoward}, with an updated statement closer to our exact needs given in~\cite[Section 7, example 2]{MasciaZumbrun}. We give an adapted exposition of the proof from~\cite{MasciaZumbrun} in Appendix~\ref{s: pointwise estimates} for completeness.  

\begin{prop}[Pointwise estimates for outgoing diffusive modes]\label{p: pointwise estimate}
	Suppose $\delta > 0$ is small. Let $\lambda_0$ denote the point at which $\Gamma^{2, +}_\mathrm{int}$ intersects $B(0, \delta)$, and let $\lambda_0^*$ denote its complex conjugate; see Figure~\ref{fig: small contours}. Let $g \colon \R^2 \times B(0,\delta) \to \C^2$ be a function such that
	\begin{align*}
	\lambda \mapsto g(\cdot, \cdot, \lambda) : B(0, \delta) \to L^\infty(\R^2, \C^2)
	\end{align*}
	is analytic and there exist a constant $C > 0$ and an integer $m \geq 0$ such that $\|g(\cdot, \cdot, \lambda) \|_{L^\infty} \leq C |\lambda|^m$. Then, for each $j,\ell \in \N_0$, there exist a function $G^{j,\ell,m} \colon [0,\infty)\times \R \to \R$ and a constant $C_{j,\ell,m} > 0$ such that, for each $(\xi, \zeta, t) \in \R^2 \times [0, \infty)$, there exists a contour $\Gamma_{\xi,\zeta,t}$ lying in $B(0,\delta)$, which connects $\lambda_0^*$ to $\lambda_0$, satisfying
	\begin{align}
	\int_{\Gamma_{\xi,\zeta,t}} |\lambda|^j |\nuwt(\lambda)|^\ell \re^{\Re (\lambda t + \nuwt(\lambda) (\xi-\zeta))} |g(\xi,\zeta,\lambda)| \chi_- (\xi-\zeta) \, |\de \lambda| \leq G^{j, \ell, m} (t, \xi-\zeta), \label{e: ptwise estimate} 
	\end{align}
	and
	\begin{align*} 
	\| G^{j, \ell, m}(t, \cdot) \|_{L^1} &\leq \frac{C_{j,\ell,m}}{(1+t)^\frac{\ell + j + m}{2}}, \qquad 
	\| G^{j, \ell, m}(t, \cdot) \|_{L^\infty} \leq \frac{C_{j,\ell,m}}{(1+t)^{\frac{1+\ell+j+m}{2}}},
	\end{align*}
    which immediately yields the interpolated estimate
	\begin{align*}
	    \| G^{j, \ell, m} (t, \cdot) \|_{L^p} \leq \frac{C_{j,\ell,m}}{(1+t)^{\frac{1}{2} - \frac{1}{2p} + \frac{j + \ell + m}{2}}},
	\end{align*}
for $p \in [1,\infty]$. 
\end{prop}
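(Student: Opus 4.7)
The plan is to adapt the classical pointwise semigroup construction of Zumbrun--Howard, choosing for each fixed triple $(\xi,\zeta,t)$ a contour of steepest descent for the phase function
\[
\phi(\lambda;\xi,\zeta,t) := \lambda t + \nuwt(\lambda)(\xi-\zeta).
\]
Since the integrand contains only nonnegative powers of $|\lambda|$ and $|\nuwt(\lambda)|$ together with the exponential factor $\re^{\Re\phi}$, the entire analysis is dictated by minimizing $\Re\phi$ along a contour in $B(0,\delta)\cap\Omega^{\mathrm{wt}}_\delta$ connecting $\lambda_0^*$ to $\lambda_0$. Recalling from Proposition~\ref{prop: wake spectral curve expansion} that $\nuwt(\lambda) = \nuwt^1\lambda-\nuwt^2\lambda^2+\mathrm{O}(\lambda^3)$, a stationary point of $\phi$ in $\lambda$ for fixed $z:=\xi-\zeta\leq 0$ and $t\geq 0$ satisfies $\nuwt'(\lambda_*)=-t/z$, which to leading order gives
\[
\lambda_*(z,t) \;=\; \frac{\nuwt^1 z + t}{2\nuwt^2\, z}\;+\;\mathrm{O}\!\left(\tfrac{\nuwt^1 z + t}{z}\right)^{\!2} \;=\; -\,\frac{z-c_g t}{2 D^{\mathrm{wt}}_{\mathrm{eff}}\,c_g^{-2}\,z}\bigl(1+o(1)\bigr),
\]
so that $\lambda_*$ is real and vanishes precisely along the characteristic $z=c_g t$. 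Evaluating at the saddle gives the fundamental Gaussian estimate $\Re \phi(\lambda_*)\lesssim -(z-c_g t)^2/(1+|z|)$ in the regime where $|\lambda_*|\leq \delta/2$.

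First I would treat the \emph{near} regime, where $\lambda_*\in B(0,\delta/2)$, i.e., $|z-c_g t|\leq C t$ for a suitable $C$. There I choose $\Gamma_{\xi,\zeta,t}$ to consist of three pieces: a short arc of steepest descent through $\lambda_*$, parameterized as $\lambda_* + \ri s - d s^2$ with $d>0$ large enough to keep the arc inside $\Omega^{\mathrm{wt}}_\delta$ while yielding $\Re\phi(\lambda(s))\leq \Re\phi(\lambda_*) - c\,(1+|z|)\, s^2$ for small $s$, joined by two short line segments to $\lambda_0$ and $\lambda_0^*$ that lie in the open left half-plane and are $t$-uniformly bounded away from the imaginary axis. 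The side segments contribute exponentially small terms; the central arc is parameterized and estimated by
\[
\int_{\text{arc}} |\lambda|^{j+\ell}\, \re^{\Re \phi(\lambda)}\,|d\lambda| \;\lesssim\; |\lambda_*|^{j+\ell+m}\, \re^{\Re\phi(\lambda_*)} \int_\R \re^{-c(1+|z|)s^2}\,ds \;+\; \text{higher-order tails},
\]
which after using $|\lambda_*|\lesssim|z-c_g t|/(1+|z|)$ yields a bound of the form $(1+t)^{-(1+j+\ell+m)/2}\exp\!\bigl(-(z-c_g t)^2/(C(1+t))\bigr)$ (we use $1+|z|\asymp 1+t$ in this regime).

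Next I would handle the \emph{far} regime $|z-c_g t|\gtrsim t$, where the saddle point lies outside $B(0,\delta)$ or otherwise outside our region of control. Here I fix the contour to be the boundary arc of $B(0,\delta)\cap\Omega^{\mathrm{wt}}_\delta$ truncated to meet $\lambda_0^*$ and $\lambda_0$, which stays uniformly bounded away from the imaginary axis except at $\lambda = 0$; pinching near zero along a short steepest-descent arc with slope $-\mathrm{sgn}(z)$ provides a uniform decay $\Re\phi\lesssim -\min(t,|z|)$, hence exponential decay in $t$ and $|z|$ simultaneously, which absorbs any polynomial loss and still fits within the claimed Gaussian envelope. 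Combining the two regimes defines
\[
G^{j,\ell,m}(t,z) \;:=\; \frac{C_{j,\ell,m}}{(1+t)^{(1+j+\ell+m)/2}} \exp\!\left(-\frac{(z-c_g t)^2}{C(1+t)}\right),
\]
up to harmless lower-order modifications, from which the $L^\infty$ bound is immediate by maximizing in $z$ and the $L^1$ bound follows from a one-dimensional Gaussian integral that contributes an extra factor $\sqrt{1+t}$. The $L^p$ bound is then interpolation.

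The main technical obstacle, and the reason a simple fixed-contour approach fails, is that the steepest-descent arc must be tuned to $(\xi,\zeta,t)$: if one tried to use the single parabolic contour of Proposition~\ref{p: wake norm estimates}, one would only obtain the weaker $L^p$ decay $(1+t)^{-(1/2+k/2-1/p)}$ (with the discrepancy noted in the text following that proposition) and would miss the Gaussian localization in $\xi-\zeta$ needed to leverage spatial localization of quadratic nonlinear terms. Verifying that the $(\xi,\zeta,t)$-dependent contour can be chosen to remain within $B(0,\delta)$ (where the local expansion of $\nuwt$ and analyticity of $g$ hold), to stay in $\Omega^{\mathrm{wt}}_\delta$, and to match endpoints $\lambda_0,\lambda_0^*$, is the delicate bookkeeping step; once accomplished, the Gaussian bound and its $L^p$ consequences follow by standard saddle-point estimates as in~\cite{ZumbrunHoward, MasciaZumbrun}.
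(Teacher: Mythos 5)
Your proposal follows essentially the same approach as the paper: a $(\xi,\zeta,t)$-dependent saddle-point contour in the spirit of Zumbrun--Howard, with a near/far case split depending on whether the real quadratic saddle $\lambda_{\min}=\bigl(t+\nuwt^1(\xi-\zeta)\bigr)/\bigl(2\nuwt^2(\xi-\zeta)\bigr)$ lies in a $\delta_0$-neighborhood of the origin, yielding a Gaussian envelope centered along $\xi-\zeta=c_g t$ in the near regime and combined exponential decay in $t$ and $|\xi-\zeta|$ in the far regime. The only material difference is cosmetic: you run a parabolic steepest-descent arc through the saddle, whereas the paper uses a vertical line segment $\Re\lambda=\eta_*$ with $\eta_*$ capped at $\pm\delta_0$, but both choices give the same Gaussian contribution and the case analysis, endpoint matching to $\lambda_0,\lambda_0^*$ via exponentially suppressed arcs, and $L^1/L^\infty/L^p$ conclusions are identical.
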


We note that the pointwise estimates in Proposition~\ref{p: pointwise estimate} are tailored to pointwise analytic terms in the resolvent decomposition of the form~\eqref{e: model term outgoing mode} which can be represented as a convolution with a suitable integral kernel. However, fixing $\zeta = 0$ and working with a function $g$ that does not depend on $\zeta$ in Proposition~\ref{p: pointwise estimate}, the result can also be applied to those terms in the resolvent decomposition of the form~\eqref{e: model term outgoing mode 2} which are pointwise analytic in $\lambda$, but are not necessarily of convolution type.

Returning to the simple example $u_t = D_\mathrm{eff} u_{\xi \xi} + c_g u_\xi$, we find for large times that the solution $u(t)$ with initial condition $u(0) = u_0$ behaves as
\begin{align*}
    \partial_\xi^k u(\xi,t) \sim \int_\R \int_{\Gamma^2_0} (\nuwt(\lambda))^k \re^{\lambda t + \nuwt(\lambda) (\xi-\zeta)} u_0 (\zeta) \de  \lambda \de \zeta, 
\end{align*}
for $k \in \N_0$, $\xi \in \R$ and $t > 0$. Using Proposition~\ref{p: pointwise estimate} and Young's convolution inequality,  we find, for each $p \in [1,\infty]$, that the sharp decay estimate
\begin{align*}
    \| \partial_\xi^k u(\cdot, t) \|_{L^p} \lesssim \| G^{0, k, 0} \|_{L^p} \| u_0 \|_{L^1}  \lesssim \frac{1}{(1+t)^{\frac{1}{2} - \frac{1}{2 p} + \frac{k}{2}}} \| u_0 \|_{L^1},
\end{align*}
holds for $t > 0$, thus recovering~\eqref{e: heat eqn estimates}. 

\paragraph{Estimates for interaction terms.} Recall from our discussion in~\S\ref{s: analyticity regions} that we cannot use the contours of Proposition~\ref{p: pointwise estimate} for estimating terms involving the interaction between branched and outgoing modes, since these contours pass through the branch point. To handle these terms, which are roughly of the form 
\begin{align*}
    \uint (\xi; \sigma) \sim \alphaint(\sigma) \chi_-(\xi) \re^{\nuwt (\sigma^2) \xi},
\end{align*}
where $\alphaint(\sigma)$ is analytic in $\sigma = \sqrt{\lambda}$ in a full neighborhood of the origin and satisfies $\alphaint(0) = 0$, we turn to Proposition~\ref{p: wake norm estimates}. The fact that $\alphaint(\sigma) = \mathrm{O}(\sigma)$, yields an extra decaying factor of $t^{-1/4}$. Returning to the model problem $u_t = D_\mathrm{eff} u_{\xi \xi} + c_g u_\xi$, and (heuristically) giving this extra factor of decay to the model estimate~\eqref{e:  model norm estimate}, we find
\begin{align*}
    \| \partial_\xi^k u(\cdot, t) \|_{L^p} \lesssim \frac{1}{t^{\frac{3}{4} + \frac{k}{2} - \frac{1}{p}}},
\end{align*}
for $t > 0$. This agrees with the sharp model estimate~\eqref{e: heat eqn estimates} when $p=2$. These estimates are sufficient to close a nonlinear stability argument. 

\section{Resolvent analysis near the origin}\label{s: resolvent}

We analyze the resolvent near the origin with a far-field/core decomposition, aiming to identify terms associated to the branched diffusive modes, terms associated to the outgoing diffusive mode, and interaction terms. We start by considering the asymptotic contributions from $\xi = \pm \infty$.

\subsection{Resolvent analysis for the wave train in the wake}\label{s: resolvent wavetrain}

In this section, we will use Floquet theory to study the resolvent $(\lwt-\lambda)^{-1}$ of the linearization about the wave train $\uwt$, near $\lambda = 0$. To begin, we transform the resolvent equation 
\begin{align}
(\lwt - \lambda)\begin{pmatrix} u \\ v\end{pmatrix} = \g, \label{e: resolvent equation wave train} 
\end{align}
for $\g = (g_1,g_2)^\top \in C^\infty (\R, \C^2)$ to the first-order system 
\begin{align}
(\partial_\xi - A(\xi, \lambda))U = G, \label{e: first order resolvent problem wave train}
\end{align}
where $U = (u, u_\xi, v)^\top$ and $G = (0, \g)^\top$. For translating between the original formulation and the first-order formulation, we define the linear operators $\Pi_{13} : \C^3 \to \C^2$ and $\Lambda_1: \C^2 \to \C^3$ by 
\begin{align*}
\Pi_{13}\begin{pmatrix} 
u \\ u_\xi \\ v \end{pmatrix} = \begin{pmatrix} u \\ v \end{pmatrix}, \quad \Lambda_1 \begin{pmatrix} g_1 \\ g_2 \end{pmatrix}
= \begin{pmatrix}
0 \\ g_1 \\ g_2
\end{pmatrix}. 
\end{align*} 

The matrix $A(\xi, \lambda)$ is periodic in $\xi$ and analytic in $\lambda$, and so by standard Floquet theory we have the following result.
\begin{lemma}[Floquet theory]\label{l: floquet theory}
Let $1 \leq p \leq \infty$. For $\delta > 0$ sufficiently small, there exists a change of coordinates $Q \colon \R \times B(0,\delta) \to \C^{3 \times 3}$ which is smooth and periodic in its first component and analytic in its second component and an analytic map $B \colon B(0,\delta) \to \C^{3 \times 3}$ such that, if we have $G \in L^p(\R,\C^3)$ and $U(\cdot,\lambda) \in W^{1,p}(\R,\C^3)$ is a solution of~\eqref{e: first order resolvent problem wave train}, then $V(\cdot, \lambda) = Q(\cdot, \lambda) U(\cdot, \lambda) \in W^{1,p}(\R,\C^3)$ solves
\begin{align}
    (\partial_\xi - B(\lambda)) V = QG. \label{e: resolvent floquet equation}
\end{align}
\end{lemma}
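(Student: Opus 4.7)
The plan is a standard application of Floquet theory adapted to allow analytic dependence on the spectral parameter. First, I would let $\Psi(\xi,\lambda)$ denote the principal fundamental matrix solution of the homogeneous system $(\partial_\xi - A(\xi,\lambda))\Psi = 0$ with $\Psi(0,\lambda) = I$. Since $A$ is smooth in $\xi$ and analytic in $\lambda$, standard ODE theory renders $\Psi$ smooth in $\xi$ and analytic in $\lambda$, and the monodromy matrix $\Phi(\lambda) := \Psi(L,\lambda)$ is then analytic in $\lambda$ on all of $\C$ and invertible (e.g.\ by Liouville's formula).

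Second, I would construct an analytic matrix logarithm of $\Phi(\lambda)$ near $\lambda = 0$. Because $\Phi(0)$ is invertible, its spectrum is a finite subset of $\C \setminus \{0\}$, so I can fix a branch of $\log$ with branch cut avoiding this spectrum together with a closed contour $\gamma$ encircling the spectrum of $\Phi(0)$ and lying in the domain of the chosen branch. Shrinking $\delta > 0$ if necessary so that the spectrum of $\Phi(\lambda)$ remains enclosed by $\gamma$ and off the branch cut for every $\lambda \in B(0,\delta)$, the Dunford integral
\[
B(\lambda) \, := \, \frac{1}{2\pi \ri L} \int_\gamma \log(z)\,\bigl(zI - \Phi(\lambda)\bigr)^{-1} \de z
\]
defines an analytic $\C^{3\times 3}$-valued function on $B(0,\delta)$ satisfying $\re^{B(\lambda) L} = \Phi(\lambda)$.

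Third, I would set
\[
P(\xi, \lambda) \, := \, \Psi(\xi, \lambda)\, \re^{-B(\lambda) \xi}, \qquad Q(\xi, \lambda) \, := \, P(\xi, \lambda)^{-1},
\]
which both inherit smoothness in $\xi$ and analyticity in $\lambda$ from $\Psi$ and $B$. Periodicity of $A$ in $\xi$ combined with uniqueness for the homogeneous equation yields the cocycle identity $\Psi(\xi + L, \lambda) = \Psi(\xi, \lambda)\,\Phi(\lambda)$; combined with $\Phi(\lambda) = \re^{B(\lambda) L}$, this gives $P(\xi + L, \lambda) = P(\xi, \lambda)$, so $Q$ is $L$-periodic in $\xi$ as well.

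Finally, I would verify the transformed equation by direct differentiation. From $\Psi = P \re^{B \xi}$ and $\partial_\xi \Psi = A \Psi$ one reads off $A P - \partial_\xi P = P B$. For $U \in W^{1,p}(\R,\C^3)$ solving $(\partial_\xi - A)U = G$ with $G \in L^p(\R, \C^3)$, a short calculation using this identity produces $(\partial_\xi - B) V = QG$ for $V := Q U$. Boundedness of $Q$ in $\xi$, inherited from periodicity and smoothness, ensures $V \in W^{1,p}(\R,\C^3)$. The only genuine obstacle is arranging $B(\lambda)$ to be \emph{analytic} rather than merely continuous in $\lambda$; this is accomplished by the coordinated choice of branch cut and contour $\gamma$ together with the shrinking of $\delta$ indicated above.
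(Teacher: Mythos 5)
Your argument is correct and fills in precisely the ``standard Floquet theory'' that the paper invokes without giving a proof: principal fundamental matrix $\Psi$, monodromy $\Phi(\lambda)=\Psi(L,\lambda)$, analytic matrix logarithm $B(\lambda)$ via a Dunford integral with branch cut and contour chosen relative to $\sigma(\Phi(0))$, the periodic Floquet transformation $Q=P^{-1}$ with $P(\xi,\lambda)=\Psi(\xi,\lambda)\re^{-B(\lambda)\xi}$, and the conjugation identity $AP-\partial_\xi P = PB$ to pass from $(\partial_\xi - A)U=G$ to $(\partial_\xi - B)V=QG$. The key point you single out --- that the complex matrix logarithm can be made \emph{analytic} in $\lambda$ by fixing a single branch and shrinking $\delta$ so that $\sigma(\Phi(\lambda))$ neither crosses the branch cut nor escapes $\gamma$ --- is indeed the one non-trivial step, and you handle it correctly; since we are working over $\C$ there is no need for the refinements (e.g.\ doubling the period) that a real Floquet normal form would require.
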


For $\lambda \in B(0,\delta) \setminus \Sigma(\lwt)$ we solve~\eqref{e: resolvent floquet equation} by constructing the matrix Green's function $\Twt_\lambda$ which solves
\begin{align*}
    (\partial_\xi - B(\lambda)) \Twt_\lambda = \delta_0 I, 
\end{align*}
and is given by 
\begin{align*}
    \Twt_\lambda (\xi) = \begin{cases}
    \re^{B(\lambda) \xi} \Pwt^\mathrm{s} (\lambda), & \xi \geq 0, \\
    -\re^{B(\lambda) \xi} \Pwt^\mathrm{u} (\lambda), & \xi < 0, 
    \end{cases}
\end{align*}
where $\Pwt^\mathrm{s}(\lambda)$ and $\Pwt^\mathrm{u}(\lambda)$ are the spectral projections onto the stable and unstable subspaces of $B(\lambda)$, respectively. Eigenvalues of $B(\lambda)$ are spatial Floquet exponents of $\lwt$. It follows from Proposition~\ref{prop: wake spectral curve expansion} that for $|\lambda|$ small, the only eigenvalue of $B(\lambda)$ near the imaginary axis is 
\begin{align}
\nuwt(\lambda) = \nuwt^1 \lambda - \nuwt^2 \lambda^{\color{blue}2} + \mathrm{O}(\lambda^3), \label{e: nu expansion} 
\end{align}
where $\nuwt^1, \nuwt^2 > 0$. Thus, for $|\lambda|$ small, $\nuwt(\lambda)$ stays bounded away from the other eigenvalues of $B(\lambda)$ and, if $\Re \lambda > 0$, this eigenvalue has positive real part. We can therefore separate $\Pwt^\mathrm{u}(\lambda)$ as  
\begin{align*}
    \Pwt^\mathrm{u}(\lambda) = \Pwt^\mathrm{cu} (\lambda) + \Pwt^\mathrm{uu}(\lambda),
\end{align*}
for $\lambda \in B(0,\delta)$ with $\Re \lambda > 0$, where $\Pwt^\mathrm{cu}(\lambda)$ is the spectral projection onto the eigenspace associated to $\nuwt(\lambda)$. Note that, since the eigenvalues of $B(\lambda)$ stay separated, the spectral projections $\Pwt^\mathrm{s}(\lambda)$, $\Pwt^\mathrm{uu}(\lambda)$, and $\Pwt^\mathrm{cu}(\lambda)$ are defined for all $\lambda$ in a full neighborhood of the origin. We may write 
\begin{align*}
    \Twt_\lambda (\xi) = -\re^{\nuwt(\lambda) \xi} \Pwt^\mathrm{cu}(\lambda) 1_{\{ \xi < 0\}} - \re^{B(\lambda) \xi} \Pwt^\mathrm{uu} (\lambda) 1_{\{\xi < 0\}} + \re^{B(\lambda) \xi} \Pwt^\mathrm{s} (\lambda) 1_{\{\xi \geq 0\}},
\end{align*}
for $\xi \in \R$ and $\lambda \in B(0,\delta)$ with $\Re \lambda > 0$. We now modify this decomposition slightly, using the cutoff function $\chi_-$, which is supported on $(-\infty,0]$, to write 
\begin{align*}
\begin{split}
    \Twt_\lambda(\xi) &= - \re^{\nuwt(\lambda) \xi} \Pwt^\mathrm{cu}(\lambda) \chi_- (\xi) + \re^{\nuwt(\lambda) \xi} \Pwt^\mathrm{cu} (\lambda) (\chi_- (\xi) - 1_{\{ \xi < 0\}}) - \re^{B(\lambda) \xi} \Pwt^\mathrm{uu} (\lambda) 1_{\{ \xi < 0 \}}\\ 
    &\qquad + \, \re^{B(\lambda) \xi} \Pwt^\mathrm{s} (\lambda) 1_{\{\xi \geq 0\}} \\
    &=: -\re^{\nuwt(\lambda) \xi} \Pwt^\mathrm{cu}(\lambda) \chi_- (\xi) + T^\mathrm{e}_\lambda (\xi).
\end{split}
\end{align*}
Note that $T^\mathrm{e}_\lambda \in L^1(\R,\C^3)$ is in fact analytic in $\lambda$ in a full neighborhood of the origin due to its uniform exponential localization. We then decompose the solution to the resolvent equation~\eqref{e: resolvent equation wave train} as 
\begin{align}
    \begin{pmatrix} u(\xi, \lambda) \\ v (\xi, \lambda) \end{pmatrix}
    = -\Pi_{13} Q(\xi, \lambda)^{-1} \int_\R \re^{\nuwt(\lambda) (\xi-\zeta)} \chi_- (\xi-\zeta) \Pwt^\mathrm{cu} (\lambda) Q(\zeta, \lambda) \Lambda_1 \g (\zeta) \de  \zeta + \bar{s}^\mathrm{wt}_e (\lambda) \g, \label{e: definition projections}
\end{align}
where 
\begin{align}
    \bar{s}^\mathrm{wt}_e (\lambda) \g (\xi) = \Pi_{13} Q(\xi, \lambda)^{-1} \int_\R T^\mathrm{e}_\lambda (\xi-\zeta) Q(\zeta, \lambda) \Lambda_1 \g(\zeta) \de  \zeta. \label{e: resolvent S0 exp def}
\end{align}

We further modify the decomposition by expanding $\Pwt^\mathrm{cu}(\lambda)$, $Q(\xi, \lambda),$ and $Q(\xi, \lambda)^{-1}$ in $\lambda$ in order to separate the most critical modes from those with improved algebraic decay in time, so that we may finally write 
\begin{align*}
     \begin{pmatrix} u(\xi, \lambda) \\ v (\xi, \lambda) \end{pmatrix} = -\Pi_{13} Q(\xi, 0)^{-1} \int_\R \re^{\nuwt(\lambda) (\xi-\zeta)} \chi_- (\xi-\zeta) \Pwt^\mathrm{cu} (0)  Q(\zeta, 0) \Lambda_1 \g(\zeta) \de  \zeta + \bar{s}^\mathrm{wt}_c(\lambda) \g + \bar{s}^\mathrm{wt}_e(\lambda) \g, 
\end{align*}
with 
\begin{align} \label{e: sc wt def}
\begin{split}
    \bar{s}^\mathrm{wt}_c (\lambda) \g(\xi) &= \Pi_{13} \bigg[Q(\xi, 0)^{-1} \int_\R - \re^{\nuwt (\lambda) (\xi-\zeta)} \chi_- (\xi-\zeta) \Pwt^\mathrm{cu}(0) Q(\zeta, 0) \Lambda_1 \g(\zeta) \de  \zeta \\ 
    &\qquad \qquad \qquad - Q(\xi, \lambda)^{-1} \int_\R \re^{\nuwt (\lambda) (\xi-\zeta)} \chi_- (\xi-\zeta) \Pwt^\mathrm{cu}(\lambda) Q(\zeta, \lambda) \Lambda_1  \g(\zeta) \de  \zeta \bigg],
\end{split}
\end{align}
and $\bar{s}^\mathrm{wt}_e (\lambda)$ given by~\eqref{e: resolvent S0 exp def}. 

\begin{lemma}\label{l: resolvent S e estimates}
    Fix $1 \leq p \leq \infty$. There exists $\delta > 0$ so that the map $\bar{s}^\mathrm{wt}_e \colon B(0, \delta) \to B(L^p(\R,\C^2), L^p(\R,\C^2))$ is analytic. Moreover, there exists a constant $C > 0$ so that 
    \begin{align*}
        \|\bar{s}^\mathrm{wt}_e(\lambda) \g \|_{L^p} \leq C \| \g \|_{L^p}
    \end{align*}
    for $\lambda \in B(0, \delta)$ and $\g \in L^p(\R,\C^2)$. 
\end{lemma}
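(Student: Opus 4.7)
The key observation is that the kernel $T^\mathrm{e}_\lambda$ is uniformly exponentially localized in $\xi$ and analytic in $\lambda$ as an $L^1$-valued map on a sufficiently small neighborhood of the origin. Once this is established, the lemma follows from Young's convolution inequality together with boundedness of the periodic multiplication operators $Q(\cdot, \lambda)^{\pm 1}$ on $L^p(\R)$.

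To verify the uniform exponential localization, I would treat each of the three pieces in the definition of $T^\mathrm{e}_\lambda$ separately. The first piece $\re^{\nuwt(\lambda) \xi} \Pwt^\mathrm{cu}(\lambda) (\chi_-(\xi) - 1_{\{\xi < 0\}})$ is compactly supported, since $\chi_-(\xi) - 1_{\{\xi<0\}}$ is supported in the fixed bounded transition region of $\chi_-$, so it lies in $L^1(\R)$ with $\lambda$-uniform bound by analyticity of $\nuwt(\lambda)$ and $\Pwt^\mathrm{cu}(\lambda)$. For the other two pieces, I would appeal to Proposition~\ref{prop: wake spectral curve expansion}, which guarantees that for $|\lambda| < \delta$ small the eigenvalues of $B(\lambda)$ in the range of $\Pwt^\mathrm{uu}(\lambda)$ (respectively $\Pwt^\mathrm{s}(\lambda)$) stay uniformly bounded away from the imaginary axis, in the open right (respectively left) half-plane. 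Together with continuity of the spectral projections, this yields uniform bounds of the form $\bigl|\re^{B(\lambda)\xi} \Pwt^\mathrm{uu}(\lambda)\bigr| \lesssim \re^{\mu \xi}$ for $\xi \leq 0$ and $\bigl|\re^{B(\lambda)\xi} \Pwt^\mathrm{s}(\lambda)\bigr| \lesssim \re^{-\mu \xi}$ for $\xi \geq 0$, valid uniformly for $\lambda \in B(0,\delta)$ and some fixed $\mu > 0$. Combining, $\|T^\mathrm{e}_\lambda\|_{L^1} \leq C$ uniformly on $B(0,\delta)$.

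Analyticity of $\lambda \mapsto T^\mathrm{e}_\lambda$ into $L^1(\R, \C^{3\times 3})$ follows from the analyticity of $B(\lambda)$ and of the three spectral projections on $B(0,\delta)$, applied to the $\lambda$-derivatives term by term: each derivative of $\re^{B(\lambda)\xi}\Pwt^{\mathrm{uu/s}}(\lambda)$ gains at most polynomial factors in $\xi$, which are absorbed by the uniform exponential decay, so $L^1$-convergence of the Taylor series is immediate. Combined with the boundedness of $\lambda \mapsto Q(\cdot,\lambda)$ in $L^\infty(\R)$ (which uses smoothness and periodicity of $Q$ in $\xi$ and analyticity in $\lambda$, together with invertibility of $Q(\cdot,0)$ extended by continuity to small $\lambda$), we then deduce analyticity of $\bar s^{\mathrm{wt}}_e$ as a $B(L^p,L^p)$-valued map.

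To conclude, I would factor the definition of $\bar{s}^\mathrm{wt}_e(\lambda)$ as composition: multiplication by $Q(\cdot, \lambda) \Lambda_1$, convolution against $T^\mathrm{e}_\lambda$, multiplication by $Q(\cdot, \lambda)^{-1}$, and projection $\Pi_{13}$. Young's inequality gives $\|T^\mathrm{e}_\lambda * f\|_{L^p} \leq \|T^\mathrm{e}_\lambda\|_{L^1} \|f\|_{L^p}$, while the multiplication operators are bounded on $L^p$ with $\lambda$-uniform operator norm. The main (mild) obstacle is verifying uniformity of the exponential decay rate $\mu$ and of the projector bounds as $\lambda$ varies over $B(0,\delta)$; this is resolved by the spectral gap in Proposition~\ref{prop: wake spectral curve expansion} combined with continuity of the Dunford-type spectral projections, shrinking $\delta$ if necessary.
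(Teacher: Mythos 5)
Your proof is correct and takes essentially the same approach as the paper's own (extremely terse) argument: the paper simply asserts that analyticity follows from pointwise analyticity of $Q(\xi,\lambda)$ in $\lambda$ together with $L^1$-analyticity of $T^\mathrm{e}_\lambda$ in $\lambda$, and both the uniform bound and the $L^1$-analyticity rest on exactly the spectral-gap considerations you spell out. Your proposal is just a more detailed expansion of the same chain of reasoning (spectral gap from Proposition~\ref{prop: wake spectral curve expansion} gives uniform exponential decay, then Young's convolution inequality and boundedness of the Floquet multiplication operators).
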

\begin{proof}
Analyticity follows readily from pointwise analyticity of $Q(\xi, \lambda)$ in $\lambda$ and analyticity of $T^\mathrm{e}_\lambda \in L^1(\R,\C^3)$ in $\lambda$ in a full neighborhood of the origin. 
\end{proof}

Undoing the Floquet change of coordinates at $\lambda = 0$, it follows that 
\begin{align*}
    -\Pi_{13} Q(\xi, 0)^{-1} \int_\R \re^{\nuwt(\lambda) (\xi-\zeta)} \chi_- (\xi-\zeta) \Pwt^\mathrm{cu} (0)  Q(\zeta, 0) \Lambda_1 \g(\zeta) \de  \zeta = \uwt'(\xi) \bar{s}_p^\mathrm{wt} (\lambda) \g (\xi),
\end{align*}
where 
\begin{align}
    \bar{s}_p^\mathrm{wt} (\lambda) \g (\xi) = \phi \left( \int_\R \re^{\nuwt(\lambda) (\xi-\zeta)} \chi_- (\xi-\zeta)\Pwt^\mathrm{cu} (0)  Q(\zeta,0) \Lambda_1 \g(\zeta) \de  \zeta \right) \label{e: sp wt def}
\end{align}
and $\phi \colon \C^3 \to \C$ is some linear map. 

We will use the following estimate in analyzing the center piece of the resolvent, which captures dynamics near the front interface. 
\begin{lemma}\label{l: wake resolvent localized estimate}
For any $\eta > 0$ sufficiently small, there exist positive constants $C$ and $\delta$ and a bounded limiting operator $R_0^\mathrm{wt} : L^1 (\R, \C^2) \to W^{1, \infty} (\R, \C) \times L^\infty (\R, \C)$ such that the resolvent $(\lwt-\lambda)^{-1}$ is a well-defined map on the exponentially weighted space $L^1_{\mathrm{exp},\eta,\eta}(\R,\C^2)$ for any $\lambda \in B(0,\delta)$. Moreover, for any $\g \in L^1(\R, \C^2)$ with $\mathrm{supp} (\g) \subseteq (-\infty, 0]$, we have  
	\begin{align} \label{e: wake resolvent localized estimate}
	\| \re^{\eta \cdot} \chi_- \left( (\lwt-\lambda)^{-1} - R_0^\mathrm{wt} \right) \g \|_{W^{1, \infty} \times L^\infty} \leq C |\lambda| \| \g \|_{L^1} 
	\end{align}
for all $\lambda \in B(0, \delta)$. 
\end{lemma}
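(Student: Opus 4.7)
The plan is to exploit the far-field/core decomposition of the resolvent derived just above the lemma,
\[
(\lwt - \lambda)^{-1}\g = \uwt'\,\bar{s}_p^\mathrm{wt}(\lambda)\g + \bar{s}_c^\mathrm{wt}(\lambda)\g + \bar{s}_e^\mathrm{wt}(\lambda)\g.
\]
Direct inspection of~\eqref{e: sc wt def} shows $\bar{s}_c^\mathrm{wt}(0) = 0$, so I would set
\[
R_0^\mathrm{wt}\g := \uwt'\,\bar{s}_p^\mathrm{wt}(0)\g + \bar{s}_e^\mathrm{wt}(0)\g,
\]
reducing~\eqref{e: wake resolvent localized estimate} to Lipschitz estimates of order $|\lambda|$ on the three differences $\bar{s}_p^\mathrm{wt}(\lambda) - \bar{s}_p^\mathrm{wt}(0)$, $\bar{s}_c^\mathrm{wt}(\lambda)$, and $\bar{s}_e^\mathrm{wt}(\lambda) - \bar{s}_e^\mathrm{wt}(0)$, viewed as operators from $L^1$ into $W^{1,\infty}\times L^\infty$ after weighting by $\re^{\eta\cdot}\chi_-$.

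The exponentially localized piece $\bar{s}_e^\mathrm{wt}$ is analytic in $\lambda$ on a full neighborhood of the origin by Lemma~\ref{l: resolvent S e estimates}. Since its kernel $T_\lambda^\mathrm{e}$ is uniformly exponentially localized and jointly smooth in $(\xi,\lambda)$ away from its discontinuity at the origin, Young's inequality together with the first-order Floquet equation~\eqref{e: resolvent floquet equation} (to recover the $\xi$-derivative of the first component) yields the desired Lipschitz bound. For the correction piece, I would rewrite~\eqref{e: sc wt def} as a convolution of the kernel $\re^{\nuwt(\lambda)(\xi-\zeta)}\chi_-(\xi-\zeta)$ against the matrix difference $Q(\xi,0)^{-1}\Pwt^\mathrm{cu}(0)Q(\zeta,0) - Q(\xi,\lambda)^{-1}\Pwt^\mathrm{cu}(\lambda)Q(\zeta,\lambda)$, which is $\mathrm{O}(\lambda)$ uniformly in $\xi,\zeta$ by joint analyticity of $Q$ and $\Pwt^\mathrm{cu}$ on a full neighborhood of the origin. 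This reduces the bound for $\bar{s}_c^\mathrm{wt}(\lambda)$ to the analogous bound for $\bar{s}_p^\mathrm{wt}(\lambda)$ itself.

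The main obstacle is the critical piece $\bar{s}_p^\mathrm{wt}(\lambda) - \bar{s}_p^\mathrm{wt}(0)$, which is not analytic at $\lambda = 0$ in standard $L^p$-operator topologies due to loss of spatial localization as $\lambda$ crosses $\Sigma(\lwt)$. I would overcome this by the identity
\[
\re^{\nuwt(\lambda)(\xi-\zeta)} - 1 = \nuwt(\lambda)(\xi-\zeta)\int_0^1 \re^{s\nuwt(\lambda)(\xi-\zeta)}\,ds,
\]
which extracts the explicit factor $\nuwt(\lambda) = \mathrm{O}(\lambda)$. Because $\mathrm{supp}(\g) \subseteq (-\infty,0]$ and $\chi_-(\xi-\zeta)$ restricts the integration to $\zeta \geq \xi$, the weighted pointwise estimate reduces, after shrinking $\delta$ so that $|\Re\nuwt(\lambda)| < \eta/2$ on $B(0,\delta)$, to controlling the kernel
\[
(\zeta-\xi)\re^{\eta\xi + s\Re\nuwt(\lambda)(\xi-\zeta)} = (\zeta-\xi)\re^{-(\eta+s\Re\nuwt(\lambda))(\zeta-\xi)}\re^{\eta\zeta}
\]
uniformly in $s \in [0,1]$ and $\xi \leq \zeta \leq 0$. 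Since $\eta + s\Re\nuwt(\lambda) \geq \eta/2 > 0$, the map $z \mapsto z\re^{-\alpha z}$ is uniformly bounded on $[0,\infty)$ for $\alpha \geq \eta/2$, and $\re^{\eta\zeta} \leq 1$ for $\zeta \leq 0$; integrating against $|\g(\zeta)|\,d\zeta$ delivers the desired bound $C|\lambda|\|\g\|_{L^1}$. For $W^{1,\infty}$ regularity in the first component, differentiation in $\xi$ either produces an extra factor $\nuwt(\lambda) = \mathrm{O}(\lambda)$ from the exponential or lands on compactly supported derivatives of $\chi_-$ or of the smooth periodic prefactors $\uwt'$ and $Q(\cdot,0)^{-1}$, all of which are absorbed by the same kernel estimates. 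Well-definedness of $(\lwt-\lambda)^{-1}$ on $L^1_{\mathrm{exp},\eta,\eta}$ for $\lambda \in B(0,\delta)$ is a byproduct of this same analysis.
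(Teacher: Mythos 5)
Your proof is correct and follows essentially the same route as the paper's: you identify the same limiting operator $R_0^\mathrm{wt} = \uwt'\,\bar{s}_p^\mathrm{wt}(0) + \bar{s}_e^\mathrm{wt}(0)$ and reduce the Lipschitz estimate to a pointwise kernel bound $\re^{\eta\xi}\bigl|\re^{\nuwt(\lambda)(\xi-\zeta)}-1\bigr| \lesssim |\lambda|$ valid for $\xi\le\zeta\le 0$. The only cosmetic difference is that the paper achieves this kernel bound by a case split on $|\nuwt(\lambda)(\xi-\zeta)|\lessgtr 1$ while you use the integral form of the mean value theorem and absorb the linear factor $(\zeta-\xi)$ into the decaying exponential $\re^{-(\eta+s\Re\nuwt(\lambda))(\zeta-\xi)}$ — an equivalent and arguably cleaner calculation.
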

\begin{proof}
First, we observe that Proposition~\ref{prop: wake spectral curve expansion} yields that, for $\eta > 0$ and $\delta > 0$ sufficient small, the eigenvalues $\nu$ of $B(\lambda)$ stay bounded away from the line $\Re(\nu) = -\eta$ for any $\lambda \in B(0,\delta)$. Hence, the $L^1_{\mathrm{exp},\eta,\eta}$-spectrum of the operator $\lwt$ does not intersect the ball $B(0,\delta)$, implying that the resolvent $(\lwt - \lambda)^{-1}$ is well-defined on the exponentially weighted space $L^1_{\mathrm{exp},\eta,\eta}(\R,\C^2)$ for any $\lambda \in B(0,\delta)$.
    
Upon defining $R_0^\mathrm{wt} \colon L^1(\R,\C^2) \to W^{1,\infty}(\R,\C) \times L^\infty(\R,\C)$ by $R_0^\mathrm{wt}\g(\xi) = \uwt'(\xi)\bar{s}^\mathrm{wt}_p(0) \g(\xi) + \bar{s}^\mathrm{wt}_e(0)\g(\xi)$, the proof of estimate~\eqref{e: wake resolvent localized estimate} is similar to~\cite[Lemma 3.6]{AveryScheelGL}, but we sketch it for completeness. We focus only on the most critical term in the resolvent, i.e.~the one involving $\bar{s}_p^\mathrm{wt}(\lambda) \g$, since obtaining the corresponding estimate for the other terms is strictly easier. Since we are assuming $\g$ is supported on $\{ \zeta \leq 0 \}$, the integrand in~\eqref{e: sp wt def} is nonzero only for $\xi \leq \zeta \leq 0$. 
    
Now, suppose $|\nuwt(\lambda) (\xi - \zeta)| \leq 1$. Then, using the mean value theorem, Proposition~\ref{prop: wake spectral curve expansion} and the fact that $|\zeta| \leq |\xi|$, we establish    
  \begin{align*}
        \re^{\eta \xi} |\re^{\nuwt(\lambda) (\xi - \zeta)} - 1 | \lesssim  \re^{\eta \xi}|\nuwt(\lambda)| |\xi-\zeta| \lesssim \re^{\eta \xi} |\lambda| |\xi| \lesssim |\lambda|. 
    \end{align*}
    for $\xi \leq \zeta \leq 0$ and $\lambda \in B(0,\delta)$.  Now, instead suppose  $|\nuwt(\lambda) (\xi-\zeta)| \geq 1$. Then, using that $|\zeta| \leq |\xi|$, we establish, if $|\lambda|$ is sufficiently small relative to $\eta$, that
    \begin{align*}
        \re^{\eta \xi} |\re^{\nuwt(\lambda) (\xi - \zeta)} - 1 | \lesssim \re^{\frac{\eta}{2} \xi} \lesssim \re^{\frac{\eta}{2} \xi} |\nuwt(\lambda) (\xi - \zeta)| \lesssim |\lambda| \re^{\frac{\eta}{2} \xi} |\xi| \lesssim |\lambda|,
    \end{align*}
    for $\xi \leq \zeta \leq 0$ and $\lambda \in B(0,\delta)$. Hence, in either case we have
    \begin{align*}
        \re^{\eta \xi} |\re^{\nuwt(\lambda) (\xi - \zeta)} - 1 | \lesssim |\lambda|,
    \end{align*}
    for $\xi \leq \zeta \leq 0$ and $|\lambda|$ sufficiently small, from which the result readily follows. 
\end{proof}

\subsection{Resolvent analysis in the leading edge}\label{s: leading edge resolvent}
  We analyze the behavior of the limiting resolvent in the leading edge $(\mcl_+ - \sigma^2)^{-1}$ near $\sigma = 0$. A general analysis in reaction-diffusion systems with non-degenerate diffusion was carried out in~\cite[Section 5]{AverySelectionRD}, based on the analysis for scalar parabolic equations in~\cite{AveryScheel, AveryScheelSelection}. So our main emphasis here is in explaining the necessary modifications to handle the degenerate diffusion in~\eqref{e: fhn}. 

As in the preceding section, we recast the resolvent equation 
\begin{align}
(\mcl_+ - \sigma^2) \begin{pmatrix}
u \\ v 
\end{pmatrix} = 
\begin{pmatrix}
g_1 \\ g_2 
\end{pmatrix} \label{e: leading edge resolvent eqn}
\end{align}
as a first-order system in the variable $U = (u, u_\xi, v)^\top$ with $G = (0,\g)^\top$ and $\g = (g_1,g_2)^\top$. This system takes the form
\begin{align}
(\partial_\xi - \mathcal{M}(\sigma^2)) U = G,  \label{e: first order formulation leading edge}
\end{align} 
where the matrix $\mathcal{M}(\sigma^2)\in \C^{3 \times 3}$ is analytic in $\sigma^2$. From a short calculation, one sees that the eigenvalues of $\mathcal{M}(\sigma^2)$ correspond precisely to roots $\nu(\sigma)$ of the dispersion relation $d_{\clin} (\sigma^2, \nu)$, which we refer to as spatial eigenvalues. Recall from Corollary~\ref{c: leading edge spatial eval} that when $|\sigma|$ is small, there are precisely two spatial eigenvalues $\nufr^\pm(\sigma)$ in a neighborhood of the origin, with expansion
\begin{align}
		\nu_\mathrm{fr}^\pm(\sigma) = \pm \nufr^1 \sigma + \mathrm{O}(\sigma^2), \label{e: leading edge spatial eval expansion}
		\end{align}
where $\nufr^1 > 0$. In particular, $\nu_\mathrm{fr}^+(\sigma)$ has strictly positive real part and $\nu_\mathrm{fr}^-(\sigma)$ has strictly negative real part for $\sigma^2 \in B(0,\delta)$ lying to the right of $\Sigma(\mcl_+)$. We record the following result, adapted from~\cite[Lemma 5.1]{AverySelectionRD}.

\begin{lemma} \label{l: center projections pole}	
    Let $\Pfr^\mathrm{cu}(\sigma)$ and $\Pfr^\mathrm{cs}(\sigma)$ denote the spectral projections onto the eigenspaces of $\mathcal{M}(\sigma^2)$ associated with $\nu^+_\mathrm{fr}(\sigma)$ and $\nu^-_\mathrm{fr}(\sigma)$, respectively, which are one-dimensional provided $\sigma \neq 0$. Then, $\Pfr^\mathrm{cu}(\sigma)$ and $\Pfr^\mathrm{cs}(\sigma)$  are meromorphic in $\sigma$ in a neighborhood of the origin, with expansions
		\begin{align*}
		\Pfr^\mathrm{cu}(\sigma) = -\frac{1}{\sigma} P_\mathrm{pole} + \mathrm{O}(1), \qquad \Pfr^\mathrm{cs}(\sigma) = \frac{1}{\sigma} P_\mathrm{pole} + \mathrm{O}(1) 
		\end{align*}
		for some matrix $P_\mathrm{pole}\in \C^{3 \times 3}$.
\end{lemma}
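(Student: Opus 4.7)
The plan is to exploit the Riesz integral representation of the spectral projections, together with the simple double root condition of Lemma~\ref{l: linear spreading speed}, which forces the eigenvalue $0$ of $\mathcal{M}(0)$ to be a defective eigenvalue with a Jordan block of length exactly two. The collision of $\nu_{\rm fr}^+(\sigma)$ and $\nu_{\rm fr}^-(\sigma)$ at $\sigma = 0$ is precisely what drives the singularity in the individual projections, while their sum remains analytic in $\sigma^2$.

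As a first step, I would introduce the total center projection $P_{\rm fr}^c(\sigma) := P_{\rm fr}^{\rm cu}(\sigma) + P_{\rm fr}^{\rm cs}(\sigma)$, which by Dunford calculus equals
$$P_{\rm fr}^c(\sigma) = \frac{1}{2\pi \mathrm{i}} \oint_{\gamma} \left(\nu I - \mathcal{M}(\sigma^2)\right)^{-1} \de \nu,$$
where $\gamma$ is a small fixed circle in the $\nu$-plane enclosing $\nu_{\rm fr}^\pm(\sigma)$ and no other spatial eigenvalues, for all $|\sigma|$ sufficiently small. Since $\mathcal{M}$ depends analytically on $\sigma^2$, the right-hand side is analytic in $\sigma^2$ near the origin, so $P_{\rm fr}^c(\sigma) = P_0 + \mathrm{O}(\sigma^2)$ for a rank-two projection $P_0$ onto the generalized kernel of $\mathcal{M}(0)$.

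Next, I would construct right and left eigenvectors. By the analyticity of $\nu_{\rm fr}^\pm(\sigma)$ in $\sigma$ and the one-dimensionality of the eigenspaces for $\sigma \neq 0$, there exist analytic choices $V^\pm(\sigma), W^\pm(\sigma) \in \C^3$ satisfying $\mathcal{M}(\sigma^2) V^\pm(\sigma) = \nu_{\rm fr}^\pm(\sigma) V^\pm(\sigma)$ and the adjoint relation. Because $\mathcal{M}(0)$ has geometric multiplicity one at $\nu = 0$, necessarily $V^+(0) = V^-(0) =: V_0$ and $W^+(0) = W^-(0) =: W_0$. Writing the Taylor expansions $V^\pm(\sigma) = V_0 \pm \sigma V_1 + \mathrm{O}(\sigma^2)$ and $W^\pm(\sigma) = W_0 \pm \sigma W_1 + \mathrm{O}(\sigma^2)$, the signs of the linear terms are forced by the fact that $\nu_{\rm fr}^\pm(\sigma)$ differ to leading order by $\pm \nu^1_{\rm fr}\sigma$ (Corollary~\ref{c: leading edge spatial eval}). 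Matching orders in the eigenvalue equation then determines $V_1$ and $W_1$ uniquely up to multiples of $V_0$ and $W_0$ respectively, using the Jordan chain structure encoded in the simple double root condition.

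Finally, I would plug this into the standard formula
$$P_{\rm fr}^{\rm cu}(\sigma) = \frac{V^+(\sigma) \, W^+(\sigma)^\top}{\langle W^+(\sigma), V^+(\sigma) \rangle}, \qquad P_{\rm fr}^{\rm cs}(\sigma) = \frac{V^-(\sigma) \, W^-(\sigma)^\top}{\langle W^-(\sigma), V^-(\sigma) \rangle}.$$
The numerators are analytic in $\sigma$, each with leading term $V_0 W_0^\top$. The crux is the denominator: since $\langle W_0, V_0 \rangle = 0$ by defectiveness, one finds $\langle W^\pm(\sigma), V^\pm(\sigma) \rangle = \pm c\, \sigma + \mathrm{O}(\sigma^2)$ for some constant $c$ determined by $\langle W_0, V_1\rangle + \langle W_1, V_0\rangle$, with opposite signs for $+$ and $-$ precisely because of the $\pm \sigma$ structure in $V^\pm, W^\pm$. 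Nondegeneracy $c \neq 0$ is where the condition $d_{10} d_{02} \neq 0$ from Lemma~\ref{l: linear spreading speed} is used: it excludes a Jordan chain of length three, equivalently excludes $\langle W_0, V_1\rangle + \langle W_1, V_0\rangle = 0$. Setting $P_{\rm pole} := -c^{-1} V_0 W_0^\top$ then yields both expansions with matching pole matrix and opposite signs, consistent with the analyticity of $P_{\rm fr}^c$.

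The main technical obstacle is the nondegeneracy $c \neq 0$ and the verification that $V^\pm, W^\pm$ can be chosen jointly analytic in $\sigma$ with the advertised $\pm\sigma$ dependence at first order. Both reduce to a Puiseux/Newton-polygon computation for the reduced $2 \times 2$ block of $\mathcal{M}(\sigma^2)$ on the generalized kernel of $\mathcal{M}(0)$, where the assumption $d_{10} d_{02} \neq 0$ guarantees that this block is a standard Jordan block perturbed by $\sigma^2$ times an invertible off-diagonal entry, producing the square-root splitting already recorded in Corollary~\ref{c: leading edge spatial eval} and the claimed pole structure here.
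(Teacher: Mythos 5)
Your proposal is correct and follows the standard perturbation-theoretic route for a colliding pair of eigenvalues. The paper itself does not supply a proof here — it records the statement as "adapted from [AverySelectionRD, Lemma 5.1]" and defers to that reference — so there is no in-paper argument to compare against line by line, but your route is the natural one for this kind of pole structure and, as far as I can tell, the same kind of argument underlies the cited source.

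A few small points worth tightening. The claim that analyticity of $\nu_{\mathrm{fr}}^\pm(\sigma)$ plus simplicity away from $\sigma=0$ immediately gives analytic eigenvector families at $\sigma=0$ is not automatic as stated; the clean way is exactly what you sketch at the end: first pass to the rank-two Riesz projection $P^c_{\mathrm{fr}}(\sigma)$ (analytic in $\sigma^2$), restrict $\mathcal{M}(\sigma^2)$ to its range to get a $2\times 2$ matrix $M_{\mathrm{red}}(\sigma^2)$, and there read off eigenvectors explicitly — e.g.\ as $(b(\sigma^2),\,\nu^\pm_{\mathrm{fr}}(\sigma)-a(\sigma^2))^\top$ when the $(1,2)$ entry $b(0)\neq 0$ encodes the Jordan block — which are manifestly analytic in $\sigma$. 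This also makes the $\pm\sigma$ structure of $V^\pm$, $W^\pm$ at first order a computation rather than an assertion. Second, the nondegeneracy $c\neq 0$ in the denominator $\langle W^\pm(\sigma), V^\pm(\sigma)\rangle = \pm c\sigma + \mathrm{O}(\sigma^2)$ is indeed the content of the "simple double root" condition $d_{10}d_{02}\neq 0$ in Lemma~\ref{l: linear spreading speed}: it is equivalent to the Jordan block having length exactly two, i.e.\ to the quadratic (rather than higher-order) tangency of the dispersion relation, which is what produces the genuine square-root branching and hence the simple pole. With those two points made explicit, the argument is complete and consistent with the analyticity of $P^c_{\mathrm{fr}}=P^{\mathrm{cu}}_{\mathrm{fr}}+P^{\mathrm{cs}}_{\mathrm{fr}}$, since the residues cancel as they must.
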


Similar to Section~\ref{s: resolvent wavetrain}, our strategy is to recover the solution to~\eqref{e: leading edge resolvent eqn} from the first-order formulation, which we solve via a (somewhat) explicit Green's function. For $\sigma^2 \in B(0,\delta) \setminus \Sigma(\mcl_+)$ the matrix Green's function $\Tfr_\sigma$ associated to the first-order formulation~\eqref{e: first order formulation leading edge} solves
\begin{align*}
	(\partial_\xi - \mathcal{M}(\sigma^2)) \Tfr_\sigma =  \delta_0 I,
 \end{align*}
so that the solution to~\eqref{e: leading edge resolvent eqn} is given by
\begin{align}
\begin{pmatrix}
u(\xi;\sigma) \\ v (\xi;\sigma)
\end{pmatrix}
= \int_\R \Pi_{13} \Tfr_\sigma (\xi-\zeta) \Lambda_1 \g(\zeta) \de \zeta, \label{e: leading edge resolvent formula}
\end{align}
where we recall the definition~\eqref{e: definition projections} of the linear operators $\Pi_{13}$ and $\Lambda_1$. Following the reasoning in~\cite[Section~2]{AveryScheel} and using Corollary~\ref{c: leading edge spatial eval}, we may decompose $\Tfr_\sigma$ as 
\begin{align}
\Tfr_\sigma(\zeta) =  \re^{\nufr^-(\sigma) \zeta} \Pfr^\mathrm{cs}(\sigma)1_{\{\zeta \geq 0 \}} - \left( \re^{\nufr^+(\sigma) \zeta} \Pfr^\mathrm{cu}(\sigma) + \re^{\nu_3(\sigma^2) \zeta} \Pfr^\mathrm{uu} (\sigma^2) \right) 1_{\{\zeta < 0\}}, \label{e: matrix greens fcn decomposition}
\end{align}
for $\sigma^2 \in B(0,\delta)$ to the right of $\Sigma(\mcl_+)$, where $\Pfr^\mathrm{uu}(\sigma^2)$ is the spectral projection onto the strong unstable subspace of $\mathcal{M}(\sigma^2)$ corresponding to the remaining third eigenvalue $\nu_3(\sigma^2)$ of $M(\sigma^2)$, which resides in the open right half-plane and remains bounded away from the imaginary axis for $|\sigma|$ small. The leading-order temporal behavior of the solution to the linearization $\u_t = \mcl_+ \u$ will be governed by those terms in~\eqref{e: matrix greens fcn decomposition} which are most singular in $\sigma$. Standard spectral theory implies that $\Pfr^\mathrm{uu}(\sigma^2)$ is analytic in $\sigma^2$ in a full neighborhood of the origin. The only singularities then arise from the terms involving $P_\mathrm{pole}$, and so from the point of view of the resulting temporal dynamics, we have
\begin{align*}
	\Tfr_\sigma(\zeta) \approx \frac{1}{\sigma} P_\mathrm{pole}\left( \re^{\nufr^+ (\sigma) \zeta} 1_{\{\zeta <0\}} + \re^{\nufr^-(\sigma) \zeta} 1_{\{\zeta \geq 0\}}\right). 
\end{align*}
Using the expansion~\eqref{e: leading edge spatial eval expansion}, we see that $-\nufr^+(\sigma) \approx \nufr^-(\sigma) \approx - \nufr^1 \sigma$, and hence
\begin{align*}
\Tfr_\sigma(\zeta) \approx \frac{1}{\sigma} \re^{- \nufr^1 \sigma |\zeta|} P_\mathrm{pole} =: G_\sigma^\mathrm{heat} (\zeta) P_\mathrm{pole}.
\end{align*}
Note that, up to scaling, $G_\sigma^\mathrm{heat}$ is the Laplace transform of the fundamental solution of the heat equation. The solution to the resolvent equation~\eqref{e: leading edge resolvent eqn} is then given to leading order by
\begin{align}
\begin{pmatrix}
u(\xi;\sigma) \\
v(\xi;\sigma)
\end{pmatrix} \approx \Pi_{13} P_\mathrm{pole} \Lambda_1 \int_\R G_\sigma^\mathrm{heat} (\xi-\zeta) \g(\zeta) \de  \zeta. \label{e: right resolvent leading order}
\end{align}
To make this reasoning rigorous, we can follow the analysis of~\cite[Section 2]{AveryScheel}, but for now ignoring estimates on derivatives of the solution, to obtain the following description of the far-field resolvent for $\sigma \in \Delta_\delta^{\mathrm{fr},1}$, where we recall that $\Delta_\delta^{\mathrm{fr},1}$ is the image of the set $\Omega_{\delta}^{\mathrm{fr},1}$ under the principal square root, see Figure~\ref{fig: resolvent regions} and~\eqref{e: Omega fr 1 def}.

\begin{lemma}\label{l: leading edge resolvent no regularity}
	Fix $r > 2$. There exist positive constants $C$ and $\delta$ and a bounded limiting operator $R_0^+ : L^1_{1,1} (\R, \C^2) \to L^\infty_{-1, -1} (\R, \C^2)$ such that for all odd functions $\g \in L^1_{1,1} (\R, \C^2)$, we have
	\begin{align*}
		\| (\mcl_+ - \sigma^2)^{-1} \g - R_0^+ \g\|_{L^\infty_{-1, -1}} &\leq C |\sigma| \| \g \|_{L^1_{1,1}}, \qquad 
				\| (\mcl_+ - \sigma^2)^{-1} \g - R_0^+ \g\|_{L^1_{-r, -r}} \leq C |\sigma| \| \g \|_{L^1_{1,1}}, 
	\end{align*}
	for all $\sigma \in \Delta^{\mathrm{fr},1}_\delta$. 
\end{lemma}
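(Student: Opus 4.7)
The plan is to follow the strategy of~\cite{AveryScheel, AverySelectionRD}, extracting a limit operator at $\sigma = 0$ and a Lipschitz-in-$\sigma$ remainder from the representation~\eqref{e: leading edge resolvent formula} together with the Green's function decomposition~\eqref{e: matrix greens fcn decomposition}. I first note that the $L^1_{-r,-r}$ estimate with $r>2$ follows from the $L^\infty_{-1,-1}$ estimate by integrability of $(1+|\xi|)^{1-r}$, so it suffices to focus on the $L^\infty_{-1,-1}$ bound. Within~\eqref{e: matrix greens fcn decomposition}, the contribution from $\Pfr^{\mathrm{uu}}(\sigma^2)$ is exponentially localized (with decay rate $\Re \nu_3(\sigma^2)$ uniformly bounded away from zero on $\Delta_\delta^{\mathrm{fr},1}$) and analytic in $\sigma^2$, hence gives an analytic (in particular Lipschitz in $\sigma$) convolution operator $L^1_{1,1}\to L^\infty_{-1,-1}$ which is absorbed into $R_0^+$ and the error. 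Similarly, the bounded $\mathrm{O}(1)$ remainders in the pole expansions of $\Pfr^{\mathrm{cu}}(\sigma),\Pfr^{\mathrm{cs}}(\sigma)$ from Lemma~\ref{l: center projections pole} and the $\mathrm{O}(\sigma^2)$ corrections in $\nufr^\pm(\sigma)$ contribute regular pieces that do not require the oddness of $\g$.

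The heart of the argument concerns the singular part, which, after combining the pole contributions of $\Pfr^{\mathrm{cu}}(\sigma)$ and $\Pfr^{\mathrm{cs}}(\sigma)$ via Lemma~\ref{l: center projections pole}, reduces modulo the regular remainders to a convolution with kernel $\Tfr_\sigma^{\mathrm{sing}}(\zeta) = \sigma^{-1}\re^{-\nufr^1 \sigma |\zeta|}P_{\mathrm{pole}}$. Since this kernel is even in $\zeta$, the oddness of $\g$ allows us to rewrite
\begin{align*}
\int_\R \Tfr_\sigma^{\mathrm{sing}}(\xi-\zeta)\Lambda_1\g(\zeta)\de\zeta = \int_0^\infty \frac{1}{\sigma}\bigl(\re^{-\nufr^1 \sigma |\xi-\zeta|} - \re^{-\nufr^1 \sigma |\xi+\zeta|}\bigr) P_{\mathrm{pole}} \Lambda_1 \g(\zeta) \de \zeta,
\end{align*}
cancelling the $1/\sigma$ pole. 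On $\Delta_\delta^{\mathrm{fr},1}$ we have $\Re \sigma \gtrsim |\sigma|$, so $|\re^{-\nufr^1 \sigma z}|\leq 1$ for $z\geq 0$, and a Taylor expansion yields
\begin{align*}
\frac{1}{\sigma}\bigl(\re^{-\nufr^1\sigma|\xi-\zeta|} - \re^{-\nufr^1\sigma|\xi+\zeta|}\bigr) = -\nufr^1\bigl(|\xi-\zeta|-|\xi+\zeta|\bigr) + \mathrm{O}\bigl(|\sigma|(1+|\xi|)(1+|\zeta|)\bigr)
\end{align*}
uniformly on $\Delta_\delta^{\mathrm{fr},1}$.

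Passing to $\sigma\to 0$ in this expression defines $R_0^+$ via the kernel $-\nufr^1 \Pi_{13}\bigl(|\xi-\zeta|-|\xi+\zeta|\bigr)P_{\mathrm{pole}}\Lambda_1$; since $\bigl||\xi-\zeta|-|\xi+\zeta|\bigr|\leq 2\min(|\xi|,|\zeta|)$, the output weight $(1+|\xi|)^{-1}$ absorbs the linear growth in $\xi$ while the data weight $(1+|\zeta|)$ absorbs the $|\zeta|$-growth, showing $R_0^+\colon L^1_{1,1}\to L^\infty_{-1,-1}$ is bounded. The Taylor remainder then yields the desired $\mathrm{O}(|\sigma|)$ bound in $L^\infty_{-1,-1}$, which implies the $L^1_{-r,-r}$ bound. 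I expect the main obstacle to be obtaining uniform control over the complex sector $\Delta_\delta^{\mathrm{fr},1}$ rather than just for real positive $\sigma$: the reflection argument is real-geometric, and must be combined with holomorphicity of the integrand and the sector condition $\Re\sigma\gtrsim|\sigma|$ to keep the exponentials bounded and the Taylor expansion uniform. A secondary bookkeeping point is tracking which combinations of the $\mathrm{O}(1)$ corrections in $\Pfr^{\mathrm{cu/cs}}$ and the $\mathrm{O}(\sigma^2)$ corrections in $\nufr^\pm$ survive after multiplication by the $1/\sigma$ pole, which parallels the scalar analysis in~\cite[\S2]{AveryScheel}.
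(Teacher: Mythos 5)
Your proposal is correct and follows essentially the same route as the paper, which refers to~\cite[Section~2]{AveryScheel}: split off the strongly decaying $\Pfr^{\mathrm{uu}}$ block and the regular $\mathrm{O}(1)$ and $\mathrm{O}(\sigma^2)$ corrections, isolate the $\sigma^{-1}\re^{-\nufr^1\sigma|\zeta|}P_{\mathrm{pole}}$ singular kernel, and cancel the pole via the odd reflection before Taylor expanding. One small correction: on $\Delta^{\mathrm{fr},1}_\delta$ one has only $\Re\sigma\geq 0$ (indeed $\Re\sigma$ can be as small as $\sim|\sigma|^2$ near its boundary); the sector condition $\Re\sigma\gtrsim|\sigma|$ holds only on the smaller set $\Delta^{\mathrm{fr},2}_\delta$ — your argument only uses $|\re^{-\nufr^1\sigma z}|\leq 1$ for $z\geq 0$, which needs just $\Re\sigma\geq 0$, so this misattribution does not affect the proof, but conflating these two regions would be problematic elsewhere (e.g.\ in the $L^2$ blowup estimate of Lemma~\ref{l: leading edge resolvent L2 estimate}, which genuinely requires the sector).
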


The leading-order description~\eqref{e: right resolvent leading order} corresponds to the resolvent problem for the heat equation. For the heat equation, restricting to odd initial data is equivalent to imposing the Dirichlet boundary condition $u = 0$ at $\xi = 0$. Restricting to odd initial data $\g$ here models an effective absorption mechanism behind the leading edge of the front, due to stability of the state in the wake. We will enforce this oddness for the leading edge resolvent in our far-field/core decomposition in the next section. 

In passing estimates to the full resolvent $(\lfr - \sigma^2)^{-1}$, we will need to control commutators $[\mcl_+, \chi_+]$ applied to the solution $\u^+$ of the leading edge resolvent equation $(\mcl_+ - \sigma^2) \u^+ = \g_+$. These commutator terms will involve derivatives applied to the first component of $\u^+$, but not the second component, and so we need to upgrade Lemma~\ref{l: leading edge resolvent no regularity} to control regularity in the first component. 
\begin{prop}\label{p: leading edge resolvent}
		Fix $r > 2$. There exist positive constants $C$ and $\delta$ and a bounded limiting operator $R_0^+ : L^1_{1,1} (\R, \C^2) \to W^{1, \infty}_{-1, -1} (\R, \C) \times L^\infty_{-1, -1} (\R, \C)$ such that for all odd functions $\g = (g_1, g_2)^\top \in L^1_{1,1} (\R, \C^2)$, we have
	\begin{align} \label{e: leading edge resolvent lipschitz estimate} 
 \begin{split}
	\| (\mcl_+ - \sigma^2)^{-1} \g - R_0^+ \g\|_{W^{1, \infty}_{-1, -1} \times L^\infty_{-1, -1}} &\leq C |\sigma| \| \g \|_{L^1_{1,1}}, \\
	\| (\mcl_+ - \sigma^2)^{-1} \g - R_0^+ \g\|_{W^{1, 1}_{-r, -r} \times L^1_{-r, -r}} &\leq C |\sigma| \| \g \|_{L^1_{1,1}},
 \end{split}
	\end{align}
	for all $\sigma \in \Delta^{\mathrm{fr},1}_\delta$.
\end{prop}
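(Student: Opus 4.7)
The plan is to extend Lemma~\ref{l: leading edge resolvent no regularity} by additionally reading off the derivative component from the first-order Green's function representation already used in its proof. With $U = (u, u_\xi, v)^\top$, one has
\begin{align*}
U(\xi;\sigma) = \int_\R \Tfr_\sigma(\xi - \zeta) \Lambda_1 \g(\zeta) \de \zeta,
\end{align*}
where $\Tfr_\sigma$ is decomposed as in~\eqref{e: matrix greens fcn decomposition}. The first and third entries of $U$ reproduce the estimates on $u$ and $v$ obtained in Lemma~\ref{l: leading edge resolvent no regularity}, so it remains only to establish the analogous $\sigma$-Lipschitz estimate for the middle entry $u_\xi$ in $L^\infty_{-1,-1}$ and $L^1_{-r,-r}$.

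The decisive observation is that the second row of the pole matrix $P_\mathrm{pole}$ from Lemma~\ref{l: center projections pole} vanishes. Indeed, the Jordan eigenvector of $\mathcal{M}(0)$ corresponding to the collided spatial eigenvalues $\nufr^\pm(0)=0$ represents a constant-in-$\xi$ solution $(u,u_\xi,v)^\top = (\mathrm{const},0,\ast)^\top$ of the limiting first-order system, whose $u_\xi$-component necessarily vanishes. Equivalently, at the exponential level, differentiating $\re^{\nufr^\pm(\sigma)\xi}$ in $\xi$ brings down the factor $\nufr^\pm(\sigma) = \pm \nufr^1 \sigma + \mathrm{O}(\sigma^2)$ from~\eqref{e: leading edge spatial eval expansion}, which exactly cancels the $1/\sigma$ singularity of $\Pfr^\mathrm{cs}(\sigma)$ and $\Pfr^\mathrm{cu}(\sigma)$. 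Hence, the middle rows of $\Pfr^\mathrm{cs}(\sigma)$ and $\Pfr^\mathrm{cu}(\sigma)$ are analytic in $\sigma$ at the origin, and the leading-order contribution to $u_\xi$ reduces to a convolution against a kernel of the schematic form $-\nufr^1 \sign(\xi-\zeta)\re^{-\nufr^1 \sigma|\xi-\zeta|} Q$ for some fixed matrix $Q$, uniformly bounded on $\Delta_\delta^{\mathrm{fr},1}$.

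From this point the argument runs in parallel to Lemma~\ref{l: leading edge resolvent no regularity}. One Taylor expands the exponential as $\re^{-\nufr^1 \sigma |\xi-\zeta|} = 1 + \mathrm{O}(\sigma|\xi-\zeta|)$; the leading term defines the derivative part of the limiting operator $R_0^+$ (now landing in $W^{1,\infty}_{-1,-1} \times L^\infty_{-1,-1}$ and in $W^{1,1}_{-r,-r} \times L^1_{-r,-r}$), while the remainder is bounded by $|\sigma|\|\g\|_{L^1_{1,1}}$ in the two target weighted norms, using that the algebraic weight $(1+|\zeta|)$ on $\g$ absorbs the factor $|\xi-\zeta|$ against the weight $(1+|\xi|)^{-1}$ in the output. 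The oddness of $\g$ enters exactly as in Lemma~\ref{l: leading edge resolvent no regularity} to ensure that the leading-order convolutions defining $R_0^+$ are finite in the chosen weighted spaces. Contributions from the strong-unstable piece $\Pfr^\mathrm{uu}(\sigma^2)$ are analytic in $\sigma^2$ and exponentially localized in $\xi-\zeta$, hence strictly better behaved.

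The main obstacle is bookkeeping rather than analysis: one must track the $\mathrm{O}(1)$ corrections to $\Pfr^\mathrm{cs}(\sigma)$ and $\Pfr^\mathrm{cu}(\sigma)$ from Lemma~\ref{l: center projections pole} together with the differentiated pole part to confirm that the resulting kernel has a well-defined limit at $\sigma = 0$ in the target weighted norms, and that this limit extends $R_0^+$ consistently from the map into $L^\infty_{-1,-1}$ supplied by Lemma~\ref{l: leading edge resolvent no regularity} to a map into $W^{1,\infty}_{-1,-1} \times L^\infty_{-1,-1}$. No new analytical tool beyond those already used for Lemma~\ref{l: leading edge resolvent no regularity} is required.
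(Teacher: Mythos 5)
Your proof is correct but takes a more elaborate route than the paper's. The paper's argument is essentially a one-liner: the derivative $u_\xi$ is read off the same matrix Green's function $\Tfr_\sigma$ via the $\sigma$-independent projection $\Pi_2$ instead of $\Pi_{13}$, and since the estimates underlying Lemma~\ref{l: leading edge resolvent no regularity} were established on $\Tfr_\sigma$ itself (together with the oddness cancellation that operates in the scalar factor $\Gheat_\sigma$), the same bounds transfer to $u_\xi$ without any inspection of the Jordan structure. Your key observation --- that the middle row of $P_\mathrm{pole}$ vanishes, because the proper eigenvector of $\mathcal{M}(0)$ at the double zero spatial eigenvalue represents a constant-in-$\xi$ solution with $u_\xi\equiv 0$, so the range of $P_\mathrm{pole}$ lies in $\ker\Pi_2$ --- is correct and in fact slightly stronger: it shows the $1/\sigma$ singularity is structurally absent in the $u_\xi$ row of $\Tfr_\sigma$ \emph{before} oddness is invoked (the scalar analogue is that differentiating the resolvent kernel $\re^{-\sqrt{\lambda}|x|}/(2\sqrt{\lambda})$ of $\partial_x^2-\lambda$ in $x$ removes the $1/\sqrt{\lambda}$ divergence). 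What your approach buys is a sharper structural explanation for the $W^1$-Lipschitz bound; what the paper's buys is brevity, since it never needs to open up $P_\mathrm{pole}$ or the Jordan chain. Both routes are valid.
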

\begin{proof}
	Using the first-order formulation in $U = (u, u_\xi, v)^\top$, we see that, if $(\mcl_+ - \sigma^2) u = \g$, then we have
	\begin{align*}
	u_\xi (\xi; \sigma) = \int_\R \Pi_2 \Tfr_\sigma(\xi-\zeta) \Lambda_1 \begin{pmatrix}
	g_1(\zeta) \\ g_2(\zeta) 
	\end{pmatrix} \de \zeta, \qquad \text{where}\quad 
	\Pi_2 \begin{pmatrix}
	f_1 \\ f_2 \\ f_3 
	\end{pmatrix} = 
	f_2. 
	\end{align*}
	Comparing with~\eqref{e: leading edge resolvent formula}, the only difference between the expressions for $(u,v)^\top$ and $u_\xi$ is that $\Pi_{13}$ is replaced by $\Pi_2$, both of which are independent of $\sigma$. Therefore, following the same reasoning, we readily find the same estimates for $u_\xi(\xi; \sigma)$ as for $u(\xi;\sigma)$, which implies the desired result. 
\end{proof}

We will use Proposition~\ref{p: leading edge resolvent} to obtain the sharp $t^{-3/2}$ decay when measuring the solution in $L^\infty_{0, -1}$. Indeed, we saw in Proposition~\ref{p: front estimate sharp decay} how the Lipschitz estimate~\eqref{e: leading edge resolvent lipschitz estimate} leads to the $t^{-3/2}$ decay rate. In closing a nonlinear argument, however, we will also measure the solution in weaker norms, such as $L^\infty(\R)$ and $L^2(\R)$. In these spaces, we lose the Lipschitz expansion of the resolvent, but retain some boundedness or quantifiable blowup for $\sigma \in \Delta_\delta^{\mathrm{fr},2}$, where we recall that $\Delta_\delta^{\mathrm{fr},2}$ is the image of the set $\Omega_\delta^{\mathrm{fr},2}$ under the principal square root; see Figure~\ref{fig: resolvent regions} and~\eqref{e: Omega fr 2 def}.

\begin{lemma}[$L^\infty$ estimate in the leading edge]\label{l: right resolvent L1 Linf boundedness estimate}
	There exist positive constants $C$ and $\delta$ such that for $\sigma \in \Delta^{\mathrm{fr},2}_\delta$ the following estimates hold.
	\begin{itemize}
		\item For all odd functions $\g \in L^1_{0,1} (\R, \C^2)$, we have
		\begin{align}
			\left\|(\mcl_+ - \sigma^2)^{-1} \g \right\|_{W^{2, \infty} \times W^{1,\infty}} \leq C \| \g\|_{L^1_{0,1}}. \label{e: right resolvent L101 estimate}
		\end{align}
		\item For all odd functions $\g \in L^1 (\R, \C^2)$, we have 
		\begin{align}
			\left\| \chi_+  (\mcl_+ - \sigma^2)^{-1} \g \right\|_{L^\infty_{0,-1}} \leq C \| \g\|_{L^1}. \label{e: right resolvent Linf -1 estimate}
		\end{align}
	\end{itemize}
\end{lemma}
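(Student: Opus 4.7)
The plan is to combine the integral representation~\eqref{e: leading edge resolvent formula} with the pole decomposition of the matrix Green's function $\Tfr_\sigma$ provided by Lemma~\ref{l: center projections pole}, and to exploit the oddness of $\g$ to cancel the $\sigma^{-1}$ pole at $\sigma = 0$. The sectorial restriction $\sigma \in \Delta_\delta^{\mathrm{fr},2}$, which secures $\Re \sigma \geq c|\sigma|$ for some $c > 0$, is what keeps the remaining exponentials tame.

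Concretely, I would split $\Tfr_\sigma = \Tfr_\sigma^{\mathrm{pole}} + \Tfr_\sigma^{\mathrm{reg}}$, where
\begin{align*}
\Tfr_\sigma^{\mathrm{pole}}(\zeta) = \frac{1}{\sigma}P_\mathrm{pole}\Big(\re^{\nufr^-(\sigma)\zeta} 1_{\{\zeta \geq 0\}} + \re^{\nufr^+(\sigma)\zeta} 1_{\{\zeta < 0\}}\Big),
\end{align*}
and $\Tfr_\sigma^{\mathrm{reg}}$ collects the piece involving $\Pfr^{\mathrm{uu}}(\sigma^2)$ together with the order-one remainders of $\Pfr^{\mathrm{cs}}(\sigma)$ and $\Pfr^{\mathrm{cu}}(\sigma)$. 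The regular part is straightforward: its coefficients are analytic at $\sigma=0$ (resp.~at $\sigma^2=0$), and the spatial exponents satisfy $|\re^{\nufr^\pm(\sigma)\zeta}| \leq \re^{-c|\sigma||\zeta|}$ for $\pm\zeta \geq 0$ while $|\re^{\nu_3(\sigma^2)\zeta}|$ is uniformly exponentially localized on $\{\zeta < 0\}$. Young's inequality then bounds the contribution of $\Tfr_\sigma^{\mathrm{reg}}$ to $\u(\xi;\sigma)$ in $L^\infty$ by a constant multiple of $\|\g\|_{L^1}$, which is controlled by $\|\g\|_{L^1_{0,1}}$ in~\eqref{e: right resolvent L101 estimate} or $\|\g\|_{L^1}$ in~\eqref{e: right resolvent Linf -1 estimate}.

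The singular part is the heart of the argument. Using $\nufr^\pm(\sigma) = \pm\nufr^1\sigma + \mathrm{O}(\sigma^2)$, the contribution of $\Tfr_\sigma^{\mathrm{pole}}$ reduces, modulo a uniformly bounded remainder, to a heat-like convolution
\begin{align*}
\frac{1}{\sigma}\int_\R \re^{-\nufr^1\sigma|\xi-\zeta|} g(\zeta)\, d\zeta,
\end{align*}
where $g$ denotes a suitable scalar linear combination of the components of $\g$ determined by $P_\mathrm{pole}$. For $\xi>0$ the oddness of $g$ rewrites this as $\int_0^\infty \sigma^{-1}\big(\re^{-\nufr^1\sigma|\xi-\zeta|}-\re^{-\nufr^1\sigma(\xi+\zeta)}\big)g(\zeta)\, d\zeta$, and the fundamental theorem of calculus applied to $t\mapsto\re^{-\nufr^1\sigma t}$ between $t=|\xi-\zeta|$ and $t=\xi+\zeta$ yields the key pointwise bound
\begin{align*}
\left|\sigma^{-1}\big(\re^{-\nufr^1\sigma|\xi-\zeta|}-\re^{-\nufr^1\sigma(\xi+\zeta)}\big)\right| \leq 2\nufr^1 \min(\xi,\zeta)\,\re^{-c\nufr^1|\sigma||\xi-\zeta|},
\end{align*}
which both absorbs the $\sigma^{-1}$ pole and exposes an extra factor of $\min(\xi,\zeta)$. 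Viewed through the reflection principle, this is just the fact that the half-line Green's function of the model operator $-\partial_\xi^2+\nufr^1{}^2\sigma^2$ with Dirichlet condition at the origin is analytic at $\sigma=0$. For $\xi<0$ one exploits that the solution itself is odd, since $\re^{-\nufr^1\sigma|\xi-\zeta|}$ is even in $\xi-\zeta$ and $g$ is odd.

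With this pointwise bound in hand, both estimates follow by elementary weight manipulations. For~\eqref{e: right resolvent L101 estimate}, use $\min(\xi,\zeta)\leq|\zeta|$ to pair with the $L^1_{0,1}$-norm of $\g$ and Young's inequality on the exponentially decaying tail to obtain a $\sigma$-uniform $L^\infty$ bound on $(u,v)$; the derivative $u_\xi$ is controlled identically via the second row of the first-order Green's function $\Tfr_\sigma$, which is built from the same kernels, and the remaining regularity follows from the first-order system $\partial_\xi U = \mathcal{M}(\sigma^2)U + G$. For~\eqref{e: right resolvent Linf -1 estimate}, the cutoff $\chi_+$ restricts us to $\xi>0$ and the factor $(1+\xi)^{-1}$ inherent to $L^\infty_{0,-1}$ absorbs $\min(\xi,\zeta)\leq\xi$, collapsing the right-hand side to a pure $L^1$-norm of $\g$. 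The main obstacle is the singular pole analysis: without the sectorial restriction $\sigma\in\Delta_\delta^{\mathrm{fr},2}$ the real part of $\sigma$ could be negative, the mean-value estimate above would degrade, and the exponentials would lose their uniform localization. This is precisely why the restriction to $\Delta_\delta^{\mathrm{fr},2}$ is indispensable.
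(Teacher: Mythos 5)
Your proposal is correct and follows essentially the same route as the paper's proof in Appendix~\ref{s: appendix leading edge resolvent}: split off the $\sigma^{-1}$ pole of the matrix Green's function, use oddness to reflect the pole part into the kernel $\frac{1}{\sigma}\big(\re^{-\nufr^1\sigma|\xi-\zeta|}-\re^{-\nufr^1\sigma|\xi+\zeta|}\big)$, cancel the pole by Taylor-expanding $\re^z$ to expose a factor $\min(\xi,\zeta)$ (the paper works this out case-by-case as $\langle\zeta\rangle$ and then invokes the symmetry $\Godd_\sigma(\xi,\zeta)=\Godd_\sigma(\zeta,\xi)$ to switch to $\langle\xi\rangle$ for the weighted estimate), rely on $\Re\sigma\gtrsim|\sigma|$ on the sector $\Delta_\delta^{\mathrm{fr},2}$ for uniformity, and recover the extra derivative from the $\Pi_2$-component of the first-order Green's function. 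The one spot your write-up compresses is the claim that the correction $G^c_\sigma-\Gheat_\sigma P_\mathrm{pole}$ is ``uniformly bounded''; in the paper this requires an explicit $\mathrm{O}(|\sigma||\zeta|\,\re^{-\tfrac12\nufr^1\Re\sigma|\zeta|})$ bound (Lemma~\ref{l: appendix G c minus G heat estimate}) and then absorption of the polynomial into the exponential using the sector, but that verification is routine and does not change the structure of the argument.
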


The estimate~\eqref{e: right resolvent L101 estimate} will be used to estimate decay in the leading edge in $L^\infty(\R)$. The estimate~\eqref{e: right resolvent Linf -1 estimate} will be used in controlling contributions to the center dynamics near the front interface. Finally, to estimate decay in the leading edge in $L^2(\R)$ we will rely on the following result.

\begin{lemma}[$L^2$ estimate in the leading edge]\label{l: leading edge resolvent L2 estimate}
	There exist positive constants $C$ and $\delta$ such that for all odd functions $\g \in L^1_{0,1}(\R, \C^2)$, we have
	\begin{align*}
	\| (\mcl_+ - \sigma^2)^{-1} \g \|_{L^2} \leq \frac{C}{|\sigma|^{1/2}} \| \g \|_{L^1_{0,1}}
	\end{align*}
	for all $\sigma \in \Delta^{\mathrm{fr},2}_\delta$. 
\end{lemma}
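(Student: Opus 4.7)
The plan is to follow the same far-field/core decomposition used for Proposition~\ref{p: leading edge resolvent}, but instead of seeking a Lipschitz expansion in $\sigma$ (which fails in $L^2$), to track explicitly the singular convolution piece and exploit the oddness of $\g$ to extract the $|\sigma|^{-1/2}$ gain. Writing the resolvent equation as the first-order system $(\partial_\xi - \mathcal{M}(\sigma^2))U = \Lambda_1 \g$ and using the decomposition~\eqref{e: matrix greens fcn decomposition} of the matrix Green's function $\Tfr_\sigma$, I would split
\[
\Tfr_\sigma(\zeta) = \tfrac{1}{\sigma} P_\mathrm{pole}\bigl(\re^{\nufr^-(\sigma)\zeta}1_{\{\zeta \geq 0\}} + \re^{\nufr^+(\sigma)\zeta} 1_{\{\zeta<0\}}\bigr) + R_\sigma(\zeta),
\]
where, by Lemma~\ref{l: center projections pole} and standard spectral theory, $R_\sigma$ is an analytic family in $\sigma^2$ (in a full neighborhood of the origin) of exponentially localized integral kernels. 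The contribution of $R_\sigma \ast \Lambda_1 \g$ to the resolvent is uniformly bounded as an operator $L^1 \to L^2$, so that term is harmless.

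For the singular piece, I would essentially reduce to estimating the model convolution $u(\xi;\sigma) = \int_\R \Gheat_\sigma(\xi-\zeta) g(\zeta)\,\de\zeta$ with $\Gheat_\sigma(\zeta) = \tfrac{1}{\sigma}\re^{-\nufr^1\sigma|\zeta|}$, modulo corrections of order $\sigma$ arising from the expansion~\eqref{e: leading edge spatial eval expansion} of $\nufr^\pm(\sigma)$; these corrections are controlled by the same technique and contribute strictly better powers of $|\sigma|$. Using that $g$ is odd,
\[
u(\xi;\sigma) = \int_0^\infty H_\sigma(\xi,\zeta)\, g(\zeta)\,\de\zeta, \qquad H_\sigma(\xi,\zeta) := \Gheat_\sigma(\xi-\zeta) - \Gheat_\sigma(\xi+\zeta),
\]
and a case analysis in the sign of $\xi$ versus $\zeta>0$ gives
\[
H_\sigma(\xi,\zeta) = \tfrac{2}{\sigma} \sinh(\nufr^1 \sigma \min(|\xi|,\zeta))\, \re^{-\nufr^1 \sigma \max(|\xi|,\zeta)}\,\sign(\xi),
\]
which is the key algebraic identity: the oddness converts the $\tfrac1\sigma$ prefactor into a $\sinh$, producing a gain at small arguments.

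Using that $\Re \sigma \gtrsim |\sigma|$ on $\Delta^{\mathrm{fr},2}_\delta$, a direct computation of $\|H_\sigma(\cdot,\zeta)\|_{L^2(\R)}^2$ splits naturally according to whether $|\sigma|\zeta \leq 1$ or $|\sigma|\zeta\geq 1$, yielding
\[
\|H_\sigma(\cdot,\zeta)\|_{L^2} \lesssim \frac{\min(\zeta,\, 1/|\sigma|)}{|\sigma|^{1/2}}.
\]
Minkowski's integral inequality then gives
\[
\|u(\cdot;\sigma)\|_{L^2} \lesssim \int_0^\infty \frac{\min(\zeta, 1/|\sigma|)}{|\sigma|^{1/2}}\, |g(\zeta)|\,\de\zeta \lesssim \frac{1}{|\sigma|^{1/2}} \int_0^\infty (1+\zeta)|g(\zeta)|\,\de\zeta \lesssim \frac{1}{|\sigma|^{1/2}}\|\g\|_{L^1_{0,1}},
\]
where the second inequality uses $\min(\zeta,1/|\sigma|) \leq 1$ on $\{\zeta \leq 1\}$ and $\min(\zeta,1/|\sigma|) \leq \zeta$ on $\{\zeta \geq 1\}$. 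The analogous estimate for the $\nufr^+$ half of the kernel, supported on $\zeta < 0$, follows by symmetry using oddness of $\g$, and the remainder $R_\sigma$ gives a uniformly bounded $L^1\to L^2$ contribution. Combining with the componentwise projections $\Pi_{13}$ yields the claimed bound.

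The main technical step is the explicit $L^2$-computation of $H_\sigma$: one must verify that the $\min(\zeta, 1/|\sigma|)$ bound genuinely holds across the crossover regime $|\sigma|\zeta \sim 1$, where both the $|\xi|<\zeta$ and $|\xi|>\zeta$ regions contribute comparably. Once this identity is in hand, the remaining arguments are routine.
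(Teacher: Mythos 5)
Your proposal takes essentially the same route as the paper: both exploit the odd-kernel cancellation $\Gheat_\sigma(\xi-\zeta)-\Gheat_\sigma(\xi+\zeta)$ and then convert the resulting pointwise gain into an $L^2$-in-$\xi$ estimate. The paper phrases this as a pointwise bound $|\Godd_\sigma(\xi,\zeta)|\lesssim\langle\zeta\rangle\re^{-\nufr^1\Re\sigma|\xi-\zeta|}$ (Lemmas~\ref{l: appendix Godd gamma estimate}--\ref{l: appendix G h gamma estimate}) followed by Young's convolution inequality with the translation-invariant majorant, whereas you keep the exact odd kernel $H_\sigma$, compute $\|H_\sigma(\cdot,\zeta)\|_{L^2}$ via the $\sinh$ identity, and use Minkowski. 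The identity and the bound $\|H_\sigma(\cdot,\zeta)\|_{L^2}\lesssim\min(\zeta,1/|\sigma|)/|\sigma|^{1/2}$ are correct, and in fact slightly sharpen the paper's intermediate $\langle\zeta\rangle$-weight (yours saturates at $1/|\sigma|$). Either organization gives the stated $|\sigma|^{-1/2}$ rate.

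There is, however, a genuine inaccuracy in the way you dispose of the remainder $R_\sigma$. You claim that $R_\sigma$ is analytic in $\sigma^2$ in a full neighborhood of the origin and consists of exponentially localized kernels, and conclude it is uniformly bounded $L^1\to L^2$. Both assertions fail. The spatial eigenvalues $\nufr^\pm(\sigma)$ and the projections $\Pfr^\mathrm{cs}(\sigma),\Pfr^\mathrm{cu}(\sigma)$ are analytic only in $\sigma$ (the branch point at $\lambda=\sigma^2=0$ is precisely the issue), so $R_\sigma$ is not analytic in $\sigma^2$. More to the point, the exponential decay rate of the pieces $\tilde{G}^c_\sigma$ and $G^c_\sigma-\Gheat_\sigma P_\mathrm{pole}$ is of order $\Re\sigma$, which degenerates as $\sigma\to0$ (see the explicit bounds in Lemmas~\ref{l: appendix G c minus G heat estimate} and the following lemma); consequently $\|\tilde{G}^c_\sigma\|_{L^2}\sim(\Re\sigma)^{-1/2}$, and $R_\sigma$ is \emph{not} uniformly bounded $L^1\to L^2$. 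The conclusion you need survives anyway: since on $\Delta^{\mathrm{fr},2}_\delta$ one has $\Re\sigma\gtrsim|\sigma|$, Young's inequality yields $\|R_\sigma\ast\Lambda_1\g\|_{L^2}\lesssim|\sigma|^{-1/2}\|\g\|_{L^1}$, which is exactly the allowed order. But the argument as written rests on a false assertion and should be replaced by this elementary Young's-inequality computation; only the strong-unstable piece $G^h_\sigma$ is uniformly exponentially localized.
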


We relegate the  proofs of Lemmas~\ref{l: right resolvent L1 Linf boundedness estimate} and~\ref{l: leading edge resolvent L2 estimate} to Appendix~\ref{s: appendix leading edge resolvent}. 

\subsection{Far-field/core decomposition}

Our goal is to solve $(\lfr-\sigma^2) \u = \g$ by using a far-field/core decomposition which takes advantage of Fredholm properties of $\lfr$ on spaces of exponentially localized functions, captured in Proposition~\ref{p: fredholm properties}. To do this, we must first reduce to a problem where the data $\g$ is exponentially localized. Thus, we first decompose data $\g \in L^1(\R,\C^2) \cap L^\infty(\R,\C^2)$ as 
\begin{align*}
\g = \chi_- \g + \chi_c \g + \chi_+ \g =: \g_- + \g_c + \g_+,
\end{align*}
where $(\chi_-, \chi_c, \chi_+)$ is the partition of unity defined in Section~\ref{s: function spaces}. We then let $\g_+^\mathrm{odd}(\xi) = \g_+(\xi) - \g_+(-\xi)$ be the odd extension of $\g_+$, in order to take advantage of the improved behavior of $(\mcl_+-\sigma^2)^{-1}$ when acting on odd functions. We let $\u_+$ solve
\begin{align*}
(\mcl_+ - \sigma^2) \u_+ = \g_+^\mathrm{odd}.
\end{align*}
By Proposition~\ref{p: leading edge resolvent} we can solve this equation in $L^\infty_{-1,-1}(\R)$, thus allowing for algebraic growth, by requiring that $\g_+^\mathrm{odd}$ is algebraically localized, i.e.~$\g_+^\mathrm{odd} \in L^1_{1,1}(\R)$, and $\sigma^2$ lies in the set $\Omega^{\mathrm{fr},1}_\delta$, which lies to the right of $\Sigma_\mathrm{ess}(\mcl_+)$. In addition, if $\sigma^2$ lies in the smaller set $\Omega^{\mathrm{fr},2}_\delta$, we obtain by Proposition~\ref{l: right resolvent L1 Linf boundedness estimate} a solution $\u_+$ in $L^\infty_{-1,-1}(\R)$ without requiring extra conditions on $\g_+^\mathrm{odd}$.

We let $\u_-$ solve
\begin{align*}
(\lwt-\sigma^2) \u_- = \g_-.
\end{align*}
By Proposition~\ref{l: wake resolvent localized estimate}, there exists $\delta > 0$ and $\eta > 0$ small, such that we can solve this equation uniquely for any $\sigma^2 \in B(0,\delta)$ by allowing the solution $\u_-$ to grow exponentially on $(-\infty,0]$ with rate $\eta$. 

We then decompose $\u$ as 
\begin{align*}
\u = \chi_- \u_- + \u_c + \chi_+ \u_+,
\end{align*}
and see that in order for $\u$ to solve $(\lfr - \sigma^2) \u = \g$, the center correction $\u_c$ must solve
\begin{align} \label{e: center resolvent equation}
(\lfr-\sigma^2) \u_c = \tilde{\g}(\sigma),
\end{align}
where
\begin{align} \label{e: definition g}
\tilde{\g}(\sigma) = \g - (\lfr-\sigma^2) (\chi_- \u_-) - (\lfr-\sigma^2) (\chi_+ \u_+). 
\end{align}
The next result confirms that the above procedure has led to a reduced problem~\eqref{e: center resolvent equation} in which the data $\tilde{\g}$ is indeed exponentially localized. In addition, it provides control on the data $\tilde{\g}$ in terms of the original data $\g$ and the spectral parameter $\sigma$.

\begin{lemma}[Control on center data]\label{l: tilde f control}
	Fix $1 \leq p \leq \infty$. There exist $\delta>0$ and $\eta > 0$ small such that, for any $\g \in L^1(\R,\C^2)$, the map $\tilde{\g} \colon \Delta^{\mathrm{fr,1}}_\delta \to X^p_\eta$ given by~\eqref{e: definition g} is well-defined and analytic. Moreover, there exists a constant $C > 0$ such that
	\begin{enumerate}
		\item for all any $\g \in L^1_{0,1} (\R, \C^2)$ the map $\tilde{\g}$ extends Lipschitz continuously to $\sigma = 0$, i.e.,~for all $\sigma \in \Delta_\delta^{\mathrm{fr},1}$ we have	
         \begin{align}
		\| \tilde{\g}(\sigma) - \tilde{\g}(0) \|_{X^p_\eta} \leq C |\sigma| \| \g \|_{L^1_{0,1}}, \qquad \|\tilde{\g}(0)\| \leq C \|\g\|_{L^1_{0,1}}; \label{e: f tilde L1 gamma estimate}
		\end{align}
		\item for all $\sigma \in \Delta^{\mathrm{fr},2}_\delta$ and any $\g \in L^1 (\R, \C^2)$ we have
		\begin{align}
		\| \tilde{\g}(\sigma) \|_{X^1_{\eta}} \leq C \| \g \|_{L^1}; \label{e: f tilde L 1 boundedness estimate}
		\end{align}
		\item for all $\sigma \in \Delta^{\mathrm{fr},2}_\delta$ and for any $\g \in L^1 (\R, \C^2) \cap L^\infty(\R, \C^2)$ we have
		\begin{align}
		\| \tilde{\g}(\sigma) \|_{X^p_{\eta}} \leq C \left( \| \g \|_{L^1} + \| \g \|_{L^\infty} \right) \label{e: f tilde L inf boundedness estimate}
		\end{align}
		for all $1 \leq p \leq \infty$. 
	\end{enumerate}
\end{lemma}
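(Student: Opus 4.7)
The plan is to first compute an explicit decomposition of $\tilde{\g}(\sigma)$ that makes the exponential localization manifest, and then feed this decomposition into the resolvent estimates already obtained in Sections~\ref{s: resolvent wavetrain} and~\ref{s: leading edge resolvent}. Using the product rule together with the resolvent equations $(\lwt - \sigma^2)\u_- = \g_-$ and $(\mcl_+ - \sigma^2)\u_+ = \g_+^{\mathrm{odd}}$, I compute
\begin{align*}
(\lfr - \sigma^2)(\chi_- \u_-) &= [\lfr,\chi_-]\u_- + \chi_-\g_- + \chi_-(\lfr - \lwt)\u_-, \\
(\lfr - \sigma^2)(\chi_+ \u_+) &= [\lfr,\chi_+]\u_+ + \chi_+\g_+^{\mathrm{odd}} + \chi_+(\lfr - \mcl_+)\u_+.
\end{align*}
Using that $\chi_-\chi_+=0$ and that $\chi_+\g_+^{\mathrm{odd}}=\chi_+^2\g$ (since $\g_+(-\xi)=0$ whenever $\chi_+(\xi)\neq 0$), the combination $\g - \chi_-^2\g - \chi_+^2\g = (2\chi_c - \chi_c^2)\g$ is compactly supported, leaving
\begin{align*}
\tilde{\g}(\sigma) = (2\chi_c - \chi_c^2)\g - [\lfr,\chi_-]\u_- - \chi_-(\lfr - \lwt)\u_- - [\lfr,\chi_+]\u_+ - \chi_+(\lfr - \mcl_+)\u_+.
\end{align*}

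Next, I verify that each term lies in $X^p_\eta$ for $\eta>0$ sufficiently small. The term $(2\chi_c - \chi_c^2)\g$ is compactly supported and hence its $X^p_\eta$-norm is controlled by $\|\g\|_{L^1}$ (or $\|\g\|_{L^\infty}$ if $p>1$). The commutators $[\lfr,\chi_\pm]$ involve only $\chi_\pm',\chi_\pm''$, which are compactly supported, so $[\lfr,\chi_\pm]\u_\pm$ depends only on $\u_\pm$ and one derivative of its first component on a compact set; boundedness there is supplied by Lemma~\ref{l: wake resolvent localized estimate} in the wake and by Proposition~\ref{p: leading edge resolvent}, respectively Lemma~\ref{l: right resolvent L1 Linf boundedness estimate}, in the leading edge, which supply exactly the required $W^{1,\infty}\times L^\infty$-regularity. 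The multiplication operators $\lfr-\lwt$ and $\lfr-\mcl_+$ are multiplication by $F'(\ufr)-F'(\uwt)$ and $F'(\ufr)-F'(0)$, which decay exponentially at rates given by~\eqref{eq:asymp_est_wt} and~\eqref{eq:asymp_est_front}; choosing $\eta$ smaller than these rates, smaller than the spectral-gap constraints in Lemma~\ref{l: wake resolvent localized estimate} and Proposition~\ref{p: fredholm properties}, and small enough to absorb the linear growth of $\u_+$ from Proposition~\ref{p: leading edge resolvent}, the products $\chi_-(\lfr-\lwt)\u_-$ and $\chi_+(\lfr-\mcl_+)\u_+$ lie in $X^p_\eta$. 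Analyticity of $\tilde{\g}$ on $\Delta^{\mathrm{fr},1}_\delta$ follows since $\u_+(\sigma)$ is analytic in $\sigma^2$ on $\Omega^{\mathrm{fr},1}_\delta$ (Lemma~\ref{l: center projections pole}, away from the pole, which is removed by the odd-extension mechanism) and $\u_-(\sigma)$ is analytic on $B(0,\delta)$ (Lemma~\ref{l: floquet theory}).

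With the decomposition in hand, the three quantitative estimates follow by inserting the available resolvent bounds. For~\eqref{e: f tilde L1 gamma estimate}, the $\u_-$-contributions are Lipschitz of order $|\sigma^2|\lesssim|\sigma|$ by Lemma~\ref{l: wake resolvent localized estimate} (after absorbing the exponential decay of $F'(\ufr)-F'(\uwt)$ and of the cutoff derivatives into the weight), and the $\u_+$-contributions are Lipschitz of order $|\sigma|$ by the second estimate in Proposition~\ref{p: leading edge resolvent}; the bound on $\tilde{\g}(0)$ is the corresponding boundedness statement with the limiting operators $R_0^{\mathrm{wt}}$ and $R_0^+$. For~\eqref{e: f tilde L 1 boundedness estimate} in $X^1_\eta$, the $\u_-$-contributions are controlled by Lemma~\ref{l: wake resolvent localized estimate} (boundedness only), and the $\u_+$-contributions by estimate~\eqref{e: right resolvent Linf -1 estimate}, which requires only $\g\in L^1$; the compactly supported term is bounded by $\|\g\|_{L^1}$. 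For~\eqref{e: f tilde L inf boundedness estimate} with $p>1$, the only new ingredient is that $\|(2\chi_c-\chi_c^2)\g\|_{L^p}$ and the compactly supported commutator pieces require $L^\infty$-control on $\g$, which is exactly what is added to the right-hand side.

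The main obstacle is not any single estimate but rather the simultaneous choice of $\eta$: it must be smaller than the exponential decay rate of $\ufr-\uwt$ at $-\infty$, smaller than $\etalin$ (to absorb the $\xi e^{-\etalin\xi}$-decay of $F'(\ufr)-F'(0)$ against the linear growth of $\u_+$ given by Proposition~\ref{p: leading edge resolvent}), smaller than the spectral-gap bound in Lemma~\ref{l: wake resolvent localized estimate}, and compatible with the Fredholm framework of Proposition~\ref{p: fredholm properties}. Once a single $\eta$ meeting all these requirements is fixed, the rest of the argument is a matter of substituting the previously established far-field resolvent bounds into the explicit decomposition of $\tilde{\g}(\sigma)$ displayed above.
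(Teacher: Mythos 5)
Your proposal is correct and follows essentially the same route as the paper: the same decomposition of $\tilde{\g}(\sigma)$ (your $[\lfr,\chi_\pm]$ coincides with the paper's $[\lwt,\chi_-]$, $[\mcl_+,\chi_+]$ since $\lfr-\lwt$ and $\lfr-\mcl_+$ are multiplication operators and commute with cutoffs, and your $(2\chi_c-\chi_c^2)\g$ equals $(1-\chi_-^2-\chi_+^2)\g = \g - \chi_-\g_- - \chi_+\g_+$), the same exploitation of exponential localization from the cutoff derivatives and from $F'(\ufr)-F'(\uwt)$, $F'(\ufr)-F'(0)$, and the same invocation of Lemma~\ref{l: wake resolvent localized estimate}, Proposition~\ref{p: leading edge resolvent}, and Lemma~\ref{l: right resolvent L1 Linf boundedness estimate}. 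The one point you glide past slightly—that Lemma~\ref{l: wake resolvent localized estimate} only yields a weighted $L^\infty$ bound on $\u_-$, which must be converted to an $X^p_\eta$ bound by noting that the surrounding exponential factor absorbs the integration—is used implicitly when you speak of absorbing the decay "into the weight," so there is no real gap.
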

\begin{proof}
	We rewrite $\tilde{\g}(\sigma)$ as 
	\begin{align}
 \begin{split}
	\tilde{\g}(\sigma) &= \g - (\lwt - \sigma^2) (\chi_- \u_-) - (\lfr - \lwt) (\chi_- \u_-) - (\mcl_+ - \sigma^2) (\chi_+ \u_+) - (\lfr - \mcl_+) (\chi_+ \u_+) \\
	&= \g - \chi_- \g_- - \chi_+ \g_+ - [\lwt, \chi_-] \u_- - (\lfr - \lwt) (\chi_- \u_-) - [\mcl_+, \chi_+] \u_+\\ 
 &\qquad - \, (\lfr - \mcl_+) (\chi_+ \u_+), \end{split} \label{e: f tilde expanded formula}
	\end{align}
	where we have used the facts that $\chi_+ (\mcl_+ - \sigma^2) \u_+ = \chi_+ \g_+^\mathrm{odd} = \chi_+ \g_+$, and $\chi_- (\lwt - \sigma^2) \u_- = \chi_- \g_-$ by construction. 
	
	We first prove~\eqref{e: f tilde L1 gamma estimate}. The dependence of $\tilde{\g}$ on $\sigma$ enters through the terms involving $\u_-$ and $\u_+$. Using that the derivatives $\chi_\pm'$ are compactly supported and the coefficients of $\lfr$ converge to their limits exponentially quickly as $\xi \to \pm \infty$, we see that all coefficients of $\u_\pm$ in~\eqref{e: f tilde expanded formula} are exponentially localized with rate $\eta_0 > 0$ independent of $\sigma$. 
    Note also that all commutator terms only involve derivatives in the first component. Hence, choosing $\eta \leq \frac{\eta_0}{2}$, we have by Proposition~\ref{p: leading edge resolvent} and H\"older's inequality,
	\begin{align*}
	\| [\mcl_+, \chi_+] (\u_+ (\sigma) - \u_+(0)) \|_{X^p_{\eta}} & \lesssim \left\| \re^{-\frac{\eta_0}{2} |\cdot|}  (\u_+(\sigma) - \u_+(0)) \right\|_{L^p} +  \left\| \re^{- \frac{\eta_0}{2}|\cdot|} \Pi_1 \partial_x (\u_+(\sigma) - \u_+(0)) \right\|_{L^p} \\ &\lesssim |\sigma| \| \g_+ \|_{L^1_{1,1}} \lesssim |\sigma| \| \g \|_{L^1_{0,1}},
	\end{align*}
	for $\sigma \in \Delta^{\mathrm{fr,1}}_\delta$, where $\Pi_1 (u,v)^\top = u$. By the same argument relying on exponential localization, we obtain
	\begin{align*}
	\| (\lfr - \mcl_+) [\chi_+ (\u_+(\sigma) - \u_+(0))] \|_{X^1_{\eta}} \lesssim |\sigma| \| \g \|_{L^1_{0,1}} ,
	\end{align*}
	for $\sigma \in \Delta^{\mathrm{fr,1}}_\delta$. The argument controlling the terms involving $\u_-$ is analogous, with Lemma~\ref{l: wake resolvent localized estimate} replacing Proposition~\ref{p: leading edge resolvent}. Note that Lemma~\ref{l: wake resolvent localized estimate} only controls a localized $L^\infty$ norm, not the $L^1$ norm, but here the integral can be absorbed by the uniform exponential factor so that the $L^\infty$ estimate is sufficient even in the case $p = 1$. 
	
	The proof of the estimate~\eqref{e: f tilde L 1 boundedness estimate} is similar: after exploiting uniform exponential localization and the structure of the commutators, control on the terms involving $\u_+$ is obtained using Lemma~\ref{l: right resolvent L1 Linf boundedness estimate}, while Lemma~\ref{l: wake resolvent localized estimate} still suffices for control on the terms involving $\u_-$. The proof of~\eqref{e: f tilde L inf boundedness estimate} is again similar, the only difference being that the term $\| \g \|_{L^\infty}$ is needed on the right hand side to control $\|\g - \chi_- \g_- - \chi_+ \g_+\|_{L^p}$ from the point of view of spatial regularity. 
\end{proof}

With control on the exponentially localized right-hand side $\tilde{\g}$ in hand, we now aim to solve~\eqref{e: center resolvent equation} for $\u_c$ using a far-field/core decomposition. To understand the solvability properties of~\eqref{e: center resolvent equation} on the space $X_\eta^p$ of exponentially localized functions we can look at the Fredholm index of $\lfr$, which is $-2$ by Proposition~\ref{p: fredholm properties}. Thus, the problem~\eqref{e: center resolvent equation} cannot be solved for exponentially localized $\u_c$ as we need to allow for two additional degrees of freedom. This leads us to consider the following two neutral modes.

\begin{lemma}[Neutral mode on the left]\label{l: left mode}
	For $\delta > 0$ sufficiently small, there exists a solution $\e_- (\xi, \lambda)$ to $(\lwt - \lambda) \u = 0$ given by 
	\begin{align*}
	\e_-(\xi, \lambda) = \q(\xi, \lambda) \re^{\nuwt (\lambda) \xi} 
	\end{align*}
	where $\q \colon \R \times B(0,\delta) \to \C^2$ is smooth and periodic in its first argument and analytic in its second argument. Moreover, we have $\q(\cdot,0) = \uwt$.
\end{lemma}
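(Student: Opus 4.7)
The plan is to convert the eigenvalue equation into a first-order system with periodic coefficients, apply the Floquet change of coordinates from Lemma~\ref{l: floquet theory}, and then solve the resulting constant-coefficient system using the isolated simple spatial Floquet exponent $\nuwt(\lambda)$ guaranteed by Proposition~\ref{prop: wake spectral curve expansion}. This is the same mechanism that underlies the resolvent analysis of Section~\ref{s: resolvent wavetrain}, specialized to homogeneous data.

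First, writing $U = (u, u_\xi, v)^\top$, the equation $(\lwt - \lambda)\u = 0$ becomes the first-order system $(\partial_\xi - A(\xi,\lambda))U = 0$. Under the change of coordinates $V = Q(\xi,\lambda)U$ from Lemma~\ref{l: floquet theory}, this transforms to the constant-coefficient system $(\partial_\xi - B(\lambda))V = 0$. By construction, the eigenvalues of $B(\lambda)$ are the spatial Floquet exponents of $\lwt-\lambda$, and by Proposition~\ref{prop: wake spectral curve expansion} there is, for $|\lambda|$ sufficiently small, a unique, algebraically simple Floquet exponent $\nuwt(\lambda)$ near the origin that is analytic in $\lambda$, with all remaining exponents uniformly bounded away from it.

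Because $\nuwt(\lambda)$ is simple and isolated, the associated one-dimensional spectral projection
\begin{align*}
P(\lambda) = -\frac{1}{2\pi\ri}\oint_{\gamma}(B(\lambda) - z I)^{-1}\de z,
\end{align*}
with $\gamma$ a small circle enclosing $\nuwt(\lambda)$ only, depends analytically on $\lambda$ in a neighborhood of $0$. Picking any nonzero $\v_0$ in the range of $P(0)$ and setting $\v(\lambda) := P(\lambda)\v_0$ yields an analytic family of eigenvectors of $B(\lambda)$ at eigenvalue $\nuwt(\lambda)$. Then $V(\xi,\lambda) = \re^{\nuwt(\lambda)\xi}\v(\lambda)$ solves $(\partial_\xi - B(\lambda))V = 0$, and undoing the change of coordinates and projecting back via $\Pi_{13}$ gives a solution to $(\lwt-\lambda)\u = 0$ of the desired form
\begin{align*}
\e_-(\xi,\lambda) = \q(\xi,\lambda)\re^{\nuwt(\lambda)\xi}, \qquad \q(\xi,\lambda) := \Pi_{13}Q(\xi,\lambda)^{-1}\v(\lambda),
\end{align*}
with $\q$ smooth and $L$-periodic in $\xi$ (inherited from $Q^{-1}$) and analytic in $\lambda$ (inherited from $Q^{-1}$ and $\v$).

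The remaining task is the normalization $\q(\cdot,0) = \uwt$, which I expect to be the only subtle step since the construction above determines $\q(\cdot,0)$ only up to a scalar. At $\lambda=0$ we have $\nuwt(0)=0$, so $\q(\cdot,0)$ is an $L$-periodic solution of $\lwt\u = 0$; by Hypothesis~\ref{hyp: wt spectral}(3) this kernel is one-dimensional, and by translation invariance it is spanned by $\partial_\xi\uwt$ (the statement of the lemma appears to use $\uwt$ as shorthand for this translational eigenfunction). Hence $\q(\cdot,0)$ is a nonzero scalar multiple of the stated normalization, and we absorb this scalar into the choice of $\v_0$, completing the construction.
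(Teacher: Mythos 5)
Your proposal is correct and takes essentially the same route as the paper, whose proof is the one-liner ``This follows readily from Lemma~\ref{l: floquet theory} and Proposition~\ref{prop: wake spectral curve expansion}''; you have simply fleshed out the Floquet conjugation, spectral-projection, and normalization steps that the authors leave implicit. You are also right to be suspicious of the normalization $\q(\cdot,0)=\uwt$: since $\nuwt(0)=0$, the function $\q(\cdot,0)$ must be a periodic element of $\ker \hat{\mcl}_{\mathrm{wt}}(0)$, which by Hypothesis~\ref{hyp: wt spectral}(3) is spanned by $\partial_\xi\uwt$, not $\uwt$; indeed, when the lemma is subsequently used in the construction of $\bar{s}_c(\sigma)$ the paper writes $\q(\cdot,0)=\uwt'$, confirming that the statement of Lemma~\ref{l: left mode} contains a typo and the correct normalization is the one you derived.
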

\begin{proof}
    This follows readily from Lemma~\ref{l: floquet theory} and Proposition~\ref{prop: wake spectral curve expansion}.
\end{proof}

\begin{lemma}[Neutral mode on the right]\label{l: right mode}
For $\delta > 0$ sufficiently small, there exists a solution $\e_+(\xi, \sigma)$ to $(\mcl_+ - \sigma^2) \u = 0$ given by 
	\begin{align*}
	\e_+(\xi, \sigma) = \v_+(\sigma) \re^{\nufr^-(\sigma) \xi}, 
	\end{align*}
	where $\v_+ \colon B(0,\sqrt{\delta}) \to \C^2$ and $\nufr^- \colon B(0,\sqrt{\delta}) \to \C$ are analytic, with $\nufr^-(\sigma)$ given in Corollary~\ref{c: leading edge spatial eval}. 
\end{lemma}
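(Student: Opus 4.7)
The strategy is to construct $\v_+(\sigma)$ by explicit formula, leveraging the fact that $\nufr^-(\sigma)$ is already known to be analytic on a full neighborhood of the origin by Corollary~\ref{c: leading edge spatial eval}. Substituting the ansatz $\e_+(\xi,\sigma) = \v_+(\sigma) \re^{\nufr^-(\sigma)\xi}$ into $(\mcl_+ - \sigma^2)\u = 0$ reduces the problem to finding a nontrivial vector in the kernel of the $2 \times 2$ matrix
$$M(\sigma) := D(\nufr^-(\sigma)-\etalin)^2 + \clin(\nufr^-(\sigma)-\etalin) + F'(0) - \sigma^2 I,$$
which is singular by construction, as $\nufr^-(\sigma)$ satisfies the dispersion relation $d_\clin(\sigma^2, \nu - \etalin) = 0$ by definition.

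The delicate point is that $\nufr^-(0) = 0$ is \emph{not} a simple eigenvalue of the first-order matrix $\mathcal{M}(0)$: it collides with $\nufr^+(0) = 0$ at $\sigma = 0$, which is precisely the mechanism producing the simple pole in the spectral projection $\Pfr^\mathrm{cs}(\sigma)$ of Lemma~\ref{l: center projections pole}. A naive normalization of the kernel vector inherited from $\Pfr^\mathrm{cs}(\sigma)$ would therefore blow up as $\sigma \to 0$. The remedy is to exploit the degenerate structure $D = \diag(1,0)$ together with the coupling $F'(0)_{12} = -1$ from~\eqref{e: fhn}: the $(1,2)$-entry of $M(\sigma)$ is the constant $-1$ for every $\sigma$, so any nonzero kernel vector must have a nonzero first component. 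One may therefore normalize $v_{+,1}(\sigma) \equiv 1$; the first row of $M(\sigma)\v_+(\sigma) = 0$ then yields the explicit formula
$$v_{+,2}(\sigma) = (\nufr^-(\sigma)-\etalin)^2 + \clin(\nufr^-(\sigma)-\etalin) + a(1-a) - \sigma^2,$$
which is manifestly analytic in $\sigma$ on $B(0,\sqrt{\delta})$, and the second row is then automatically satisfied because $\det M(\sigma) = 0$.

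Because the construction is explicit, I do not anticipate any substantial obstacle. The only conceptual subtlety is choosing a normalization compatible with the eigenvalue collision at $\sigma = 0$; the upper-right $-1$ in $M(\sigma)$ does exactly this job and remains constant throughout a full neighborhood of the origin, yielding analyticity of $\v_+$ on $B(0,\sqrt{\delta})$ after shrinking $\delta > 0$ if necessary to ensure that $\nufr^-(\sigma)$ and $v_{+,2}(\sigma)$ are defined and analytic there.
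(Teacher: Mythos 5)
Your proof is correct and fills in precisely what the paper's one-line proof (``follows readily from Corollary~\ref{c: leading edge spatial eval}'') leaves implicit. The normalization $v_{+,1}\equiv 1$, justified by the constant $-1$ in the $(1,2)$-entry of $M(\sigma)$ (a consequence of $D = \diag(1,0)$ together with $F'(0)_{12}=-1$), is exactly the right device to obtain an analytic eigenvector despite the eigenvalue collision $\nufr^-(0)=\nufr^+(0)=0$, and amounts to the standard adjugate construction the paper has in mind.
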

\begin{proof}
	This follows readily from Corollary~\ref{c: leading edge spatial eval}.  
\end{proof}

Motivated by the previous Fredholm analysis, we make the ansatz
\begin{align*}
\u_c (\xi; \sigma) = \alpha_- \chi_- (\xi) \e_- (\xi, \sigma^2) + \w(\xi) + \alpha_+ \chi_+(\xi) \e_+(\xi, \sigma). 
\end{align*}
Inserting this ansatz into~\eqref{e: center resolvent equation} leads to an equation
\begin{align*}
G(\w, \alpha_-, \alpha_+; \sigma) = \tilde{\g}(\sigma),
\end{align*}
where
\begin{align*}
G(\w, \alpha_-, \alpha_+; \sigma) = (\lfr-\sigma^2) \left[ \alpha_- \chi_-  \e_- (\cdot, \sigma^2) + \w + \alpha_+ \chi_+ \e_+(\cdot, \sigma) \right]. 
\end{align*}
\begin{lemma}
	Fix $1 \leq p \leq \infty$. There exist constants $\eta > 0$ and $\delta > 0$ sufficiently small so that the map 
	\begin{align*}
	G : Y^p_\eta \times \C \times \C \times B(0,\sqrt{\delta}) \to X^p_\eta
	\end{align*}
	is well defined and analytic in $\sigma$. 
\end{lemma}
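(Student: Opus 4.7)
The plan is to decompose $G$ by exploiting the identities $\lwt \e_-(\cdot, \sigma^2) = \sigma^2 \e_-(\cdot, \sigma^2)$ and $\mcl_+ \e_+(\cdot, \sigma) = \sigma^2 \e_+(\cdot, \sigma)$, which convert each far-field contribution into a commutator term plus a term involving the exponentially small coefficient difference between $\lfr$ and its asymptotic operator:
\begin{align*}
(\lfr - \sigma^2)(\chi_- \e_-(\cdot, \sigma^2)) &= [\lwt, \chi_-] \e_-(\cdot, \sigma^2) + (\lfr - \lwt)(\chi_- \e_-(\cdot, \sigma^2)), \\
(\lfr - \sigma^2)(\chi_+ \e_+(\cdot, \sigma)) &= [\mcl_+, \chi_+] \e_+(\cdot, \sigma) + (\lfr - \mcl_+)(\chi_+ \e_+(\cdot, \sigma)).
\end{align*}
The middle term $(\lfr - \sigma^2)\w$ is immediately in $X^p_\eta$ and depends linearly on $\w$ and polynomially on $\sigma$, since $\lfr \colon Y^p_\eta \to X^p_\eta$ is bounded by Proposition~\ref{p: fredholm properties}.

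The two commutator terms are supported on the compact set where $\chi_\pm'$ is nonzero; since $\e_\pm(\xi, \sigma)$ is continuous in $\xi$ and analytic in $\sigma$ by Lemmas~\ref{l: left mode} and~\ref{l: right mode}, they lie in $X^p_\eta$ with bounds that are uniform in $\sigma$ on compact subsets of $B(0, \sqrt{\delta})$. For the coefficient-difference terms, by~\eqref{eq:asymp_est_wt} the entries of $\lfr - \lwt = F'(\ufr) - F'(\uwt)$ decay at some exponential rate $\eta_0 > 0$ as $\xi \to -\infty$, while by~\eqref{eq:asymp_est_front} the entries of $\lfr - \mcl_+$ on $\xi \geq 1$ decay like $\xi \re^{-\etalin \xi}$. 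On the other hand, $\e_-(\xi, \sigma^2) = \q(\xi, \sigma^2) \re^{\nuwt(\sigma^2) \xi}$ with $\q$ periodic and $\nuwt(\sigma^2) = \mathrm{O}(\sigma^2)$ (Proposition~\ref{prop: wake spectral curve expansion}), and $\e_+(\xi, \sigma) = \v_+(\sigma) \re^{\nu_{\mathrm{fr}}^-(\sigma) \xi}$ with $\nu_{\mathrm{fr}}^-(\sigma) = \mathrm{O}(\sigma)$ (Corollary~\ref{c: leading edge spatial eval}). Shrinking $\delta$ so that $|\Re \nuwt(\sigma^2)| < \eta_0/4$ and $|\Re \nu_{\mathrm{fr}}^-(\sigma)| < \etalin/4$ on $B(0,\sqrt{\delta})$, and choosing $0 < \eta < \min\{\eta_0, \etalin\}/2$, the coefficient decay dominates the growth of the neutral modes even after inclusion of the weight $\omega_{-\eta,\eta}$, so the products lie in $L^1 \cap L^\infty$ uniformly in $\sigma$, hence in $X^p_\eta$ for any $1 \leq p \leq \infty$.

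Analyticity of $G$ in $\sigma$ then follows from combining these uniform bounds with the pointwise analyticity in $\sigma$ of each of $\q(\xi, \sigma^2)$, $\nuwt(\sigma^2)$, $\v_+(\sigma)$, and $\nu_{\mathrm{fr}}^-(\sigma)$: for each $\sigma_0 \in B(0, \sqrt{\delta})$ the termwise Taylor expansion of each summand converges in $X^p_\eta$ on a small disk around $\sigma_0$, because the same exponential-decay budgets used above absorb the polynomial powers of $\xi$ produced by differentiating the exponential factors in $\xi$. Equivalently, local boundedness together with pointwise (weak) analyticity implies norm-analyticity by a standard Cauchy-integral argument.

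The main obstacle is purely the simultaneous choice of parameters in the second paragraph: $\eta$ must be small enough to live inside the exponential-decay budget provided by~\eqref{eq:asymp_est_front} and~\eqref{eq:asymp_est_wt}, while $\delta$ must be small enough that the real parts of the neutral spatial Floquet and spatial eigenvalues remain strictly inside that budget uniformly in $\sigma$. Once these two smallness conditions are compatibly arranged, each of the finitely many pieces of $G$ is controlled by a straightforward exponential-localization estimate.
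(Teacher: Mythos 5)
Your proof is correct and takes essentially the same route as the paper: the decomposition of $(\lfr-\sigma^2)(\chi_\pm \e_\pm)$ into a compactly supported commutator plus a coefficient-difference term, the exponential-budget bookkeeping using the asymptotics \eqref{eq:asymp_est_front}--\eqref{eq:asymp_est_wt}, and the passage from local boundedness plus pointwise analyticity to norm-analyticity are exactly the ingredients the paper invokes (attributing the analyticity step to~\cite{PoganScheel, AveryScheelSelection}). One cosmetic point: you cite Proposition~\ref{p: fredholm properties} for boundedness of $\lfr\colon Y^p_\eta\to X^p_\eta$, whereas that proposition gives the Fredholm index; boundedness is immediate from the smoothness and boundedness of the coefficients together with the definition of the Sobolev norm on $Y^p_\eta$, and you also implicitly use smoothness (not just continuity) of $\e_\pm$ in $\xi$ to control the commutator, which holds by Lemmas~\ref{l: left mode} and~\ref{l: right mode}.
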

\begin{proof}
	This follows from the fact that $\e_\pm$ solve the equations in the far-field (see Lemmas~\ref{l: left mode} and~\ref{l: right mode}), together with exponential convergence of the coefficients of $\lfr$ and localization of commutator terms. Analyticity in $\sigma$ follows as in~\cite[Proposition 5.11]{PoganScheel} or~\cite[Lemma 3.9]{AveryScheelSelection}.
\end{proof}

Note that $G$ is linear in $w, \alpha_-$, and $\alpha_+$. Although $D_w G = \lfr$ is Fredholm with index $-2$ by Proposition~\ref{p: fredholm properties}, provided $\eta > 0$ is sufficiently small, the Fredholm bordering lemma and continuity of the Fredholm index imply that, for each fixed $|\sigma|$ small, the map $(w, \alpha_-, \alpha_+) \mapsto G(w, \alpha_-, \alpha_+; \sigma)$ is Fredholm with index $0$. Hence, the two additional degrees of freedom in the ansatz for $\u_c$, represented by the coefficients $\alpha_\pm$ of the neutral modes, led to a problem which can potentially be inverted. The next result shows that this is in fact the case. 

\begin{prop}\label{p: ff core invertibility}
	Fix $1 \leq p \leq \infty$. There exist constants $\eta > 0$ and $\delta > 0$ sufficiently small so that for each $\sigma \in B(0, \sqrt{\delta})$, the map 
	\begin{align*}
	(\w, \alpha_-, \alpha_+) \mapsto G(w, \alpha_-, \alpha_+;\sigma) : Y^p_\eta \times \C \times \C \to X^p_\eta 
	\end{align*}
	is invertible. We denote the solution to $G(\w, \alpha_-, \alpha_+;\sigma) = \g$ by 
	\begin{align*}
	(\w, \alpha_-, \alpha_+) = (T(\sigma) \g, A_- (\sigma) \g, A_+(\sigma) \g). 
	\end{align*}
	Moreover, the inverse maps
	\begin{align*}
	T(\sigma) : X^p_\eta \to Y^p_\eta, \quad A_\pm(\sigma) : X^p_\eta \to \C
	\end{align*}
	are analytic in $\sigma$ in $B(0,\sqrt{\delta})$. 
\end{prop}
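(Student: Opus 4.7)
\textbf{Proof plan for Proposition~\ref{p: ff core invertibility}.}

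My approach is the standard one for far-field/core decompositions: use the Fredholm bordering lemma to reduce invertibility to injectivity at $\sigma=0$, apply Hypothesis~\ref{hyp: point spectrum} to rule out nontrivial kernel elements at $\sigma=0$, and then use analyticity of $G$ in $\sigma$ together with openness of the invertible operators to propagate invertibility to a neighborhood and to obtain analyticity of the inverse. Concretely, since $D_\w G(\cdot,\cdot,\cdot;\sigma) = \lfr$ acts between $Y^p_\eta$ and $X^p_\eta$ and is Fredholm of index $-2$ by Proposition~\ref{p: fredholm properties} (for $\eta>0$ sufficiently small), the Fredholm bordering lemma shows that the full map $(\w,\alpha_-,\alpha_+) \mapsto G(\w,\alpha_-,\alpha_+;\sigma)$ is Fredholm with index $-2 + 2 = 0$ for every $\sigma \in B(0,\sqrt{\delta})$. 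Hence invertibility is equivalent to injectivity.

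The core step is injectivity at $\sigma=0$. Suppose $G(\w,\alpha_-,\alpha_+;0) = 0$, i.e.,
\begin{align*}
\lfr \left[ \alpha_- \chi_- \e_-(\cdot,0) + \w + \alpha_+ \chi_+ \e_+(\cdot,0) \right] = 0.
\end{align*}
Set $\u := \alpha_- \chi_- \e_-(\cdot,0) + \w + \alpha_+ \chi_+ \e_+(\cdot,0)$. Using Lemmas~\ref{l: left mode} and~\ref{l: right mode}, the far-field modes at $\sigma=0$ satisfy $\e_-(\xi,0) = \q(\xi,0)$, which is periodic and bounded, and $\e_+(\xi,0) = \v_+(0)$, which is a constant vector since $\nufr^-(0)=0$ by Corollary~\ref{c: leading edge spatial eval}. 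Combined with the exponential localization of $\w \in Y^p_\eta$, the function $\u$ is bounded on $\R$, and satisfies $\lfr \u = 0$. By the second part of Hypothesis~\ref{hyp: point spectrum}, the only bounded solution of $\lfr \u = 0$ is trivial, so $\u \equiv 0$.

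It remains to extract $\alpha_\pm = 0$ and $\w=0$. Passing $\xi \to -\infty$, the identity $\u\equiv 0$ reduces (on $\{\xi \leq -1\}$) to $\alpha_- \q(\xi,0) = -\w(\xi)$. The right-hand side decays exponentially while $\q(\cdot,0)$ is periodic and nontrivial, so evaluating along a sequence $\xi_n \to -\infty$ where $|\q(\xi_n,0)|$ is bounded below forces $\alpha_-=0$. Similarly, $\xi\to+\infty$ gives $\alpha_+ \v_+(0) = -\w(\xi) \to 0$; since $\v_+(0) \neq 0$ by construction of the spatial eigenvector, $\alpha_+=0$. Then $\w=0$ immediately. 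This proves injectivity, hence invertibility, at $\sigma=0$.

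Invertibility for small $\sigma \neq 0$ follows from openness of the set of invertible bounded operators together with analyticity of $\sigma \mapsto G(\cdot,\cdot,\cdot;\sigma)$ on $B(0,\sqrt{\delta})$; shrinking $\delta$ if necessary, the map is invertible on all of $B(0,\sqrt{\delta})$. Finally, analyticity of $\sigma \mapsto (T(\sigma),A_-(\sigma),A_+(\sigma))$ in $\sigma$ follows from analytic perturbation theory: the inverse of an analytic family of invertible operators is analytic (one can, e.g., compute the inverse via the Neumann series around $\sigma=0$ and then continue analytically on $B(0,\sqrt{\delta})$). The main obstacle is really just the injectivity argument at $\sigma=0$; the subtle point is that boundedness (not $L^p$ integrability) of $\u$ is what Hypothesis~\ref{hyp: point spectrum} rules out, and one must verify that the ansatz indeed yields a genuinely bounded function by accounting for both the exponentially localized $\w$ and the bounded non-decaying contributions from $\chi_\pm \e_\pm(\cdot,0)$.
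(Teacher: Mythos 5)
Your proof is correct and follows essentially the same route as the paper: reduce to Fredholm index zero via the bordering lemma, use Hypothesis~\ref{hyp: point spectrum} to exclude a bounded kernel element at $\sigma=0$, then propagate invertibility and analyticity by analytic perturbation theory (the paper phrases this as the implicit function theorem, which amounts to the same thing). The one genuine addition in your write-up is the explicit verification that $\u\equiv 0$ forces $(\w,\alpha_-,\alpha_+)=0$ by matching the exponentially decaying $\w$ against the non-decaying far-field modes $\q(\cdot,0)$ and $\v_+(0)$; the paper treats this as implicit in the phrase ``a nontrivial kernel would correspond to a nontrivial bounded solution,'' so spelling it out is a reasonable and correct bit of extra care.
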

\begin{proof}
	The result will follow from the implicit function theorem provided we can verify invertibility at $\sigma = 0$. Since $(\w, \alpha_-, \alpha_+) \mapsto G(\w, \alpha_-, \alpha_+; 0)$ is Fredholm with index 0, this map is invertible if and only if it has trivial kernel. Since $\e_-(\xi, 0)$ and $\e_+(\xi, 0)$ are bounded on the real line, a nontrivial kernel of this map would correspond to a nontrivial bounded solution to $\lfr \u = 0$, which is excluded by Hypothesis~\ref{hyp: point spectrum}. 
\end{proof}

We have  constructed a solution to $(\lfr - \sigma^2) \u = \g$ given by
\begin{align*}
\u (\xi; \sigma) = \chi_-(\xi) \u_- (\xi; \sigma^2) + \alpha_- (\sigma) \chi_-(\xi) \e_- (\xi; \sigma) + \w(\xi; \sigma) + \alpha_+(\sigma) \chi_+(\xi) \e_+(\xi; \sigma) + \chi_+(\xi) \u_+ (\xi; \sigma),
\end{align*}
where
\begin{align*}
\alpha_- (\sigma) = A_- (\sigma) \tilde{\g} (\sigma), \quad \w(\cdot; \sigma) = T(\sigma) \tilde{\g}(\sigma), \quad \alpha_+(\sigma) = A_+(\sigma) \tilde{\g}(\sigma). 
\end{align*}
Our goal is now to isolate the terms which correspond to the leading-order temporal dynamics. Thus, we decompose the solution as 
\begin{align} \label{e: solution decomp resolvent}
\u(\xi) = \chi_- (\xi) \u_-(\xi; \sigma^2) + \alpha_- (\sigma) \chi_- (\xi) \e_- (\xi; \sigma^2) + R_\mathrm{fr}(\xi; \sigma),
\end{align}
where 
\begin{align*}
R_\mathrm{fr}(\xi; \sigma) = \chi_+(\xi) \u_+(\xi; \sigma) + T(\sigma) \tilde{\g} (\sigma) + \alpha_+(\sigma) \chi_+(\xi) \e_+(\xi; \sigma)
\end{align*}
captures contributions from the branched mode in the leading edge. To estimate time decay rates of the branched mode in various norms, we use the following estimates on $R_\mathrm{fr}(\xi; \sigma)$. The first corresponds to the sharp $t^{-3/2}$ pointwise decay rate, which is observed when measuring in sufficiently weak norms. The next two correspond to slower decay rates when measuring in stronger norms. 

\begin{lemma}[Expansion for branched mode in $L^\infty_{0,-1}$] \label{l: fr Linf 0 1 resolvent estimate}
	There exist positive constants $C$ and $\delta$ such that for any $\g \in L^1_{0,1}(\R, \C^2)$, we have the estimate
	\begin{align*}
		\| R_\mathrm{fr} (\cdot; \sigma) - R_\mathrm{fr} (\cdot; 0) \|_{L^\infty_{0, -1}} \leq C |\sigma| \| \g \|_{L^1_{0,1}} ,
	\end{align*}
	for all $\sigma \in \Delta^{\mathrm{fr},1}_\delta$. 
\end{lemma}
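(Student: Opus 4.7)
The plan is to decompose $R_\mathrm{fr}(\cdot;\sigma) - R_\mathrm{fr}(\cdot;0)$ according to the three defining pieces in
\[R_\mathrm{fr}(\xi;\sigma) = \chi_+(\xi)\u_+(\xi;\sigma) + T(\sigma)\tilde{\g}(\sigma) + \alpha_+(\sigma)\chi_+(\xi)\e_+(\xi;\sigma),\]
and establish the Lipschitz bound $\mathrm{O}(|\sigma|\|\g\|_{L^1_{0,1}})$ in $L^\infty_{0,-1}$ for each piece separately. Throughout, I will exploit that the weight $\rho_{0,-1}$ tolerates a factor $|\xi|$ of growth at $+\infty$, a relaxation that will be crucial for the neutral-mode contribution.

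First, for the leading-edge resolvent term $\chi_+[\u_+(\cdot;\sigma) - \u_+(\cdot;0)]$, I will invoke the Lipschitz estimate~\eqref{e: leading edge resolvent lipschitz estimate} of Proposition~\ref{p: leading edge resolvent} with $\u_+(\cdot;0) = R_0^+ \g_+^{\mathrm{odd}}$. Since $\|\g_+^{\mathrm{odd}}\|_{L^1_{1,1}} \lesssim \|\g\|_{L^1_{0,1}}$, and $\|f\|_{L^\infty_{0,-1}} \leq \|f\|_{L^\infty_{-1,-1}}$ whenever $f$ is supported on $\{\xi \geq 0\}$, this yields the required bound directly. For the core piece $T(\sigma)\tilde{\g}(\sigma) - T(0)\tilde{\g}(0)$, I will split as $[T(\sigma) - T(0)]\tilde{\g}(\sigma) + T(0)[\tilde{\g}(\sigma) - \tilde{\g}(0)]$, combine analyticity of $T(\sigma)$ from Proposition~\ref{p: ff core invertibility} with the boundedness and Lipschitz estimates~\eqref{e: f tilde L1 gamma estimate} of Lemma~\ref{l: tilde f control} to obtain an $\mathrm{O}(|\sigma|\|\g\|_{L^1_{0,1}})$ bound in $Y^\infty_\eta$, and conclude via the embedding $Y^\infty_\eta \hookrightarrow L^\infty \hookrightarrow L^\infty_{0,-1}$.

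The main obstacle lies in the neutral-mode term $\alpha_+(\sigma)\chi_+\e_+(\cdot;\sigma) - \alpha_+(0)\chi_+\e_+(\cdot;0)$, and it is here that the weight $\rho_{0,-1}$ becomes essential: at $\sigma = 0$, $\e_+(\xi;0) = \v_+(0)$ is a \emph{constant} on the leading edge, so Taylor-expanding $\re^{\nufr^-(\sigma)\xi}$ at $\sigma = 0$ produces an unbounded factor of $\xi$. I will split the difference as
\[(\alpha_+(\sigma) - \alpha_+(0))\chi_+\e_+(\cdot;\sigma) + \alpha_+(0)\chi_+[\e_+(\cdot;\sigma) - \e_+(\cdot;0)].\]
For the first summand, $|\alpha_+(\sigma) - \alpha_+(0)| \lesssim |\sigma|\|\g\|_{L^1_{0,1}}$ follows from analyticity of $A_+(\sigma)$ (Proposition~\ref{p: ff core invertibility}) combined with Lemma~\ref{l: tilde f control}, while $\|\chi_+\e_+(\cdot;\sigma)\|_{L^\infty}$ remains uniformly bounded because $\Re\nufr^-(\sigma) \leq 0$ on $\Delta^{\mathrm{fr},1}_\delta$ by construction of this region. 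For the second summand, I will write $\e_+(\xi;\sigma) - \e_+(\xi;0) = [\v_+(\sigma) - \v_+(0)]\re^{\nufr^-(\sigma)\xi} + \v_+(0)[\re^{\nufr^-(\sigma)\xi} - 1]$; the first term is $\mathrm{O}(|\sigma|)$ uniformly on $\xi \geq 0$ by analyticity of $\v_+$, while for the second a case distinction on whether $|\nufr^-(\sigma)\xi| \leq 1$ or $\geq 1$ yields the pointwise bound $|\re^{\nufr^-(\sigma)\xi} - 1| \lesssim |\sigma|(1+\xi)$ on $\xi \geq 0$. Multiplying by the $(1+\xi)^{-1}$ weight of $L^\infty_{0,-1}$ then exactly absorbs the linear factor, closing the estimate.
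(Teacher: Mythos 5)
Your argument is correct and unpacks precisely the same chain of ingredients that the paper's one-sentence proof references: the Lipschitz estimate from Proposition~\ref{p: leading edge resolvent} for $\chi_+\u_+$, the analyticity of $T(\sigma)$ and $A_+(\sigma)$ from Proposition~\ref{p: ff core invertibility} combined with the Lipschitz bound~\eqref{e: f tilde L1 gamma estimate} for the core and amplitude coefficients, and the Taylor expansion of $\re^{\nufr^-(\sigma)\xi}$ (your case distinction $|\nufr^-(\sigma)\xi| \lessgtr 1$ is an equivalent, slightly more robust way of obtaining the bound $|\sigma|(1+\xi)$) whose linear $\xi$-growth is exactly absorbed by the $\rho_{0,-1}$ weight. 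This matches the paper's intended argument.
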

\begin{proof}
	This follows by combining Corollary~\ref{c: leading edge spatial eval}, Proposition~\ref{p: leading edge resolvent}, Lemma~\ref{l: tilde f control}, and Proposition~\ref{p: ff core invertibility}, and Taylor expanding the exponential in $\e_+(\xi;\sigma)$.
\end{proof}
\begin{lemma}[Blowup of the branched mode in $L^2$]\label{l: fr L2 resolvent estimate}
	There exist positive constants $C$ and $\delta$ such that for any $\g \in L^1_{0,1} (\R, \C^2) \cap L^\infty(\R, \C^2)$, we have the estimate
	\begin{align*}
	\| R_\mathrm{fr}(\cdot; \sigma) \|_{L^2} \leq \frac{C}{|\sigma|^{1/2}} \| \g \|_{L^1_{0,1} \cap L^\infty}
	\end{align*}
	for all $\sigma \in \Delta^{\mathrm{fr},2}_\delta$. 
\end{lemma}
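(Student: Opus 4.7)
The plan is to estimate each of the three terms in the decomposition
\[
R_\mathrm{fr}(\xi;\sigma) = \chi_+(\xi)\u_+(\xi;\sigma) + T(\sigma)\tilde{\g}(\sigma) + \alpha_+(\sigma)\chi_+(\xi)\e_+(\xi;\sigma)
\]
separately. The first term $\chi_+\u_+$ is handled directly by Lemma~\ref{l: leading edge resolvent L2 estimate}: since $\u_+$ solves $(\mcl_+-\sigma^2)\u_+ = \g_+^{\mathrm{odd}}$, which is odd by construction, and $\|\g_+^{\mathrm{odd}}\|_{L^1_{0,1}} \lesssim \|\g\|_{L^1_{0,1}}$, we obtain the bound $\|\chi_+\u_+\|_{L^2} \leq \|\u_+\|_{L^2} \leq C|\sigma|^{-1/2}\|\g\|_{L^1_{0,1}}$ for $\sigma \in \Delta^{\mathrm{fr},2}_\delta$.

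For the central term $T(\sigma)\tilde{\g}(\sigma)$, I would use that by Proposition~\ref{p: ff core invertibility} the map $T(\sigma): X^2_\eta \to Y^2_\eta$ is analytic in $\sigma$ on $B(0,\sqrt{\delta})$ and hence uniformly bounded. Combining with estimate~\eqref{e: f tilde L inf boundedness estimate} of Lemma~\ref{l: tilde f control}, we get $\|T(\sigma)\tilde{\g}(\sigma)\|_{Y^2_\eta} \leq C(\|\g\|_{L^1}+\|\g\|_{L^\infty})$. Since the unweighted $L^2$-norm is controlled by the $X^2_\eta$-norm (the weight $\omega_{-\eta,\eta}$ is bounded away from zero only asymptotically, but the proof uses that the weighted norm majorizes the standard norm via Cauchy--Schwarz on compact sets combined with exponential decay at infinity; or more simply, multiplication by the bounded function $\omega_{-\eta,\eta}^{-1}$ is bounded $X^2_\eta \to L^2$), this yields a uniform $L^2$-bound on this term, which is strictly better than the claimed $|\sigma|^{-1/2}$ blow-up.

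For the third term, I first bound the coefficient using that $A_+(\sigma):X^2_\eta \to \C$ is analytic by Proposition~\ref{p: ff core invertibility}, hence uniformly bounded in $\sigma$, which combined with~\eqref{e: f tilde L inf boundedness estimate} gives $|\alpha_+(\sigma)| = |A_+(\sigma)\tilde{\g}(\sigma)| \leq C(\|\g\|_{L^1}+\|\g\|_{L^\infty})$. Next I estimate the $L^2$-norm of $\chi_+\e_+(\cdot;\sigma) = \chi_+(\xi)\v_+(\sigma)\re^{\nufr^-(\sigma)\xi}$ using the formula from Lemma~\ref{l: right mode}. The key point is the sector geometry: $\Delta^{\mathrm{fr},2}_\delta$ is the image of the sector $\Omega^{\mathrm{fr},2}_\delta$ under the principal square root, hence an opening sector of half-angle $\theta_0/2 < \pi/2$ about the positive real axis, so $\Re\sigma \geq \cos(\theta_0/2)|\sigma|$ for $\sigma \in \Delta^{\mathrm{fr},2}_\delta$. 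Combined with the expansion $\nufr^-(\sigma) = -\nufr^1\sigma + \mathrm{O}(\sigma^2)$ with $\nufr^1>0$ from Corollary~\ref{c: leading edge spatial eval}, this gives $-\Re\nufr^-(\sigma) \geq c|\sigma|$ for some $c>0$ and $\delta$ sufficiently small, so that
\[
\|\chi_+\e_+(\cdot;\sigma)\|_{L^2}^2 \leq |\v_+(\sigma)|^2\int_0^\infty \re^{2\Re\nufr^-(\sigma)\xi}\de\xi = \frac{|\v_+(\sigma)|^2}{-2\Re\nufr^-(\sigma)} \leq \frac{C}{|\sigma|}.
\]
Multiplying by the bound on $|\alpha_+(\sigma)|$ yields $|\sigma|^{-1/2}(\|\g\|_{L^1}+\|\g\|_{L^\infty})$. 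Summing the three contributions completes the proof; the essential obstruction is  term three, where the $|\sigma|^{-1/2}$ blow-up is unavoidable and is precisely what forces the restriction to the smaller sector $\Delta^{\mathrm{fr},2}_\delta$ on which $\Re\sigma \gtrsim |\sigma|$.
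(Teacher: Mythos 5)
Your proof is correct and follows essentially the same route the paper indicates (the paper only sketches the argument in a single sentence after Lemma~\ref{l: fr Linf resolvent estimate}, citing exactly the ingredients you deploy). You correctly identify that the $|\sigma|^{-1/2}$ singularity comes from the leading-edge piece $\chi_+\u_+$ (via Lemma~\ref{l: leading edge resolvent L2 estimate}) and from the far-field mode $\alpha_+(\sigma)\chi_+\e_+(\cdot;\sigma)$ (via the explicit exponential integral combined with the sector bound $\Re\sigma\gtrsim|\sigma|$ on $\Delta^{\mathrm{fr},2}_\delta$), while the core piece $T(\sigma)\tilde\g(\sigma)$ is uniformly bounded by Proposition~\ref{p: ff core invertibility} and Lemma~\ref{l: tilde f control}.
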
 
\begin{lemma}[Boundedness of the branched mode in $L^\infty$]\label{l: fr Linf resolvent estimate}
    There exist positive constants $C$ and $\delta$ such that for any $\g \in L^1_{0,1} (\R, \C^2) \cap L^\infty(\R, \C^2)$, we have the estimate
    \begin{align*}
    \| R_\mathrm{fr}(\cdot; \sigma) \|_{L^\infty} \leq C \| \g \|_{L^1_{0,1}\cap L^\infty}
    \end{align*}
    for all $\sigma \in \Delta^{\mathrm{fr},2}_\delta$.  
\end{lemma}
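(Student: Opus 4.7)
The plan is to estimate the three constituents of
\begin{align*}
R_\mathrm{fr}(\xi;\sigma) = \chi_+(\xi) \u_+(\xi;\sigma) + T(\sigma)\tilde{\g}(\sigma) + \alpha_+(\sigma) \chi_+(\xi) \e_+(\xi;\sigma)
\end{align*}
separately in $L^\infty$, in each case reducing to already established bounds. The structure mirrors the preceding Lemma~\ref{l: fr L2 resolvent estimate}, but is in fact easier because $L^\infty$ requires no quantitative control on spatial localization in the wake direction, so no blow-up in $\sigma$ appears.

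For the far-field piece in the leading edge, $\chi_+ \u_+(\cdot;\sigma)$, I apply estimate~\eqref{e: right resolvent L101 estimate} from Lemma~\ref{l: right resolvent L1 Linf boundedness estimate} to the odd extension $\g_+^\mathrm{odd}$ of $\chi_+ \g$, noting that $\|\g_+^\mathrm{odd}\|_{L^1_{0,1}} \lesssim \|\g\|_{L^1_{0,1}}$, to obtain $\|\u_+(\cdot;\sigma)\|_{W^{2,\infty}\times W^{1,\infty}} \lesssim \|\g\|_{L^1_{0,1}}$, and then multiply by the bounded cutoff $\chi_+$. For the core piece $\w(\sigma) = T(\sigma)\tilde{\g}(\sigma)$, I use that by Proposition~\ref{p: ff core invertibility} the operator $T(\sigma) \colon X^\infty_\eta \to Y^\infty_\eta$ is analytic in $\sigma \in B(0,\sqrt{\delta})$ and therefore uniformly bounded; combining with the continuous embedding $Y^\infty_\eta \hookrightarrow L^\infty \times L^\infty$ (which holds since the weight $\omega_{-\eta,\eta}$ is bounded below away from zero) and the bound~\eqref{e: f tilde L inf boundedness estimate} from Lemma~\ref{l: tilde f control}, I arrive at
\begin{align*}
\|T(\sigma)\tilde{\g}(\sigma)\|_{L^\infty} \lesssim \|\tilde{\g}(\sigma)\|_{X^\infty_\eta} \lesssim \|\g\|_{L^1} + \|\g\|_{L^\infty}.
\end{align*}

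For the remaining neutral-mode piece, I write $\e_+(\xi;\sigma) = \v_+(\sigma) \re^{\nufr^-(\sigma)\xi}$ from Lemma~\ref{l: right mode} and observe that when $\sigma \in \Delta^{\mathrm{fr},2}_\delta$ the expansion in Corollary~\ref{c: leading edge spatial eval} together with the sectorial bound $\Re \sigma \geq C|\sigma|$ valid in this set gives $\Re\nufr^-(\sigma) \leq 0$ for $\delta$ sufficiently small. Hence $\sup_{\xi\geq 0}|\chi_+(\xi)\e_+(\xi;\sigma)| \leq |\v_+(\sigma)|$, which is uniformly bounded by analyticity of $\v_+$. Using that $A_+(\sigma)\colon X^\infty_\eta \to \C$ is analytic, hence uniformly bounded, together again with~\eqref{e: f tilde L inf boundedness estimate}, I get $|\alpha_+(\sigma)| = |A_+(\sigma)\tilde{\g}(\sigma)| \lesssim \|\g\|_{L^1} + \|\g\|_{L^\infty}$, and combining yields the claimed bound on the third term.

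I do not anticipate a serious obstacle: all three pieces reduce directly to previously proven estimates, and the use of $\Delta^{\mathrm{fr},2}_\delta$ (rather than the larger $\Delta^{\mathrm{fr},1}_\delta$) is needed only to ensure that the sectorial condition $\Re\sigma \geq C|\sigma|$ keeps $\nufr^-(\sigma)$ in the closed left half-plane so that $\e_+(\cdot;\sigma)$ remains uniformly bounded on $[0,\infty)$. The only minor subtlety is verifying the embedding $Y^\infty_\eta \hookrightarrow L^\infty\times L^\infty$, which follows immediately from the strict positivity and boundedness-from-below of the weight function.
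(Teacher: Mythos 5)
Your proposal is correct and follows exactly the route the paper takes: the paper's own proof is a one-line citation to the same combination of Lemma~\ref{l: right resolvent L1 Linf boundedness estimate}, Corollary~\ref{c: leading edge spatial eval}, Lemma~\ref{l: right mode}, Proposition~\ref{p: ff core invertibility}, and Lemma~\ref{l: tilde f control}, applied term by term to the three pieces of $R_\mathrm{fr}$. The only slight imprecision is in your closing remark: the restriction to $\Delta^{\mathrm{fr},2}_\delta$ (as opposed to $\Delta^{\mathrm{fr},1}_\delta$) is primarily inherited from the hypotheses of Lemma~\ref{l: right resolvent L1 Linf boundedness estimate}, since $\Re\,\nufr^-(\sigma) < 0$ already holds on all of $\Delta^{\mathrm{fr},1}_\delta$ (both sets map to the right of $\Sigma(\mcl_+)$), so the boundedness of $\chi_+ \e_+(\cdot;\sigma)$ is not the bottleneck.
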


Lemmas~\ref{l: fr L2 resolvent estimate} and~\ref{l: fr Linf resolvent estimate} readily follow from the control on the leading edge resolvent $(\mcl_+-\sigma^2)^{-1}$ in Lemmas~\ref{l: leading edge resolvent L2 estimate} and~\ref{l: right resolvent L1 Linf boundedness estimate}, control on $\nufr^-(\sigma)$ and $\e_+(\cdot,\sigma)$ from Corollary~\ref{c: leading edge spatial eval} and Lemma~\ref{l: right mode}, and control on $T(\sigma) \tilde{\g}(\sigma)$ and $A_+(\sigma)\tilde{\g}(\sigma)$ from Proposition~\ref{p: ff core invertibility} and Lemma~\ref{l: tilde f control}. 

We now analyze the terms $\chi_-(\xi)\u_-(\xi;\sigma^2)$ and $\alpha_-(\sigma)\chi_-(\xi)\e_-(\xi;\sigma^2)$ in~\eqref{e: solution decomp resolvent}, which correspond to the leading-order temporal dynamics. We begin with the term $\chi_-(\xi)\u_-(\xi;\sigma^2)$. By the resolvent decomposition for the wave train in the wake, performed in Section~\ref{s: resolvent wavetrain}, this contribution can be expressed as
\begin{align*}
	\chi_- (\xi) \u_- (\xi; \sigma^2) = \chi_- (\xi) \uwt'(\xi) [\bar{s}_p^\mathrm{wt}(\sigma^2) \g_-](\xi) + \chi_-(\xi) [\bar{{s}}_c^\mathrm{wt} (\sigma^2) \g_-](\xi)_- + \chi_-(\xi) [\bar{s}_e^\mathrm{wt}(\sigma^2) \g_-](\xi),
\end{align*}
where $\bar{s}_p^\mathrm{wt}$, $\bar{s}^\mathrm{wt}_c$ and $\bar{s}_e^\mathrm{wt}$ are defined in~\eqref{e: sp wt def},~\eqref{e: sc wt def} and~\eqref{e: resolvent S0 exp def}, respectively. Since we are interested in the stability of the pattern-forming front $\ufr$, it is more natural to identify the leading-order dynamics with $\ufr'$ rather than $\uwt'$. We can make this replacement up to a manageable error, writing
\begin{align*}
\chi_-(\xi) \uwt'(\xi) [\bar{s}_p^\mathrm{wt} (\sigma^2) \g_-] (\xi) = \chi_- (\xi) \ufr'(\xi) [\bar{s}_p^\mathrm{wt} (\sigma^2) \g_- ](\xi) + \chi_- (\xi) (\uwt'(\xi) - \ufr'(\xi)) [\bar{s}_p^\mathrm{wt} (\sigma^2) \g_-] (\xi). 
\end{align*}
Since $\chi_- (\uwt' - \ufr')$ is uniformly exponentially localized in space, contributions from this term can be seen to decay much faster in time. 

Similarly, using that $\e_- (\xi, \sigma^2) = \q(\xi, \sigma^2) \re^{\nuwt (\sigma^2) \xi}$, and that $\q(\xi, \sigma^2)$ is analytic in $\sigma^2$ in $L^\infty(\R)$, with $\q(\cdot, 0) = \uwt'$, we have
\begin{align*}
\alpha_-(\sigma) \chi_- (\xi) \e_-(\xi; \sigma^2) &= \alpha_- (\sigma) \chi_- (\xi) \ufr'(\xi) \re^{\nuwt (\sigma^2) \xi} + \alpha_- (\sigma) \chi_-(\xi) \re^{\nuwt (\sigma^2) \xi} [\ufr'(\xi) - \uwt'(\xi)] \\ 
&\qquad + \, \alpha_- (\sigma) \chi_- (\xi) \re^{\nuwt(\sigma^2) \xi} [ \uwt'(\xi)- \q(\xi; \sigma^2)]. 
\end{align*}
The first term contributes to the leading-order time dynamics, while the second term is faster decaying since $\chi_- [\ufr'-\uwt']$ is uniformly exponentially localized. Since $\q(\cdot; \sigma^2) = \uwt' + \mathrm{O}(|\sigma|^2)\in L^\infty(\R)$ the last term is also faster decaying in time. Note also that $\omega$ is identically equal to $1$ on the support of $\chi_-$, so we may replace all factors $\chi_- \ufr'$ in the above with $\chi_- \omega \ufr'$. This notation is more natural for the nonlinear argument.

In summary, we have obtained the resolvent decomposition
\begin{align}
\u (\cdot; \sigma) = (\lfr-\sigma^2)^{-1}\g = \textcolor{blue}{\omega}\ufr' [\bar{s}_p^\mathrm{fr} (\sigma) \g] + \bar{s}_c (\sigma) \g + \bar{s}_e(\sigma) \g \label{e: resolvent decomp},
\end{align}
where
\begin{align}
\bar{s}_p^\mathrm{fr}(\sigma) \g &= \chi_- \bar{s}_p^\mathrm{wt}(\sigma^2) \g_- + \chi_-  \alpha_- (\sigma) \re^{\nuwt(\sigma^2) \cdot}, \label{e: sp gamma} \\
\bar{s}_c(\sigma) \g &= R_\mathrm{fr}(\cdot; \sigma) + \alpha_-(\sigma) \chi_- \re^{\nuwt (\sigma^2) \cdot} \left( [\ufr' - \uwt'] + [\uwt' - \q(\cdot; \sigma^2)] \right) + \chi_- \bar{{s}}_c^\mathrm{wt}(\sigma^2) \g_- \label{e: sc gamma}\\ 
&\qquad + \, \chi_- (\uwt' - \ufr') \bar{s}_p^\mathrm{wt} (\sigma^2) \g_-, \nonumber\\
\bar{s}_e (\sigma) \g &= \chi_- \bar{s}_e^\mathrm{wt}(\sigma^2) \g. \label{e: se gamma}
\end{align}

In the next section we will extract temporal decay by combining the abstract linear estimates in Section~\ref{s: abstract linear} with bounds on $\bar{s}_p^\mathrm{fr}(\sigma), \bar{s}_c(\sigma)$ and $\bar{s}_e(\sigma)$. We recall that control on $\bar{s}_e(\sigma)$ follows from Lemma~\ref{l: resolvent S e estimates}. Moreover, control on $\bar{s}_c(\sigma)$ is provided by the estimates on $R_\mathrm{fr}(\cdot;\sigma)$ in Lemmas~\ref{l: fr Linf 0 1 resolvent estimate},~\ref{l: fr L2 resolvent estimate},  and~\ref{l: fr Linf resolvent estimate} in combination with the control on $\nuwt(\sigma^2)$, $Q(\cdot;\sigma^2)$, $P_\mathrm{wt}^\mathrm{cu}(\sigma^2)$ and  $\alpha_-(\sigma)$, obtained in Propositions~\ref{prop: wake spectral curve expansion} and~\ref{p: ff core invertibility} and Lemmas~\ref{l: floquet theory} and~\ref{l: tilde f control}, using the explicit expressions~\eqref{e: sc wt def} and~\eqref{e: sp wt def} of $\bar{s}_c^\mathrm{wt}(\sigma)$ and $\bar{s}_p^\mathrm{wt}(\sigma)$. Finally, using again the explicit expressions~\eqref{e: sp wt def} of $\bar{s}_p^\mathrm{wt}(\sigma)$, control on $\bar{s}_p^\mathrm{fr}(\sigma)$ follows by control on $\nuwt(\sigma^2)$ in combination with the following estimate on the interaction term $\chi_-\alpha_-(\sigma)\re^{\nuwt(\sigma^2) \cdot}$.

\begin{lemma}[Estimate on interaction term]\label{l: interaction norm resolvent estimate}
   Fix $1 \leq p \leq \infty$. There exist constants $C,\delta > 0$ such that we have the estimates 
    \begin{align}
        |\alpha_-(\sigma) - \alpha_-(0)| \leq C |\sigma| \| \g \|_{L^1_{0,1}}, &\qquad |\alpha_-(0)| \leq C\| \g \|_{L^1_{0,1}} \label{e: interaction term uniform estimate 2},
        \end{align}
   for all $\sigma \in \Delta^{\mathrm{fr},1}_\delta$ and $\g \in L_{0,1}^1(\R,\C^2)$, and 
     \begin{align}   
        \| [\alpha_-(\sigma) - \alpha_-(0)] \chi_- \re^{\nuwt (\sigma^2) \cdot} \|_{L^p} &\leq C |\sigma|^{1-\frac4p} \|\g\|_{L^1_{0,1}} ,\label{e: interaction term uniform estimate}
    \end{align}
    for all $\sigma^2 \in \Omega^{\mathrm{wt}}_\delta$ and $\g \in L_{0,1}^1(\R,\C^2)$.
    \end{lemma}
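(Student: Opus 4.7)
The plan is to combine the Lipschitz control on $\tilde{\g}(\sigma)$ from Lemma~\ref{l: tilde f control} with the analyticity of the functional $A_-(\sigma)$ from Proposition~\ref{p: ff core invertibility} to obtain~\eqref{e: interaction term uniform estimate 2}, and then multiply by a direct $L^p$ estimate on $\chi_-\re^{\nuwt(\sigma^2)\cdot}$ that rests on a lower bound for $\Re\nuwt(\sigma^2)$ on $\Omega_\delta^{\mathrm{wt}}$ extracted from Proposition~\ref{prop: wake spectral curve expansion}.

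For~\eqref{e: interaction term uniform estimate 2}, recall that $\alpha_-(\sigma) = A_-(\sigma)\tilde{\g}(\sigma)$, and write
\begin{align*}
\alpha_-(\sigma) - \alpha_-(0) = A_-(\sigma)\left[\tilde{\g}(\sigma) - \tilde{\g}(0)\right] + \left[A_-(\sigma) - A_-(0)\right]\tilde{\g}(0).
\end{align*}
By Proposition~\ref{p: ff core invertibility}, the map $\sigma\mapsto A_-(\sigma)\in B(X^1_\eta,\C)$ is analytic on a neighborhood of $\sigma=0$, hence uniformly bounded and satisfies $\|A_-(\sigma) - A_-(0)\|_{\mathrm{op}} \lesssim |\sigma|$ for $\sigma\in\Delta_\delta^{\mathrm{fr},1}$. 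Combining these bounds with $\|\tilde{\g}(\sigma) - \tilde{\g}(0)\|_{X^1_\eta} \lesssim |\sigma|\|\g\|_{L^1_{0,1}}$ and $\|\tilde{\g}(0)\|_{X^1_\eta} \lesssim \|\g\|_{L^1_{0,1}}$ from Lemma~\ref{l: tilde f control} directly yields the two inequalities in~\eqref{e: interaction term uniform estimate 2}.

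For~\eqref{e: interaction term uniform estimate}, the key step is a uniform lower bound $\Re\nuwt(\sigma^2)\gtrsim |\sigma|^4$ for $\sigma^2\in\Omega_\delta^{\mathrm{wt}}$. Using the expansion $\nuwt(\lambda) = \nuwt^1\lambda - \nuwt^2\lambda^2 + \mathrm{O}(\lambda^3)$ from Proposition~\ref{prop: wake spectral curve expansion}, with $\nuwt^1,\nuwt^2>0$, I parameterize the boundary curve $K^\mathrm{wt}$ by $\lambda = \ri k - C^\mathrm{wt} k^2$ and compute
\begin{align*}
\Re\nuwt(\lambda) = \left(\nuwt^2 - \nuwt^1 C^\mathrm{wt}\right)k^2 + \mathrm{O}(k^3),
\end{align*}
which is positive for $|k|$ small, provided $C^\mathrm{wt} < \nuwt^2/\nuwt^1$, a choice which may also be arranged to ensure that $K^\mathrm{wt}$ lies to the right of $\Sigma(\lwt)$ in the definition of $\Omega_\delta^{\mathrm{wt}}$. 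Since $|\lambda|\sim|k|$ on $K^\mathrm{wt}$ and $|\lambda|=|\sigma|^2$, this gives $\Re\nuwt(\sigma^2)\gtrsim|\sigma|^4$ on the boundary, and because $\partial_\lambda\nuwt(0) = \nuwt^1 > 0$ the function $\Re\nuwt$ increases as $\Re\lambda$ grows, propagating the bound to all of $\Omega_\delta^{\mathrm{wt}}$. Direct integration then yields
\begin{align*}
\|\chi_-\re^{\nuwt(\sigma^2)\cdot}\|_{L^p}\lesssim (\Re\nuwt(\sigma^2))^{-1/p}\lesssim |\sigma|^{-4/p},
\end{align*}
and multiplying by the estimate $|\alpha_-(\sigma) - \alpha_-(0)|\lesssim |\sigma|\|\g\|_{L^1_{0,1}}$ proven in the first step gives~\eqref{e: interaction term uniform estimate}.

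The main obstacle is the uniform lower bound $\Re\nuwt(\sigma^2)\gtrsim|\sigma|^4$ on $\Omega_\delta^{\mathrm{wt}}$, which is the worst-case scaling attained near $K^\mathrm{wt}$. Verifying that the constant $C^\mathrm{wt}$ defining $\Omega_\delta^{\mathrm{wt}}$ is compatible with this quadratic bound on $\Re\nuwt$ along $K^\mathrm{wt}$, and that $\Re\nuwt$ does not drop below this scale as one moves inside $\Omega_\delta^{\mathrm{wt}}$, is the one computation requiring care; everything else is a direct consequence of the resolvent decomposition developed in Section~\ref{s: resolvent}.
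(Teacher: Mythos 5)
Your proof is correct and follows essentially the same path as the paper's, with one organizational difference worth noting. For the first pair of estimates, both you and the paper combine the Lipschitz control on $\tilde{\g}(\sigma)$ from Lemma~\ref{l: tilde f control} with analyticity of $A_-(\sigma)$ from Proposition~\ref{p: ff core invertibility}; your explicit decomposition $\alpha_-(\sigma) - \alpha_-(0) = A_-(\sigma)[\tilde{\g}(\sigma) - \tilde{\g}(0)] + [A_-(\sigma) - A_-(0)]\tilde{\g}(0)$ simply makes visible what the paper compresses into one sentence.

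For~\eqref{e: interaction term uniform estimate}, both proofs reduce to the same key lower bound $\Re\nuwt(\sigma^2)\gtrsim |\sigma|^4$ on $\Omega^{\mathrm{wt}}_\delta$ together with the scaling $\|\chi_-\re^{\nuwt(\sigma^2)\cdot}\|_{L^p}\lesssim(\Re\nuwt(\sigma^2))^{-1/p}$, and both require the same compatibility condition $C^\mathrm{wt} < \nuwt^2/\nuwt^1 = D_{\mathrm{eff}}^\mathrm{wt}/c_g^2$, which is exactly what guarantees that $K^\mathrm{wt}$ lies to the right of $\Sigma(\lwt)$. Where the two diverge is in how the lower bound on $\Re\nuwt$ is verified throughout $\Omega^{\mathrm{wt}}_\delta$: the paper foliates the region by nearby parabolic contours $\Gamma^{\tilde\eps}_\mathrm{par}$ and obtains the bound $\Re\nuwt(\lambda_{\tilde\eps}(b)) \geq k_1\tilde\eps + k_2 b^2$ uniformly in $\tilde\eps$, from which $\Re\nuwt \gtrsim |\lambda|^2$ follows leaf by leaf; you instead establish the bound directly on $K^\mathrm{wt}$ and propagate it inward. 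Your monotonicity step ``$\Re\nuwt$ increases as $\Re\lambda$ grows, propagating the bound'' is slightly informal as written, since $|\lambda|^2$ also increases as $\Re\lambda$ grows and one must verify the ratio stays controlled; the precise reason it works is that $\partial_a\Re\nuwt(a + \ri b) = \nuwt^1 + \mathrm{O}(|\lambda|)$ is uniformly bounded below on $B(0,\delta)$ while $\partial_a|\lambda|^2 = 2a = \mathrm{O}(\delta)$, so for $\delta$ small the difference $\Re\nuwt(\lambda) - c|\lambda|^2$ has positive $a$-derivative and its value on $K^\mathrm{wt}$ propagates. The paper's foliation avoids this extra step, but your approach is equally valid once that detail is supplied.
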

\begin{proof}
The first estimate~\eqref{e: interaction term uniform estimate 2} follows readily from Lemma~\ref{l: tilde f control}  and Proposition~\ref{p: ff core invertibility}. Noting that $\Omega_\delta^{\mathrm{wt}} \subset \Omega_\delta^{\mathrm{fr},1}$ (see Figure~\ref{fig: resolvent regions}), we observe that estimate~\eqref{e: interaction term uniform estimate 2} holds for all $\sigma^2 \in \Omega_\delta^\mathrm{wt}$ and $\g \in L_{0,1}^1(\R,\C^2)$.

For proving the second estimate, we fix $d > 0$ and define the curve
    \begin{align*}
    \Gamma_\mathrm{par}^{\tilde{\eps}} = B(0, \delta) \cap \{ \lambda_{\tilde{\eps}}(b) : b \in \R\}, \qquad \lambda_{\tilde{\eps}}(b) := \tilde{\eps} + \ri b - db^2, 
    \end{align*}
    for each $\tilde{\eps} > 0$. By estimate~\eqref{e: interaction term uniform estimate 2} and a basic scaling argument, we obtain
    \begin{align}
        \| [\alpha_-(\sigma) - \alpha_-(0)] \chi_- \re^{\nuwt (\sigma^2) \cdot} \|_{L^p} \lesssim \frac{|\sigma|}{|\Re \nuwt(\sigma^2)|^{1/p}} \|\g\|_{L^1_{0,1}}, \label{e: wt norm resolvent estimate 1}
    \end{align}
   for all $g \in L_{0,1}^1(\R,\C^2)$ and $\sigma^2 \in \Omega_\delta^\mathrm{wt}$. Using the expansion~\eqref{e: nu expansion}, we see that along $\Gamma_\mathrm{par}^{\tilde{\eps}}$, we have 
    \begin{align*}
        \Re \nuwt(\lambda_{\tilde{\eps}}(b)) = \nu_1 \tilde{\eps} - \nu_2 \tilde{\eps}^2 + b^2 (-\nu_1 d + \nu_2) + \mathrm{O}((\tilde{\eps}+b)^3). 
    \end{align*}
    From this expansion, we observe that, upon taking $d > 0$ smaller if necessary, there are constants $k_1, k_2 > 0$, uniform in $\tilde{\eps} > 0$ sufficiently small, such that
    \begin{align*}
        \Re \nuwt(\lambda_{\tilde{\eps}}(b)) \geq k_1 \tilde{\eps} + k_2 b^2, 
    \end{align*}
    holds along $\Gamma_\mathrm{par}^{\tilde{\eps}}$. Note that along $\Gamma_\mathrm{par}^{\tilde{\eps}}$, we have $|\sigma|^2 \sim \sqrt{\eps^2 + b^2},$ and so we have
    \begin{align*}
        \Re \nuwt(\sigma^2) \geq k |\sigma|^4,
    \end{align*}
    for $\sigma^2 \in \Gamma^{\tilde{\eps}}_\mathrm{par}$, where $k > 0$ is some constant independent of $\tilde{\eps}$ and $\sigma$. Combining this estimate and~\eqref{e: wt norm resolvent estimate 1}, we obtain~\eqref{e: interaction term uniform estimate} for $\sigma^2 \in \Gamma^{\tilde{\eps}}_\mathrm{par}$ with constant $C > 0$ independent of $\tilde{\eps}$ and $\sigma$. If $\delta > 0$ is small, we can write $\Omega^\mathrm{wt}_\delta$ as the union of all such $\Gamma^{\tilde{\eps}}_\mathrm{par}$, parameterized over $\tilde{\eps} > 0$ sufficiently small, and so the result follows. 
\end{proof}

\section{Linear estimates}\label{s: linear estimates}
In this section, we will apply the general linear estimates of Section~\ref{s: abstract linear} to the resolvent description of Section~\ref{s: resolvent} to establish a decomposition of the semigroup $\re^{\lfr t}$ with corresponding estimates, which are suitable for proving sharp nonlinear stability results. The outcome of our linear analysis may be summarized as follows.

\begin{thmlocal}[Linear estimates]\label{t: linear estimates}
    The semigroup $\re^{\lfr t}$, generated by $\lfr$, may be extended to an operator on $L^p(\R, \R^2)$ for $ 1 \leq p < \infty$ or on $C_0 (\R, \R^2)$. Moreover, $\re^{\lfr t}$ admits a decomposition
	\begin{align}
	[\re^{\lfr t} \g] (\xi) = \omega(\xi) \ufr'(\xi) [s_p(t) \g] (\xi) + [S_c (t) \g] (\xi) + [S_e (t) \g] (\xi), \qquad \xi \in \R, \, t > 0, \label{e: semigroup decomp}
	\end{align}
	with $s_p(0) = 0$, and there exist constants $C,\mu > 0$ such that the following estimates hold. 
	\begin{enumerate}
		\item \textbf{Estimates on exponentially damped part.} For all $t > 0$ we have
		\begin{align*}
		\| S_e (t) \|_{L^2 \to L^2} \leq C \re^{-\mu t}, \quad \| S_e (t) \|_{C_0 \to C_0} \leq C \re^{-\mu t}.
		\end{align*}
		\item \textbf{Estimates on smoothing principal part.} For each pair of non-negative integers $m$ and $\ell$, there exists a constant $C_{m, \ell} > 0$ such that
		\begin{align}
  \begin{split}
		\| \partial_\xi^\ell \partial_t^m s_p(t) \g \|_{L^2} &\leq \frac{C_{m, \ell}}{(1+t)^{\left(\frac{1}{4} + \frac{m+\ell}{2} \right) \wedge \frac{3}{2} }} \| \g \|_{L^1_{0,1}}, 
  \\
		\| \partial_\xi^\ell \partial_t^m s_p(t) \g \|_{L^\infty} &\leq \frac{C_{m, \ell}}{(1+t)^{\left( \frac{1}{2}+\frac{m+\ell}{2} \right) \wedge \frac{3}{2}}} \| \g \|_{L^1_{0,1}}, 
  \end{split}
  \label{e: linear principal 2}
		\end{align}
		for all $\g \in L_{0,1}^1(\R,\R^2)$ and $t > 0$, where $a \wedge b = \min\{a,b\}$.
		\item \textbf{Estimates on residual part.} We have
		\begin{align*}
  \begin{split}
		\| {S}_c (t) \g \|_{L^2} &\leq \frac{C}{(1+t)^{\frac34}} \left( \| \g \|_{L^1_{0,1}} + \| \g \|_{L^\infty} \right), \qquad
		\| {S}_c (t) \g \|_{L^\infty} \leq \frac{C}{1+t} \left( \| \g \|_{L^1_{0,1}} + \| \g\|_{L^\infty} \right),
  \end{split} 
		\end{align*}
  for all $\g \in L^1_{0,1} (\R,\R^2) \cap C_0 (\R,\R^2)$ and $t > 0$. 
	\end{enumerate}
	Moreover, we have the improved decay estimate in the leading edge,
	\begin{align}
	    \| \chi_+ \re^{\lfr t} \g \|_{L^\infty_{0, -1}} \leq \frac{C}{(1+t)^{\frac32}} \left(\| \g \|_{L^1_{0,1}} + \|\g\|_{L^\infty}\right), \label{e: leading edge linear decay}
	\end{align}
	for all $\g \in L^1_{0,1}(\R, \R^2) \cap C_0(\R, \R^2)$ and $t > 0$. 
\end{thmlocal}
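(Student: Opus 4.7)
The plan is to start from the contour integral representation furnished by Corollary~\ref{c: second shift}: for $\g \in C^\infty_c(\R,\R^2)$ and $t > 0$,
\[
\re^{\lfr t}\g = -\frac{1}{2\pi\ri}\lim_{R\to\infty}\int_{\Gamma^2_R}\re^{\lambda t}(\lfr-\lambda)^{-1}\g\,\de\lambda,
\]
where the contour portion away from the origin contributes exponential temporal decay by Corollary~\ref{c: second shift}. Inserting the resolvent decomposition~\eqref{e: resolvent decomp} on the remaining small arc $\Gamma^2_0$ yields the three-piece decomposition $\re^{\lfr t}\g = \ufr' s_p(t)\g + S_c(t)\g + S_e(t)\g$, with $s_p(t)\g$, $S_c(t)\g$, and $S_e(t)\g$ defined as inverse Laplace transforms of $\bar{s}_p^{\mathrm{fr}}(\sigma)\g$, $\bar{s}_c(\sigma)\g$, and $\bar{s}_e(\sigma)\g$, respectively. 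The normalization $s_p(0)=0$ is arranged by transferring the time-zero value $\ufr' s_p(0)\g$ into $S_c(t)\g$, which preserves the $S_c$-estimates since $s_p(0)\g$ is uniformly bounded in the target norms.

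\textbf{Estimates on $S_e$ and $s_p$.} Since $\bar{s}_e(\sigma)$ is analytic in $\lambda$ on a full neighborhood of the origin by Lemma~\ref{l: resolvent S e estimates}, its integration contour may be contracted entirely into the open left half-plane, giving the exponential bound on $S_e(t)$. For $s_p(t)\g$, I use~\eqref{e: sp gamma} to split $\bar{s}_p^{\mathrm{fr}}(\sigma)\g = \chi_-\bar{s}_p^{\mathrm{wt}}(\sigma^2)\g_- + \chi_-\alpha_-(\sigma)\re^{\nuwt(\sigma^2)\cdot}$, and further decompose the interaction piece as $\alpha_-(\sigma) = \alpha_-(0) + [\alpha_-(\sigma)-\alpha_-(0)]$. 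The convolution piece and the $\alpha_-(0)$-piece are pointwise analytic in $\lambda$ on a full neighborhood of the origin (since $\nuwt(\lambda)$ is analytic by Proposition~\ref{prop: wake spectral curve expansion}), so after swapping the convolution in $\zeta$ with the Laplace integration in $\lambda$ I apply the pointwise estimates of Proposition~\ref{p: pointwise estimate}. The Lipschitz piece has a branch point at $\sigma = 0$, so I apply instead the normed estimate of Proposition~\ref{p: wake norm estimates} together with Lemma~\ref{l: interaction norm resolvent estimate}; the extra factor of $\sigma$ from Lipschitz continuity of $\alpha_-$ compensates for the suboptimality of normed estimates along the parabolic contour. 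Derivatives $\partial_\xi^\ell \partial_t^m$ inside the Laplace integral are realized by multiplication by $\nuwt(\sigma^2)^\ell\lambda^m$, and the saturation at $(1+t)^{-3/2}$ arises once the Lipschitz gain ceases to improve the combined rate for large $m+\ell$.

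\textbf{Estimates on $S_c$ and leading-edge improvement.} For $S_c(t)\g$ I estimate each term in~\eqref{e: sc gamma} separately. The branched-mode remainder $R_{\mathrm{fr}}(\cdot,\sigma)$ is controlled in $L^2$ by the blowup bound of Lemma~\ref{l: fr L2 resolvent estimate} and in $L^\infty$ by the boundedness of Lemma~\ref{l: fr Linf resolvent estimate}, to which Proposition~\ref{p: L2 front estimate} applies with $r = 1/2$ and $r = 0$ respectively, yielding the claimed $(1+t)^{-3/4}$ and $(1+t)^{-1}$ rates. The remaining wake contributions, namely those involving $\bar{s}_c^{\mathrm{wt}}$ and the uniformly exponentially localized prefactors $\ufr'-\uwt'$ and $\uwt'-\q(\cdot,\sigma^2)=\bigo(\sigma^2)$, are analytic in $\lambda$ on $\Omega^{\mathrm{wt}}_\delta$ and carry at least one extra factor of $\sigma$, so Proposition~\ref{p: wake norm estimates} delivers rates at least as fast as those above. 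For the improved leading-edge estimate~\eqref{e: leading edge linear decay}, note that $\chi_+\ufr' s_p(t)\g$ vanishes identically since $\chi_+\chi_- \equiv 0$, so only the $\chi_+ S_c(t)\g$ piece survives. The Lipschitz estimate of Lemma~\ref{l: fr Linf 0 1 resolvent estimate} in $L^\infty_{0,-1}$, combined with the sharp Proposition~\ref{p: front estimate sharp decay}, then delivers the $(1+t)^{-3/2}$ rate.

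\textbf{Extension and main obstacle.} The preceding construction yields the decomposition~\eqref{e: semigroup decomp} and all estimates for $\g \in C^\infty_c(\R,\R^2)$, with constants uniform on bounded subsets of the source spaces; by density of $C^\infty_c$ in $L^p(\R,\R^2)$ for $1 \leq p < \infty$ and in $C_0(\R,\R^2)$, the operators $s_p(t), S_c(t),$ and $S_e(t)$ extend by continuity, and the semigroup property is inherited from Proposition~\ref{p: Zk generation}. The principal technical obstacle is the simultaneous presence of a branch point (from the leading-edge mode via $\alpha_-(\sigma)$) and an outgoing exponential (from the wave-train mode via $\re^{\nuwt(\sigma^2)\cdot}$) in the interaction term: the former precludes the pointwise contours of Proposition~\ref{p: pointwise estimate}, while the latter prevents closing contours to the left of $\Sigma(\lwt)$. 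The hybrid Lipschitz-plus-normed strategy of Lemma~\ref{l: interaction norm resolvent estimate} and Proposition~\ref{p: wake norm estimates} resolves this tension at the cost of the $(1+t)^{-3/2}$ saturation in~\eqref{e: linear principal 2}, which is nonetheless fast enough to close the subsequent nonlinear argument.
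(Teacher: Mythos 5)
Your overall architecture matches the paper's: you define $s_p$, $S_c$, $S_e$ as the inverse Laplace transforms of the pieces of~\eqref{e: resolvent decomp} over the small arc, peel off exponentially decaying contributions from the rest of the contour, and apply the abstract estimates of Section~\ref{s: abstract linear}. You also correctly identify the central tension and the hybrid Lipschitz-plus-normed strategy for the interaction term. Two steps, however, would fail as written.

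First, your device for enforcing $s_p(0)=0$ does not work. Transferring the time-independent term $\ufr'\, s_p(0)\g$ into $S_c(t)\g$ means $\|S_c(t)\g\|_{L^2}$ cannot decay at all as $t\to\infty$, let alone at rate $(1+t)^{-3/4}$; "uniformly bounded" is not "decaying." The paper instead introduces a smooth temporal cutoff $\varsigma(t)$ with $\varsigma \equiv 0$ on $[0,1]$ and $\varsigma \equiv 1$ on $[2,\infty)$, multiplies each contour integral by $\varsigma(t)$ (which makes $s_p(0)=0$ trivially), and assigns the short-time remainder $(1-\varsigma(t))\re^{\lfr t}\g$ to $S_e(t)$. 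Because $1-\varsigma$ is compactly supported in time, this remainder still decays exponentially, and the decay rates on $S_c$ and $s_p$ are unaffected.

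Second, your treatment of the wake contributions to $S_c$ via the normed Proposition~\ref{p: wake norm estimates} gives suboptimal $L^2$ rates. The terms $\chi_-\bar{s}_c^{\mathrm{wt}}(\sigma^2)\g_-$ and $\alpha_-(0)\chi_-\re^{\nuwt(\sigma^2)\cdot}\big(\uwt'-\q(\cdot;\sigma^2)\big)$ retain the full outgoing wave-train tail: along the parabolic contours used in Proposition~\ref{p: wake norm estimates} one has $|\Re\nuwt(\lambda)|\sim|\lambda|^2$, hence $\|\chi_-\re^{\nuwt(\lambda)\cdot}\|_{L^2}\sim|\lambda|^{-1}$. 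The extra factor of $\lambda$ you invoke (from the structure of $\bar{s}_c^{\mathrm{wt}}$, or from $\uwt'-\q(\cdot;\sigma^2)=\bigo(\lambda)$) only cancels this loss, leaving $r=0$ in Proposition~\ref{p: wake norm estimates} and a decay rate of $(1+t)^{-1/2}$ in $L^2$ — short of the $(1+t)^{-3/4}$ required by the theorem (and by the downstream Duhamel iteration). This is exactly the suboptimality quantified in the paper's "Need for pointwise estimates" discussion~\eqref{e: model norm estimate}. The paper avoids it by noting that these pieces are \emph{pointwise} analytic in $\lambda$ on a full ball and applying Proposition~\ref{p: pointwise estimate}, recovering the sharp rate; the purely normed route is reserved for the genuinely branchy piece $[\alpha_-(\sigma)-\alpha_-(0)]\chi_-\re^{\nuwt(\sigma^2)\cdot}$, where the Lipschitz gain $|\sigma|$ exactly compensates for the normed loss. (Similarly, the term $\chi_-(\ufr'-\uwt')\big(\alpha_-(0)+\bar{s}_p^{\mathrm{wt}}(\sigma^2)\g_-\big)$ is analytic in $\lambda$ on a \emph{full} neighborhood of the origin — not merely on $\Omega^{\mathrm{wt}}_\delta$ — owing to the exponentially localized prefactor, so the contour can be shifted entirely into the left half-plane and this contribution is in fact exponentially decaying; your proposed bound would again undershoot.)
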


Fix $\delta > 0$ small enough so that the decomposition~\eqref{e: resolvent decomp} of the resolvent holds in the ball $B(0, \delta)$, which is depicted in gray in the right panel of Figure~\ref{fig:contour shifting}. By Corollary~\ref{c: second shift}, we may write the action of the semigroup $\re^{\lfr t}$ on a test function $\g \in C^\infty_c (\R, \R^2)$ via the inverse Laplace representation as
\begin{align}
    \re^{\lfr t} \g = - \frac{1}{2 \pi \ri} \lim_{R \to \infty} \int_{\Gamma^2_R} \re^{\lambda t} (\lfr - \lambda)^{-1} \g \de  \lambda, \label{e: linear estimates inverse laplace formula}
\end{align}
where the contour $\Gamma^2_R$ is depicted in the right panel of Figure~\ref{fig:contour shifting}. To prove Theorem~\ref{t: linear estimates}, we will deform the portion $\Gamma^2_0 = \Gamma^2_R \cap B(0,\delta)$ of the contour $\Gamma^2_R$ near the origin in several different ways depending on the behavior of the different terms arising in the decomposition~\eqref{e: resolvent decomp} of the resolvent so that we can subsequently apply the abstract linear estimates obtained in Section~\ref{s: abstract linear}.

First, we identify the various terms in the decomposition~\eqref{e: semigroup decomp}. Recall by Corollary~\ref{c: second shift} that we can write the semigroup $\re^{\lfr t}$ via the inverse Laplace transform as
\begin{align*}
    \re^{\lfr t}\g = -\frac{1}{2 \pi \ri} \lim_{R \to \infty} \int_{\Gamma^{1,-}_R \cup \Gamma^{2, -}_\mathrm{int} \cup \Gamma^2_0 \cup \Gamma^{2,+}_\mathrm{int} \cup \Gamma^{1,+}_R} \re^{\lambda t} (\lfr - \lambda)^{-1}\g \de  \lambda,
\end{align*}
for $\g \in C_c^\infty(\R,\R^2)$. The segments $\Gamma^{1,\pm}_R$ and $\Gamma^{2, \pm}_\mathrm{int}$ of the contour are contained strictly in the left half-plane, and so contribute to the exponentially decaying part of the semigroup, see Corollary~\ref{c: second shift}. By Lemma~\ref{l: resolvent S e estimates} the term $\bar{s}_e^\mathrm{wt}(\lambda)$ in the decomposition~\eqref{e: resolvent decomp} of the resolvent is analytic in $\lambda$ in $L^p(\R)$ on the full ball $B(0,\delta)$. As a consequence, the integral over $\Gamma^2_0$ of this term is also exponentially decaying in time as the contour $\Gamma_0^2$ can be deformed within $B(0,\delta)$ so that it lies in the open left half-plane. Thus, collecting these exponentially decaying terms, we define
\begin{align}
\begin{split}
    S_e (t) \g &= -\frac{\color{black}{\varsigma(t)}}{2 \pi \ri} \lim_{R \to \infty} \int_{\Gamma^{1,-}_R \cup \Gamma^{2, -}_\mathrm{int} \cup \Gamma^{2, +}_\mathrm{int} \cup \Gamma^{1, +}_R} \re^{\lambda t} (\lfr - \lambda)^{-1} \g \de  \lambda - \frac{\color{black}{\varsigma(t)}}{2 \pi \ri} \int_{\Gamma^2_0} \re^{\lambda t} \chi_- \bar{s}_e^\mathrm{wt} (\lambda) \g_- \de  \lambda \\
    & \qquad +\, \color{black}{\left(1-\varsigma(t)\right) \re^{\El_{\mathrm{fr}} t} \g},
    \end{split}
    \label{e: Se t def}
\end{align}
{\color{black} where $\varsigma \colon [0,\infty) \to \R$ satisfies $\varsigma(t) = 0$ for $t \in [0,1]$ and $\varsigma(t) = 1$ for $t \in [2,\infty)$.\footnote{\color{black} The reason for introducing the temporal cut-off function is to assure that the critical diffusively decaying part of the semigroup vanishes at $t = 0$, that is,~we have $s_p(0) = 0$. This has the advantage that temporal derivatives of the phase modulation function simplify in the nonlinear stability argument.}} On the other hand, we collected all terms, contributing to the leading-order temporal dynamics, in the term $\bar{s}_p^\mathrm{fr}(\sigma)$ in the decomposition~\eqref{e: resolvent decomp} of the resolvent. Thus, we define
\begin{align}
    s_p(t) \g = - \frac{\color{black}{\varsigma(t)}}{2 \pi \ri} \int_{\Gamma^2_0} \re^{\sigma^2 t} \bar{s}_p^\mathrm{fr}(\sigma) \g \de  (\sigma^2), \label{e: sp t def}
\end{align}
for the principal part, which leaves only the algebraically decaying residual part
\begin{align}
    S_c(t) \g = - \frac{\color{black}{\varsigma(t)}}{2 \pi \ri} \int_{\Gamma^2_0} \re^{\sigma^2 t} \bar{s}_c(\sigma) \g \de  (\sigma^2). \label{e: sc t def}
\end{align}
Recall that expressions for $\bar{s}_e^\mathrm{wt}(\lambda), \bar{s}_p^\mathrm{fr}(\sigma)$, and $\bar{s}_c(\sigma)$ are given in~\eqref{e: resolvent S0 exp def},~\eqref{e: sp gamma}, and~\eqref{e: sc gamma}, respectively. 

Having identified the terms in the decomposition~\eqref{e: semigroup decomp}, we now estimate them one by one by choosing appropriate integration contours based on the analysis in Section~\ref{s: abstract linear}, thereby proving Theorem~\ref{t: linear estimates}.

\begin{lemma}[Estimates on exponentially damped part]
    The operator $S_e (t)$, defined by~\eqref{e: Se t def}, extends to a bounded linear operator on $L^2(\R,\R^2)$ and on $C_0(\R,\R^2)$ for each fixed $t > 0$. Moreover, there exist constants $C, \mu > 0$ such that for all $t > 0$ we have
    \begin{align*}
        \| S_e(t) \|_{L^2 \to L^2} \leq C \re^{-\mu t}, \quad \| S_e (t) \|_{C_0 \to C_0} \leq C \re^{-\mu t}. 
    \end{align*}
\end{lemma}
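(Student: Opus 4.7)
I would prove the bound by splitting $S_e(t)$ into the three summands appearing in~\eqref{e: Se t def} and estimating each in $L^2$ and in $C_0$ separately, then combining the exponential rates at the end by shrinking $\mu$ to the smallest of them.

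\emph{High-frequency contour pieces.} For the first summand, Corollary~\ref{c: second shift} directly supplies the bound $C\re^{-rt}\|\g\|_{L^p}$ on test functions for every $1\leq p \leq \infty$. Since $C_c^\infty$ is dense in $L^2$ and in $C_0$, the estimate extends by continuity to all admissible data, and the prefactor $\varsigma(t)$ is bounded.

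\emph{Wake-exponential contribution.} For the second summand, Lemma~\ref{l: resolvent S e estimates} tells us that $\lambda \mapsto \bar{s}_e^\mathrm{wt}(\lambda)$ is analytic on the \emph{entire} ball $B(0,\delta)$ with values in $\mathcal{B}(L^p,L^p)$. The endpoints of $\Gamma^2_0$ are the points where $\Gamma^{2,\pm}_\mathrm{int}$ meet $\partial B(0,\delta)$, and both lie in the open left half-plane (see the right panel of Figure~\ref{fig:contour shifting}). Exploiting the absence of singularities, I would deform $\Gamma^2_0$ within $B(0,\delta)$ to a compact contour $\tilde{\Gamma}$ contained in $\{\Re\lambda \leq -\mu\}$ for some $\mu>0$. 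On $\tilde{\Gamma}$, one has $|\re^{\lambda t}| \leq \re^{-\mu t}$ and $\|\bar{s}_e^\mathrm{wt}(\lambda)\|_{L^p \to L^p}$ is uniformly bounded by continuity on a compact set, so the integral is estimated by $C\re^{-\mu t}\|\g\|_{L^p}$.

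\emph{Short-time correction.} The third summand carries the cutoff $1-\varsigma(t)$, which is supported on $[0,2]$, so it suffices to bound $\re^{\lfr t}$ uniformly on $L^2$ and on $C_0$ for $t\in[0,2]$. I would realize $\lfr = D\partial_\xi^2 + \clin\partial_\xi + F'(\ufr(\cdot))$ as a bounded multiplicative perturbation of $D\partial_\xi^2 + \clin\partial_\xi$, whose two components (heat plus drift in $u$, pure drift in $w$) each generate a $C_0$-semigroup on $L^2$ and on $C_0$; bounded perturbation then yields a $C_0$-semigroup with $\|\re^{\lfr t}\|_{L^p \to L^p} \leq \re^{\|F'(\ufr)\|_\infty t}$. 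For $\g \in C_c^\infty$ this semigroup coincides, by uniqueness of the generator, with the one supplied by Proposition~\ref{p: Zk generation} and hence with the inverse-Laplace formula, so the $L^p$ bound applies to this summand. Restricting to $t \in [0,2]$ yields a uniform constant, which is absorbed by enlarging the prefactor $C$ and, if necessary, shrinking $\mu$.

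\emph{Main obstacle.} The step not reducible to a direct quotation of an earlier result is the contour deformation in the wake-exponential term. It hinges on \emph{operator-valued} analyticity of $\bar{s}_e^\mathrm{wt}$ on the whole ball $B(0,\delta)$ — not merely pointwise, and in particular across the essential spectrum — which Lemma~\ref{l: resolvent S e estimates} delivers thanks to the uniform exponential localization of the residual kernel $T^\mathrm{e}_\lambda$. The rest of the argument is bookkeeping of constants and standard density arguments.
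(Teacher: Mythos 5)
Your proposal follows the paper's proof exactly: split $S_e(t)$ into the three summands of~\eqref{e: Se t def}, bound the high-frequency contour contribution via Corollary~\ref{c: second shift}, estimate the wake term by deforming $\Gamma^2_0$ into the open left half-plane using the operator-valued analyticity of $\bar{s}^{\mathrm{wt}}_e(\lambda)$ on the full ball $B(0,\delta)$ from Lemma~\ref{l: resolvent S e estimates}, and absorb the compactly time-supported third term into the constant after verifying $C_0$-semigroup generation on $L^2$ and $C_0$ (the paper invokes the proof of Proposition~\ref{p: Zk generation} for this step). One small slip in the third step: $D\partial_\xi^2 + \clin\partial_\xi + F'(\ufr(\cdot))$ is the formula for $\afr$, not for $\lfr = \omega\afr\omega^{-1}$, which carries additional coefficients $a_0,b_0$ and a first-order correction $a_1\partial_\xi$ from the conjugation (see~\eqref{e: coefficients from weight}); since $a_0,b_0$ are bounded and $\partial_{\xi\xi}+(c+a_1)\partial_\xi$ remains sectorial, the perturbation argument of Appendix~\ref{s: semigroup generation} goes through unchanged, so this does not affect the conclusion.
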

\begin{proof}
    The estimates on the first two terms in~\eqref{e: Se t def} follow for $\g \in C^\infty_c (\R, \R^2)$ from Corollary~\ref{c: second shift} together with analyticity in $\lambda$ of $\bar{s}^{\mathrm{wt}}_e(\lambda)$ on the full ball $B(0,\delta)$ obtained in Lemma~\ref{l: resolvent S e estimates}. From the proof of Proposition~\ref{p: Zk generation}, it is clear that $\re^{\lfr t}$ also generates a strongly continuous semigroup on $L^2(\R, \R^2)$ or on $C_0 (\R, \R^2)$. An exponential decay estimate on the last term in~\eqref{e: Se t def} follows from these facts together with compact support in time of $1-\varsigma(t)$. With these estimates in hand, we can then naturally extend $S_e(t)$ to the larger spaces $L^2 (\R, \R^2)$ and $C_0 (\R, \R^2)$ by approximating with test functions. 
\end{proof}

\begin{lemma}[Estimates on smoothing principal part] \label{l: principal temporal estimates}
    Let $s_p(t)$ be defined by~\eqref{e: sp t def}. For each pair of non-negative integers $\ell$ and $j$, the operator $\partial_\xi^\ell \partial_t^j s_p(t)$ extends to a bounded linear operator from $L^1_{0,1} (\R, \R^2)$ into $L^2(\R, \R^2)$ or from $L^1_{0, 1} (\R, \R^2)$ into $C_0(\R, \R^2)$. Moreover, there exists a constant $C_{j, \ell} > 0$ such that 
    \begin{align}
    \| \partial_\xi^\ell \partial_t^j s_p(t) \g \|_{L^2} &\leq \frac{C_{j, \ell}}{(1+t)^{\frac{1}{4} + \frac{j+\ell}{2}\wedge \frac{3}{2}}} \| \g \|_{L^1_{0,1}}, \qquad 
        \| \partial_\xi^\ell \partial_t^j s_p(t) \g \|_{L^\infty} \leq \frac{C_{j, \ell}}{(1+t)^{\frac{1}{2}+\frac{j+\ell}{2} \wedge \frac{3}{2} }} \| \g \|_{L^1_{0,1}}, \label{e: linear principal local 2}
    \end{align}
    for all $t>0$ and $\g \in L^1_{0,1}(\R)$.
\end{lemma}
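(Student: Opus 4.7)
\textbf{Proof plan for Lemma~\ref{l: principal temporal estimates}.} The plan is to decompose $s_p(t)\g$ according to the splitting of $\bar s_p^{\mathrm{fr}}(\sigma)\g$ in~\eqref{e: sp gamma} into the convolution-type outgoing piece coming from $\bar s_p^{\mathrm{wt}}(\sigma^2)\g_-$ and the interaction-type piece $\chi_-\alpha_-(\sigma)\re^{\nuwt(\sigma^2)\,\cdot}$. Using that $\alpha_-(\sigma)\g$ admits the Lipschitz expansion $\alpha_-(\sigma)\g = \alpha_-(0)\g + (\alpha_-(\sigma)-\alpha_-(0))\g$ from Lemma~\ref{l: interaction norm resolvent estimate}, I isolate the branched singularity at $\sigma=0$ in a single remainder and write
\begin{align*}
s_p(t)\g = s_p^{\mathrm{I}}(t)\g + s_p^{\mathrm{II}}(t)\g + s_p^{\mathrm{III}}(t)\g,
\end{align*}
where $s_p^{\mathrm{I}}$ is the convolution piece, $s_p^{\mathrm{II}}$ carries the $\alpha_-(0)\g$ coefficient and is pointwise analytic in $\lambda$ on a full ball, and $s_p^{\mathrm{III}}$ carries the coefficient $(\alpha_-(\sigma)-\alpha_-(0))\g$ and vanishes to first order at $\sigma=0$. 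Derivatives $\partial_t^j$ produce $\sigma^{2j}=\lambda^j$ under the contour integral, while $\partial_\xi^\ell$ applied to the principal exponentials $\re^{\nuwt(\sigma^2)(\xi-\zeta)}$ (for $s_p^{\mathrm{I}}$) or $\re^{\nuwt(\sigma^2)\xi}$ (for $s_p^{\mathrm{II}}, s_p^{\mathrm{III}}$) produces $\nuwt(\sigma^2)^\ell\sim\sigma^{2\ell}$, plus commutator terms in which derivatives fall on the outer cutoff $\chi_-(\xi)$ or the inner cutoff $\chi_-(\xi-\zeta)$; these have compact support in the relevant variable.

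For $s_p^{\mathrm{I}}$ and $s_p^{\mathrm{II}}$, I swap the $\zeta$-convolution with the $\lambda$-contour integral (for $s_p^{\mathrm{II}}$, trivially, by fixing $\zeta=0$ and using that $\alpha_-(0)\g$ is $\sigma$-independent and bounded by $C\|\g\|_{L^1_{0,1}}$ via Lemma~\ref{l: interaction norm resolvent estimate}) and deform to the pointwise contours provided by Proposition~\ref{p: pointwise estimate}. Applying that proposition with parameters $(j,\ell,0)$ bounds the absolute value of the principal kernel pointwise by $G^{j,\ell,0}(t,\xi-\zeta)$, after which Young's convolution inequality together with the $L^p$ estimate
\begin{align*}
\|G^{j,\ell,0}(t,\cdot)\|_{L^p}\leq \frac{C_{j,\ell}}{(1+t)^{\frac12-\frac{1}{2p}+\frac{j+\ell}{2}}}
\end{align*}
yields the claimed rates on both the $L^2$ and the $L^\infty$ side (matching $(1+t)^{-\frac14-\frac{j+\ell}{2}}$ and $(1+t)^{-\frac12-\frac{j+\ell}{2}}$ respectively, which in turn are controlled by the min with $\tfrac{3}{2}$). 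Commutator terms from $\chi_-'(\xi-\zeta)$ inside the convolution are handled by the same mechanism with smaller values of $\ell$ and yield strictly faster decay because $\chi_-'$ is bounded and the remaining exponential still fits within Proposition~\ref{p: pointwise estimate}.

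For $s_p^{\mathrm{III}}$, the branch point precludes the pointwise contours, so I deform the $\sigma^2$-contour into $\Omega^{\mathrm{wt}}_\delta$ and appeal to Proposition~\ref{p: wake norm estimates}. Combining the scaling argument from the proof of Lemma~\ref{l: interaction norm resolvent estimate} (which gives $\|\chi_-\re^{\nuwt(\sigma^2)\cdot}\|_{L^p}\lesssim|\Re\nuwt(\sigma^2)|^{-1/p}\lesssim|\sigma|^{-4/p}$ on the parabolic contours in $\Omega^\mathrm{wt}_\delta$) with the Lipschitz bound $|\alpha_-(\sigma)-\alpha_-(0)|\lesssim|\sigma|\|\g\|_{L^1_{0,1}}$ from~\eqref{e: interaction term uniform estimate 2} and the factors $|\sigma|^{2(\ell+j)}$ from $\partial_\xi^\ell\partial_t^j$, one obtains
\begin{align*}
\left\| \partial_\xi^\ell \partial_t^j \left[(\alpha_-(\sigma)-\alpha_-(0))\chi_-\re^{\nuwt(\sigma^2)\cdot}\right] \right\|_{L^p} \lesssim |\sigma|^{1+2(\ell+j)-\frac{4}{p}} \|\g\|_{L^1_{0,1}} = |\lambda|^{\frac12+\ell+j-\frac{2}{p}} \|\g\|_{L^1_{0,1}}
\end{align*}
on the contours, so Proposition~\ref{p: wake norm estimates} delivers decay at rate $(1+t)^{-\frac{3}{4}-\frac{\ell+j}{2}+\frac{1}{p}}$. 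Evaluated at $p=2$ this matches the claimed $(1+t)^{-\frac14-\frac{\ell+j}{2}}$, and at $p=\infty$ it exceeds the claimed $(1+t)^{-\frac12-\frac{\ell+j}{2}}$. Contributions from $\varsigma^{(k)}(t)$ with $k\geq 1$ are bounded and compactly supported in $t$, hence absorbed into the bounds; note $s_p(0)\g = 0$ by construction. Finally I extend from $\g\in C_c^\infty(\R,\R^2)$ to all of $L^1_{0,1}(\R,\R^2)$ by density, taking values in $L^2$ or $C_0$ as appropriate.

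The main obstacle will be the careful bookkeeping of commutator terms in which $\partial_\xi$ falls on the outer cutoff $\chi_-(\xi)$: these terms have compact support in $\xi$ and consequently apparent $(1+t)^{-\frac12}$ decay that is a priori slower than the principal rate. To absorb them into the stated bound I will have to exploit either the saturation at $\tfrac{3}{2}$ for $\ell+m$ large, or finer pointwise estimates on the convolution restricted to the bounded $\xi$-support of $\chi_-'$ using the full $L^1$ bound $\|G^{j,\ell,0}(t,\cdot)\|_{L^1}\lesssim (1+t)^{-(j+\ell)/2}$ against $\|\g\|_{L^\infty}$-type control (trading a derivative on $\chi_-$ for a better temporal exponent), and assemble these via the layered splitting so that each commutator lands in a pointwise-contour regime with sufficient $\sigma$-decay to match the target.
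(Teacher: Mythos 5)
Your three-way decomposition into a convolution piece $s_p^{\mathrm{I}}$, a constant-coefficient interaction piece $s_p^{\mathrm{II}}$ with coefficient $\alpha_-(0)$, and a branched interaction remainder $s_p^{\mathrm{III}}$ with coefficient $\alpha_-(\sigma)-\alpha_-(0)$ is exactly the decomposition the paper uses (equations~\eqref{e: spt decomp 1} and~\eqref{e: principal second decomp}), and your contour choices and exponent arithmetic for the case in which all derivatives fall on the exponential factors $\re^{\nuwt\cdot}$ match the paper's argument: pointwise contours from Proposition~\ref{p: pointwise estimate} for $s_p^{\mathrm{I}}, s_p^{\mathrm{II}}$, and the parabolic normed contours from Proposition~\ref{p: wake norm estimates} for $s_p^{\mathrm{III}}$, with the computed rates agreeing at $p=2,\infty$.

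However, there is a genuine gap in your handling of the commutator terms where $\partial_\xi$ falls on $\chi_-(\xi)$ or $\chi_-(\xi-\zeta)$, and your diagnosis of these terms as an ``obstacle'' with only apparent $(1+t)^{-1/2}$ decay (to be patched via the saturation at $\tfrac32$ or finer $L^1$ bookkeeping) misses the much simpler mechanism that actually resolves them. Once a derivative produces $\chi_-'$, the resulting factor has compact support, and therefore the exponential $\re^{\nuwt(\lambda)(\xi-\zeta)}$ (or $\re^{\nuwt(\lambda)\xi}$) restricted to this support is uniformly bounded \emph{regardless of the sign of $\Re\nuwt$}. This restores analyticity of the entire kernel in $\lambda$ on the full ball $B(0,\delta)$ in the appropriate $L^p$-space, not merely pointwise. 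For $s_p^{\mathrm{I}}$ and $s_p^{\mathrm{II}}$ one then simply shifts the small-contour portion into the open left half-plane within $B(0,\delta)$ and extracts \emph{exponential} decay in time, far better than anything Proposition~\ref{p: pointwise estimate} would give. For $s_p^{\mathrm{III}}$, the same compact-support observation shows that $u(\sigma) = (\alpha_-(\sigma)-\alpha_-(0))\partial_\xi^k\chi_-\,\partial_\xi^{\ell-k}\re^{\nuwt(\sigma^2)\cdot}$ ($k\geq 1$) is analytic in $\sigma^2$ on $\Omega_\delta^{\mathrm{fr},1}$ in $L^p$ with the Lipschitz bound $\|u(\sigma)\|_{L^p}\lesssim|\sigma|\|\g\|_{L^1_{0,1}}$ from the $\alpha_-$ estimate, so that Proposition~\ref{p: front estimate sharp decay} directly gives the rate $(1+t)^{-3/2}$; this is precisely where the $\wedge\tfrac32$ cap in the statement comes from, rather than being a device you invoke to rescue a too-slow estimate. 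Without this compact-support-implies-full-analyticity step, your argument for the commutators does not close.
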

\begin{proof}
    By~\eqref{e: sp gamma}, the action of $s_p(t)$ on a test function $\g \in C_c^\infty(\R,\R^2)$ decomposes as
    \begin{align}
        s_p(t) \g = - \frac{\color{black}{\varsigma(t)}}{2 \pi \ri} \int_{\Gamma^2_0} \re^{\lambda t} \chi_-  \bar{s}_p^\mathrm{wt} (\lambda) \g_- \de  \lambda - \frac{\color{black}{\varsigma(t)}}{2 \pi \ri} \int_{\Gamma^2_0} \re^{\sigma^2 t} \chi_- \alpha_- (\sigma) \re^{\nuwt (\sigma^2) \cdot} \de  (\sigma^2).\label{e: spt decomp 1}
    \end{align}
    We start estimating the first integral. With the aid of~\eqref{e: sp wt def} we rewrite this integral as
    \begin{align*}
        \int_{\Gamma^2_0} \re^{\lambda t} [\chi_- \bar{s}_p^\mathrm{wt} (\lambda) \g_-] (\xi) \de  \lambda  \,  = \phi \left(\int_{\Gamma^2_0} \re^{\lambda t} \chi_-(\xi) \int_\R \re^{\nuwt (\lambda) (\xi-\zeta)} \chi_-(\xi-\zeta) \Pwt^\mathrm{cu}(0) Q(\zeta, 0) \Lambda_1 \g_- (\zeta) \de \zeta \de \lambda \right). 
    \end{align*}
    Using that $\chi_-(\xi-\zeta) \re^{\nuwt(\lambda)(\xi-\zeta)}$ is uniformly exponentially localized for $\lambda$ in the compact contour $\Gamma^2_0$, we swap the order of integration to obtain
    \begin{align*}
       \int_{\Gamma^2_0} \re^{\lambda t} [\chi_- \bar{s}_p^\mathrm{wt} (\lambda) \g_-] (\xi) \de  \lambda = \phi \left( \chi_-(\xi) \int_\R \Pwt^\mathrm{cu}(0) Q(\zeta,0) \Lambda_1 \g_-(\zeta) \int_{\Gamma^2_0} \re^{\lambda t + \nuwt(\lambda) (\xi-\zeta)} \chi_- (\xi-\zeta) \de  \lambda \de\zeta \right).
    \end{align*}
    Since the map $\lambda \mapsto \re^{\lambda t + \nuwt(\lambda)(\xi-\zeta)}$ is analytic in $\lambda$ on the full ball $B(0,\delta)$ for each fixed $\xi, \zeta \in \R$ and $t \geq 0$ by Proposition~\ref{prop: wake spectral curve expansion}, we can deform the integration contour $\Gamma^2_0$ to the pointwise contour $\Gamma_{\xi,\zeta,t}$ obtained in Proposition~\ref{p: pointwise estimate}. Then, using Proposition~\ref{p: pointwise estimate} to estimate the resulting integral and applying Young's convolution inequality, we obtain
    \begin{align*}
        \left\| \int_{\Gamma^2_0} \re^{\lambda t} \chi_-  \bar{s}_p^\mathrm{wt} (\lambda) \g_- \de  \lambda  \right\|_{L^2} \lesssim  \| G^{0, 0, 0} (t, \cdot) \|_{L^2} \| Q(\cdot, 0) \g_-\|_{L^1} \lesssim \frac{\| \g \|_{L^1} }{(1+t)^{\frac14}} \leq \frac{\| \g \|_{L^1_{0,1}}}{(1+t)^{\frac14}} ,
    \end{align*}
    and
    \begin{align*}
        \left\| \int_{\Gamma^2_0} \re^{\lambda t} \chi_-  \bar{s}_p^\mathrm{wt} (\lambda) \g_- \de  \lambda  \right\|_{L^\infty} \lesssim  \| G^{0, 0, 0} (t, \cdot) \|_{L^\infty} \| Q(\cdot, 0) \g_-\|_{L^1} \lesssim \frac{\| \g \|_{L^1}}{(1+t)^{\frac12}}  \leq \frac{\| \g \|_{L^1_{0,1}}}{(1+t)^{\frac12}},
    \end{align*}
    for $\g \in C_c^\infty(\R,\R^2)$ and $t > 0$, which correspond to the estimates~\eqref{e: linear principal local 2} for $j = \ell = 0$. The estimate for $j > 0$ and $ \ell = 0$ follows in the same way: differentiating $j$ times the factor $\re^{\lambda t}$ produces a factor of $\lambda^j$, and we then can apply Proposition~\ref{p: pointwise estimate} with $j > 0$ to obtain the improved decay. Considering $\ell > 0$, the $\xi$-derivatives may fall on either $\chi_-(\xi), \re^{\nuwt(\lambda) (\xi-\zeta)}$ or $\chi_-(\xi-\zeta)$. If all $\ell$ derivatives fall on $\re^{\nuwt(\lambda) (\xi-\zeta)}$, then the desired estimate follows directly by applying Proposition~\ref{p: pointwise estimate} with $\ell > 0$. If any derivatives fall on $\chi_-(\xi-\zeta)$, then we use Proposition~\ref{prop: wake spectral curve expansion} and the fact that $\chi_-'$ is compactly supported to conclude that $\lambda \mapsto \re^{\nuwt(\lambda) \cdot}\, \chi_-'$ is analytic in $\lambda$ on $B(0,\delta)$ in $L^p(\R)$ for any $1 \leq p \leq \infty$, so that the contour integral can be shifted into the left half-plane, implying exponential decay in time. Similarly, if any derivatives fall on $\chi_-(\xi)$, then note that the map $(\xi,\zeta) \mapsto \chi_-'(\xi) \g_-(\zeta) \chi_-(\xi-\zeta) = \chi_-'(\xi)\chi_-(\zeta)\chi_-(\xi-\zeta)\g(\zeta)$ is supported on the compact set $\{ 1 \leq \xi \leq \zeta \leq 0 \}$, so that we do not have to worry about loss of spatial localization of $\re^{\nuwt(\lambda) (\xi - \zeta)}$. Hence, the resulting integral is again analytic in $\lambda$ on $B(0,\delta)$ in $L^p(\R)$ in this case and we gain exponential decay in time. This completes the estimates for the first term in~\eqref{e: spt decomp 1} for $\g \in C_c^\infty(\R,\R^2)$. 
    
    Next, we analyze the second term in~\eqref{e: spt decomp 1}, which encodes the interaction between branched and outgoing modes. We start by further decomposing this integral as
    \begin{align}
    \begin{split}
        \int_{\Gamma^2_0} \re^{\sigma^2 t} \chi_- \alpha_- (\sigma) \re^{\nuwt(\sigma^2) \cdot}\, \de  (\sigma^2) &= \int_{\Gamma^2_0} \re^{\sigma^2 t} \chi_- \alpha_-(0) \re^{\nuwt(\sigma^2) \cdot}\, \de  (\sigma^2)\\ &\qquad\,+ \int_{\Gamma^2_0} \re^{\sigma^2 t} \chi_- [\alpha_-(\sigma) - \alpha_-(0)] \re^{\nuwt(\sigma^2) \cdot}\, \de  (\sigma^2). 
        \end{split}\label{e: principal second decomp} 
    \end{align}
    The estimates~\eqref{e: linear principal local 2} for the first integral in~\eqref{e: principal second decomp} with $j = \ell = 0$ follow by applying the pointwise estimates in Proposition~\ref{p: pointwise estimate} with $\zeta = 0$ fixed. Using Lemma~\ref{l: interaction norm resolvent estimate} to estimate $|\alpha_-(0)|$, this leads to
    \begin{align*}
    \left\|\int_{\Gamma^2_0} \re^{\lambda t} \chi_- \alpha_-(0) \re^{\nuwt(\lambda) \cdot} \,\de \lambda\right\|_{L^p} \lesssim \|G^{0,0,0}(t,\cdot)\|_{L^p} |\alpha_-(0)| \lesssim  \frac{\| \g \|_{L^1_{0,1}}}{(1+t)^{\frac12 - \frac1{2p}}},
    \end{align*}
    for $p \in\{ 2,\infty\}$, $\g \in C_c^\infty(\R,\R^2)$ and $t > 0$. This can be extended to positive $j$ and $\ell$ by handling $\xi$- and $t$-derivatives exactly as for the first integral in~\eqref{e: spt decomp 1}. 
    
    We cannot use Proposition~\ref{p: pointwise estimate} for the second term in~\eqref{e: principal second decomp}, since $\alpha_-(\sigma) - \alpha_-(0)$ is not analytic in $\sigma^2$ in a full neighborhood of the origin. Instead, by Lemma~\ref{l: interaction norm resolvent estimate}, we have 
    \begin{align} \label{e: alpha Lipschitz estimate}
        \| \chi_- [\alpha_-(\sigma) - \alpha_-(0)] \re^{\nuwt(\sigma^2) \cdot} \|_{L^p} \lesssim |\sigma|^{1-\frac4p} \| \g \|_{L^1_{0,1}} = |\sigma|^{2\left(\frac12-\frac2p\right)} \| \g \|_{L^1_{0,1}},
    \end{align}
    for $p=2,\infty$, $\sigma^2 \in \Omega^\mathrm{wt}_\delta$ and $\g \in C_c^\infty(\R,\R^2)$. Since $\frac12 - \frac{2}{p} > -1$ for $p > \frac43$, we can apply Proposition~\ref{p: wake norm estimates} to obtain
    \begin{align}
        \left\| \int_{\Gamma^2_0} \re^{\sigma^2 t} \chi_- [\alpha_-(\sigma) - \alpha_-(0)] \re^{\nuwt(\sigma^2) \cdot} \de  (\sigma^2) \right\|_{L^p} \leq \frac{\| \g \|_{L^1_{0,1}}}{(1+t)^{\frac{3}{4} - \frac{1}{p}} }, \label{e: interaction estimate}
    \end{align}
    for $\g \in C_c^\infty(\R,\R^2)$, $t > 0$ and $p = 2, \infty$, which leads to the estimates~\eqref{e: linear principal local 2} for $\ell = j = 0$. The same argument applies to obtain faster decay rates for $j > 0$, since differentiating $\re^{\sigma^2 t}$ with respect to $t$ produces additional factors of $|\sigma|^2$, which contribute to faster decay by Proposition~\ref{p: wake norm estimates}. When $\ell > 0$, if all derivatives fall on $\re^{\nuwt(\sigma^2) \cdot}\,$, then the same argument applies by Proposition~\ref{prop: wake spectral curve expansion}. However, if some derivatives fall on $\chi_-$, then, since $\chi_-'$ is compactly supported, the function $\partial_x^k \chi_- \partial_x^{\ell - k} \re^{\nuwt(\sigma^2)\cdot}$ is analytic in $\sigma^2$ on $B(0,\delta)$ in $L^p(\R)$ provided $k \geq 1$. Hence, if we define
    \begin{align*}
        u(\sigma) = [\alpha_-(\sigma) - \alpha_- (0)] \partial_\xi^{k} \chi_- \partial_\xi^{\ell-k} \re^{\nuwt(\sigma^2) \cdot}\,, 
    \end{align*}
    then this analyticity together with the regularity of $\alpha_-(\sigma)$ from Lemma~\ref{l: tilde f control} and Proposition~\ref{p: ff core invertibility} implies that $u(\sigma)$ is analytic in $L^p(\R)$ in $\sigma^2$ on $\Omega^{\mathrm{fr},1}_\delta$. It enjoys the estimate $\| u(\sigma) \|_{L^p} \lesssim |\sigma| \|g\|_{L_{0,1}^1}$
    for $\sigma \in \Delta^{\mathrm{fr},1}_\delta$ and $\g \in \smash{C_c^\infty(\R,\R^2)}$ by Lemma~\ref{l: interaction norm resolvent estimate}. Applying Proposition~\ref{p: front estimate sharp decay}, we see that this term decays with rate $(1+t)^{-3/2}$. 
    
    Altogether, after using approximation with test functions, we have established~\eqref{e: linear principal local 2} for any pair of non-negative integers $(\ell,j)$, as desired. 
\end{proof}

\begin{lemma}[Estimates on smoothing residual part]
    Let $S_c(t)$ be defined by~\eqref{e: sc t def}. The operator $S_c(t)$ extends to a bounded linear operator from $L^1_{0,1}(\R, \R^2) \cap C_0(\R,\R^2)$ into $ L^2(\R,\R^2)$ or from $L^1_{0,1}(\R, \R^2) \cap C_0(\R,\R^2)$ into $ C_0(\R,\R^2)$. Moreover, there exists a constant $C > 0$ such that
    \begin{align*}
        \| S_c(t) \g \|_{L^2} \leq \frac{C}{(1+t)^{\frac34}} \| \g \|_{L^1_{0,1} \cap L^\infty}, \qquad \| S_c(t) \g \|_{L^\infty} \leq \frac{C}{1+t} \| \g \|_{L^1_{0,1} \cap L^\infty},
    \end{align*}
    for all $\g \in L^1_{0,1}(\R, \R^2) \cap C_0(\R,\R^2)$ and $t > 0$.
\end{lemma}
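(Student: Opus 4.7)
The plan is to estimate each of the five pieces in the decomposition~\eqref{e: sc gamma} of $\bar{s}_c(\sigma)\g$ separately and sum, closely following the template of Lemma~\ref{l: principal temporal estimates}. The target decay rates are $(1+t)^{-1/2}$ faster than those of the principal part, and the key observation is that each contribution in $\bar{s}_c$ carries an extra vanishing at $\sigma = 0$ (compared with the analogous principal term) that supplies precisely this additional decay.

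For the branched-mode contribution $R_\mathrm{fr}(\cdot;\sigma)$, I apply Proposition~\ref{p: L2 front estimate} on $\Omega^{\mathrm{fr},2}_\delta$: the blowup estimate of Lemma~\ref{l: fr L2 resolvent estimate} (with $r = 1/2$) yields the $L^2$ bound $(1+t)^{-3/4}$, while the boundedness estimate of Lemma~\ref{l: fr Linf resolvent estimate} (with $r = 0$) yields the $L^\infty$ bound $(1+t)^{-1}$. For the remaining four pieces, all of which involve the outgoing-mode factor $\chi_-\re^{\nuwt(\sigma^2)\cdot}$, I split $\alpha_-(\sigma) = \alpha_-(0) + [\alpha_-(\sigma) - \alpha_-(0)]$ where it appears. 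The $\alpha_-(0)$ pieces, together with $\chi_-\bar{s}^\mathrm{wt}_c(\sigma^2)\g_-$ and $\chi_-(\uwt'-\ufr')\bar{s}^\mathrm{wt}_p(\sigma^2)\g_-$, are pointwise analytic in $\lambda = \sigma^2$ in a full neighborhood of the origin. Swapping orders of integration where applicable, Proposition~\ref{p: pointwise estimate} then applies (with $\zeta = 0$ fixed for the non-convolutional pieces, as allowed by the remark following its statement). The additional factors of $|\lambda|$ needed for the improved rates are supplied by: the identity $\bar{s}^\mathrm{wt}_c(0) = 0$, visible by evaluating~\eqref{e: sc wt def} at $\lambda = 0$, giving exponent $m = 1$; the expansion $\q(\cdot;\sigma^2) = \q(\cdot;0) + \mathrm{O}(|\sigma|^2)$ from Lemma~\ref{l: left mode}, again giving $m = 1$; and the uniform exponential localization of $\ufr'-\uwt'$ from~\eqref{eq:asymp_est_wt}, which for $\delta$ sufficiently small renders the corresponding integrand analytic in $\lambda$ in $L^p$ in a full neighborhood of the origin, allowing contour deformation into the left half-plane for exponential decay.

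The Lipschitz remainders $[\alpha_-(\sigma) - \alpha_-(0)]$ are analytic in $\sigma$ but not in $\sigma^2$ in a full neighborhood of the origin. For the piece carrying the exponentially localized factor $\ufr'-\uwt'$, the product $\chi_-[\ufr'-\uwt']\re^{\nuwt(\sigma^2)\cdot}$ is bounded in $L^p$ uniformly in $\sigma \in \Delta^{\mathrm{fr},1}_\delta$ and analytic in $\sigma^2$ on $\Omega^{\mathrm{fr},1}_\delta$, so Proposition~\ref{p: front estimate sharp decay} applies with Lipschitz constant $\lesssim \|\g\|_{L^1_{0,1}}$ and yields a $(1+t)^{-3/2}$ bound. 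For the remaining Lipschitz piece $[\alpha_-(\sigma) - \alpha_-(0)]\chi_-\re^{\nuwt(\sigma^2)\cdot}[\uwt'-\q(\cdot;\sigma^2)]$, which loses spatial localization as $\sigma^2$ approaches $\Sigma(\lwt)$, I refine the proof of Lemma~\ref{l: interaction norm resolvent estimate} to incorporate the $|\sigma|^2$ vanishing of $\uwt'-\q(\cdot;\sigma^2)$, obtaining the uniform bound $\lesssim |\sigma|^{3-4/p}\|\g\|_{L^1_{0,1}}$ on $\Omega^\mathrm{wt}_\delta$. Proposition~\ref{p: wake norm estimates} with $r = 3/2 - 2/p$ then delivers $(1+t)^{-3/4}$ in $L^2$ and $(1+t)^{-5/4}$ in $L^\infty$, both sufficient.

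The main technical obstacle lies in the bookkeeping required to route each subterm through the appropriate one of Propositions~\ref{p: front estimate sharp decay},~\ref{p: L2 front estimate},~\ref{p: wake norm estimates}, or~\ref{p: pointwise estimate}, depending on whether the factor is analytic in $\sigma$ versus in $\sigma^2$, analytic pointwise versus in a fixed function space, and on its vanishing order at $\sigma = 0$. Once the pieces are assembled and the slowest rate within each identified, summing yields the claimed bounds; extension to $\g \in L^1_{0,1}(\R,\R^2) \cap C_0(\R,\R^2)$ then follows by density of test functions.
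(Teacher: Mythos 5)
Your proposal is correct and follows essentially the same route as the paper: the same decomposition~\eqref{e: sc gamma}, the same splitting $\alpha_-(\sigma)=\alpha_-(0)+[\alpha_-(\sigma)-\alpha_-(0)]$, and the same routing of each subterm through Propositions~\ref{p: L2 front estimate},~\ref{p: pointwise estimate},~\ref{p: front estimate sharp decay}, and~\ref{p: wake norm estimates} depending on its analyticity domain and vanishing order at $\sigma=0$. One small remark: the last Lipschitz piece needs no ``refinement'' of Lemma~\ref{l: interaction norm resolvent estimate} --- one simply multiplies estimate~\eqref{e: interaction term uniform estimate} by the bound $\|\uwt'-\q(\cdot;\sigma^2)\|_{L^\infty}\lesssim|\sigma|^2$ from Lemma~\ref{l: left mode}, which is what your computation of the exponent $|\sigma|^{3-4/p}$ amounts to in any case.
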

\begin{proof}
    Expanding $\bar{s}_c(\sigma) \g$ for a test function $\g \in C_c^\infty(\R,\R^2)$ via the formula~\eqref{e: sc gamma}, we find
    \begin{align}
    \begin{split}
        S_c(t) \g &= - \frac{\color{black}{\varsigma(t)}}{2 \pi \ri} \bigg( \int_{\Gamma^2_0} \re^{\sigma^2 t} R^\mathrm{fr}(\sigma) \de  (\sigma^2) + \int_{\Gamma^2_0} \re^{\sigma^2 t} \chi_- \bar{{s}}_c^\mathrm{wt}(\sigma^2)  \g_- \de  (\sigma^2) \\ 
        &\qquad\qquad\qquad + \int_{\Gamma^2_0} \re^{\sigma^2 t} \left(\alpha_-(\sigma) - \alpha_-(0)\right) \re^{\nuwt(\sigma^2) \cdot} \chi_- (\ufr'-\uwt') \de  (\sigma^2)\\ 
        &\qquad\qquad\qquad + \int_{\Gamma^2_0} \re^{\sigma^2 t} \chi_- (\ufr'-\uwt')\left(\alpha_-(0) + \bar{s}_p^\mathrm{wt} (\sigma^2)\g_-\right) \de  (\sigma^2) \\
        &\qquad\qquad\qquad + \int_{\Gamma^2_0} \re^{\sigma^2 t} \alpha_-(\sigma) \chi_- \re^{\nuwt (\sigma^2) \cdot} (\uwt' - \q(\cdot; \sigma^2)) \de  (\sigma^2) \bigg).
        \end{split}\label{e: estimating Sc t}
    \end{align}
    
    For the first integral, applying Lemma~\ref{l: fr L2 resolvent estimate} and Proposition~\ref{p: L2 front estimate}, we obtain the estimate
    \begin{align*}
        \left\| \int_{\Gamma^2_0} \re^{\sigma^2 t} R^\mathrm{fr}(\sigma) \de  (\sigma^2) \right\|_{L^2} \lesssim \frac{\|\g\|_{L^1_{0,1} \cap L^\infty} }{(1+t)^{\frac34}} , 
    \end{align*}
    for $\g \in C_c^\infty(\R,\R^2)$ and $t > 0$. Similarly, using Lemma~\ref{l: fr Linf resolvent estimate} instead of~\ref{l: fr L2 resolvent estimate}, we obtain
    \begin{align*}
        \left\| \int_{\Gamma^2_0} \re^{\sigma^2 t} R^\mathrm{fr}(\sigma) \de  (\sigma^2) \right\|_{L^\infty} \leq \frac{\|\g\|_{L^1_{0,1} \cap L^\infty}}{1+t}  ,
    \end{align*}
     for $\g \in C_c^\infty(\R,\R^2)$ and $t > 0$.
     
    The desired estimates on the second integral in~\eqref{e: estimating Sc t} follow exactly as those on the first integral in~\eqref{e: spt decomp 1} in the proof of Lemma~\ref{l: principal temporal estimates}, noting that each term in the formula~\eqref{e: sc wt def} for $\bar{{s}}^\mathrm{wt}_c(\sigma)$ carries by Lemma~\ref{l: floquet theory} an extra factor of $\lambda$ compared to $\bar{s}^\mathrm{wt}_p(\lambda)$, which improves the temporal decay rate by a factor $(1+t)^{-1/2}$. 
    
    For the third integral in~\eqref{e: estimating Sc t}, note that $\chi_- \re^{\nuwt(\sigma^2) \cdot} (\ufr' - \uwt')$ is analytic in $\sigma^2$ in a full neighborhood of the origin in $L^p(\R)$ for any $1 \leq p \leq \infty$ using Proposition~\ref{prop: wake spectral curve expansion} and the fact that $\chi_- (\ufr'-\uwt')$ is exponentially localized. Hence, the worst behavior in this term is in $\alpha_-(\sigma) - \alpha_-(0)$, which can be bounded with the aid of Lemma~\ref{l: interaction norm resolvent estimate}. Altogether, we can use Proposition~\ref{p: front estimate sharp decay} to obtain 
    \begin{align*}
        \left\| \int_{\Gamma^2_0} \re^{\sigma^2 t} \left(\alpha_-(\sigma) - \alpha_-(0)\right) \chi_- \re^{\nuwt(\sigma^2) \cdot} (\ufr'-\uwt') \de  (\sigma^2) \right\|_X \lesssim \frac{\|\g\|_{L^1_{0,1}}}{(1+t)^{\frac32}},
    \end{align*}
    for $g \in C_c^\infty(\R,\R^2)$ and $t > 0$, where $X = L^2(\R)$ or $X = L^\infty(\R)$. 

    For the fourth integral in~\eqref{e: estimating Sc t}, we observe that the integrand is analytic in $\sigma^2$ in a full neighborhood of the origin in $L^p(\R)$ for any $1 \leq p \leq \infty$, where we use the expression~\eqref{e: sp wt def} and the facts that $\nuwt(\lambda)$ is analytic in $\lambda$ by Proposition~\ref{prop: wake spectral curve expansion} and $\chi_- (\ufr'-\uwt')$ is exponentially localized. Thus, we gain exponential decay in time in $L^2(\R)$ and $L^\infty(\R)$ of the fourth integral.
    
    The last integral in~\eqref{e: estimating Sc t} can be estimated exactly as the second term in~\eqref{e: spt decomp 1} in the proof of Lemma~\ref{l: principal temporal estimates}. That is, we further decompose
    \begin{multline*}
        \int_{\Gamma^2_0} \re^{\sigma^2 t} \alpha_-(\sigma) \chi_- \re^{\nuwt (\sigma^2) \cdot} (\uwt' - \q(\cdot; \sigma^2)) \de  (\sigma^2) = \int_{\Gamma^2_0} \re^{\sigma^2 t} \alpha_-(0) \chi_- \re^{\nuwt (\sigma^2) \cdot} (\uwt' - \q(\cdot; \sigma^2)) \de  (\sigma^2) \\ + \int_{\Gamma^2_0} \re^{\sigma^2 t} [\alpha_-(\sigma)-\alpha_-(0)] \chi_- \re^{\nuwt (\sigma^2) \cdot} (\uwt' - \q(\cdot; \sigma^2)) \de  (\sigma^2).
    \end{multline*}
    Lemma~\ref{l: left mode} then  yields 
    \begin{align} \label{e: left neutral mode bound}
    \|\uwt' - \q(\cdot; \lambda)\|_{L^\infty} \lesssim |\lambda|,
    \end{align}
    for $|\lambda|$ sufficiently small. The desired estimates on the first term then follow by applying the pointwise estimates of Proposition~\ref{p: pointwise estimate}, using Lemma~\ref{l: interaction norm resolvent estimate} to bound $\alpha_-(0)$, and noting that we carry an extra factor of $|\lambda|$ by estimate~\eqref{e: left neutral mode bound}. Similarly, the second term carries an extra factor of $|\sigma|^2$ compared to~\eqref{e: interaction estimate}. Hence, we obtain the desired estimates by combining Proposition~\ref{p: wake norm estimates} with the estimates~\eqref{e: alpha Lipschitz estimate} and~\eqref{e: left neutral mode bound}. 
    
    Altogether, the result now follows by approximation with test functions.
\end{proof}

To complete the proof of Theorem~\ref{t: linear estimates}, it only remains to establish the improved decay~\eqref{e: leading edge linear decay} in the leading edge. 
\begin{lemma}[Improved decay in the leading edge] There exists a constant $C > 0$ such that for all $\g \in L^1_{0,1}(\R, \R^2) \cap C_0(\R,\R^2)$ and all $t > 0$, we have the estimate
\begin{align*}
    \| \chi_+ \re^{\lfr t} \g \|_{L^\infty_{0, -1}} \leq \frac{C}{(1+t)^{\frac32}} \| \g \|_{L^1_{0,1}\cap L^\infty}.  
\end{align*}
\end{lemma}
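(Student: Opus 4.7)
The plan is to apply the semigroup decomposition~\eqref{e: semigroup decomp} established in Theorem~\ref{t: linear estimates} and analyze each of the three resulting contributions after multiplication by $\chi_+$, exploiting the geometric identity $\chi_+ \chi_- \equiv 0$, which holds because the two cutoffs are supported in $[0,\infty)$ and $(-\infty,0]$ respectively and both vanish at $\xi = 0$.

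First I would dispose of the smoothing principal part entirely. Inspection of the definition~\eqref{e: sp gamma} shows that both summands of $\bar{s}_p^{\mathrm{fr}}(\sigma)\g$ carry an explicit factor of $\chi_-$, so $s_p(t)\g$ is supported in $(-\infty, 0]$. Consequently $\chi_+ \ufr' s_p(t)\g \equiv 0$ and this piece contributes nothing to the leading-edge estimate, regardless of its (only) diffusive $(1+t)^{-1/2}$ global $L^\infty$ decay.

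Next I would treat $\chi_+ S_c(t)\g$ by exploiting the expanded form~\eqref{e: estimating Sc t} of $S_c(t)\g$. Four of the five terms in that expansion carry an explicit $\chi_-$ factor, hence vanish identically after multiplication by $\chi_+$. The sole surviving contribution is
\begin{equation*}
\chi_+ S_c(t)\g = -\frac{\varsigma(t)}{2\pi\ri}\int_{\Gamma^2_0}\re^{\sigma^2 t}\,\chi_+\, R_{\mathrm{fr}}(\cdot;\sigma)\,\de(\sigma^2).
\end{equation*}
By Lemma~\ref{l: fr Linf 0 1 resolvent estimate}, the residual term $R_{\mathrm{fr}}(\cdot;\sigma)$ enjoys the Lipschitz expansion $\|R_{\mathrm{fr}}(\cdot;\sigma) - R_{\mathrm{fr}}(\cdot;0)\|_{L^\infty_{0,-1}} \leq C|\sigma|\|\g\|_{L^1_{0,1}}$ for $\sigma \in \Delta^{\mathrm{fr},1}_\delta$, and this Lipschitz bound survives multiplication by the bounded function $\chi_+$. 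Applying Proposition~\ref{p: front estimate sharp decay} with $X = L^\infty_{0,-1}$ and $u(\sigma) = \chi_+ R_{\mathrm{fr}}(\cdot;\sigma)$ directly yields
\begin{equation*}
\|\chi_+ S_c(t)\g\|_{L^\infty_{0,-1}} \leq \frac{C}{(1+t)^{3/2}}\|\g\|_{L^1_{0,1}}.
\end{equation*}

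The exponentially damped contribution $\chi_+ S_e(t)\g$ is then trivial: by item (1) of Theorem~\ref{t: linear estimates}, $\|\chi_+ S_e(t)\g\|_{L^\infty_{0,-1}} \lesssim \|S_e(t)\g\|_{C_0} \lesssim \re^{-\mu t}\|\g\|_{C_0}$, since $\rho_{0,-1}$ is bounded on $\R$. Summing the three bounds and using density of test functions in $L^1_{0,1}\cap C_0$ yields~\eqref{e: leading edge linear decay}. The only nontrivial ingredient in the whole argument is the improved-in-norm Lipschitz control provided by Lemma~\ref{l: fr Linf 0 1 resolvent estimate}, which is precisely the place where Hypothesis~\ref{hyp: point spectrum} (absence of a resonance at $\lambda = 0$) is essential; without it one would only get the boundedness of Lemma~\ref{l: fr Linf resolvent estimate} and lose a power of $(1+t)^{1/2}$ in the decay.
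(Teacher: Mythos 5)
Your argument is correct and takes essentially the same route as the paper: both proofs reduce to the observation that multiplying by $\chi_+$ annihilates every term in the resolvent decomposition~\eqref{e: resolvent decomp} that carries a $\chi_-$ prefactor, so only $\chi_+R_\mathrm{fr}(\cdot;\sigma)$ survives, and both then invoke the Lipschitz expansion of Lemma~\ref{l: fr Linf 0 1 resolvent estimate} together with Proposition~\ref{p: front estimate sharp decay} to extract the $(1+t)^{-3/2}$ rate. The only cosmetic difference is that you pass through the packaged semigroup decomposition~\eqref{e: semigroup decomp} of Theorem~\ref{t: linear estimates} (treating $s_p$, $S_c$, $S_e$ separately), whereas the paper works directly from the contour representation~\eqref{e: linear estimates inverse laplace formula} and applies the support observation at the level of $(\lfr-\sigma^2)^{-1}$; the substance is identical.
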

\begin{proof}
Using the inverse Laplace representation~\eqref{e: linear estimates inverse laplace formula}, we write
\begin{align*}
    \chi_+ \re^{\lfr t} \g = - \frac{1}{2 \pi \ri} \lim_{R \to \infty} \int_{\Gamma^{1,-}_R \cup \Gamma^{2, -}_\mathrm{int} \cup \Gamma^{2,+}_\mathrm{int} \cup \Gamma^{1, +}_R} \re^{\lambda t} \chi_+ (\lfr - \lambda)^{-1} \g \de  \lambda - \frac{1}{2 \pi \ri} \int_{\Gamma^2_0} \re^{\lambda t} \chi_+ (\lfr - \lambda)^{-1} \g \de  \lambda. 
\end{align*}
For the first term, we apply Corollary~\ref{c: second shift} to find a constant $r > 0$ such that
\begin{align}
\begin{split}
    \left\| \lim_{R \to \infty} \int_{\Gamma^{1,-}_R \cup \Gamma^{2, -}_\mathrm{int} \cup \Gamma^{2,+}_\mathrm{int} \cup \Gamma^{1, +}_R} \re^{\lambda t} \chi_+ (\lfr - \lambda)^{-1} \g \de  \lambda \right\|_{L^\infty_{0, -1}} 
    &\lesssim \re^{-rt} \| \g \|_{L^\infty} ,
    \end{split}
    \label{e: leading edge estimate exponential part}
\end{align}
for all $\g \in C_c^\infty(\R,\R^2)$ and $t > 0$, where we use the continuous embedding $L_{0,-1}^\infty(\R) \hookrightarrow L^\infty(\R)$.

For the integral over $\Gamma^2_0$, we find from the resolvent decomposition~\eqref{e: resolvent decomp} that, for $\lambda = \sigma^2$ in a neighborhood of the origin, we have
\begin{align*}
    \chi_+ (\lfr - \sigma^2)^{-1} \g = \chi_+ R_\mathrm{fr} (\cdot; \sigma), 
\end{align*}
which satisfies the estimate
\begin{align*}
    \| \chi_+ [R_\mathrm{fr} (\cdot; \sigma) - R_\mathrm{fr}(\cdot; 0)]\|_{L^\infty_{0, -1}} \lesssim |\sigma| \| \g \|_{L^1_{0,1}},
\end{align*}
for all $\sigma \in \Delta^{\mathrm{fr},1}_\delta$ and $\g \in C_c^\infty(\R,\R^2)$ by Lemma~\ref{l: fr Linf 0 1 resolvent estimate}. Using Proposition~\ref{p: front estimate sharp decay}, we therefore obtain 
\begin{align*}
    \left\| \int_{\Gamma^2_0} \re^{\lambda t} \chi_+ (\lfr - \lambda)^{-1} \g \de  \lambda \right\|_{L^\infty_{0, -1}} \lesssim \frac{ \| g \|_{L^1_{0,1}}}{(1+t)^{\frac32}},
\end{align*}
for $t > 0$ and $\g \in C_c^\infty(\R,\R^2)$. Combining this with~\eqref{e: leading edge estimate exponential part} completes the proof of the lemma by approximation with test functions. 
\end{proof}

\section{Nonlinear iteration scheme and nonlinear estimates} \label{sec:itscheme}

In this section we formulate a nonlinear iteration scheme and state associated nonlinear estimates. In the next section we then prove our nonlinear stability result by closing an iterative argument based on this scheme with the nonlinear estimates.

We are interested in the long-time dynamics of the perturbed solution $\u(t)$ of~\eqref{e: fhn comoving} with initial condition $\u(0) = \u_{\mathrm{fr}} + \w_0$, where $\v_0 := \omega \w_0 \in L^1_{0,1}(\R,\R^2) \cap \left(H^3(\R,\R) \times H^2(\R,\R)\right)$ is sufficiently small. The initial perturbation $\w_0$ is $L^1$-localized on $(-\infty,0]$ and exponentially localized on $[0,\infty)$. Recall that the latter is necessary to marginally stabilize the essential spectrum associated with the leading edge of the front. 

The perturbed solution can be written as $\u(t) = \u_{\mathrm{fr}} + \w(t)$, where the perturbation $\w(t)$ has initial condition $\w(0) = \w_0$. Consequently, the weighted perturbation $\vt(t) = \omega \w(t)$ with $\vt(0) = \v_0$ satisfies
\begin{align}
\vt_t = \El_{\mathrm{fr}} \vt + \NT(\vt), \label{e:umodpert}
\end{align}
with locally Lipschitz continuous nonlinearity $\NT \colon H^1(\R,\R^2) \to H^1(\R,\R^2)$ given by
\begin{align*}
\NT(\vt) = \omega \widetilde{N}\left(\frac{\vt}{\omega}\right), \qquad \widetilde{N}(\w) = F(\u_{\mathrm{fr}}+\w) - F(\u_{\mathrm{fr}}) - F'(\u_{\mathrm{fr}}) \w.
\end{align*}
As stated in Proposition~\ref{p: Zk generation}, the linearization $\El_{\mathrm{fr}}$ generates a $C^0$-semigroup on the Hilbert space $H^1(\R)$ with domain $H^3(\R) \times H^2(\R)$. Hence, the following local well-posedness result follows from classical semigroup theory, see e.g.~\cite[Theorems~6.1.4 and~6.1.6]{Pazy}. 

\begin{prop}[Local well-posedness of the unmodulated perturbation] \label{p:local_unmod}
There exists a maximal time $T_{\max} \in (0,\infty]$ such that~\eqref{e:umodpert} possesses a unique classical solution
\begin{align*} \vt \in C\big([0,T_{\max}),H^3(\R,\R) \times H^2(\R,\R)\big) \cap C^1\big([0,T_{\max}),H^1(\R,\R^2)\big), \end{align*}
with initial condition $\vt(0) = \v_0$. Moreover, if $T_{\max} < \infty$, then we have
\begin{align*} \limsup_{t \nearrow T_{\max}} \left\|\vt(t)\right\|_{H^1} = \infty.\end{align*}
\end{prop}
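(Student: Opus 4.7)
The plan is to invoke the standard local well-posedness theorem for semilinear evolution equations on a Banach space, namely Pazy~\cite[Thm.~6.1.5]{Pazy}, together with its blow-up alternative~\cite[Thm.~6.1.4]{Pazy}. These results produce a unique maximal classical solution of $\vt_t = A\vt + f(\vt)$ with initial data in $D(A)$ whenever $A$ generates a $C^0$-semigroup on a Banach space $X$ and $f \colon X \to X$ is continuously Fr\'echet differentiable, and they guarantee $\|\vt(t)\|_X \to \infty$ as $t \nearrow T_{\max}$ whenever the maximal existence time is finite. In our setting one takes $X = Z_1(\R,\R^2)$, $A = \El_{\mathrm{fr}}$, $D(A) = Z_3(\R,\R) \times Z_2(\R,\R)$, and $f = \NT$.

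The first hypothesis is exactly the content of Proposition~\ref{p: Zk generation} with $k=1$. The only substantive item is therefore the $C^1$-regularity of $\NT$ on $Z_1(\R,\R^2)$, which is asserted just above the proposition. To verify it I would exploit the polynomial structure of $F$: the Taylor remainder
\begin{align*}
\widetilde{N}(\w) = F(\u_{\mathrm{fr}}+\w) - F(\u_{\mathrm{fr}}) - F'(\u_{\mathrm{fr}}) \w
\end{align*}
is a finite sum of monomials of degree two and three in the first component of $\w$ only (the second component of $F$ is affine in $\u$), with coefficients depending polynomially on $\u_{\mathrm{fr}}$ and hence bounded together with all derivatives since $\u_{\mathrm{fr}}$ is smooth and bounded. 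Conjugating with the weight yields $\NT(\vt) = \omega\widetilde{N}(\vt/\omega)$, a finite sum of terms of the form $a_k(\xi)\,\omega(\xi)^{1-k}\,\vt_1^k$ for $k \in \{2,3\}$. Crucially, the multipliers $\omega^{1-k}$ are bounded together with all derivatives because $\omega \geq 1$ on $\R$ and $\omega$ is smooth and monotone.

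In one space dimension, $Z_1(\R,\R) = H^1(\R,\R) \cap C^1_0(\R,\R)$ is a Banach algebra under pointwise multiplication, and multiplication by a bounded smooth coefficient with bounded derivatives defines a bounded linear operator on $Z_1$. Consequently each summand $\vt \mapsto a_k\,\omega^{1-k}\,\vt_1^k$ is a polynomial, hence real-analytic and in particular continuously Fr\'echet differentiable, map from $Z_1(\R,\R^2)$ to $Z_1(\R,\R^2)$. Summing, $\NT$ inherits the same regularity, and Pazy's theorem then delivers the asserted unique local classical solution with the stated regularity class, while the blow-up alternative furnishes the claim that $\|\vt(t)\|_{Z_1} \to \infty$ whenever $T_{\max} < \infty$.

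I do not anticipate any substantive obstacle: the whole argument is essentially a bookkeeping invocation of classical semigroup theory. The only point that truly merits attention is that the exponentially growing weight $\omega$ could in principle introduce unbounded coefficients after conjugation, but this does not happen because $\widetilde{N}$ is at least quadratic, so $\omega$ appears only to non-positive powers in $\NT$, and all resulting multipliers remain uniformly bounded on $\R$.
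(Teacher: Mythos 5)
Your argument follows exactly the paper's own route: the result is obtained by invoking Pazy's Theorem~6.1.5, using Proposition~\ref{p: Zk generation} for semigroup generation on $Z_1(\R,\R^2)$ with domain $Z_3(\R,\R)\times Z_2(\R,\R)$, and the continuous Fr\'echet-differentiability of $\NT$ on $Z_1$. Your elaboration of why $\NT$ is $C^1$ — that the remainder is quadratic and cubic in the first component only, that $Z_1$ is a Banach algebra, and that the weight $\omega \geq 1$ appears only to non-positive powers so its multipliers remain bounded with bounded derivatives — correctly substantiates a point the paper asserts without proof.
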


First, we recall the discussion from Section~\ref{s: nonlinear diffusive stability} about the difficulties and overall strategy of the nonlinear iteration argument. The principal difficulty is that the overall linear decay rate $(1+t)^{-1/2}$  of Theorem~\ref{t: linear estimates} is too slow to directly close a nonlinear iteration scheme. To overcome this, we draw on ideas from the nonlinear stability of wave trains~\cite{DSSS, SSSU, JONZ, JNRZ_13_1, JNRZInventiones} and introduce the \emph{inverse-modulated perturbation}
\begin{align}
\v(\xi,t) = \omega(\xi)\left(\u(\xi - \psi(\xi,t),t) - \u_{\mathrm{fr}}(\xi)\right). \label{e:modpert}
\end{align}
We will choose the spatio-temporal phase modulation $\psi$ to capture the leading-order behavior of the solution, crucially relying on the detailed semigroup decomposition of Theorem~\ref{t: linear estimates}. Using this phase modulation, we will obtain improved decay for $\v$ which will ultimately allow us to close a nonlinear iteration.

We then, however, encounter the additional difficulty that the coupled system for $\v$ and $\psi$ is quasilinear, and we need to overcome an apparent loss of regularity in our nonlinear iteration scheme. To control regularity, we will introduce the forward-modulated perturbation,
\begin{align}
\vf(\xi,t) = \omega(\xi)\left(\u(\xi,t) - \u_{\mathrm{fr}}(\xi + \psi(\xi,t))\right). \label{e:fmodpert}
\end{align}
By putting the phase modulation in $\ufr$ rather than the solution $\u$ itself, we ensure that the equation for $\vf$ remains semilinear, so that it is easier to control regularity for $\vf$. We rely on ideas recently developed in~\cite{ZUM22} to show that decay rates for $\v$ and $\vf$ are equivalent, so that we may enjoy improved temporal decay of $\v$ while using nonlinear damping estimates on $\vf$ to control regularity, ultimately closing a nonlinear iteration argument to prove Theorem~\ref{t: main detailed}. 

The rest of this section is structured as follows. First, we derive an equation for the inverse-modulated perturbation $\v(t)$ and establish estimates on the corresponding nonlinearity. We then choose the phase modulation function $\psi(t)$ in such a way that it accounts for supercritical terms in the Duhamel formula for $\v(t)$. Subsequently, we consider the equation for the forward-modulated perturbation and derive an associated nonlinear damping estimate, which yields regularity control for the inverse-modulated perturbation by bounding the relevant norms of $\v(t)$ in terms of those of $\vf(t)$ plus controllable error terms in $\psi_\xi(t)$.

\subsection{The inverse-modulated perturbation}

Using that both the perturbed solution $\u(t)$ and the pattern-forming front $\u_{\mathrm{fr}}$ solve the FitzHugh-Nagumo system~\eqref{e: fhn comoving}, we derive an equation for the inverse-modulated perturbation~\eqref{e:modpert} in Appendix~\ref{app: modulated sys derivation}, which reads
\begin{align}
\left(\partial_t - \El_{\mathrm{fr}}\right)\left[\v + \omega \u_{\mathrm{fr}}' \psi\right] = \mathcal{N}(\v,\psi,\partial_t \psi) + \left(\partial_t - \El_{\mathrm{fr}}\right)\left[\psi_\xi \v\right], \label{e:modpertbeq}
\end{align}
with nonlinearity $\mathcal{N}$ given by
\begin{align*}
\mathcal{N}(\v,\psi,\psi_t) &= \omega\left(\mathcal Q\left(\frac{\v}{\omega},\psi\right) + \partial_\xi \mathcal R\left(\frac{\v}{\omega},\psi,\psi_t\right)\right),
\end{align*}
where
\begin{align} \label{e:defNLQ}
\mathcal Q(\z,\psi) &= \left(F(\u_{\mathrm{fr}}+\z) - F(\u_{\mathrm{fr}}) - F'(\u_{\mathrm{fr}}) \z\right)\left(1-\psi_\xi\right),
\end{align}
is quadratic in $\z$ and 
\begin{align} \label{e:defNLR}
\mathcal R(\z,\psi,{\color{blue} \psi_t}) = \z\left(c\psi_\xi - \psi_t\right) + D\left(\frac{\left(\z_\xi + \ufr'\psi_\xi\right)\psi_\xi}{1-\psi_\xi} + \left(\z\psi_\xi\right)_\xi\right), 
\end{align}
contains all terms which are linear in $\z$. We observe that equation~\eqref{e:modpertbeq} is quasilinear in $\v(t)$. Using the continuous embedding $H^1(\R) \hookrightarrow L^\infty(\R)$ and the fact that $\omega(\xi)^{-1}$ is bounded and decays exponentially as $\xi \to \infty$, we establish the relevant nonlinear estimate, where we recall the function spaces $Z_k(\R) = H^k(\R) \cap C^k_0 (\R)$ for $k \in \mathbb N_0$, which were defined in~\S\ref{s: function spaces}.

\begin{lemma}[Nonlinear estimates] \label{l: nonlinear estimates}
There exists a constant $C > 0$ such that the inequality
\begin{align*}
\begin{split}
\left\|\mathcal{N}\left(\v,\psi,\psi_t\right)\right\|_{L^1_{0,1} \cap Z_0} &\leq C\left(\|v_1\|_{Z_0}^2 + \left\|\omega \left(\psi_\xi,\psi_t\right)\right\|_{Z_2 \times Z_1}\left(\left\|\v\right\|_{Z_2 \times Z_1} + \left\|\omega \psi_\xi\right\|_{Z_0}\right)\right),
\end{split}
\end{align*}
holds for each $\v = (v_1,v_2) \in Z_2(\R) \times Z_1(\R)$ and $(\psi,\psi_t) \in Z_3(\R) \times Z_1(\R)$ satisfying $\|v_1\|_{L^\infty}, \|\psi_\xi\|_{W^{1,\infty}} \leq \frac{1}{2}$ and $\|\omega \left(\psi_\xi,\psi_t\right)\|_{Z_2 \times Z_1} < \infty$.
\end{lemma}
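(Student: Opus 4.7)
\textbf{Proof plan for Lemma~\ref{l: nonlinear estimates}.} The plan is to bound the two structurally distinct pieces
\begin{align*}
\mathcal{N}_{\mathcal{Q}}(\v,\psi) := \omega \mathcal{Q}\!\left(\tfrac{\v}{\omega},\psi\right), \qquad \mathcal{N}_{\mathcal{R}}(\v,\psi,\psi_t) := \omega \partial_\xi \mathcal{R}\!\left(\tfrac{\v}{\omega},\psi,\psi_t\right),
\end{align*}
separately in $L^1_{0,1}$, $L^2$, and $L^\infty$ (since $Z_0 = H^0\cap C^0_0$ with norm $\|\cdot\|_{L^2}+\|\cdot\|_{L^\infty}$), then sum. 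The three key structural facts to exploit are: (i) $\omega$ equals $1$ on $(-\infty,0]$ and grows like $\mathrm{e}^{\etalin \xi}$ on $[1,\infty)$, so $\omega$ is bounded below, $\omega^{-1}$ and $\omega'/\omega$ together with all their derivatives are in $L^\infty(\R)$, and the polynomial-versus-exponential comparison gives $(1+|\xi|)^n/\omega \in L^\infty(\R)$ for every $n\ge 0$; (ii) by~\eqref{eq:asymp_est_front}, $\omega(\xi) \ufr^{(k)}(\xi)$ grows at most linearly in $\xi$ on $[0,\infty)$, so $\omega \ufr^{(k)}/(1+|\xi|)\in L^\infty(\R)$; (iii) the FitzHugh--Nagumo reaction $F$ is cubic in its first component and linear in its second, so the Taylor remainder $F(\ufr+\z)-F(\ufr)-F'(\ufr)\z$ has vanishing second component and first component of the form $a_2(\ufr)z_1^2 + a_3 z_1^3$, with $a_2$ smooth bounded and $a_3$ constant.

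\emph{Step 1 (quadratic part).} Using (iii), write
$$\mathcal{N}_{\mathcal{Q}} = (1-\psi_\xi)\bigl(a_2(\ufr) v_1^2/\omega + a_3 v_1^3/\omega^2\bigr)\e_1.$$
Since $|1-\psi_\xi|\leq 3/2$ and $\omega^{-1}$ is bounded, bound $\|v_1^2/\omega\|_{L^\infty}\lesssim \|v_1\|_{L^\infty}^2$ and $\|v_1^2/\omega\|_{L^2}\lesssim \|v_1\|_{L^\infty}\|v_1\|_{L^2}$. For the $L^1_{0,1}$ bound, split at $\xi=0$: on $(-\infty,0]$ the weight is $1$, giving $\|v_1^2/\omega\|_{L^1((-\infty,0])}\leq \|v_1\|_{L^2}^2$; on $[0,\infty)$ use that $(1+\xi)/\omega$ is in $L^1\cap L^\infty$, giving $\int_0^\infty(1+\xi)|v_1^2/\omega|\,\de\xi\lesssim \|v_1\|_{L^\infty}^2$. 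The cubic term is controlled identically and absorbed using $\|v_1\|_{L^\infty}\leq 1/2$. This yields $\|\mathcal{N}_{\mathcal{Q}}\|_{L^1_{0,1}\cap Z_0}\lesssim \|v_1\|_{Z_0}^2$.

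\emph{Step 2 (modulation part).} Expand $\partial_\xi \mathcal{R}$ by the product rule and multiply each term by $\omega$, converting $\omega\partial_\xi^j(\v/\omega)$ into a sum of $\v^{(j)}$ and $(\omega'/\omega)^k \v^{(j-k)}$-type terms with bounded coefficients. Every resulting term is a trilinear product of:
\begin{itemize}
    \item a factor drawn from $\v$ or its derivatives up to order one (in the first component, also second);
    \item a smooth coefficient involving $\ufr^{(k)}$, paired with $\omega$ when needed, so that it is either in $L^\infty$ or in $L^\infty_{-1}$ via fact (ii);
    \item a factor involving $\psi_\xi$, $\psi_t$, or their derivatives up to order two in $\psi_\xi$ and one in $\psi_t$, possibly multiplied by $(1-\psi_\xi)^{-k}$ whose $W^{j,\infty}$ norm is controlled by a polynomial in $\|\psi_\xi\|_{W^{j,\infty}}$ since $\|\psi_\xi\|_{L^\infty}\leq 1/2$.
\end{itemize}
Using $\|\psi_\xi\|_{W^{j,\infty}}\lesssim \|\omega\psi_\xi\|_{W^{j,\infty}}$ (because $\omega^{-1}$ has bounded derivatives), each such term is bounded in $L^2$ and $L^\infty$ by H\"older, placing the highest-derivative $\psi$-factor in $Z_2$ (or $Z_1$ for $\psi_t$), the $\v$-factor in $Z_2\times Z_1$ (so $\v_\xi$ sits in $L^\infty\cap L^2$ by Sobolev), and the lower-order $\psi$-factor in $L^\infty\lesssim \|\omega\psi_\xi\|_{Z_0}$. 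For the $L^1_{0,1}$ bound, the extra algebraic weight $\rho_{0,1}$ is handled exactly as in Step 1: on $(-\infty,0]$ it equals $1$, and on $[0,\infty)$ we absorb $(1+\xi)$ either into $\omega^{-1}$ (via $(1+\xi)/\omega\in L^\infty\cap L^2$) or into the $\omega \ufr^{(k)}$ factor using (ii). This yields
$$\|\mathcal{N}_{\mathcal{R}}\|_{L^1_{0,1}\cap Z_0}\lesssim \|\omega(\psi_\xi,\psi_t)\|_{Z_2\times Z_1}\bigl(\|\v\|_{Z_2\times Z_1} + \|\omega\psi_\xi\|_{Z_0}\bigr),$$
where the $\|\omega\psi_\xi\|_{Z_0}$ contribution arises from the purely $\psi$-quadratic term $\omega \partial_\xi[\ufr'\psi_\xi^2/(1-\psi_\xi)]$, in which one factor of $\psi_\xi$ is paired with $\omega\ufr^{(k)}$ through (ii) and the other is kept in $Z_2$.

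\emph{Main obstacle.} The proof is not conceptually hard but is bookkeeping-heavy: the expansion of $\omega \partial_\xi \mathcal{R}$ produces many terms, and the subtlest among them involve $\omega\ufr^{(k)}$, which grows linearly on $[0,\infty)$ rather than being bounded. Tracking that this linear growth is always compensated either by $(1+\xi)/\omega\in L^\infty$ acting on one of the $\psi$-factors, or by placing the product against the algebraic weight $\rho_{0,1}$ which itself grows only linearly, is the step where one must be careful, and is the reason the bound contains the mixed term $\|\omega\psi_\xi\|_{Z_2}\cdot\|\omega\psi_\xi\|_{Z_0}$ rather than $\|\omega\psi_\xi\|_{Z_2}^2$.
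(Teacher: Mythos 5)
Your proof is correct and fills in exactly the kind of bookkeeping the paper compresses into its one-sentence remark preceding the lemma (the $H^1\hookrightarrow L^\infty$ embedding and the exponential decay of $\omega^{-1}$ at $+\infty$). In particular, your three structural observations — that $\omega^{-1}$ and $\omega'/\omega$ are bounded with all derivatives, that $\omega\ufr^{(k)}$ grows at most linearly by~\eqref{eq:asymp_est_front}, and that the FitzHugh--Nagumo Taylor remainder is purely a quadratic-plus-cubic polynomial in the first component of $\z$ — together with the trick of pairing the linear growth of $\omega\ufr^{(k)}$ and of the algebraic weight $\rho_{0,1}$ against the exponential decay of $\omega^{-1}$ acting on at most one $\psi$-factor, correctly produce the stated mixed term $\|\omega(\psi_\xi,\psi_t)\|_{Z_2\times Z_1}\bigl(\|\v\|_{Z_2\times Z_1}+\|\omega\psi_\xi\|_{Z_0}\bigr)$ and the purely quadratic bound $\|v_1\|_{Z_0}^2$ for $\mathcal Q$, consistent with the paper's intent.
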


For the moment we assume that the phase modulation function $\psi(t)$ vanishes identically at $t = 0$. Then, the Duhamel formulation of~\eqref{e:modpertbeq} reads
\begin{align}
\v(t) + \omega\u_{\mathrm{fr}}'\psi(t) = \re^{\El_{\mathrm{fr}} t} \v_0 + \int_0^t \re^{\El_{\mathrm{fr}}(t-s)}\mathcal{N}(\v(s),\psi(s),\partial_t \psi(s))\de s + \psi_\xi(t)\v(t). \label{e:intv}
\end{align}
We make a choice for $\psi(t)$ such that the linear term $\u_{\mathrm{fr}}'\psi(t)$ compensates for the  critical nonlinear contributions on the right-hand side of equation~\eqref{e:intv}. More precisely, motivated by the semigroup decomposition~\eqref{e: semigroup decomp}, we make the implicit choice
\begin{align}
\psi(t) = s_p(t)\v_0 + \int_0^t s_p(t-s) \mathcal{N}(\v(s),\psi(s),\partial_t \psi(s))\de s, \label{e:intpsi}
\end{align}
which, upon substitution in~\eqref{e:intv}, leads to the integral equation
\begin{align}
\begin{split}
\v(t) &= \left(S_c(t) + S_e(t)\right)\v_0+\int_0^t\left(S_c(t-s)+S_e(t-s)\right)\mathcal{N}(\v(s),\psi(s),\partial_t \psi(s)) \de s + \psi_\xi(t)\v(t),
\end{split}\label{e:intv2}
\end{align}
for the inverse-modulated perturbation, where we use that the identities~\eqref{e: sp gamma} and~\eqref{e: sp t def} imply that $\psi(t)$ vanishes on $[-1,\infty)$. We note that, due to the introduction of the temporal cut-off function $\varsigma(t)$, we have $s_p(0) = 0$ and thus we have $\psi(0) = 0$. Moreover, we emphasize that, with this choice of $\psi(t)$, the inverse-modulated perturbation exhibits, at least on the linear level, higher-order algebraic pointwise decay at rate $t^{-1}$, see~Theorem~\ref{t: linear estimates}, whereas the phase-modulation $\psi(t)$ decays diffusively at rate $\smash{t^{-\frac12}}$. Nevertheless, the nonlinearity $\mathcal{N}$ in~\eqref{e:intpsi} and~\eqref{e:intv2} only contains spatial and temporal derivatives of $\psi$, which satisfy
\begin{align}
\partial_\xi^\ell \partial_t^j \psi(t) = \partial_\xi^\ell \partial_t^j s_p(t)\v_0 + \int_0^t \partial_\xi^\ell \partial_t^j s_p(t-s) \mathcal{N}(\v(s),\psi(s),\partial_t \psi(s))\de s, \label{e:intpsi2}
\end{align}
for $\ell,j \in \N_0$, where we use that $s_p(0)=0$. The estimates in Theorem~\ref{t: linear estimates} show that the linear terms in~\eqref{e:intpsi2} also decay pointwise at rate $t^{-1}$ for $\ell + j \geq 1$. These higher-order decay rates suggest that, as in the stability analyses~\cite{JNRZ_13_1,JONZ} of periodic wave trains, an iteration scheme consisting of $\v(t)$ and \emph{derivatives} of $\psi(t)$ might close, which we indeed confirm in the next section. Note that we also need to address the loss of derivatives in the quasilinear integral scheme~\eqref{e:intv2}-\eqref{e:intpsi2}, an issue we tackle in the next section through nonlinear damping estimates on the forward-modulated perturbation~\eqref{e:fmodpert}. 

We end this section by establishing local well-posedness of the phase modulation and the inverse-modulated perturbation. To that end, we note that we can express the inverse-modulated perturbation as
\begin{align}
\v(\xi,t) = \vt(\xi-\psi(\xi,t),t) + \omega(\xi)\left(\u_{\mathrm{fr}}(\xi - \psi(\xi,t)) - \u_{\mathrm{fr}}(\xi)\right), \label{e:definvmod2}  
\end{align}
and recall that local well-posedness of the unmodulated perturbation $\vt(t)$ has been established in Proposition~\ref{p:local_unmod}. Thus, the integral equation~\eqref{e:intpsi} forms a closed system in $\psi$ for which a standard contraction mapping argument yields local well-posedness.

\begin{prop}[Local well-posedness of the phase modulation] \label{p:psi}
Let $T_{\max}$ and $\vt(t)$ be as in Proposition~\ref{p:local_unmod}. Then, there exist a constant $r_0 > 0$ and a maximal time $\tau_{\max} \in (0,T_{\max}]$ such that~\eqref{e:intpsi}, with $\v$ given by~\eqref{e:definvmod2}, has a unique solution
\begin{align*} \psi \in C\big([0,\tau_{\max}),H^4(\R)\big) \cap C^{1}\big([0,\tau_{\max}), H^{2}(\R)\big), \end{align*}
satisfying $\psi(t)=0$ for all $t \in [0,\tau_{\max})$ with $t \leq 1$. In addition, it holds $\|(\psi(t),\partial_t \psi(t))\|_{H^4 \times H^2} < r_0$ and $\|\psi(t)\|_{Z_2} \leq \frac12$ for all $t \in [0,\tau_{\max})$. Furthermore, if $\tau_{\max} < T_{\max}$, then we have
\begin{align*} \limsup_{t \nearrow \tau_{\max}} \|(\psi(t),\partial_t \psi(t))\|_{H^4 \times H^2} = r_0.\end{align*}
Finally, it holds $\psi \in C\big([0,\tau_{\max}),H^{4+m}(\R)\big) \cap C^{1+j}\big([0,\tau_{\max}), H^{2+l}(\R)\big)$ for any $j, l, m \in \mathbb{N}_0$.
\end{prop}

For the sake of completeness we provide a proof of Proposition~\ref{p:psi} in Appendix~\ref{app:local}.

Local well-posedness of the inverse-modulated perturbation~\eqref{e:definvmod2} now readily follows.

\begin{corollary}[Local well-posedness of the inverse-modulated perturbation] \label{C:local_v}
Let $\vt(t)$ be the unmodulated perturbation from Proposition~\ref{p:local_unmod} and let $\psi(t)$ and $\tau_{\max}$ be as in Proposition~\ref{p:psi}. Then, the inverse-modulated perturbation, defined by~\eqref{e:definvmod2}, satisfies 
\begin{align*}
\v = (v_1,v_2) \in C\left([0,\tau_{\max}),Z_2(\R) \times Z_1(\R)\right).
\end{align*} 
Moreover, the Duhamel formulation~\eqref{e:intv} holds for all $t \in [0,\tau_{\max})$.
\end{corollary}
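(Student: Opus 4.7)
The plan is to verify the two assertions separately: first, the regularity $\v \in C([0,\tau_{\max}),Z_2(\R) \times Z_1(\R))$ via a chain-rule analysis of the explicit formula~\eqref{e:definvmod2}; second, the Duhamel representation~\eqref{e:intv} via a variation of constants argument applied to the classical PDE~\eqref{e:modpertbeq} derived in Appendix~\ref{app: modulated sys derivation}.

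For the regularity claim, I split~\eqref{e:definvmod2} into two pieces. The composition $\vt(\xi - \psi(\xi,t),t)$ is handled via the chain rule. By the one-dimensional Sobolev embedding $H^4(\R) \hookrightarrow W^{3,\infty}(\R)$, $\psi(t)$ has uniformly bounded spatial derivatives up to order three; since $\psi(0) = 0$ and $\psi \in C([0,\tau_{\max}), H^4)$, after possibly shrinking $\tau_{\max}$ one may assume $\|\psi_\xi(t)\|_{L^\infty} < 1/2$, so that $\xi \mapsto \xi - \psi(\xi,t)$ is a proper $C^3$-diffeomorphism of $\R$. Differentiating up to twice (resp.\ once) in the first (resp.\ second) component produces expressions that are polynomials in derivatives of $\psi$ with coefficients $\partial_\xi^k \vt(\xi - \psi(\xi,t), t)$, $k\leq 2$ (resp.\ $k\leq 1$); since $\vt(t) \in Z_3 \times Z_2$, composition with the diffeomorphism preserves membership in $L^2 \cap C_0$, and multiplication by the $L^\infty$ coefficients involving $\psi$ keeps the result in $H^2 \cap C_0^2$ (resp.\ $H^1 \cap C_0^1$). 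For the second piece $\omega(\xi)(\u_{\mathrm{fr}}(\xi - \psi(\xi,t)) - \u_{\mathrm{fr}}(\xi))$, the mean value theorem gives the representation $-\omega(\xi)\psi(\xi,t)\int_0^1 \u_{\mathrm{fr}}'(\xi - \theta \psi(\xi,t))\,\de\theta$; since $\psi(t)$ is supported on $(-\infty,0]$ by Proposition~\ref{p:psi}, where $\omega \equiv 1$ and $\u_{\mathrm{fr}}'$ is smooth and bounded (converging to the smooth wave train at $-\infty$), the required Sobolev and $C_0^k$ bounds follow from $\psi(t) \in H^4$. Continuity in $t$ is then a direct consequence of the continuity of $\vt$ and $\psi$ in their respective spaces.

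For the Duhamel formula, I define $\w(t) := \v(t) + \omega \u_{\mathrm{fr}}' \psi(t) - \psi_\xi(t)\v(t)$, so that~\eqref{e:modpertbeq} becomes $(\partial_t - \El_{\mathrm{fr}})\w = \mathcal{N}(\v,\psi,\partial_t \psi)$, and $\psi(0) = 0$ forces $\w(0) = \v_0$. Lemma~\ref{l: nonlinear estimates}, together with the regularity of $\v$ and $\psi$ just established, shows that the nonlinearity lies in $C([0,\tau_{\max}),Z_0(\R,\R^2))$, while $\w$ is a classical solution taking values in the domain of $\El_{\mathrm{fr}}$ on $Z_0$, namely $Z_2(\R,\R) \times Z_1(\R,\R)$. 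The standard variation of constants formula~\cite{Pazy}, applied to the $C^0$-semigroup on $Z_0$ provided by Proposition~\ref{p: Zk generation}, then yields $\w(t) = \re^{\El_{\mathrm{fr}} t}\v_0 + \int_0^t \re^{\El_{\mathrm{fr}}(t-s)}\mathcal{N}(\v(s),\psi(s),\partial_t\psi(s))\,\de s$, which upon rearrangement is precisely~\eqref{e:intv}. The main obstacle is the bookkeeping in the regularity step: one must carefully track the interaction of the diffeomorphism $\xi - \psi(\xi,t)$ with the Sobolev and $C_0^k$ structures, and verify that the exponential weight $\omega$ interacts correctly with the support of $\psi$ in the wake. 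These verifications are routine but somewhat tedious.
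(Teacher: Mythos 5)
Your approach matches the spirit of the paper's one-line justification (mean value theorem plus Sobolev embedding, then variation of constants), and your derivation of the Duhamel formula via $\w := \v + \omega\ufr'\psi - \psi_\xi\v$ is exactly the right rearrangement of~\eqref{e:modpertbeq}. The one step that does not hold up as written is the phrase ``after possibly shrinking $\tau_{\max}$ one may assume $\|\psi_\xi(t)\|_{L^\infty}<1/2$.'' The corollary asserts regularity and the Duhamel identity on the \emph{full} interval $[0,\tau_{\max})$, with $\tau_{\max}$ the maximal time of Proposition~\ref{p:psi}, and this maximality is used downstream (the blow-up alternative~\eqref{e:blowup} and the continuation argument in the proof of Theorem~\ref{t: main detailed}). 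Replacing $\tau_{\max}$ by a shorter interval defeats that purpose and is not what the statement claims.

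The diffeomorphism structure is also unnecessary: you invoked it to change variables in the $L^2$-norm of $\vt(\cdot-\psi(\cdot,t),t)$, but one can bound this integral without injectivity. Since $\|\psi(t)\|_{L^\infty} =: M_t < \infty$, the argument $\xi - \psi(\xi,t)$ stays within distance $M_t$ of $\xi$, so for any fixed $t<\tau_{\max}$,
\begin{align*}
\int_\R \bigl|\vt(\xi - \psi(\xi,t),t)\bigr|^2\,\de\xi \;\leq\; \int_\R \sup_{|y-\xi|\leq M_t}\bigl|\vt(y,t)\bigr|^2\,\de\xi
\;\lesssim\; (1+M_t)\,\|\vt(t)\|_{H^1}^2,
\end{align*}
using a local Sobolev bound $\sup_{|y-\xi|\leq M_t}|\vt(y,t)|^2 \lesssim \int_{\xi - M_t-1}^{\xi+M_t+1}\bigl(|\vt|^2 + |\vt_\xi|^2\bigr)\de z$ and Fubini. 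The same scheme handles higher derivatives (the chain-rule factor $(1-\psi_\xi)$ etc.\ is merely bounded, not required to be uniformly bounded away from zero) and the $C_0^k$-norms. With this replacement the regularity claim holds on all of $[0,\tau_{\max})$, and the rest of your argument goes through unchanged. Note that the diffeomorphism property, with the explicit hypothesis $\|\psi(t)\|_{Z_2}\leq\tfrac12$, is genuinely needed in Lemma~\ref{l: equivalence}; Corollary~\ref{C:local_v} deliberately carries no such hypothesis and should be proved without it.
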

\begin{proof}
The function $\v$ maps continuously into $C_0^2(\R) \times C_0^1(\R)$ by Propositions~\ref{p:local_unmod} and~\ref{p:psi}, the mean value theorem, the continuous embedding $H^1(\R) \hookrightarrow C_0(\R)$ and the fact that functions in $C_0(\R)$ are uniformly continuous. Moreover, $\v$ maps continuously into $H^2(\R) \times H^1(\R)$ by combining Proposition~\ref{p:psi} with Lemma~\ref{L:loc_gamma}.
\end{proof}

\subsection{The forward-modulated perturbation}

In this section we derive a nonlinear damping estimate for the forward-modulated perturbation $\vf(t)$ in order to control regularity in the quasilinear iteration scheme~\eqref{e:intv2}-\eqref{e:intpsi2}. That is, we derive an energy estimate controlling the $(H^3 \times H^2)$-norm of $\vf(t)$ in terms of its $L^2$-norm and the $(H^3 \times H^2)$-norm of the initial condition $\v_0$. 

We start by establishing local well-posedness of the forward-modulated perturbation, which readily follows by combining Propositions~\ref{p:local_unmod} and~\ref{p:psi} and applying the mean value theorem. 

\begin{corollary}[Local well-posedness of the forward-modulated perturbation] \label{c:local_vf}
Let $\vt(t)$ be the unmodulated perturbation from Proposition~\ref{p:local_unmod} and let $\psi(t)$ and $\tau_{\max}$ be as in Proposition~\ref{p:psi}. Then, the forward-modulated perturbation, defined by~\eqref{e:fmodpert}, satisfies $\vf \in C\big([0,\tau_{\max}),H^3(\R) \times H^2(\R)\big) \cap C^1\big([0,\tau_{\max}),H^1(\R)\big)$.
\end{corollary}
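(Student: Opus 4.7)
The plan is to reduce the statement to a direct combination of the two previous propositions by extracting $\vt$ from $\vf$. Writing
\begin{align*}
\vf(\xi,t) = \omega(\xi)\bigl(\u(\xi,t) - \u_{\mathrm{fr}}(\xi)\bigr) + \omega(\xi)\bigl(\u_{\mathrm{fr}}(\xi) - \u_{\mathrm{fr}}(\xi+\psi(\xi,t))\bigr) = \vt(\xi,t) + \mathcal{I}(\xi,t),
\end{align*}
Proposition~\ref{p:local_unmod} together with the embeddings $Z_3(\R)\times Z_2(\R) \hookrightarrow H^3(\R) \times H^2(\R)$ and $Z_1(\R,\R^2) \hookrightarrow H^1(\R,\R^2)$, and the fact that $\tau_{\max} \leq T_{\max}$, gives the desired regularity and time differentiability for $\vt$ on $[0,\tau_{\max})$. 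The entire task is therefore to show that the correction $\mathcal{I}(t)$ enjoys the same regularity.

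The key structural observation from Proposition~\ref{p:psi} is that $\psi(t)$ is supported in $(-\infty,-1]$, where $\omega \equiv 1$. Hence $\mathcal{I}(\xi,t) = \u_{\mathrm{fr}}(\xi) - \u_{\mathrm{fr}}(\xi+\psi(\xi,t))$ for $\xi \leq -1$ and $\mathcal{I}(\xi,t) = 0$ for $\xi \geq -1$, and we may entirely avoid the exponential weight. By the mean value theorem,
\begin{align*}
\mathcal{I}(\xi,t) = -\psi(\xi,t)\int_0^1 \u_{\mathrm{fr}}'(\xi+s\psi(\xi,t))\,\de s.
\end{align*}
Since $\u_{\mathrm{fr}} \in C^\infty(\R,\R^2)$ by standard ODE regularity applied to the traveling-wave equation, and since $\u_{\mathrm{fr}}$ together with all its derivatives is uniformly bounded on $(-\infty,0]$ (it is asymptotic to the smooth periodic wave train $\uwt$), the composition $G(\xi,p) := -\int_0^1 \u_{\mathrm{fr}}'(\xi+sp)\,\de s$ is smooth with all partial derivatives uniformly bounded on $\{\xi \leq -1\} \times \{|p|\leq 1\}$. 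Combined with $\psi(t) \in H^4(\R)$ (which, via $H^1 \hookrightarrow L^\infty$, is uniformly small in $L^\infty$) and standard Moser-type Sobolev composition estimates, this shows that $G(\cdot,\psi(\cdot,t))$ is bounded in $W^{3,\infty}(\R)$ by a continuous function of $\|\psi(t)\|_{H^4}$, so that $\mathcal{I}(t) = \psi(t)\,G(\cdot,\psi(\cdot,t))$ lies in $H^3(\R,\R^2) \subset H^3(\R) \times H^2(\R)$, with norm controlled by $\|\psi(t)\|_{H^3}$.

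Continuity $t \mapsto \mathcal{I}(t) \in H^3\times H^2$ on $[0,\tau_{\max})$ follows from continuity of $\psi \in H^4$ together with the Lipschitz continuity of the composition map $\psi \mapsto G(\cdot,\psi(\cdot))$ in appropriate Sobolev norms, which again follows from smoothness and bounded derivatives of $\u_{\mathrm{fr}}'$. For time differentiability we compute
\begin{align*}
\partial_t \mathcal{I}(\xi,t) = -\u_{\mathrm{fr}}'(\xi+\psi(\xi,t))\,\partial_t\psi(\xi,t),
\end{align*}
supported on $\{\xi \leq -1\}$; since $\partial_t\psi \in C([0,\tau_{\max}),H^2)$ by Proposition~\ref{p:psi} and $\u_{\mathrm{fr}}'(\cdot+\psi(\cdot,t))$ is bounded in $W^{1,\infty}$ on the wake by the argument above, this product lies in $H^1(\R,\R^2)$ continuously in $t$. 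Combining with the corresponding estimate $\partial_t \vt \in C([0,\tau_{\max}),Z_1) \hookrightarrow C([0,\tau_{\max}),H^1)$ completes the proof. The only mildly delicate point is verifying the composition estimates cleanly, but this is routine since all compositions happen on the wake where no exponential weight is present.
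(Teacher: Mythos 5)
Your proposal is correct and matches the paper's (very terse) proof in both substance and method: the paper simply notes that the result ``readily follows by combining Propositions~\ref{p:local_unmod} and~\ref{p:psi} and applying the mean value theorem,'' and your decomposition $\vf = \vt + \omega(\ufr - \ufr(\cdot+\psi))$, the use of $\mathrm{supp}\,\psi(\cdot,t)\subset(-\infty,-1]$ (where $\omega\equiv 1$) to eliminate the weight, and the mean-value-theorem argument on the correction term are precisely the intended ingredients. Your elaboration via the composition-operator/Moser-type estimate is a valid and appropriately detailed way to fill in the ``readily'' step.
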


Using that both the perturbed solution $\u(t)$ and the pattern-forming front $\u_{\mathrm{fr}}$ solve the FitzHugh-Nagumo system~\eqref{e: fhn comoving}, we find that the forward-modulated perturbation $\vf(t)$ satisfies the equation
\begin{align}
\begin{split}
\vf_t &= D\left(\vf_{\xi \xi} + a_1 \vf_\xi + a_0 \vf\right) + \clin \vf_\xi + b_0\vf + \omega \left(F\left(\frac{\vf}{\omega} + \uf_{\mathrm{fr},0}\right) - F(\uf_{\mathrm{fr},0})\right)\\ &\qquad +\, \omega \left(\clin \psi_\xi - \partial_t \psi\right)\uf_{\mathrm{fr},1} + \omega D\left(\psi_{\xi\xi} \uf_{\mathrm{fr},1} + \psi_\xi\left(\psi_\xi + 2\right)\uf_{\mathrm{fr},2}\right),
\end{split}
\label{e:fmodpertbeq}
\end{align}
where we denote $\uf_{\mathrm{fr},j}(\xi,t) = \big(\partial_\xi^j \ufr\big)(\xi + \psi(\xi,t))$ and $a_j, b_j$ are generated by conjugation with the exponential weight $\omega$, with the expressions
\begin{align}
    a_1 = 2 \omega (\omega^{-1})', \qquad a_0 = \omega (\omega^{-1})'', \qquad b_0  = \clin \omega (\omega^{-1})'. \label{e: coefficients from weight}
\end{align}

We emphasize that, in contrast to the equation~\eqref{e:modpertbeq} for the inverse-modulated perturbation $\v(t)$, the equation~\eqref{e:fmodpertbeq} is semilinear in $\vf$, which implies that all nonlinear terms in~\eqref{e:fmodpertbeq} can be controlled by linear damping terms. As mentioned before, the linear damping terms in~\eqref{e:fmodpertbeq} are $\partial_{\xi\xi}\mathring{v}_1$ in the first component and $-\epsilon \gamma \mathring{v}_2$ in the second component. Using these observations we establish the following nonlinear damping estimate for the forward-modulated perturbation.

\begin{prop}[Nonlinear damping estimate for the forward-modulated perturbation] \label{p: nonlinear damping}
Let $\vf(t)$ be as in Corollary~\ref{c:local_vf} and $\psi(t)$ and $\tau_{\max}$ as in Proposition~\ref{p:psi}. Fix $R > 0$. There exist constants $C, \mu > 0$ such that the forward-modulated perturbation $\vf(t)$ satisfies the nonlinear damping estimate
\begin{align} \label{e:dampingineq}
\begin{split}
\|\vf(t)\|_{H^3 \times H^2}^2 &\leq C\left(\re^{-\mu t} \|\v_0\|_{H^3 \times H^2}^2 + \left\|\vf(t)\right\|_{L^2}^2\phantom{\int_0^t}\right. \\
&\qquad \qquad \qquad \left. + \, \int_0^t \re^{-\mu(t-s)} \left(\|\vf(s)\|_{L^2}^2 + \|\psi_\xi(s)\|_{H^3}^2 + \|\partial_s \psi(s)\|_{H^2}^2 \right) \de s\right),
\end{split}
\end{align}
for each $t \in [0,\tau_{\max})$ with
\begin{align}\label{e:upbound}
\sup_{0 \leq s \leq t} \left(\|\mathring{v}_1(s)\|_{W^{1,\infty}} + \|\psi_\xi(s)\|_{W^{1,\infty}}\right) \leq R.
\end{align}  
\end{prop}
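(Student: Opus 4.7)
The approach is a weighted energy estimate tailored to the degenerate parabolic structure of equation~\eqref{e:fmodpertbeq}. Only the first component of the diffusion matrix $D$ is nontrivial, so the first component contributes damping $-2\|\partial_\xi^{k+1}\mathring{v}_1\|_{L^2}^2$ at each derivative order $k$ coming from $\partial_{\xi\xi}\mathring{v}_1$, while the second component has only algebraic damping $-2\eps\gamma\|\partial_\xi^k\mathring{v}_2\|_{L^2}^2$ arising from the $-\eps\gamma\mathring{v}_2$ term. I will introduce a functional
\begin{align*}
E(t) = \sum_{k=1}^{3} \|\partial_\xi^k \mathring{v}_1(t)\|_{L^2}^2 + K \sum_{k=0}^{2} \|\partial_\xi^k \mathring{v}_2(t)\|_{L^2}^2,
\end{align*}
with a positive constant $K$ to be fixed large. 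The zeroth-order $L^2$-norm of $\mathring{v}_1$ does not receive any parabolic damping on the whole line, so it is omitted from $E$; this is precisely why $\|\vf(t)\|_{L^2}^2$ appears on the right-hand side of~\eqref{e:dampingineq}, via the equivalence $\|\vf(t)\|_{H^3 \times H^2}^2 \leq C(E(t) + \|\vf(t)\|_{L^2}^2)$. Differentiating~\eqref{e:fmodpertbeq} up to three times in $\xi$ for $\mathring{v}_1$ and twice for $\mathring{v}_2$, testing against $\partial_\xi^k \vf$ and integrating, the damping terms above arise from the diffusion and the $-\eps\gamma\mathring{v}_2$ coupling, while the advection $c\vf_\xi$ integrates by parts to a vanishing boundary contribution.

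The delicate point is the linear cross-coupling $-\mathring{v}_2$ in the first-component equation. After $k$-fold differentiation and pairing with $\partial_\xi^k\mathring{v}_1$, this produces a term $-\int \partial_\xi^k \mathring{v}_1 \, \partial_\xi^k \mathring{v}_2 \, \de\xi$; for $k = 3$ the factor $\partial_\xi^3 \mathring{v}_2$ is not in the controlled $H^2$ norm for $\mathring{v}_2$. The remedy is to integrate by parts once, rewriting the integral as $\int \partial_\xi^{k+1}\mathring{v}_1 \, \partial_\xi^{k-1}\mathring{v}_2 \, \de\xi$, and then apply Young's inequality to bound it by $\tfrac12\|\partial_\xi^{k+1}\mathring{v}_1\|_{L^2}^2 + \tfrac12\|\partial_\xi^{k-1}\mathring{v}_2\|_{L^2}^2$. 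The first piece is absorbed by the diffusive damping at level $k$ in $E$, and the second by the algebraic damping at level $k-1 \leq 2$ in $E$, provided $K$ is taken large enough. The reciprocal coupling $\eps\mathring{v}_1$ in the second component is handled by direct Young splitting, leaving contributions $\|\partial_\xi^j \mathring{v}_1\|_{L^2}^2$ at level $j \leq 2$: for $j \geq 1$ these sit in $E$ itself (absorbable by the damping upon taking the damping constant slightly smaller), while for $j = 0$ the residual $\|\mathring{v}_1\|_{L^2}^2$ is absorbed into the $\|\vf(t)\|_{L^2}^2$ source on the right-hand side of~\eqref{e:dampingineq}. The $\xi$-dependent, smooth coefficients coming from $F'(\uf_{\mathrm{fr},0})$ are $W^{k,\infty}$-bounded uniformly by smoothness of $\ufr$ and the $\|\psi_\xi\|_{W^{1,\infty}} \leq R$ bound in~\eqref{e:upbound}, and produce only lower-order terms absorbable by the damping or by $\|\vf\|_{L^2}^2$.

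The nonlinear Taylor remainder $\omega(F(\vf/\omega + \uf_{\mathrm{fr},0}) - F(\uf_{\mathrm{fr},0}) - F'(\uf_{\mathrm{fr},0})(\vf/\omega))$ is at most quadratic in $\vf/\omega$ and carries an overall factor $\omega^{-1}$, providing exponential decay on $\{\xi \geq 1\}$. Standard Moser/Gagliardo-Nirenberg product estimates combined with the $W^{1,\infty}$ bound on $\mathring{v}_1$ from~\eqref{e:upbound} and smoothness of $\ufr$ control its $H^k$-norm pairings with $\partial_\xi^k \vf$ by constants depending on $R$; iterated applications of Young's inequality then distribute these contributions between (an arbitrarily small fraction of) the damping and a residual bounded by $C(R)\|\vf\|_{L^2}^2$. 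The $\psi$-dependent source terms in~\eqref{e:fmodpertbeq} are linear in $\psi_\xi, \psi_{\xi\xi}, \partial_t \psi$ with coefficients involving $\uf_{\mathrm{fr},1}, \uf_{\mathrm{fr},2}$ and $\psi_\xi$ that are $W^{k,\infty}$-bounded by smoothness of $\ufr$ and~\eqref{e:upbound}; pairing with $\partial_\xi^k \vf$ and applying Cauchy-Schwarz splits each into $\delta\|\partial_\xi^k \vf\|_{L^2}^2 + C_\delta(\|\psi_\xi(t)\|_{H^3}^2 + \|\partial_t\psi(t)\|_{H^2}^2)$ with $\delta$ small. Collecting everything yields a differential inequality
\begin{align*}
\frac{dE}{dt} \leq -\mu E + C\bigl(\|\vf(t)\|_{L^2}^2 + \|\psi_\xi(t)\|_{H^3}^2 + \|\partial_t \psi(t)\|_{H^2}^2\bigr),
\end{align*}
with $\mu, C > 0$ depending on $R$, and Gronwall's inequality together with the equivalence above yields~\eqref{e:dampingineq}.

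The main obstacle lies in the careful bookkeeping required to ensure that every non-damping linear term is either absorbable into the weighted damping (by fixing $K$ large) or reducible to the right-hand side of~\eqref{e:dampingineq}. In particular, at each cross-coupling term one must track exactly how many $\xi$-derivatives fall on $\mathring{v}_2$ versus $\mathring{v}_1$ and integrate by parts exactly as many times as needed to match the available damping norms, without exceeding the $H^3 \times H^2$ regularity budget. An analogous strategy has been successfully carried out for the St.~Venant equations in~\cite{RZ16}, which is cited immediately before the statement of the proposition as an explicit model for the present argument.
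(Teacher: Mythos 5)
Your strategy --- an energy estimate exploiting the parabolic damping $\partial_{\xi\xi}\mathring{v}_1$ and the algebraic damping $-\eps\gamma\mathring{v}_2$, with Young's inequality to redistribute cross-couplings and Gronwall to conclude --- matches the paper's approach in its essential mechanism. The real difference lies in the energy functional. You build a ladder energy $E(t) = \sum_{k=1}^3 \|\partial_\xi^k\mathring{v}_1\|_{L^2}^2 + K\sum_{k=0}^2\|\partial_\xi^k\mathring{v}_2\|_{L^2}^2$ and handle each cross-term level by level with integration by parts and a large weight $K$. The paper instead takes the \emph{minimal} top-order energy $E(t) = \tfrac12\|\partial_\xi^3\mathring{v}_1\|_{L^2}^2 + \tfrac1{2\eps\gamma}\|\partial_\xi^2\mathring{v}_2\|_{L^2}^2$ (note that the weight $1/(2\eps\gamma)$ plays exactly the role of your $K$) and recovers the full $H^3\times H^2$ norm afterwards by Sobolev interpolation, $\|\mathring{v}_1\|_{H^3}^2 \lesssim \|\partial_\xi^3\mathring{v}_1\|_{L^2}^2 + \|\mathring{v}_1\|_{L^2}^2$ and similarly for $\mathring{v}_2$. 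The top-order-only energy avoids the sub-top cross-coupling bookkeeping your multilevel $E$ forces, because only one cross term $\int\partial_\xi^3\mathring{v}_1\,\partial_\xi^3\mathring{v}_2$ (integrated by parts to $\int\partial_\xi^4\mathring{v}_1\,\partial_\xi^2\mathring{v}_2$) and one reciprocal coupling $\int\partial_\xi^2\mathring{v}_1\,\partial_\xi^2\mathring{v}_2$ must be Young-split; everything else is absorbed by interpolation. Your route works too, but is heavier, and you still need interpolation at the very end to convert the level-$(k+1)$ damping of $\mathring{v}_1$ into coercivity over the level-$k$ term $\|\partial_\xi^k\mathring{v}_1\|_{L^2}^2$ sitting in $E$ (the bottom level $k=1$ has no direct damping in your scheme); you assert the resulting inequality $E' \leq -\mu E + C(\cdots)$ without spelling this step out.

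One genuine omission: your argument differentiates $E(t)$ and integrates by parts at orders $3$ and $2$, which requires $\vf \in H^5 \times H^4$ (and $\partial_t\vf \in H^3$), whereas Corollary~\ref{c:local_vf} gives only $\vf \in C([0,\tau_{\max}), H^3\times H^2)\cap C^1([0,\tau_{\max}),H^1)$. The paper handles this by proving the estimate first for smoother initial data $\v_0 \in Z_5\times Z_4$ (where the extra regularity propagates locally) and then passing to the limit using continuous dependence on initial data and the fact that the final inequality~\eqref{e:dampingineq} involves only norms controlled at the $H^3\times H^2$ level. Without this regularization step the differential inequality is formal; you should add it.
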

\begin{proof}
Our aim is to infer an estimate for the energy 
\begin{align*}E(t) = \frac12 \left\|\partial_\xi^3 \mathring{v}_1(t)\right\|_{L^2}^2 + \frac1{2\epsilon \gamma} \left\|\partial_\xi^2 \mathring{v}_2(t)\right\|_{L^2}^2.\end{align*} 
In order to be able to differentiate $E(t)$ with respect to $t$, we restrict ourselves for the moment to initial conditions $\v_0 \in H^5(\R) \times H^4(\R)$. One then obtains, using the same reasoning as in the proofs of Propositions~\ref{p:local_unmod} and~\ref{p:psi}, analogous local well-posedness statements with two additional degrees of regularity. That is, we have $\vt \in C\big([0,T_{\max}),H^5(\R) \times H^4(\R)\big) \cap C^1\big([0,T_{\max}),H^3(\R)\big)$, $\psi \in C\big([0,\tau_{\max}),H^5(\R)\big) \cap C^1\big([0,\tau_{\max}), H^3(\R)\big)$ and, thus, $\vf \in  C\big([0,\tau_{\max}),H^5(\R) \times H^4(\R)\big) \cap C^1\big([0,\tau_{\max}),H^3(\R)\big)$.

Fix a constant $R> 0$. Let $t \in [0,\tau_{\max})$ be such that~\eqref{e:upbound} holds. Differentiating $E(s)$ and integrating by parts we establish a $t$-independent constant $C > 0$ such that
\begin{align*}
\begin{split}
E'(s) &\leq -\left\|\partial_\xi^4 \mathring{v}_1(s)\right\|_{L^2}^2 - \left\|\partial_\xi^2 \mathring{v}_2(s)\right\|_{L^2}^2 + \left\|\partial_\xi^4 \mathring{v}_1(s)\right\|_{L^2}\left\|\partial_\xi^2 \mathring{v}_2(s)\right\|_{L^2} + C\left(\left\|\partial_\xi^2\mathring{v}_1(s)\right\|_{L^2}\left\|\partial_\xi^2 \mathring{v}_2(s)\right\|_{L^2}\right. \\
&\qquad \left. + \, \left\|\mathring{v}_1(s)\right\|_{H^3}^2 + \left(\left\|\vf(s)\right\|_{H^3 \times H^1} + \left\|\partial_x^4 \mathring{v}_1(s)\right\|_{L^2} + \left\|\partial_\xi^2 \mathring{v}_2(s)\right\|_{L^2}\right)\left(\left\|\psi_\xi(s)\right\|_{H^3} + \left\|\partial_s \psi(s)\right\|_{H^2}\right)\right),
\end{split}
\end{align*}
for $s \in [0,t]$. Here we used the embedding $H^1(\R) \hookrightarrow L^\infty(\R)$ and the facts that $\psi(s)$ vanishes on $[-1,\infty)$ by Proposition~\ref{p:psi}, $a_1,a_0$ and $b_0$ are bounded, and $b_0$ is non-positive since $\omega$ is non-decreasing and $\clin > 0$. Next, we apply Young's inequality to the above estimate to yield a $t$-independent constant $C > 0$ such that
\begin{align*}
\begin{split}
E'(s) &\leq -\frac14\left\|\partial_\xi^4 \mathring{v}_1(s)\right\|_{L^2}^2 - \frac14 \left\|\partial_\xi^2 \mathring{v}_2(s)\right\|_{L^2}^2 + C\left(\left\|\vf(s)\right\|_{H^3 \times H^1}^2 + \left\|\psi_\xi(s)\right\|_{H^3}^2 + \left\|\partial_s \psi(s)\right\|_{H^2}^2\right).
\end{split}
\end{align*}
Subsequently, we use Sobolev interpolation to obtain $t$-independent constants $C,\mu > 0$ such that
\begin{align*}
\begin{split}
E'(s) &\leq -\mu E(s) + C\left(\left\|\vf(s)\right\|_{L^2}^2 + \left\|\psi_\xi(s)\right\|_{H^3}^2 + \left\|\partial_s \psi(s)\right\|_{H^2}^2\right),
\end{split}
\end{align*}
for $s \in [0,t]$. Integrating the above inequality we arrive at
\begin{align*}
E(t) \leq \re^{-\mu t} E(0) + C\int_0^t \re^{-\mu(t-s)}\left(\left\|\vf(s)\right\|_{L^2}^2 + \left\|\psi_\xi(s)\right\|_{H^3}^2 + \left\|\partial_s \psi(s)\right\|_{H^2}^2\right) \de s,
\end{align*}
Finally, using Sobolev interpolation again, we establish a $t$-independent constant $C > 0$ such that~\eqref{e:dampingineq} holds.

For the case $\v_0 \in H^3(\R) \times H^2(\R)$, we approximate $\v_0$ in $(H^3 \times H^2)$-norm by a sequence $\left(\v_{0,n}\right)_{n \in \N}$ in $H^5(\R) \times H^4(\R)$. By continuity with respect to initial data, see~\cite[Proposition~4.3.7]{CA98}, we obtain sequences of solutions $\vt_n(t)$ of~\eqref{e:umodpert} with $\vt_n(0) = \v_{0,n}$ and of solutions $\psi_n(t)$ of~\eqref{e:intpsi} (with $\v_0$ replaced by $\v_{0,n}$) such that $\vt_n(t)$ converges to $\vt(t)$ in $H^3(\R) \times H^2(\R)$, $\psi_n(t)$ converges to $\psi(t)$ in $H^4(\R)$ and $\partial_t \psi_n(t)$ converges to $\partial_t \psi(t)$ in $H^2(\R)$. Since~\eqref{e:dampingineq} only depends on the $(H^3\times H^2)$-norm of $\vf(t) = \vt(t) + {\color{red}\omega}(\uf_{\mathrm{fr}} - \ufr)$, on the $H^4$-norm of $\psi(t)$ and on the $H^2$-norm of $\partial_t \psi(t)$, the desired result follows by approximation.
\end{proof}

As long as the phase modulation $\psi(t)$ and its spatial derivative stay sufficiently small, one can express the forward- and inverse-modulated perturbation in terms of each other by inverting the function $\xi \mapsto \xi - \psi(\xi,t)$. With the aid of the mean value theorem, one then establishes that the $W^{k,p}$-norms of the forward- and inverse-modulated perturbations are equivalent modulo controllable norms of $\psi_\xi$. This has been established in~\cite[Corollary 5.3]{ZUM22} for the case without exponential weight. For the sake of completeness, we obtain this equivalence for the relevant norms in the current setting in the following lemma.

\begin{lemma}[Equivalence of the forward- and inverse-modulated perturbations] \label{l: equivalence}
Let $\v(t)$ be as in Corollary~\ref{C:local_v}, $\vf(t)$ as in Corollary~\ref{c:local_vf} and $\psi(t)$ and $\tau_{\max}$ as in Proposition~\ref{p:psi}. Then, there exists a constant $C > 0$ such that
\begin{align}
\begin{split}
\|\v(t)\|_{Z_2 \times Z_1} &\leq C\left(\|\vf(t)\|_{Z_2 \times Z_1} + \|\psi_\xi(t)\|_{Z_1}\right),
\end{split}
\label{e:bdibf}
\end{align}
and
\begin{align}
\begin{split}\|\vf(t)\|_{L^\infty} \leq C\left(\|\v(t)\|_{L^\infty} + \|\psi_\xi(t)\|_{L^\infty}\right), &\qquad \|\vf(t)\|_{W^{1,\infty}} \leq C\left(\|\v(t)\|_{W^{1,\infty}} + \|\psi_\xi(t)\|_{L^\infty}\right),\\
\|\vf(t)\|_{L^2} \leq C\left(\|\v(t)\|_{L^2} + \|\psi_\xi(t)\|_{L^\infty}\right),\ &\qquad \quad \|\vf(t)\|_{H^1} \leq C\left(\|\v(t)\|_{H^1} + \|\psi_\xi(t)\|_{Z_0}\right),
\end{split}
\label{e:bdibf2}
\end{align}
for any $t \in [0,\tau_{\max})$.
\end{lemma}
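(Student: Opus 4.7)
The hypothesis $\|\psi(t)\|_{Z_2} \leq \tfrac12$ implies in particular that $\|\psi_\xi(t)\|_{L^\infty} \leq \tfrac12$, so that the maps $\Phi^\pm_t : \xi \mapsto \xi \pm \psi(\xi, t)$ are $C^2$-diffeomorphisms of $\R$ with Jacobians uniformly bounded in $[\tfrac12, \tfrac32]$. The plan is to exploit this diffeomorphism property to relate $\v$ and $\vf$ pointwise and then use mean-value-type expansions to transfer estimates between them, with the error terms controllable purely in terms of derivatives of $\psi$.

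For the bound~\eqref{e:bdibf}, I substitute $y(\xi) = \xi - \psi(\xi, t)$ into the definition~\eqref{e:fmodpert} of $\vf$ to obtain $\u(y, t) = \vf(y, t)/\omega(y) + \u_{\mathrm{fr}}(y + \psi(y, t))$, which together with the definition~\eqref{e:modpert} yields the pointwise identity
\begin{align*}
\v(\xi, t) = \frac{\omega(\xi)}{\omega(y(\xi))}\, \vf(y(\xi), t) + \omega(\xi)\bigl[\u_{\mathrm{fr}}(y(\xi) + \psi(y(\xi), t)) - \u_{\mathrm{fr}}(\xi)\bigr].
\end{align*}
By Proposition~\ref{p:psi}, $\psi(t)$ is supported on $(-\infty, 0]$, where $\omega \equiv 1$, while $y(\xi) = \xi$ for $\xi > 0$; hence the weight ratio $\omega(\xi)/\omega(y(\xi))$ is uniformly bounded. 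The first term is then handled by the change of variables $\xi \mapsto y(\xi)$, whose Jacobian $1-\psi_\xi$ is uniformly bounded away from zero, yielding a contribution $\lesssim \|\vf(t)\|_{L^p}$. For the error term, two applications of the mean value theorem, together with the telescoping identity $y(\xi) + \psi(\xi, t) = \xi$, give $y + \psi(y, t) - \xi = \psi(y, t) - \psi(\xi, t) = -\psi_\xi(\tilde\xi, t)\psi(\xi, t)$ for some intermediate $\tilde\xi$, so the error is pointwise bounded by $\|\ufr'\|_{L^\infty} |\psi_\xi(\tilde\xi, t)| |\psi(\xi, t)|$. Using the embedding $Z_2 \hookrightarrow L^p$ and the hypothesis $\|\psi(t)\|_{L^p} \leq \|\psi(t)\|_{Z_2} \leq \tfrac12$ for $p \in \{2, \infty\}$, this contribution is absorbed into $C\|\psi_\xi(t)\|_{L^\infty} \leq C\|\psi_\xi(t)\|_{Z_1}$.

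For the higher-order derivative estimates required to bound $\v$ in $Z_2 \times Z_1$, I differentiate the identity above and apply the chain rule. Each spatial derivative of $y(\xi) = \xi - \psi(\xi, t)$ produces a factor of $\psi_\xi$, $\psi_{\xi\xi}$, or $\psi_{\xi\xi\xi}$. The resulting expressions are handled by Sobolev product estimates based on the embedding $H^1(\R) \hookrightarrow L^\infty(\R)$, together with the smallness hypothesis $\|\psi(t)\|_{Z_2} \leq \tfrac12$ used to absorb bare $\psi$-factors into constants. In this way, all error terms can ultimately be bounded by $\|\psi_\xi(t)\|_{Z_1}$, yielding~\eqref{e:bdibf}. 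The bounds~\eqref{e:bdibf2} are obtained analogously by interchanging the roles of $\Phi^+$ and $\Phi^-$: substituting $z(\xi) = \xi + \psi(\xi, t)$ in the definition of $\v$ yields an analogous identity expressing $\vf$ in terms of $\v$. Since computing $\vf$ from $\v$ does not require inverting a diffeomorphism and the $\psi$-dependence enters directly through $\ufr(\xi + \psi(\xi, t))$, fewer derivatives of $\psi$ appear in the resulting error bounds, explaining the less demanding requirements on $\psi$ in~\eqref{e:bdibf2} compared to~\eqref{e:bdibf}.

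The main technical obstacle is the careful bookkeeping in the higher-order derivative estimates: verifying that every error term can be expressed purely through $\psi_\xi$ in the appropriate norm, without separate control of $\psi$. This relies crucially on the product structure of the mean value expansions, which always produce a $\psi_\xi \cdot \psi$ factor, combined with absorbing bare $\psi$-factors using the smallness hypothesis. The argument follows the strategy developed in~\cite[Section~5]{ZUM22}, adapted here to the weighted setting; the key simplification is that $\omega \equiv 1$ on the support of $\psi$, so the weight causes no additional difficulty.
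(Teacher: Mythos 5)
Your treatment of \eqref{e:bdibf} is essentially the paper's: substitute $y(\xi)=\xi-\psi(\xi,t)$ into \eqref{e:fmodpert}, observe that the weight ratio $\omega(\xi)/\omega(y(\xi))\equiv 1$ because $\psi$ is supported where $\omega\equiv 1$ and is zero elsewhere (so $y(\xi)=\xi$ there), then use the telescoping identity and two applications of the mean value theorem to bound the $\ufr$-discrepancy pointwise by a $\psi_\xi\cdot\psi$ product. This matches the paper step for step.

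There is, however, a genuine gap in your treatment of \eqref{e:bdibf2}. You propose to express $\vf$ in terms of $\v$ by ``substituting $z(\xi)=\xi+\psi(\xi,t)$ in the definition of $\v$,'' and you justify this with the claim that ``computing $\vf$ from $\v$ does not require inverting a diffeomorphism.'' Both are incorrect. The definition \eqref{e:modpert} gives access to $\u$ only at pre-images $\xi'-\psi(\xi',t)$; to recover $\u(\xi,t)$, which is what $\vf(\xi,t)$ needs, you must solve $\xi'-\psi(\xi',t)=\xi$, i.e.\ replace $\xi$ by the true inverse $h_t^{-1}(\xi)$ of $h_t(\xi)=\xi-\psi(\xi,t)$. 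Your $z(\xi)=\xi+\psi(\xi,t)$ is not that inverse: one has $z(\xi)-\psi(z(\xi),t)=\xi+\delta(\xi,t)$ with $\delta(\xi,t)=\psi(\xi,t)-\psi(\xi+\psi(\xi,t),t)\neq 0$ in general. Consequently your substitution yields $\u(\xi+\delta(\xi,t),t)$ instead of $\u(\xi,t)$, and closing the gap requires yet another mean-value step involving $\u_\xi$, which regenerates $\vf_\xi$ on the right-hand side. For the pure $\|\vf(t)\|_{L^\infty}\lesssim\|\v(t)\|_{L^\infty}+\|\psi_\xi(t)\|_{L^\infty}$ estimate this is a genuine circularity, not a bookkeeping detail. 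The paper's proof instead works directly with $h_t^{-1}$, using the inverse function theorem to get $\partial_\xi(h_t^{-1}(\xi))=1+\psi_\xi(h_t^{-1}(\xi),t)/(1-\psi_\xi(h_t^{-1}(\xi),t))$, and the analogue of the telescoping identity $h_t^{-1}(\xi)=\xi+\psi(h_t^{-1}(\xi),t)$ to control the $\ufr$-discrepancy by $\|\psi_\xi\|_{L^\infty}|\psi(h_t^{-1}(\xi),t)|$. Your closing remark that \eqref{e:bdibf2} is less demanding on $\psi$ ``because the $\psi$-dependence enters directly through $\ufr(\xi+\psi)$'' is also not the right explanation; both directions involve a composed shift, and the asymmetry in the hypotheses simply reflects that \eqref{e:bdibf2} only goes up to one derivative of $\vf$ while \eqref{e:bdibf} goes up to two derivatives of $\v$.
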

\begin{proof}
Let $t \in [0,\tau_{\max})$. We then have $\|\psi(t)\|_{Z_2} \leq \frac12$ by Proposition~\ref{p:psi}. Therefore, the function $h_t \colon \R \to \R$ given by $h_t(\xi) = \xi - \psi(\xi,t)$ is strictly increasing and invertible. Moreover, using $\xi = h_t(h_t^{-1}(\xi)) = h_t^{-1}(\xi) - \psi(h_t^{-1}(\xi),t)$ and $\|\psi(t)\|_{L^\infty} \leq \frac12$, we find $|\xi - h_t^{-1}(\xi)| \leq \frac12$ for all $\xi \in \R$. 

Substituting~\eqref{e:fmodpert} into~\eqref{e:modpert}, we obtain
\begin{align*}
\begin{split}
\v(\xi,t) &= \omega(\xi)\left(\frac{\vf(\xi-\psi(\xi,t),t)}{\omega(\xi - \psi(\xi,t))} + \ufr\left(\xi-\psi(\xi,t) + \psi(\xi-\psi(\xi,t),t)\right) - \ufr(\xi)\right),\\
&= \vf(\xi-\psi(\xi,t),t) + \ufr\left(\xi-\psi(\xi,t) + \psi(\xi-\psi(\xi,t),t)\right) - \ufr(\xi),
\end{split}
\end{align*}
where the second equality follows from the facts that, for $\xi \leq -1$ we have $\xi - \psi(\xi,t) \leq -1 + \|\psi(t)\|_{L^\infty} \leq -\frac12$, for $\xi \leq 0$ we have $\omega(\xi) = 1$, and for $\xi \geq -1$ we have $\psi(\xi,t) = 0$ by Proposition~\ref{p:psi}. Taking spatial derivatives yields
\begin{align*}
\v_\xi(\xi,t) &= \vf_\xi(\xi-\psi(\xi,t),t)\left(1-\psi_\xi(\xi,t)\right) - \ufr'(\xi)\\
&\qquad + \, \ufr'\left(\xi-\psi(\xi,t) + \psi(\xi-\psi(\xi,t),t)\right)\left(1-\psi_\xi(\xi,t) + \psi_\xi(\xi-\psi(\xi,t),t)\left(1-\psi_\xi(\xi,t) \right) \right) ,\\
\v_{\xi\xi}(\xi,t) &= \vf_{\xi\xi}(\xi-\psi(\xi,t),t)\left(1-\psi_\xi(\xi,t)\right)^2 - \ufr''(\xi) - \vf_\xi(\xi-\psi(\xi,t),t)\psi_{\xi\xi}(\xi,t)\\ 
&\qquad + \, \ufr''\left(\xi-\psi(\xi,t) + \psi(\xi-\psi(\xi,t),t)\right)\left(1-\psi_\xi(\xi,t) + \psi_\xi(\xi-\psi(\xi,t),t)\left(1-\psi_\xi(\xi,t) \right) \right)^2\\ 
&\qquad + \, \ufr'\left(\xi - \psi(\xi,t) + \psi(\xi-\psi(\xi,t),t)\right)\left( \psi_\xi(\xi-\psi(\xi,t),t)\left(1-\psi_\xi(\xi,t) \right)^2\right.\\
&\qquad \qquad \qquad \qquad \qquad \qquad \qquad \qquad  \qquad\left. \phantom{\left(1-\psi_\xi(\xi,t) \right)^2} - \psi_{\xi\xi}(\xi,t)\left(1 + \psi_\xi(\xi-\psi(\xi,t),t)\right) \right).
\end{align*}
Using the mean value theorem twice, we estimate
\begin{align*}
\left|\left(\partial_\xi^j\ufr\right)\left(\xi - \psi(\xi,t) + \psi(\xi-\psi(\xi,t),t)\right) - \partial_\xi^j \ufr(\xi)\right| \leq \left\|\partial_\xi^{j+1}\ufr\right\|_{L^\infty}\left\|\psi_\xi(t)\right\|_{L^\infty} \left|\psi(\xi,t)\right|,
\end{align*}
for $j = 0,1,2$ and $\xi \in \R$. In addition, using $\|\psi_\xi(t)\|_{L^\infty} \leq \frac{1}{2}$ and applying the substitution $y = h_t(\xi)$, we bound
\begin{align*}
\left\|f(\cdot - \psi(\cdot,t))\right\|_{L^2}^2 = \int_{\R} \left|f(\xi - \psi(\xi,t))\right|^2 \de \xi = \int_{\R} \frac{\left|f(y)\right|^2}{1-\psi_\xi\left(h_t^{-1}(y),t\right)} \de y \leq {\color{red} 2}\|f\|_{L^2}^2,
\end{align*} 
for $f \in L^2(\R)$. With the aid of the latter two estimates and using the continuous embedding $H^1(\R) \hookrightarrow L^\infty(\R)$ and the fact that $\|\psi(t)\|_{Z_2} \leq \frac12$, we thus establish~\eqref{e:bdibf}.

Conversely, substituting~\eqref{e:modpert} into~\eqref{e:fmodpert}, we obtain
\begin{align*}
\vf(\xi,t) &= \omega(\xi)\left(\frac{\v(h_t^{-1}(\xi),t)}{\omega(h_t^{-1}(\xi))} + \ufr\left(h_t^{-1}(\xi)\right) - \ufr(\xi + \psi(\xi,t))\right)\\ 
&= \v\left(h_t^{-1}(\xi),t\right) + \ufr\left(h_t^{-1}(\xi)\right) - \ufr(\xi + \psi(\xi,t)),
\end{align*}
where we used that for $\xi \leq -\frac12$ we have $h_t^{-1}(\xi) = \xi + \psi(h_t^{-1}(\xi),t) \leq -\frac12 + \|\psi(t)\|_{L^\infty} \leq 0$, for $\xi \leq 0$ we have $\omega(\xi) = 1$, for $\xi \geq -\frac12$ we have $h_t^{-1}(\xi) = \xi + \psi(h_t^{-1}(\xi),t) \geq -\frac12 - \|\psi(t)\|_{L^\infty} \geq -1$, and for $\xi \geq -1$ we have $\psi(\xi,t) = 0$ by Proposition~\ref{p:psi}. Taking spatial derivatives yields
\begin{align*}
\vf_\xi(\xi,t) &= \left(\v_\xi\left(h_t^{-1}(\xi),t\right) + \ufr'\left(h_t^{-1}(\xi)\right)\right)\partial_\xi \left(h_t^{-1}(\xi)\right) - \ufr'(\xi + \psi(\xi,t))(1+\psi_\xi(\xi,t)).
\end{align*}
First, the inverse function theorem implies
\begin{align*} 
\partial_\xi \left(h_t^{-1}(\xi)\right) = 1 + \frac{\psi_\xi\left(h_t^{-1}(\xi),t\right)}{1-\psi_\xi\left(h_t^{-1}(\xi),t\right)}, 
\end{align*}
for $\xi \in \R$. Next, recalling that $h_t^{-1}(\xi) = \xi + \psi(h_t^{-1}(\xi),t)$ and using the mean value theorem twice, we arrive at
\begin{align*}
\left|\left(\partial_\xi^j\ufr\right)\left(h_t^{-1}(\xi)\right) - \left(\partial_\xi^j\ufr\right)(\xi + \psi(\xi,t))\right| \leq \left\|\partial_\xi^{j+1}\ufr\right\|_{L^\infty}\left\|\psi_\xi(t)\right\|_{L^\infty} \left|\psi\left(h_t^{-1}(\xi),t\right)\right|.
\end{align*}
for $j = 0,1$ and $\xi \in \R$. Lastly, using $\|\psi_\xi(t)\|_{L^\infty} \leq \frac{1}{2}$ and applying the substitution $\xi = h_t(y)$, we estimate
\begin{align*}
\left\|f \circ h_t^{-1}\right\|_{L^2}^2 = \int_{\R} \left|f\left(h_t^{-1}(\xi)\right)\right|^2 \de \xi = \int_{\R} \left|f(y)\right|^2 (1+\psi_y(y,t)) \de y \leq {\color{red} 2}\|f\|_{L^2}^2,
\end{align*}
for $f \in L^2(\R)$. With these these three observations, recalling the continuous embedding $H^1(\R) \hookrightarrow L^\infty(\R)$ and the fact that $\|\psi(t)\|_{Z_1} \leq \frac12$, we have established~\eqref{e:bdibf2}.
\end{proof}

\section{Nonlinear stability argument}\label{s: nonlinear argument}

In this section we prove our nonlinear stability result, Theorem~\ref{t: main detailed}, by applying the linear estimates, obtained in Theorem~\ref{t: linear estimates}, and the nonlinear estimates, obtained in Lemma~\ref{l: nonlinear estimates} and Proposition~\ref{p: nonlinear damping}, to the nonlinear interaction scheme consisting of the integral equations~\eqref{e:intv2} and~\eqref{e:intpsi2} for the inverse-modulated perturbation $\v(t)$ and the phase modulation $\psi(t)$, respectively. 

\begin{proof}[Proof of Theorem~\ref{t: main detailed}]
Set $\v_0 = \omega \w_0$ and let $\vt(t), \v(t)$ and $\vf(t)$ be the associated unmodulated, inverse- and forward-modulated perturbations, established in Proposition~\ref{p:local_unmod} and Corollaries~\ref{C:local_v} and~\ref{c:local_vf}. In addition, let $\psi(t)$ be the corresponding phase modulation, established in Proposition~\ref{p:psi}. It follows by Propositions~\ref{p:local_unmod} and~\ref{p:psi} and Corollary~\ref{C:local_v} that the functions $\eta_1, \eta_2 \colon [0,\tau_{\max}) \to \R$ given by
\begin{align*}
\eta_1(t) &= \sup_{0\leq s\leq t} \left[(1+s)\|\v(s)\|_{L^\infty} + (1+s)^{\frac{3}{4}}\left(\|\v(s)\|_{L^2} + \left\|\psi_\xi(s)\right\|_{H^3} + \left\|\partial_s \psi(s)\right\|_{H^2}\right) + (1+s)^{\frac{1}{4}}\left\|\psi(s)\right\|_{L^2} \right],\\ 
\eta_2(t) &= \sup_{0\leq s\leq t} \left[(1+s)^{\frac{3}{4}}\left\|\partial_\xi \v(s)\right\|_{L^\infty} + (1+s)^{\frac14} \left\|\vt(s)\right\|_{H^1}\right],
\end{align*}
are well-defined, continuous, positive and monotonically increasing. We will control terms that appear in $\eta_1(t)$ by iterative estimates on their Duhamel formulas, whereas terms in $\eta_2(t)$ are controlled by the nonlinear damping estimate stated in Proposition~\ref{p: nonlinear damping}. This leads to different type of inequalities for $\eta_1(t)$ and $\eta_2(t)$, which we eventually combine to an inequality for $\eta(t) = \eta_1(t) + \eta_2(t)$. Next, recall that if $\tau_{\max} < T_{\max}$ or $T_{\max} < \infty$, then we have
\begin{align} \label{e:blowup}
 \lim_{t \nearrow \tau_{\max}} \eta(t) \geq r_0,
\end{align}
where $r_0 > 0$ is the constant from Proposition~\ref{p:psi}. As common in nonlinear iteration arguments, our aim is to establish a nonlinear inequality for the template function $\eta(t)$, which, by continuity, implies that $\eta(t)$ must stay small and precludes~\eqref{e:blowup}, yielding global existence. More specifically, our goal is to obtain a $t$-independent constant $C \geq 1$ such that for any $t \in [0,\tau_{\max})$ with $\eta(t) \leq \frac12$ the inequality 
\begin{align}\label{e:key}
\eta(t) \leq C\left(E_0 + \eta(t)^2\right),
\end{align}
holds. Then, upon taking $\delta = \frac{1}{4C}\min\{\frac{1}{C},r_0\}$, it follows from continuity, monotonicity and non-negativity of $\eta$ that, provided $E_0 \in (0,\delta)$, we have $\eta(t) \leq 2CE_0 \leq \frac12$ for all $t \in [0,\tau_{\max})$. Indeed, for any given $t \in [0,\tau_{\max})$ with $\eta(s) \leq 2CE_0$ for each $s \in [0,t]$, we conclude
$$\eta(t) \leq C\left(E_0 + 4C^2E_0^2\right) < 2CE_0,$$
using~\eqref{e:key}. Therefore, once we have established~\eqref{e:key} and, thus, $\eta(t) \leq 2CE_0$ for all $t \in [0,\tau_{\max})$, it follows by~\eqref{e:blowup} that necessarily $\tau_{\max} = T_{\max} = \infty$. Consequently, one obtains that $\eta(t) \leq 2CE_0$ for all $t \geq 0$, which then yields the desired estimates.

Thus, we start by showing the nonlinear inequality~\eqref{e:key}. In fact, we first obtain such an inequality for $\eta_1(t)$, while using that both $\eta_1(t)$ and $\eta_2(t)$ are bounded. We stress that boundedness of $\eta_2(t)$ is required here to apply the nonlinear damping estimate stated in Proposition~\ref{p: nonlinear damping} in order to control derivatives in the nonlinearity $\mathcal{N}$ in~\eqref{e:intv2} and~\eqref{e:intpsi2}. Subsequently, we bound $\eta_2(t)$ in terms of $\eta_1(t)$ using the same nonlinear damping estimate. Finally, adding the inequalities for $\eta_1(t)$ and $\eta_2(t)$ yields an estimate of the form~\eqref{e:key}. 

Thus, let $t \in [0,\tau_{\max})$ be such that $\eta(t) \leq \frac12$. By Proposition~\ref{p:psi} we have 
\begin{align} \label{e:psibound}
\|\psi(s)\|_{Z_2} \leq \frac12
\end{align}
for all $s \in [0,t]$. Lemma~\ref{l: equivalence}, estimate~\eqref{e:psibound} and the continuous embedding 
\begin{align} \label{e:embedding}
H^{k+1}(\R) \hookrightarrow Z_k(\R), \qquad k \in \N_0,
\end{align}
yield the following bounds on the forward-modulated perturbation
\begin{align} \label{e:estf}
\|\vf(\tau)\|_{L^2} &\lesssim \frac{\eta_1(t)}{(1+\tau)^{\frac34}}, \qquad \|\mathring{v}_1(\tau)\|_{W^{1,\infty}} \lesssim \frac{\eta(t)}{(1+\tau)^{\frac34}},
\end{align}
for all $\tau \in [0,t]$. 
Thus, applying the nonlinear damping estimate in Proposition~\ref{p: nonlinear damping} and using~\eqref{e:embedding} and~\eqref{e:estf}, while noting that $\eta(t) \leq \frac12$, we establish
\begin{align} \label{e:damping}
\|\vf(s)\|_{H^3 \times H^2}^2 \lesssim \re^{-\mu s} E_0^2 + \frac{\eta_1(t)^2}{(1+s)^{\frac32}} + \int_0^s \re^{-\mu(s-r)} \frac{\eta_1(t)^2}{(1+r)^{\frac32}} \de r \lesssim \frac{E_0^2 + \eta_1(t)^2}{(1+s)^{\frac32}},
\end{align}
for all $s \in [0,t]$. Hence, combining Lemma~\ref{l: equivalence} with~\eqref{e:damping}, recalling~\eqref{e:psibound} and~\eqref{e:embedding}, and using Young's inequality, we arrive at
\begin{align} \label{e:damping2}
\|\v(s)\|_{Z_2 \times Z_1} &\lesssim \frac{E_0 + \eta_1(t)}{(1+s)^{\frac34}},
\end{align}
for each $s \in [0,t]$. Finally, combining the latter estimate with Lemma~\ref{l: nonlinear estimates}, using $\eta(t) \leq \frac12$ and~\eqref{e:psibound}, recalling~\eqref{e:embedding} and noting  that $\psi(s)$ vanishes on $[-1,\infty)$ by Proposition~\ref{p:psi}, we obtain the nonlinear bound
\begin{align} \label{e:nlest}
\begin{split} 
\left\|\mathcal{N}\left(\v(s),\psi(s),\partial_s \psi(s)\right)\right\|_{L^1_{0,1} \cap Z_0} &\lesssim \frac{\eta_1(t)\left(E_0 + \eta_1(t)\right)}{(1+s)^{\frac32}}, 
\end{split}
\end{align}
for each $s \in [0,t]$. 

We are now in the position to establish an inequality for $\eta_1(t)$ by iterative estimates on the Duhamel formulas~\eqref{e:intv2} and~\eqref{e:intpsi2} for the inverse-modulated perturbation $\v(t)$ and the phase modulation $\psi(t)$, respectively. Thus, we apply the linear estimates in Theorem~\ref{t: linear estimates} and the nonlinear bound in~\eqref{e:nlest} to~\eqref{e:intv2} and establish
\begin{align}
\label{e:duh1}
\|\v(t)\|_{L^2} &\lesssim \frac{E_0 + \eta_1(t)^2}{(1+t)^{\frac34}} + \int_0^t \frac{\eta_1(t)(E_0 + \eta_1(t))}{\re^{\mu(t-s)}(1+s)^{\frac32}} \de s + \int_0^t \frac{\eta_1(t)(E_0 + \eta_1(t))}{(1+t-s)^{\frac34}(1+s)^{\frac32}} \de s \lesssim \frac{E_0 + \eta_1(t)^2}{(1+t)^{\frac34}},
\end{align}
 and
\begin{align}
\label{e:duh11}
\|\v(t)\|_{L^\infty} &\lesssim \frac{E_0 + \eta_1(t)^2}{1+t} + \int_0^t \frac{\eta_1(t)(E_0 + \eta_1(t))}{\re^{\mu(t-s)}(1+s)^{\frac32}} \de s + \int_0^t \frac{\eta_1(t)(E_0 + \eta_1(t))}{(1+t-s)(1+s)^{\frac32}} \de s \lesssim \frac{E_0 + \eta_1(t)^2}{1+t},
\end{align}
where we used $\eta_1(t) \leq \frac12$. Similarly, we apply Theorem~\ref{t: linear estimates} and estimate~\eqref{e:nlest} to~\eqref{e:intpsi2} and obtain
 \begin{align}
 \label{e:duh22}
\|\psi(t)\|_{L^p} &\lesssim \frac{E_0}{(1+t)^{\frac12 - \frac1{2p}}} + \int_0^t \frac{\eta_1(t)(E_0 + \eta_1(t))}{(1+t-s)^{\frac12 - \frac1{2p}}(1+s)^{\frac32}} \de s \lesssim \frac{E_0 + \eta_1(t)^2}{(1+t)^{\frac12 - \frac1{2p}}},
 \end{align}
and 
\begin{align}
\label{e:duh2}
\left\|\partial_\xi^\ell \partial_t^j \psi(t)\right\|_{L^p} \lesssim \frac{E_0}{(1+t)^{1 - \frac1{2p}}} + \int_0^t \frac{\eta_1(t)(E_0 + \eta_1(t))}{(1+t-s)^{1 - \frac1{2p}}(1+s)^{\frac32}} \de s \lesssim \frac{E_0 + \eta_1(t)^2}{(1+t)^{1 - \frac1{2p}}},
\end{align}
for $p = 2,\infty$ and $\ell, j \in \N_0$ with $1 \leq \ell + 2j \leq 4$, where we used $\eta(t) \leq \frac12$. Combining the estimates~\eqref{e:duh1},~\eqref{e:duh11},~\eqref{e:duh22}, and~\eqref{e:duh2}, we establish a $t$-independent constant $C_1 \geq 1$ such that
\begin{align} \label{e:keyin1}
\eta_1(t) \leq C_1\left(E_0 + \eta_1(t)^2\right).
\end{align}

Next, we obtain an inequality for $\eta_2(t)$. First, we express the unmodulated perturbation as
\begin{align} \label{e:forwunm}
\vt(\xi,t) = \vf(\xi,t) + \omega(\xi)\left(\ufr(\xi + \psi(\xi,t)) - \ufr(\xi)\right) = \vf(\xi,t) + \ufr(\xi + \psi(\xi,t)) - \ufr(\xi),
\end{align}
where we use that $\omega(\xi) = 1$ for $\xi \leq 0$ and $\psi(t)$ vanishes on $[-1,\infty)$ by Proposition~\ref{p:psi}. Thus, Lemma~\ref{l: equivalence} and the mean value theorem yield
\begin{align*}
\|\vt(t)\|_{H^1} \lesssim \|\vf(t)\|_{H^1} + \|\psi(t)\|_{H^1} \lesssim \|\v(t)\|_{H^1} + \|\psi(t)\|_{H^1},
\end{align*}
where we use $\eta(t) \leq \frac12$ and~\eqref{e:psibound}. Combining the latter with~\eqref{e:damping2}, while recalling~\eqref{e:embedding}, we establish the bounds
\begin{align*}
\|\vt(t)\|_{H^1} \leq \frac{E_0 + \eta_1(t)}{(1+t)^{\frac14}}, \qquad \|\v_\xi(s)\|_{L^\infty} \leq  \frac{E_0 + \eta_1(t)}{(1+t)^{\frac34}}.
\end{align*}
Thus, we establish a $t$-independent constant $C_2 \geq 1$ such that
\begin{align} \label{e:key2}
\eta_2(t) \leq C_2\left(E_0 + \eta_1(t)\right).
\end{align}
Finally,~\eqref{e:keyin1} and~\eqref{e:key2} admit the estimate 
\begin{align*}
 \eta(t) \leq C_2E_0 + (1+C_2)\eta_1(t) \leq \left(C_2 + \left(1+C_2\right)C_1\right)E_0 + (1+C_2)C_1\eta_1(t)^2,
\end{align*}
which implies~\eqref{e:key} for some $t$-independent constant $C \geq 1$. As mentioned before, this yields $\tau_{\max} = T_{\max} = \infty$ and $\eta(t) \leq 2CE_0$ for all $t \geq 0$. Hence, recalling Propositions~\ref{p:local_unmod} and~\ref{p:psi}, we readily establish~\eqref{e:regularity}. Moreover, the mean value theorem, identity~~\eqref{e:forwunm} and Lemma~\ref{l: equivalence} yield the estimates
\begin{align} \label{e:equibounds}
 \begin{split}
\|\vt(t)\|_{H^3 \times H^2} \lesssim \|\vf(t)\|_{H^3 \times H^2} + &\|\psi(t)\|_{H^3}, \qquad \|\vt(t)\|_{L^\infty} \lesssim \|\vf(t)\|_{L^\infty} + \|\psi(t)\|_{L^\infty},\\ 
\|\vf(t)\|_{L^\infty} &\lesssim \|\v(t)\|_{L^\infty} + \|\psi_\xi(t)\|_{L^\infty},
\end{split}
\end{align}
for $t \geq 0$, where we use $\eta(t) \leq 2CE_0$ and~\eqref{e:psibound}. All in all, combining the estimates~\eqref{e:damping},~\eqref{e:duh22},~\eqref{e:duh2} and~\eqref{e:equibounds} with the fact that $\eta(t) \leq 2CE_0$ holds for all $t \geq 0$, yields a $t$-independent constant $M > 0$ such that~\eqref{e:MTmodder} and~\eqref{e:MTmodder2} are satisfied.

It only remains to prove~\eqref{e: improved leading edge decay}, that is, to transfer the improved linear decay estimate~\eqref{e: leading edge linear decay} in the leading edge to the nonlinear level. Thus, we multiply the Duhamel formula~\eqref{e:intv} with $\chi_+$ and recall that $\psi(t)$ vanishes on $[-1,\infty)$ to arrive at
\begin{align}
\chi_+ \vt(t) = \chi_+ \v(t) = \chi_+\re^{\El_{\mathrm{fr}} t} \v_0 + \int_0^t \chi_+\re^{\El_{\mathrm{fr}}(t-s)}\mathcal{N}(\v(s),\psi(s),\partial_t \psi(s))\de s. \label{e:intv3}
\end{align}
Applying the linear bound~\eqref{e: leading edge linear decay} in Theorem~\ref{t: linear estimates} and the nonlinear bound~\eqref{e:nlest} to~\eqref{e:intv3}, while using $\eta(t) \leq 2CE_0$, we obtain
\begin{align*}
\left\|\rho_{0,-1} \chi_+ \vt(t)\right\|_{L^\infty} \lesssim \frac{E_0}{(1+t)^{\frac32}} + \int_0^t \frac{\eta_1(t)\left(E_0 + \eta_1(t)\right)}{(1+t-s)^{\frac32}(1+s)^{\frac32}} \de s \lesssim \frac{E_0}{(1+t)^{\frac32}},
\end{align*}
for $t \geq 0$. We conclude that there exists a $t$-independent constant $M > 0$ such that estimate~\eqref{e: improved leading edge decay} holds for all $t \geq 0$, which completes the proof.
\end{proof}

\section{Discussion}\label{s: discussion}
We believe that the methods developed here can be useful for analyzing diffusive stability problems in many contexts, and we conclude by discussing potential applications of our methods to several related problems.

\paragraph{Pushed pattern-forming fronts.}
The analysis in~\cite{CarterScheel} also gives a rigorous construction of pushed pattern-forming invasion fronts in the FitzHugh-Nagumo system~\eqref{e: FHN not shifted} for $0 < a < \frac{1}{3}$. For pushed fronts, the propagation is no longer driven by the dynamics in the leading edge, but instead by a localized mode near the front interface. Hence, the essential spectrum in the leading edge may be fully stabilized by introducing an exponential weight, but there is a resonance pole of the Evans function at $\lambda = 0$. As a result, the linearized dynamics exhibit a non-decaying mode associated to spatial translation. Key to the dynamics then is the interaction of this non-decaying mode with the outgoing diffusive mode. We expect that our approach to resolvent decompositions can still be used in this setting to give a precise description of the linearized dynamics which could then be used to close a nonlinear stability argument. The stability problem here is conceptually similar to the stability of source defects, which also focuses on the interaction of non-decaying, translational modes with outgoing diffusive modes, and has been studied in the complex Ginzburg-Landau equation in~\cite{BeckNguyenSandstedeZumbrun}. We emphasize that, for pushed pattern-forming fronts, proving nonlinear stability against the natural class of perturbations readily yields selection from steep initial data, since the exponential weights involved automatically allow one to consider perturbations which cut off the front tail. This strongly contrasts to the case of pulled pattern-forming fronts, where substantial additional steps are necessary to extend sharp nonlinear stability results, as presented in this work, to selection results (see further discussion below).

\paragraph{Pulled pattern-forming fronts beyond the FitzHugh-Nagumo system.} 
The essential feature of the stability of pulled pattern-forming fronts is the interaction between the branched diffusive mode associated with the linear spreading speed and the outgoing diffusive mode associated with the pattern in the wake. The methods developed here should generally be successful in proving nonlinear stability of pulled pattern-forming fronts also in other systems. A particular case of interest is front invasion in the wake of a Turing instability, for instance in the Swift-Hohenberg equation or other reaction-diffusion systems. The new challenge, compared to the FitzHugh-Nagumo fronts studied here, is that the invasion dynamics in the leading edge are oscillatory in time, so that the pattern-forming fronts are \emph{modulated} traveling waves, which are time-periodic in the comoving frame, rather than stationary. We expect that this can be overcome by combining the analysis here with ideas from~\cite{BeckSandstedeZumbrun}, which develops a framework for studying time-periodic diffusive stability problems via an inverse Laplace transform. We expect that this approach can be used to establish a general result on nonlinear stability of pulled pattern-forming invasion fronts in reaction-diffusion systems near a supercritical Turing instability. 

\paragraph{Selection of pulled pattern-forming fronts from steep initial data.} 
The challenge in establishing selection of pulled pattern-forming fronts from steep initial data is that perturbations which cut off the front tail induce a logarithmic delay $-\frac{3}{2 \etalin} \log t$ in the position of the front. In this frame, perturbations in the leading edge no longer decay at all, creating additional difficulties when controlling the long-time behavior. These difficulties have been overcome for fronts selecting constant states in~\cite{AveryScheelSelection, AverySelectionRD}, but they present substantial additional difficulties for pattern-forming fronts, since these non-decaying modes interact with the outgoing diffusive dynamics in the wake. We are hopeful that combining the present analysis with recent work on selection of pulled fronts~\cite{AveryScheelSelection, AverySelectionRD} and on stability of periodic wave trains against nonlocalized perturbations~\cite{deRijknonlocalized}, which also do not exhibit temporal decay, may make progress towards establishing front selection in this context. In particular, the $(1+t)^{-3/2}$ decay for localized perturbations is the key ingredient in closing the front selection argument in~\cite{AveryScheelSelection, AverySelectionRD}.  The fact that we are able to recover this decay rate in the leading edge here then is a promising step towards establishing selection of pulled pattern-forming fronts.

\appendix

\section{Shifting the integration contour}\label{s: contour shifting}

We prove Proposition~\ref{p: first shift} in this section by carefully analyzing the high frequency behavior of the resolvent $(\lfr - \lambda)^{-1}$. The analysis of this section adapts some ideas from~\cite{MasciaZumbrun}, although we use a different method to obtain a description of the high frequency behavior of the resolvent. We again let $u_*$ denote the first component of $\ufr$, and abbreviate $c = \clin > 0$. We have
\begin{align*}
    \lfr = \omega \afr \omega^{-1} = \begin{pmatrix}
    \partial_{\xi \xi} + (c + a_1) \partial_\xi + F_1'(u_*) + a_0 + b_0 & -1 \\
    \eps & c \partial_\xi - \eps \gamma + b_0
    \end{pmatrix},
\end{align*}
where $a_j, b_j$ are given by~\eqref{e: coefficients from weight}. The key observations to retaining spectral mapping properties for the operator $\lfr$ is damping induced by the second derivative in the first component and by the term $-\eps \gamma$ in the second component. In addition, since $\omega$ is non-decreasing, $b_0$ is non-positive and, thus, only contributes to additional damping.

\subsection{Resolvent decomposition for \texorpdfstring{$\Im \lambda \gg 1$}{Im(lambda)>>1}}

Let $\eta > 0$ be as in Corollary~\ref{c: inverse laplace} and fix $\Omega_0 > 0$. We consider the resolvent equation 
\begin{align}
	(\lfr - \lambda) \begin{pmatrix} u \\ v \end{pmatrix} = \begin{pmatrix} g_1 \\ g_2 \end{pmatrix} \label{e: resolvent equation}
\end{align}
for $\g = (g_1,g_2) \in C^\infty(\R,\R^2)$ and $\lambda \in \rho(\lfr)$ of the form $\lambda = b + \ri \Omega$, with $b \in [- \frac{3}{4} \eps \gamma,\eta]$ and $\Omega$ real with $|\Omega| \geq \Omega_0$. We set $\kappa = \sign(\Omega)$, so that $\Omega = \kappa |\Omega|$. We rescale equation~\eqref{e: resolvent equation}
by defining $X = \sqrt{|\Omega|} \xi$ together with the rescaled functions
\begin{align*}
    \U (X) = \begin{pmatrix}
    U(X) \\ V(X) 
    \end{pmatrix} = \begin{pmatrix}
    u\left(\frac{X}{\sqrt{|\Omega|}}\right) \\
    v\left(\frac{X}{\sqrt{|\Omega|}}\right)
    \end{pmatrix},
    \quad
    \G (X) = \begin{pmatrix}
    G_1(X) \\ G_2(X) 
    \end{pmatrix} = \begin{pmatrix}
    g_1\left(\frac{X}{\sqrt{|\Omega|}}\right) \\
    g_2\left(\frac{X}{\sqrt{|\Omega|}}\right)
    \end{pmatrix},
\end{align*}
and
\begin{align*}
    U_* (X) = u_*\left( \frac{X}{\sqrt{|\Omega|}}\right), \quad A_1 (X) = a_1 \left( \frac{X}{\sqrt{|\Omega|}}\right), \quad A_0 (X) = a_0 \left( \frac{X}{\sqrt{|\Omega|}}\right), \quad B_0 (X) = b_0 \left( \frac{X}{\sqrt{|\Omega|}}\right),
\end{align*}
so that~\eqref{e: resolvent equation} becomes 
\begin{align*}
	&\begin{pmatrix}
		|\Omega| (\partial_{XX} - i\kappa) + \sqrt{|\Omega|} (c + A_1) \partial_X + F_1'(U_*) - b + A_0 + B_0 & -1 \\
		\eps & c \sqrt{|\Omega|} \partial_X - \eps \gamma - b - \ri \kappa |\Omega| + B_0 
	\end{pmatrix}
	\begin{pmatrix}
		U \\
		V
	\end{pmatrix}
	\\
 &\qquad =
	\begin{pmatrix}
		G_1 \\
		G_2
	\end{pmatrix}. 
\end{align*}
Setting $\U = (U,V)^\top$, multiplying both sides by $\diag (|\Omega|^{-1}, |\Omega|^{-1/2})$ and setting $\mu = |\Omega|^{-1/2}$, we obtain the system
\begin{align}
    [L_0(\mu) + L_1(\mu)] \U = \begin{pmatrix}
    \mu^2 G_1 \\
    \mu G_2 
    \end{pmatrix}, \label{e: spectral mapping resolvent rescaled}
\end{align}
where
\begin{align*}
    L_0(\mu) = \begin{pmatrix}
    \partial_{XX} - \ri \kappa & 0 \\
    0 & c \partial_X - \tilde{b} \mu - \frac{\ri \kappa}{\mu} + \mu B_0 
    \end{pmatrix}
    =: \begin{pmatrix}
    L_0^{11} & 0 \\
    0 & L_0^{22} (\mu) 
    \end{pmatrix},
\end{align*}
$\tilde{b} = b + \eps \gamma$, $\kappa \in \{ -1, 1 \}$,
and
\begin{align*}
    L_1(\mu) = \begin{pmatrix}
    \mu (c + A_1)\partial_x + \mu^2 (F_1'(U_*) - b + A_0 + B_0) & - \mu^2 \\
    \eps \mu & 0 
    \end{pmatrix} =: \begin{pmatrix}
    L_1^{11}(\mu) & L_1^{12}(\mu) \\
    L_1^{21}(\mu) &0 
    \end{pmatrix}.
\end{align*}

The operator $L_0(\mu)$ represents a principal part which is responsible for the damping, whereas $L_1(\mu)$ contains higher-order corrections which do not interfere with the damping from $L_0(\mu)$. In particular, we will see that the precise form of $U_*$ does not matter, and the coefficients from the exponential weight are irrelevant as long as $B_0$ is non-positive. 

Our strategy is to invert $L(\mu) := L_0(\mu) + L_1(\mu)$ by using the factorization
\begin{align*}
    L(\mu) = [1 + L_1(\mu) L_0(\mu)^{-1}] L_0(\mu), 
\end{align*}
inverting $L_0(\mu)$ explicitly, and showing that $[1 + L_1(\mu) L_0(\mu)^{-1}]$ may be inverted via a Neumann series. Since $L_0(\mu)$ is diagonal, we can invert it by inverting each block separately. The first block may be inverted by standard spectral theory, since $\pm i$ is not in the spectrum of $\partial_{XX}$ on $L^p(\R)$. For the second block, we can use an integrating factor to write the inverse explicitly as 
\begin{align*}
    \left(c \partial_X - \tilde{b} \mu - \frac{i\kappa}{\mu} + \mu B_0\right)^{-1} G_2 (X) = \int_\R \re^{\frac{i\kappa}{\mu c} (X-Y)} G^\mathrm{tr}_\mu (X,Y) G_2(Y) \de  Y,
\end{align*}
where
\begin{align}
    G_\mu^\mathrm{tr}(X,Y) = \begin{cases}
    \frac{1}{c} \re^{\frac{\tilde{b}\mu (X-Y)}{c}} \re^{\frac{\mu}{c}\int_X^Y B_0 (Z) \de  Z}, & X \leq Y, \\
    0, &X > Y. 
    \end{cases} \label{e: spectral mapping G tr def}
\end{align}
Exploiting the fact that $B_0(Z)$ is non-positive, we readily obtain the following estimates.
\begin{lemma}\label{l: spectral mapping L0 inverse bounds}
    Let $1 \leq p \leq \infty$. There exists a constant $C > 0$ such that, for $\mu > 0$ sufficiently small, we have the estimates
    \begin{align}
        \| (L_0^{11})^{-1} \|_{L^p \to W^{2,p}} &\leq C, \label{e: L0 parabolic bound} \qquad
        \| L_0^{22}(\mu)^{-1} \|_{L^p \to L^p} \leq \frac{C}{|\mu|}. 
    \end{align}
\end{lemma}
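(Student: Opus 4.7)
The plan is to prove each estimate separately, exploiting the block-diagonal structure of $L_0(\mu)$.

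For the first estimate, observe that $L_0^{11} = \partial_{XX} - \ri \kappa$ does not depend on $\mu$ and has constant coefficients. Since $\kappa \in \{-1, +1\}$, the value $\ri\kappa$ lies in the resolvent set of $\partial_{XX}$ on $L^p(\R)$ (whose $L^p$-spectrum is the nonpositive real axis). Classical constant-coefficient elliptic theory then yields that $(\partial_{XX} - \ri\kappa)^{-1}$ is bounded from $L^p(\R)$ to $L^p(\R)$ for $1 \leq p < \infty$, and the equation itself gives $\partial_{XX}u = f + \ri\kappa u$, upgrading the estimate to $L^p \to W^{2,p}$. For $p = \infty$, analogous bounds follow via Fourier multiplier arguments applied on $C_0$. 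The resulting constant is independent of $\mu$, yielding~\eqref{e: L0 parabolic bound}.

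For the second estimate, I would work directly with the explicit kernel representation of $L_0^{22}(\mu)^{-1}$. The key observations are: (i) $\tilde{b} = b + \eps\gamma$ with $b \geq -\tfrac{3}{4}\eps\gamma$, so $\tilde{b} \geq \tfrac{1}{4}\eps\gamma > 0$; and (ii) since $\omega$ is smooth and monotonically non-decreasing, $(\omega^{-1})' \leq 0$, so $B_0 \leq 0$. Consequently, for $X \leq Y$ and $\mu > 0$ sufficiently small, the factor $\re^{\frac{\mu}{c}\int_X^Y B_0(Z)\,\de Z}$ is bounded by $1$, and we obtain the pointwise estimate
\begin{align*}
|G_\mu^{\mathrm{tr}}(X,Y)| \leq \frac{1}{c}\re^{-\frac{\tilde{b}\mu}{c}(Y-X)}\mathbf{1}_{\{X \leq Y\}}.
\end{align*}
Integrating, both $\sup_X \|G_\mu^{\mathrm{tr}}(X,\cdot)\|_{L^1_Y}$ and $\sup_Y \|G_\mu^{\mathrm{tr}}(\cdot,Y)\|_{L^1_X}$ are bounded by $\frac{1}{\tilde{b}\mu} \leq \frac{C}{\mu}$.

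Since the oscillatory factor $\re^{\frac{\ri\kappa}{\mu c}(X-Y)}$ has modulus one, the Schur test (equivalently, interpolation between the $L^1 \to L^1$ and $L^\infty \to L^\infty$ bounds obtained from Young's inequality) gives the desired $L^p \to L^p$ bound with operator norm at most $C/\mu$, uniformly in $1 \leq p \leq \infty$. There is no significant obstacle in the proof; the only conceptually delicate point is tracking the $\mu$-scaling: although $L_0^{22}(\mu)$ contains the large imaginary shift $-\ri\kappa/\mu$, it is a first-order operator, and the $\mu$-dependent damping coefficient $\tilde{b}\mu > 0$ is responsible for the $1/\mu$ scaling of the resolvent. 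It is essential for this argument that the weight-induced coefficient $B_0$ be non-positive, so that it contributes additional damping rather than offsetting the contribution of $\tilde{b}\mu$.
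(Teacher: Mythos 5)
Your proof is correct and follows essentially the same route the paper sketches: standard spectral theory for the constant-coefficient block $L_0^{11}$, and for $L_0^{22}(\mu)^{-1}$ the explicit integrating-factor kernel $G^{\mathrm{tr}}_\mu$ together with the observations that $\tilde b \geq \tfrac14 \eps\gamma > 0$ and $B_0 \leq 0$ (since $\omega$ is nondecreasing and $c>0$), so that the Schur/Young bound gives operator norm $\lesssim 1/(\tilde b\mu)$.
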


We also readily obtain the following basic estimates on $L_1(\mu)$. 
\begin{lemma}\label{l: spectral mapping L1 bounds}
    Let $1 \leq p \leq \infty$. There exists a constant $C > 0$ such that, for $\mu > 0$ sufficiently small, we have the estimates
    \begin{align*}
        \| L_1^{11}(\mu) \|_{W^{1,p} \to L^p} &\leq C |\mu|, 
        \qquad
        \| L_1^{12}(\mu) \|_{L^p \to L^p} \leq C|\mu|^2, \qquad
        \| L_1^{21}(\mu) \|_{L^p \to L^p} \leq C|\mu|.
    \end{align*}
\end{lemma}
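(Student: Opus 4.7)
The plan is to bound each matrix entry directly from its explicit formula, relying only on the boundedness of the coefficients together with the scaling factors of $\mu$ extracted during the rescaling.

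First, I would record the relevant coefficient bounds. Since $\omega$ is a smooth, positive, non-decreasing weight that is identically $1$ on $(-\infty,0]$ and equal to $\re^{\etalin \xi}$ on $[1,\infty)$, the functions $a_0, a_1, b_0$ defined in~\eqref{e: coefficients from weight} are smooth and bounded on $\R$. Hence, after rescaling, $A_0, A_1, B_0$ are uniformly bounded in $L^\infty(\R)$ by constants independent of $\mu$. Moreover, since $\ufr \in L^\infty(\R,\R^2)$ (its first component $u_*$ is bounded being a front connecting $0$ to a bounded wave train) and $F_1' \in C^1$, the function $F_1'(U_*)$ is likewise uniformly bounded. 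Finally, $b$ ranges in the compact set $[-\tfrac34 \eps\gamma, \eta]$, so $|b|$ is uniformly bounded as well.

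Next, I would estimate the entries. For $L_1^{12}(\mu) = -\mu^2 I$ and $L_1^{21}(\mu) = \eps \mu I$, the multiplication operators give the respective bounds $\|L_1^{12}(\mu)\|_{L^p \to L^p} = \mu^2$ and $\|L_1^{21}(\mu)\|_{L^p \to L^p} = \eps |\mu|$, which are exactly the claimed estimates. For the $(1,1)$ entry,
\begin{align*}
\|L_1^{11}(\mu) U\|_{L^p} \leq |\mu|\bigl(c + \|A_1\|_{L^\infty}\bigr) \|\partial_X U\|_{L^p} + \mu^2 \bigl(\|F_1'(U_*)\|_{L^\infty} + |b| + \|A_0\|_{L^\infty}\bigr) \|U\|_{L^p},
\end{align*}
for $U \in W^{1,p}(\R,\C)$. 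Since we only need the bound for $\mu > 0$ sufficiently small, we may absorb $\mu^2 \leq |\mu|$ in the second term and collect both terms into a single $|\mu| \|U\|_{W^{1,p}}$ factor, yielding $\|L_1^{11}(\mu)\|_{W^{1,p} \to L^p} \leq C|\mu|$.

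No step here presents a genuine obstacle; the content is purely bookkeeping of the $\mu$-scaling produced by the rescaling $X = \sqrt{|\Omega|}\,\xi$ together with the multiplication by $\diag(|\Omega|^{-1}, |\Omega|^{-1/2})$ in~\eqref{e: spectral mapping resolvent rescaled}. The only point requiring a modicum of care is to ensure that all coefficient norms in the preceding paragraph are $\mu$-independent, which is immediate from their definition on the physical spatial variable: rescaling $\xi \mapsto X/\sqrt{|\Omega|}$ preserves $L^\infty$-norms and does not affect the uniform bounds on $F_1'(U_*)$ or on the weight-induced coefficients.
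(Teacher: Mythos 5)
Your proof is correct and is essentially the routine verification the paper omits: read each block of $L_1(\mu)$ off its explicit form, use $\mu$-independent $L^\infty$-bounds on the rescaled coefficients $A_0, A_1, B_0, F_1'(U_*)$ and $b$, and collect the factors of $\mu$. The paper presents Lemma~\ref{l: spectral mapping L1 bounds} without proof precisely because it is this kind of bookkeeping, and your argument fills the gap in the expected way.
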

We may then invert $[1 + L_1(\mu) L_0(\mu)^{-1}]$ via the Neumann series
\begin{align*}
    [1+L_1(\mu) L_0(\mu)^{-1}]^{-1} = \sum_{n = 0}^\infty \left(-L_1(\mu) L_0(\mu)^{-1}\right)^n,
\end{align*}
provided $\mu > 0$ is sufficiently small.

\begin{lemma}\label{l: neumann series}
	Let $1 \leq p \leq \infty$. The series 
	\begin{align*}
		\sum_{n = 0}^\infty (- L_1 (\mu) L_0 (\mu)^{-1})^n
	\end{align*}
	converges in the space of bounded operators on $L^p(\R)$, provided $\mu > 0$ is sufficiently small. Hence, for $\mu > 0$ sufficiently small, the operator $[1 + L_1(\mu) L_0(\mu)^{-1}]$ is invertible on $L^p(\R)$, with its inverse given by this Neumann series. Moreover, there exists a constant $C > 0$ such that
	\begin{align}
	    \left\| \sum_{n = 3}^\infty (-L_1(\mu) L_0(\mu)^{-1})^n \right\|_{L^p \to L^p} \leq C |\mu|^3. \label{e: neumann series remainder estimate}
	\end{align}
\end{lemma}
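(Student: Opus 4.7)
\textbf{Proof proposal for Lemma~\ref{l: neumann series}.}

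The plan is to show directly that $\|L_1(\mu) L_0(\mu)^{-1}\|_{L^p(\R,\C^2) \to L^p(\R,\C^2)} \leq C|\mu|$ for some $\mu$-independent constant $C > 0$ and all $\mu > 0$ sufficiently small, after which convergence of the Neumann series and the tail estimate~\eqref{e: neumann series remainder estimate} both follow from the elementary geometric bound $\sum_{n \geq N}(C|\mu|)^n \lesssim |\mu|^N$, valid whenever $C|\mu| < \tfrac12$.

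To establish this operator norm bound, I would analyze $L_1(\mu) L_0(\mu)^{-1}$ block by block, using the diagonal structure of $L_0(\mu)$. Writing
\begin{align*}
L_1(\mu) L_0(\mu)^{-1} = \begin{pmatrix}
L_1^{11}(\mu) (L_0^{11})^{-1} & L_1^{12}(\mu) L_0^{22}(\mu)^{-1} \\
L_1^{21}(\mu) (L_0^{11})^{-1} & 0
\end{pmatrix},
\end{align*}
each entry is estimated by combining Lemma~\ref{l: spectral mapping L0 inverse bounds} and Lemma~\ref{l: spectral mapping L1 bounds}. The $(1,1)$ entry is bounded by $C|\mu|$ since $L_1^{11}(\mu)$ maps $W^{1,p} \to L^p$ with norm $O(|\mu|)$ while $(L_0^{11})^{-1}$ maps $L^p \to W^{2,p} \hookrightarrow W^{1,p}$ with uniform bound. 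The $(2,1)$ entry is similarly bounded by $C|\mu|$. The critical entry is $(1,2)$: here $L_0^{22}(\mu)^{-1}$ has norm $O(1/|\mu|)$, which would prevent convergence, but this blowup is exactly cancelled by the $|\mu|^2$ prefactor in $L_1^{12}(\mu) = -\mu^2 I$, yielding an overall bound $C|\mu|^2 \cdot (1/|\mu|) = C|\mu|$. The $(2,2)$ entry vanishes identically.

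Once $\|L_1(\mu) L_0(\mu)^{-1}\|_{L^p \to L^p} \leq C|\mu|$ is in hand, standard Neumann series theory gives absolute convergence of $\sum_{n=0}^\infty (-L_1(\mu) L_0(\mu)^{-1})^n$ in the operator norm topology for $|\mu| < 1/(2C)$, invertibility of $[1 + L_1(\mu) L_0(\mu)^{-1}]$, and the tail estimate
\begin{align*}
\left\| \sum_{n=3}^\infty (-L_1(\mu)L_0(\mu)^{-1})^n \right\|_{L^p \to L^p} \leq \sum_{n=3}^\infty (C|\mu|)^n = \frac{(C|\mu|)^3}{1 - C|\mu|} \leq 2C^3 |\mu|^3,
\end{align*}
as desired.

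There is essentially no serious obstacle here; the entire content is the cancellation between the $1/|\mu|$ blowup of $L_0^{22}(\mu)^{-1}$ and the $\mu^2$ smallness of the off-diagonal coupling $L_1^{12}(\mu)$. This cancellation reflects the principle identified earlier in the discussion — that the ODE component $c\partial_X - \eps\gamma$ provides high-frequency damping sufficient to offset the weak coupling to the parabolic component — so the remainder of the argument is purely mechanical.
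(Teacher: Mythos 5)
Your proof is correct and matches the paper's argument: both establish $\|L_1(\mu)L_0(\mu)^{-1}\|_{L^p \to L^p} \leq C|\mu|$ by the same block-by-block estimate (in particular, the $\mu^2$ in $L_1^{12}$ absorbing the $1/|\mu|$ blowup of $L_0^{22}(\mu)^{-1}$), and then both conclude via the geometric series.
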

\begin{proof}
    Combining Lemmas~\ref{l: spectral mapping L0 inverse bounds} and~\ref{l: spectral mapping L1 bounds}, we find that $L_1(\mu) L_0(\mu)^{-1}$ may be written in the form
    \begin{align*}
        L_1(\mu) L_0(\mu)^{-1} = \begin{pmatrix}
        \mu B_{11}(\mu) & \mu B_{12}(\mu) \\
        \mu B_{21}(\mu) & 0 
        \end{pmatrix},
    \end{align*}
    where $B_{ij}(\mu)$ are operators on $L^p$, which are $\mu$-uniformly bounded. Hence, there exists a $\mu$-independent constant $C > 0$ such that
    \begin{align*}
        \| [L_1(\mu) L_0(\mu)^{-1}]^n \| _{L^p \to L^p} \leq (C |\mu|)^n,
    \end{align*}
    which implies that the series converges for $|\mu| < \frac{1}{C}$, and that the estimate~\eqref{e: neumann series remainder estimate} holds. 
\end{proof}

\textbf{Collecting relevant terms.} In order to shift the integration contour in~\eqref{e: contour integral representation}, we want to express the solution to the resolvent equation~\eqref{e: resolvent equation} as an explicit leading-order part, plus a remainder which is integrable in $\Omega$. From Lemma~\ref{l: neumann series}, we can get an expansion of the solution to~\eqref{e: resolvent equation} in powers of $\mu = |\Omega|^{-1/2}$. To get an integrable remainder, we therefore need to explicitly capture all terms up to order $\mu^2$, since $\mu^3 = |\Omega|^{-3/2}$ is integrable near $|\Omega| = \infty$. Thus, we employ Lemma~\ref{l: neumann series} to express the solution to the rescaled resolvent equation~\eqref{e: spectral mapping resolvent rescaled} as
\begin{align}
    L(\mu)^{-1} \begin{pmatrix} \mu^2 G_1 \\ \mu G_2 \end{pmatrix} = L_0(\mu)^{-1} \sum_{n = 0}^\infty (-L_1 (\mu) L_0(\mu)^{-1} )^n \begin{pmatrix} \mu^2 G_1 \\ \mu  G_2 \end{pmatrix}. \label{e: Neumann series inversion}
\end{align}
By Lemmas~\ref{l: spectral mapping L0 inverse bounds} and~\ref{l: spectral mapping L1 bounds}, we see that
\begin{align*}
    L_0 (\mu)^{-1} \sim \begin{pmatrix} 1 & 0 \\ 0 & \mu^{-1} \end{pmatrix}, \quad L_1(\mu) \sim \begin{pmatrix} \mu & \mu^2 \\ \mu & 0 \end{pmatrix},
\end{align*}
which we can use to check the order in $\mu$ of the terms in~\eqref{e: Neumann series inversion}. The first term, corresponding to $n = 0$, in~\eqref{e: Neumann series inversion}, is
\begin{align*}
    \begin{pmatrix} U_0 (\mu) \\ V_0(\mu) \end{pmatrix} = L_0 (\mu)^{-1} \begin{pmatrix} \mu^2 G_1 \\ \mu  G_2 \end{pmatrix}
	\sim \begin{pmatrix}
	1 & 0 \\
	0 & \mu^{-1} 
	\end{pmatrix}
	\begin{pmatrix} \mu^2 \\ \mu \end{pmatrix}
	\sim
	\begin{pmatrix} 
	\mu^2 \\
	1
	\end{pmatrix}.
\end{align*}
The next term, corresponding to $n = 1$ in~\eqref{e: Neumann series inversion}, is 
\begin{align*}
    \begin{pmatrix} \tilde{U}_1 (\mu) \\ \tilde{V}_1 (\mu) \end{pmatrix} = -L_0(\mu)^{-1} L_1(\mu) \begin{pmatrix} U_0 (\mu) \\ V_0(\mu) \end{pmatrix} \sim \begin{pmatrix} 1 & 0 \\ 0 & \mu^{-1} \end{pmatrix} \begin{pmatrix} \mu & \mu^2 \\ \mu & 0 \end{pmatrix} \begin{pmatrix} \mu^2 \\ 1 \end{pmatrix} \sim \begin{pmatrix} \mu^2 \\ \mu^2 \end{pmatrix}.
\end{align*}

We denote this term by $(\tilde{U}_1, \tilde{V}_1)^\top$ since it contains some higher-order terms which we will not compute explicitly, and so we will later extract the principal contributions $U_1, V_1$. The term corresponding to $n = 2$ in~\eqref{e: Neumann series inversion} is 
\begin{align}
    \begin{pmatrix} U_2 (\mu) \\ V_2 (\mu) \end{pmatrix} = -L_0(\mu) L_1(\mu) \begin{pmatrix} \tilde{U}_1 (\mu) \\ \tilde{V}_1 (\mu) \end{pmatrix} \sim \begin{pmatrix} 1 & 0 \\ 0 & \mu^{-1} \end{pmatrix} \begin{pmatrix} \mu & \mu^2 \\ \mu & 0 \end{pmatrix} \begin{pmatrix} \mu^2 \\ \mu^2 \end{pmatrix} \sim \begin{pmatrix} \mu^3 \\ \mu^2 \end{pmatrix}. \label{e: spectral mapping n = 2 formal}
\end{align}
The residual error from truncating the series at at $n = 2$ will then be of order $\mu^3 = \Omega^{-3/2}$. 

Explicitly computing $(U_0, V_0)$, we find
\begin{align}
	 \begin{pmatrix}
	    U^0(X, \mu) \\
	    V^0 (X, \mu)
	\end{pmatrix}
	= L_0 (\mu)^{-1} \begin{pmatrix} \mu^2 G_1 \\ \mu  G_2 \end{pmatrix}
	= \begin{pmatrix}
		\int_\R G^\mathrm{p} (X - Y) \mu^2 G_1(Y) \de  Y \\
		\mu \int_\R \re^{ \frac{\ri \kappa}{\mu c} (X-Y)} G^\mathrm{tr}_\mu (X,Y)  G_2(Y) \de  Y
	\end{pmatrix}, \label{e: spectral mapping tilde U0 V0 def}
\end{align}
where $G^\mathrm{tr}_\mu$ is given by~\eqref{e: spectral mapping G tr def}, and 
\begin{align}
    G^p (X) = \frac{1}{2 \sqrt{\ri \kappa}} \re^{-\sqrt{\ri \kappa} |X|}. \label{e: spectral mapping G parabolic}
\end{align}
is the spatial Green's function corresponding to the resolvent operator $(\partial_{XX} - \ri \kappa)^{-1}$.

Translating back to the original coordinates $u(\xi)$ and $v(\xi)$, these leading-order terms are then given by 
\begin{align}
	u_0 (\xi) &= \frac{1}{\sqrt{|\Omega|}} \int_\R G^\mathrm{p} \left(\sqrt{|\Omega|} (\xi-\zeta)\right) g_1(\zeta) \de \zeta, \label{e: spectral mapping u0} \\
	v_0 (\xi) &= \int_\R \tilde{G}^\mathrm{tr} (\xi, \zeta) \re^{\ri \frac{\Omega}{c} (\xi-\zeta)} g_2(\zeta) \de \zeta \label{e: spectral mapping v0},
\end{align}
where
\begin{align}
    \tilde{G}^\mathrm{tr}(\xi,\zeta) = \begin{cases}
    \frac{1}{c} \re^{\frac{\tilde{b} (\xi-\zeta)}{c}} \re^{\frac{1}{c} \int_\xi^\zeta b_0(z) \de z}, & \xi \leq \zeta, \\
    0, & \xi > \zeta. 
    \end{cases} \label{e: tilde G def}
\end{align}
The next term is
\begin{align*}
    \begin{pmatrix}
    \tilde{U}_1 \\ \tilde{V}_1
    \end{pmatrix}
    = - L_0(\mu)^{-1} L_1(\mu) \begin{pmatrix}
    U_0 \\ V_0
    \end{pmatrix}
    = -L_0(\mu)^{-1} \begin{pmatrix}
    L_1^{11}(\mu) U_0  - \mu^2  V_0 \\
    \eps \mu U_0 
    \end{pmatrix}. 
\end{align*}
Notice from~\eqref{e: spectral mapping tilde U0 V0 def} and~\eqref{e: L0 parabolic bound} that $\| U_0 \|_{W^{2,p}} \lesssim |\mu|^2 \| G_1 \|_{L^p}$ for $G_1 \in C^\infty(\R)$ and $\mu > 0$ sufficiently small. Then, also applying Lemma~\ref{l: spectral mapping L1 bounds}, we find
\begin{align}
\| L_1^{11}(\mu) U_0\|_{L^p}  \lesssim |\mu|^3 \| \G \|_{L^p}, \label{e: spectral mapping U1 tilde residual estimate}
\end{align}
for $\G = (G_1, G_2)^\top \in C^\infty(\R,\R^2)$ and $\mu > 0$ sufficiently small. Hence we identify the leading-order contributions as 
\begin{align*}
    \begin{pmatrix}
    U_1 \\
    V_1 
    \end{pmatrix}
    = \begin{pmatrix}
    \mu^2 (\partial_{XX} - \ri \kappa)^{-1} V_0 \\ 
    - \eps \mu L_0^{22}(\mu)^{-1} U_0 
    \end{pmatrix},
\end{align*}
while the term $-(L_0^{11})^{-1} L_1^{11} (\mu) U_0$ is higher order. 
Returning to original coordinates $u(\xi)$ and $v(\xi)$, we obtain the corresponding expressions
\begin{align}
    u_1 (\xi) &= \frac{1}{\sqrt{|\Omega|}} \int_\R G^\mathrm{p} \left(\sqrt{|\Omega|} (\xi-\zeta)\right) \int_\R \re^{\ri \frac{\Omega}{c} (\zeta-z)} \tilde{G}^\mathrm{tr}(\zeta,z) g_2(z) \de z \de  \zeta ,  \label{e: spectral mapping u1}\\
    v_1 (\xi) &= - \frac{\eps}{\sqrt{|\Omega|}} \int_\R \tilde{G}^\mathrm{tr}(\xi,\zeta) \re^{\frac{i}{c} \Omega (\xi-\zeta)} \int_\R G^\mathrm{p}\left(\sqrt{|\Omega|} (\zeta-z)\right) g_1(z) \de z \de  \zeta. \label{e: spectral mapping v1}
\end{align}

For the final explicit term, corresponding to $n=2$ in~\eqref{e: Neumann series inversion}, we only need to compute $V_2$ explicitly, since $U_2 = \mathrm{O}(\mu^3)$ as argued before. In $X$ coordinates, following the formal calculation~\eqref{e: spectral mapping n = 2 formal} while keeping track of which entries are relevant, we have 
\begin{align*}
    V_2 = -\eps \mu^4 L_0^{22}(\mu)^{-1} \left[ (L_0^{11})^{-1} \left( L_0^{22}(\mu)^{-1} G_2) \right) \right].   
\end{align*}
Note from Lemma~\ref{l: spectral mapping L0 inverse bounds} that each factor of $L_0^{22}(\mu)^{-1}$ contributes a factor of $\mu^{-1}$ in norm, so $V_2$ is in fact of order $\mu^2$ rather than $\mu^4$. Reverting to $\xi$ coordinates, we obtain 
\begin{align}
    v_2 (\xi) = - \eps \int_{\R^3} \tilde{G}^\mathrm{tr} (\xi,\zeta) \re^{\ri \frac{\Omega}{c} (\xi-\zeta)} \frac{G^\mathrm{p}(\sqrt{|\Omega|} (\zeta-z))}{\sqrt{|\Omega|}} \tilde{G}^\mathrm{tr} (z,w) \re^{\ri \frac{\Omega}{c} (z-w)} g_2(w) \de w \de z \de\zeta. \label{e: spectral mapping v2}
\end{align}

It follows from the proof of Lemma~\ref{l: neumann series} that all remaining terms in the Neumann series expansion are $\mathrm{O}(\mu^3)$, and so do not need to be computed explicitly. We now translate this remainder estimate into the original coordinates. 

\begin{lemma}\label{l: spectral mapping large omega expansions}
    There exist constants $C,\Omega_0 > 0$ such that for each $\lambda = b + \ri \Omega \in \C$ with $-\frac{3}{4} \eps \gamma \leq b \leq \eta$ and $|\Omega| \geq \Omega_0$, the solution $(u(\xi; \lambda), v(\xi; \lambda))$ to the resolvent equation~\eqref{e: resolvent equation} with $\g = (g_1,g_2)^\top \in C^\infty(\R,\R^2)$ obeys the estimate
    \begin{align*}
        \left\| \begin{pmatrix}
    u(\cdot; \lambda) \\ v(\cdot; \lambda)
    \end{pmatrix} -  \begin{pmatrix}
    u_0 (\cdot; \lambda) \\ v_0 (\cdot; \lambda) 
    \end{pmatrix} - \begin{pmatrix} u_1(\cdot; \lambda) \\ v_1 (\cdot; \lambda) \end{pmatrix} - \begin{pmatrix} 0 \\ v_2 (x; \lambda) \end{pmatrix} \right\|_{L^p} \leq C \Omega^{-\frac32} \| \g \|_{L^p},
    \end{align*}
where $(u_0, v_0)$ are given by~\eqref{e: spectral mapping u0}-\eqref{e: spectral mapping v0}, $(u_1, v_1)$ are given by~\eqref{e: spectral mapping u1}-\eqref{e: spectral mapping v1}, and $v_2$ is given by~\eqref{e: spectral mapping v2}. 
\end{lemma}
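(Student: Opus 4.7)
The plan is to work with the rescaled system~\eqref{e: spectral mapping resolvent rescaled} and leverage the factorization $L(\mu) = [1+L_1(\mu) L_0(\mu)^{-1}] L_0(\mu)$ together with the Neumann series expansion~\eqref{e: Neumann series inversion} guaranteed by Lemma~\ref{l: neumann series}. The strategy is to split the Neumann series at $n = 2$, identifying the terms $(U_0, V_0)$, $(\tilde U_1, \tilde V_1)$, and $(U_2, V_2)$ that are too large to be absorbed into the $\mathrm{O}(\mu^3)$ remainder, extract their principal contributions explicitly, and lump everything else into an error of order $\mu^3 = |\Omega|^{-3/2}$.

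First, by Lemma~\ref{l: neumann series}, the $L^p$-operator-norm of $L_0(\mu)^{-1} \sum_{n \geq 3} (-L_1(\mu) L_0(\mu)^{-1})^n$ applied to $(\mu^2 G_1, \mu G_2)^\top$ is $\mathrm{O}(\mu^3 \|\G\|_{L^p})$, using that $L_0(\mu)^{-1}$ contributes at worst a factor $\mu^{-1}$ by Lemma~\ref{l: spectral mapping L0 inverse bounds}. For the $n=0$ and $n=2$ terms, direct inspection of the matrix structure and Lemmas~\ref{l: spectral mapping L0 inverse bounds}–\ref{l: spectral mapping L1 bounds} shows that $U_0$ is $\mathrm{O}(\mu^2)$, $V_0$ is $\mathrm{O}(1)$, $V_2$ is $\mathrm{O}(\mu^2)$, and $U_2$ is genuinely $\mathrm{O}(\mu^3)$, so that $U_2$ may be absorbed into the remainder. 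For the $n=1$ term I extract the leading part
\begin{align*}
\begin{pmatrix} U_1 \\ V_1 \end{pmatrix} = \begin{pmatrix} \mu^2 (L_0^{11})^{-1} V_0 \\ -\eps \mu L_0^{22}(\mu)^{-1} U_0 \end{pmatrix},
\end{align*}
while the leftover piece $-(L_0^{11})^{-1} L_1^{11}(\mu) U_0$ in the first component is controlled by~\eqref{e: spectral mapping U1 tilde residual estimate} together with~\eqref{e: L0 parabolic bound} to yield an $\mathrm{O}(\mu^3 \|\G\|_{L^p})$ contribution to the remainder.

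Next, I unwind the rescaling. The change of variables $X = \sqrt{|\Omega|}\xi$ gives $\|f\|_{L^p(\xi)} = |\Omega|^{-1/(2p)} \|F\|_{L^p(X)}$ and $\|G_j\|_{L^p(X)} = |\Omega|^{1/(2p)} \|g_j\|_{L^p(\xi)}$, so the two scaling factors cancel in the operator norm and the bounds in $X$-coordinates transfer to identical $\mu^k$-rates in $\xi$-coordinates. Inverting $L_0^{11}$ via the Green's function~\eqref{e: spectral mapping G parabolic} and $L_0^{22}(\mu)$ via the integrating factor~\eqref{e: spectral mapping G tr def}, the principal parts become precisely the expressions $(u_0, v_0)$ in~\eqref{e: spectral mapping u0}–\eqref{e: spectral mapping v0}, $(u_1, v_1)$ in~\eqref{e: spectral mapping u1}–\eqref{e: spectral mapping v1}, and $v_2$ in~\eqref{e: spectral mapping v2}.

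The main technical nuisance will be bookkeeping: keeping track of which entries of each matrix product yield which power of $\mu$ and checking that the $\mathrm{O}(\mu^3)$ remainder estimate~\eqref{e: neumann series remainder estimate} genuinely applies, given that both $L_1(\mu)$ and $L_0(\mu)^{-1}$ are inhomogeneous in $\mu$ across their entries. Everything else is essentially a direct translation between the rescaled Neumann expansion and the explicit kernels in $\xi$-coordinates, combined with the uniformity of all estimates for $b \in [-\tfrac{3}{4}\eps\gamma, \eta]$ guaranteed by the non-positivity of $B_0$ in Lemma~\ref{l: spectral mapping L0 inverse bounds}.
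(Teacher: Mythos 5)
Your proposal is correct and follows essentially the same strategy as the paper's proof: truncate the Neumann series~\eqref{e: Neumann series inversion} after $n=2$, peel off the principal contributions $(U_0,V_0)$, $(U_1,V_1)$, and $V_2$, bound the three residual pieces (the $-(L_0^{11})^{-1} L_1^{11}(\mu) U_0$ correction from $n=1$ via~\eqref{e: spectral mapping U1 tilde residual estimate}, the non-principal part of the $n=2$ term via Lemmas~\ref{l: spectral mapping L0 inverse bounds}--\ref{l: spectral mapping L1 bounds}, and the $n\geq 3$ tail via~\eqref{e: neumann series remainder estimate}) by $\mathrm{O}(\mu^3\|\G\|_{L^p})$, and then unwind the rescaling, observing the $|\mu|^{-1/p}$ Jacobian factors cancel on both sides. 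The paper's write-up is more terse but uses the identical decomposition and the identical lemmas.
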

\begin{proof}
    The remainder terms, in $X$ coordinates introduced above, are given by 
    \begin{multline*}
        \U_\mathrm{res}(\mu) = \begin{pmatrix}
        U_\mathrm{res}(\mu) \\
        V_\mathrm{res}(\mu) 
        \end{pmatrix} = -L_0 (\mu)^{-1} \begin{pmatrix} L_1^{11}(\mu) U_0 \\ 0 \end{pmatrix} 
        \\ + P_1 L_0(\mu)^{-1} (-L_1(\mu) L_0(\mu)^{-1})^2 \begin{pmatrix} \mu^2 G_1 \\ \mu G_2 \end{pmatrix}  +
        L_0 (\mu)^{-1} \sum_{n = 3}^\infty (-L_1 (\mu) L_0 (\mu)^{-1})^n \begin{pmatrix} \mu^2 G_1 \\ \mu  G_2 \end{pmatrix},
    \end{multline*}
    where $P_1 (U,V)^\top = (U, 0)^\top$ is the projection onto the first coordinate, with this term accounting for the fact that we computed $V_2$ but not $U_2$ explicitly. 
    
    Using the estimates of Lemmas~\ref{l: spectral mapping L0 inverse bounds} and~\ref{l: spectral mapping L1 bounds}, we have
    \begin{align*}
        \left\| P_1 L_0(\mu)^{-1} (-L_1(\mu) L_0(\mu)^{-1})^2 \begin{pmatrix} \mu^2 G_1 \\ \mu G_2 \end{pmatrix} \right\|_{L^p} \lesssim |\mu|^3 \| \G\|_{L^p}.
    \end{align*}
    for $\mu > 0$ sufficiently small and $\G = (G_1,G_2)^\top \in C^\infty(\R,\R^2)$. Together with~\eqref{e: spectral mapping U1 tilde residual estimate} and~\eqref{e: neumann series remainder estimate}, we obtain
    \begin{align*}
        \| \U_\mathrm{res}(\cdot; \mu) \|_{L^p} \lesssim |\mu|^3 \| \G \|_{L^p},
    \end{align*}
    for $\mu > 0$ sufficiently small and $\G = (G_1,G_2)^\top \in C^\infty(\R,\R^2)$. Hence, reverting to $\xi$ coordinates with $\u_\mathrm{res}(\xi; \lambda) = \U_\mathrm{res}(X; \mu)$, we arrive at
    \begin{align*}
        \| \u_\mathrm{res}(\cdot; \lambda) \|_{L^p} \lesssim  \Omega^{-\frac32} \| \g \|_{L^p},
    \end{align*}
    for $\g \in C^\infty(\R,\R^2)$ and $\lambda = b + \ri \Omega \in \C$ with $-\frac{3}{4} \eps \gamma \leq b \leq \eta$ and $|\Omega|$ sufficiently large. Note that transforming back to $\xi$ coordinates yields a factor of $|\mu|^{-1/p}$ in the $L^p$ norm, but this factor is present on both sides of the above estimate, and hence can be canceled. 
\end{proof}

\subsection{Shifting the integration contour}

Let $\eta > 0$ be as in Corollary~\ref{c: inverse laplace}. Letting $g_1,g_2 \in C^\infty_c(\R)$ and 
\begin{align*}
    \begin{pmatrix} u \\ v \end{pmatrix} = (\lfr - \lambda) ^{-1} \begin{pmatrix} g_1 \\ g_2 \end{pmatrix},
\end{align*}
for $\lambda \in \rho(\lfr)$, we may write 
\begin{align*}
    \re^{\lfr t} \begin{pmatrix} g_1 \\ g_2 \end{pmatrix} = - \frac{1}{2 \pi \ri} \lim_{R \to \infty} \int_{\eta - iR}^{\eta + iR} \re^{\lambda t} \begin{pmatrix} u(\lambda) \\ v(\lambda) \end{pmatrix} \de  \lambda,
\end{align*}
for $t > 0$. Our goal in this section is to use Lemma~\ref{l: spectral mapping large omega expansions} to shift this contour into the left half-plane, except for a region bounded near the origin. To do this, we first fix $\Omega_0$ sufficiently large so that Lemma~\ref{l: spectral mapping large omega expansions} applies for $|\Omega| \geq \Omega_0$. We then let $\Gamma^0_R$, $\Gamma^1_R$, and $\tilde{\Gamma}_R$ be the contours depicted in Figure~\ref{fig:contour shifting spectral mapping}.

\begin{figure}
    \centering
    \includegraphics[width=1\textwidth]{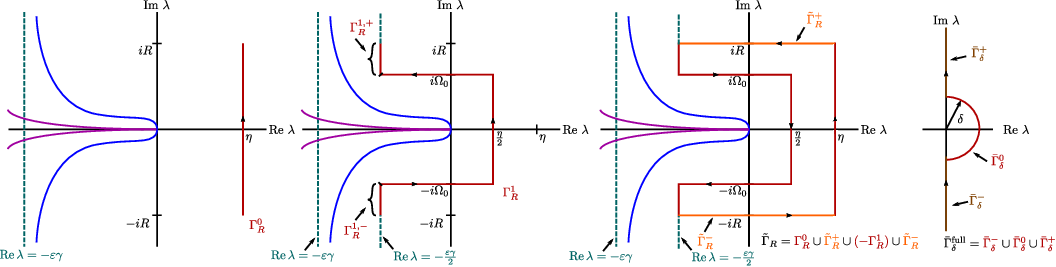}
    \caption{Far left: Fredholm borders of $\mcl$ (blue, purple) and starting contour $\Gamma^0_R$ (dark red) from Corollary~\ref{c: inverse laplace}. Left center: the new contour $\Gamma^1_R$ (dark red). Right center: the closed contour $\tilde{\Gamma}_R$. Far right: the contour $\bar{\Gamma}^\mathrm{full}_\delta$. }
    \label{fig:contour shifting spectral mapping}
\end{figure}

Lemmas~\ref{l: spectral mapping L0 inverse bounds} and~\ref{l: spectral mapping large omega expansions} show that the only term in the expansion of $(u(\cdot;\lambda),v(\cdot;\lambda))^\top$ that does not carry a factor $|\Omega|^{-1/2}$ is the term $(0,v_0(\cdot;\lambda))^\top$. However, this term also vanishes as $|\Omega| \to \infty$ by the Riemann-Lebesgue lemma. We conclude that the contributions from the horizontal segments on the top and bottom of $\tilde{\Gamma}_R$ vanish in the limit $R \to \infty$, so that we have
\begin{align*}
    \lim_{R \to \infty} \int_{\tilde{\Gamma}_R} \re^{\lambda t} \begin{pmatrix} u(\lambda) \\ v(\lambda) \end{pmatrix} \de \lambda = \lim_{R \to \infty} \left( \int_{\Gamma^0_R} - \int_{\Gamma^1_R} \right) \re^{\lambda t} \begin{pmatrix} u(\lambda) \\ v(\lambda) \end{pmatrix} \de \lambda.
\end{align*}
Since 
\begin{align*}
    \lim_{R \to \infty} \int_{\Gamma^0_R}  \re^{\lambda t} \begin{pmatrix} u(\lambda) \\ v(\lambda) \end{pmatrix} \de \lambda 
\end{align*}
exists for $g_1, g_2 \in C^\infty_c(\R)$ by Corollary~\ref{c: inverse laplace}, we will have 
\begin{align*}
    \lim_{R \to \infty} \int_{\Gamma^1_R} \re^{\lambda t} \begin{pmatrix} u(\lambda) \\ v(\lambda) \end{pmatrix} \de \lambda = \lim_{R \to \infty} \int_{\Gamma^0_R} \re^{\lambda t} \begin{pmatrix} u(\lambda) \\ v(\lambda) \end{pmatrix} \de \lambda,
\end{align*}
with the limit on the left-hand side existing, provided 
\begin{align*}
    \lim_{R \to \infty} \int_{\tilde{\Gamma}_R} \re^{\lambda t} \begin{pmatrix} u(\lambda) \\ v(\lambda) \end{pmatrix} \de \lambda  = 0. 
\end{align*}
But this integral is zero for each $R > 0$ by analyticity of $(u(\lambda), v(\lambda))$, where we use that the contour $\tilde\Gamma_R$ lies fully to the right of the spectrum of $\lfr$; see Figure~\ref{fig:contour shifting spectral mapping}. Hence we may write 
\begin{align*}
    \re^{\lfr t} \begin{pmatrix} g_1 \\ g_2 \end{pmatrix}
    = 
    - \frac{1}{2 \pi \ri} \lim_{R \to \infty} \int_{\Gamma^1_R} \re^{\lambda t} (\lfr-\lambda)^{-1} \begin{pmatrix} g_1 \\ g_2 \end{pmatrix} \de  \lambda,
\end{align*}
for $g_1,g_2 \in C^\infty_c (\R)$ and $t > 0$. The goal of the remainder of this section is to obtain the following estimate on the integral over $\Gamma_R^{1,+} \cup \Gamma_R^{1,-}$, which will (together with the subsequent analysis of the resolvent near the origin) allow us to extend the contour definition of the semigroup to $\g \in L^p (\R, \R^2), 1 \leq p < \infty$ or $\g \in C_0(\R, \R^2)$ and prove exponential decay estimates for the high frequency contributions. 

\begin{prop}\label{p: spectral mapping estimate}
    Fix $1 \leq p \leq \infty$.
    Provided $\Omega_0$ is sufficiently large, there exists a constant $C > 0$ such that 
    \begin{align*}
        \left\| \lim_{R \to \infty} \int_{\Gamma^{1,+}_R \cup \Gamma^{1,-}_R} \re^{\lambda t} (\lfr - \lambda)^{-1} \g \de  \lambda \right\|_{L^p} \leq C \re^{- \frac{\eps \gamma}{4} t} \| \g \|_{L^p}
    \end{align*}
    for all $t > 0$ and $\g \in C^\infty_c (\R, \R^2)$.
\end{prop}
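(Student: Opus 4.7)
\emph{Proof proposal.} The contours $\Gamma^{1,\pm}_R$ are vertical segments contained in $\{\Re\lambda = -\tfrac{3}{4}\eps\gamma\}$ with $|\Im\lambda|\in[\Omega_0,R]$; see Figure~\ref{fig:contour shifting spectral mapping}. Parameterizing $\lambda = b + \ri\Omega$ with $b := -\tfrac{3}{4}\eps\gamma$, the factor $|\re^{\lambda t}|=\re^{bt} = \re^{-\frac{3}{4}\eps\gamma t}$ is already better than the claimed rate $\re^{-\frac{\eps\gamma}{4}t}$. Hence, it suffices to show that the $\Omega$-integral of $(\lfr-(b+\ri\Omega))^{-1}\g$ multiplied by $\re^{\ri\Omega t}$ converges in $L^p$ as $R\to\infty$, and defines a bounded (or at worst mildly exponentially growing in $t$) function of $t$. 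The plan is to substitute the resolvent expansion from Lemma~\ref{l: spectral mapping large omega expansions} and estimate each piece separately.

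The remainder from Lemma~\ref{l: spectral mapping large omega expansions} is of order $|\Omega|^{-3/2}$ in the $L^p\to L^p$ operator norm, and since $\int_{|\Omega|\geq\Omega_0}|\Omega|^{-3/2}\de\Omega<\infty$, its contour integral is bounded in $L^p$ uniformly in $t\geq 0$. Turning to the explicit contributions $u_0,u_1,v_1,v_2$ from \eqref{e: spectral mapping u0}, \eqref{e: spectral mapping u1}, \eqref{e: spectral mapping v1}, and \eqref{e: spectral mapping v2}, each is a composition of at most two convolutions, one with the rescaled parabolic kernel $|\Omega|^{-1/2}G^{\mathrm{p}}(\sqrt{|\Omega|}\,\cdot)$ and possibly one with the modulated transport kernel $\tilde G^{\mathrm{tr}}(\xi,\zeta)\re^{\ri\Omega(\xi-\zeta)/c}$. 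The rescaled parabolic kernel has $L^1$-norm of order $|\Omega|^{-1}$, and $\tilde G^{\mathrm{tr}}(\xi,\zeta)$ is $L^1$ in $\xi-\zeta$ with a uniform-in-$\Omega$ bound since $\tilde b = b + \eps\gamma = \tfrac14\eps\gamma>0$ and $b_0\leq 0$ by monotonicity of $\omega$. A direct application of Young's convolution inequality then gives $\|u_0\|_{L^p}$, $\|u_1\|_{L^p}$, $\|v_1\|_{L^p}$, $\|v_2\|_{L^p}\lesssim |\Omega|^{-1}\|\g\|_{L^p}$, which is integrable in $\Omega$ on $\{|\Omega|\geq\Omega_0\}$ and thus again contributes a uniformly bounded term in $L^p$.

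The sole obstruction is $v_0$, for which the naive $L^p$ bound $\|v_0(\cdot;\lambda)\|_{L^p}\lesssim\|g_2\|_{L^p}$ fails to be integrable in $\Omega$. This is the main technical point and will be resolved by exploiting the oscillatory factor $\re^{\ri\Omega(\xi-\zeta)/c}$ in \eqref{e: spectral mapping v0}. After inserting a smooth cutoff in $\Omega$ to justify swapping integrals by Fubini, the $\Omega$-integration can be performed first:
\begin{align*}
\int_{\Omega_0\leq|\Omega|\leq R}\re^{\ri\Omega(t+(\xi-\zeta)/c)}\de\Omega = \frac{2\sin(R(t+(\xi-\zeta)/c)) - 2\sin(\Omega_0(t+(\xi-\zeta)/c))}{t+(\xi-\zeta)/c}.
\end{align*}
Distributionally, as $R\to\infty$ this converges to $2\pi\delta(t+(\xi-\zeta)/c)$ minus a smooth bounded correction supported in frequencies $|\Omega|\leq\Omega_0$. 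Testing this against the absolutely convergent integral in $\zeta$ weighted by $\tilde G^{\mathrm{tr}}(\xi,\zeta)g_2(\zeta)$, the delta contribution gives $2\pi c\,\tilde G^{\mathrm{tr}}(\xi,\xi+ct)g_2(\xi+ct)$; using $|\tilde G^{\mathrm{tr}}(\xi,\xi+ct)|\leq\frac{1}{c}\re^{-\tilde b t}$ (since $b_0\leq 0$), we obtain the estimate $\re^{-\tilde b t}\|g_2\|_{L^p}$. Combined with the $\re^{bt}$ prefactor this yields $\re^{(b-\tilde b)t}\|g_2\|_{L^p} = \re^{-\eps\gamma t}\|g_2\|_{L^p}$, comfortably faster than required. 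The smooth correction term is supported on a frequency window of length $2\Omega_0$, gives a bounded convolution operator in $L^p$ by another application of Young's inequality, and contributes only the prefactor $\re^{bt} = \re^{-\frac{3}{4}\eps\gamma t}$. The hard part will be the rigorous execution of this Fubini/distributional argument for the $v_0$ term; all other inputs are standard norm estimates once Lemma~\ref{l: spectral mapping large omega expansions} is in hand. The contour integral for $\g\in C_c^\infty(\R,\R^2)$ may finally be extended to $L^p$ or $C_0$ by density.
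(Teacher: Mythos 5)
Your decomposition via Lemma~\ref{l: spectral mapping large omega expansions}, the treatment of the $|\Omega|^{-3/2}$-remainder, and the oscillatory-integral argument for $v_0$ are all on the right track. However, there is a genuine gap in your treatment of $u_0,u_1,v_1,v_2$: the Young's-inequality bound you derive, $\|u_0\|_{L^p},\|u_1\|_{L^p},\|v_1\|_{L^p},\|v_2\|_{L^p}\lesssim|\Omega|^{-1}\|\g\|_{L^p}$, is \emph{not} integrable on $\{|\Omega|\geq\Omega_0\}$ --- the integral $\int_{\Omega_0}^\infty|\Omega|^{-1}\,\de\Omega$ diverges logarithmically. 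Only the remainder attains the integrable rate $|\Omega|^{-3/2}$; all four explicit terms sit exactly at the borderline $|\Omega|^{-1}$, and $v_0$ is even worse at $|\Omega|^0$. So your claim that ``the sole obstruction is $v_0$'' is false, and the argument as proposed does not close.

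The fix, which is what the paper does in Lemmas~\ref{l: spectral mapping u0 estimates}, \ref{l: spectral mapping u1 v1 estimates}, and~\ref{l: spectral mapping v2 estimate}, is to apply the same oscillatory-integral strategy you use for $v_0$ to each of $u_0,u_1,v_1,v_2$: after swapping integrals, add and subtract a small semicircle $\bar{\Gamma}^0_{\Omega_0}$ to close the contour in $\lambda$, recognize the closed-contour integral of $\re^{\lambda t}\re^{-\sqrt{\lambda}|\xi-\zeta|}/(2\sqrt{\lambda})$ as the heat kernel $G^{\mathrm{heat}}$, whose $L^1$-norm is $t$-uniformly bounded, and estimate the compensating semicircle piece by deforming it to a fixed contour with $|\Re\lambda|\leq\eps\gamma/4$, picking up only $\re^{\eps\gamma t/4}$ which the prefactor $\re^{-\eps\gamma t/2}$ from $\Gamma^{1,\pm}_R$ absorbs. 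For $u_1,v_1,v_2$ the composition with $\tilde{G}^{\mathrm{tr}}$ shifts the heat time to $t+(\zeta-z)/c$, which can be negative since $\tilde{G}^{\mathrm{tr}}(\zeta,z)$ is supported on $\{\zeta\leq z\}$; the paper therefore splits into $E_t=\{t+(\zeta-z)/c\geq0\}$ and its complement, using the heat-kernel identification on $E_t$ and showing the $E_t^c$-contribution from $\bar{\Gamma}^{\mathrm{full}}_{\Omega_0}$ vanishes by pushing that contour towards $\Re\lambda\to+\infty$. (A minor factual point: the contours $\Gamma^{1,\pm}_R$ sit at $\Re\lambda=-\eps\gamma/2$, not $-\tfrac34\eps\gamma$ as you assert; the range $b\in[-\tfrac34\eps\gamma,\eta]$ is merely where the resolvent expansions of Lemma~\ref{l: spectral mapping large omega expansions} hold, not the contour's location.)
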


To prove this proposition, we split up $(\lfr - \lambda)^{-1} \g$ into several terms according to the decomposition in Lemma~\ref{l: spectral mapping large omega expansions} and estimate the integral for each term separately. 

\begin{lemma}[$u_0$ estimates]\label{l: spectral mapping u0 estimates}
    Fix $1 \leq p \leq \infty$. Let $u_0$ be as in Lemma~\ref{l: spectral mapping large omega expansions}. There exists a constant $C > 0$ such that 
    \begin{align*}
        \left\| \lim_{R \to \infty} \int_{\Gamma^{1,+}_R \cup \Gamma^{1,-}_R} \re^{\lambda t} u_0(\lambda) \de  \lambda \right\|_{L^p} \leq C \re^{- \frac{\eps \gamma}{4} t} \| \g \|_{L^p}
    \end{align*}
    for all $t > 0$ and for all $\g = (g_1, g_2)^\top \in C^\infty_c (\R, \R^2).$
\end{lemma}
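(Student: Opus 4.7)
The strategy is to recognize $u_0$ as essentially the resolvent of the one-dimensional Laplacian, identify the contour integral with the heat semigroup via an explicit Laplace inversion, and control the correction arising from the bypass of the origin either by contour deformation (small $t$) or by oscillatory cancellation (large $t$). First, parameterize the vertical segments $\Gamma^{1,\pm}_R$ as $\lambda = b_0 + \mathrm{i}\Omega$, with $|\Omega| \in [\Omega_0, R]$ and $b_0 \in [-\tfrac{3\eps\gamma}{4}, -\tfrac{\eps\gamma}{4}]$ the fixed real part, so that $\de\lambda = \mathrm{i}\,\de\Omega$ and $e^{\lambda t} = e^{b_0 t} e^{\mathrm{i}\Omega t}$. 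By~\eqref{e: spectral mapping u0}, $u_0(\xi;\lambda)$ is the convolution $K(\cdot;\mathrm{i}\Omega) \ast g_1$ with the explicit kernel $K(x;\mathrm{i}\Omega) = (2\sqrt{\mathrm{i}\Omega})^{-1} e^{-\sqrt{\mathrm{i}\Omega}|x|}$ (principal branch), depending on $\lambda$ only through $\Omega$. Fubini reduces the contour integral to a convolution $\mathrm{i}\, e^{b_0 t}\, \mathcal{K}_R(\cdot,t) \ast g_1$, where $\mathcal{K}_R(x,t) := \int_{|\Omega| \in [\Omega_0, R]} e^{\mathrm{i}\Omega t} K(x;\mathrm{i}\Omega)\,\de\Omega$.

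Next, invoke the classical Laplace transform identity
\begin{align*}
\tfrac{1}{2\pi\mathrm{i}}\int_{c-\mathrm{i}\infty}^{c+\mathrm{i}\infty} e^{\mu t}(2\sqrt{\mu})^{-1}e^{-|x|\sqrt{\mu}}\,\de\mu = \Gamma_t(x), \qquad c>0,
\end{align*}
for the one-dimensional heat kernel $\Gamma_t(x) := (4\pi t)^{-1/2} e^{-x^2/(4t)}$. Deforming the Bromwich contour to the imaginary $\mu$-axis (with a vanishing indentation around the integrable $\mu^{-1/2}$ singularity at $\mu = 0$) and substituting $\mu = \mathrm{i}\Omega$ yields $\int_\R e^{\mathrm{i}\Omega t} K(x;\mathrm{i}\Omega)\,\de\Omega = 2\pi\Gamma_t(x)$. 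Hence $\mathcal{K}_R = 2\pi\Gamma_t - \mathcal{E}_{\Omega_0} + o_R(1)$, with $\mathcal{E}_{\Omega_0}(x,t) := \int_{|\Omega| \leq \Omega_0} e^{\mathrm{i}\Omega t} K(x;\mathrm{i}\Omega)\,\de\Omega$. The leading term contributes the heat semigroup $2\pi\, e^{t\partial_{\xi\xi}} g_1$ which is bounded in $L^p$ by $2\pi \|g_1\|_{L^p}$, and the high-frequency tail $o_R(1)$ vanishes in $L^1_x$ as $R\to\infty$ thanks to the decay $|K(x;\mathrm{i}\Omega)| \lesssim |\Omega|^{-1/2} e^{-c\sqrt{|\Omega|}|x|}$.

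The main technical obstacle is to bound the low-frequency correction $\mathcal{E}_{\Omega_0}$ uniformly in $L^1_x$, since $K(x;\mathrm{i}\Omega)$ has a $|\Omega|^{-1/2}$ singularity at $\Omega=0$ that precludes a na\"ive Young estimate. We resolve this by splitting on the size of $t$. For $t \in (0,1]$, analyticity of $K(x;\mu)$ on $\C\setminus(-\infty,0]$ permits deforming the $\mu$-segment $\mathrm{i}[-\Omega_0,\Omega_0]$ to a semicircle of radius $\Omega_0$ in the right half $\mu$-plane, on which $|e^{\mu t}| \leq e^{\Omega_0}$ and $|K(x;\mu)| \leq (2\sqrt{\Omega_0})^{-1} e^{-\sqrt{\Omega_0/2}|x|}$; this yields $\|\mathcal{E}_{\Omega_0}(\cdot,t)\|_{L^1} \leq C$ uniformly on $(0,1]$. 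For $t \geq 1$, exploit the oscillation via integration by parts in $\Omega$ applied directly to $\int_{|\Omega|>\Omega_0} e^{\mathrm{i}\Omega t} u_0(\cdot;\mathrm{i}\Omega)\,\de\Omega$: the convolution-square bound $\|K \ast K\|_{L^1} \lesssim |\Omega|^{-2}$ gives $\|\partial_\Omega u_0(\cdot;\mathrm{i}\Omega)\|_{L^p} \lesssim |\Omega|^{-2}\|g_1\|_{L^p}$, integrable over $\{|\Omega|>\Omega_0\}$, and hence $\left\|\int_{|\Omega|>\Omega_0} e^{\mathrm{i}\Omega t} u_0\,\de\Omega\right\|_{L^p} \leq C(t\Omega_0)^{-1}\|g_1\|_{L^p}$. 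Combining both regimes, $\|\mathcal{K}_R(\cdot,t) \ast g_1\|_{L^p} \leq C\|g_1\|_{L^p}$ uniformly in $R$ and $t>0$; multiplying by the prefactor $e^{b_0 t} \leq e^{-\eps\gamma t/4}$ and letting $R \to \infty$ yields the claim.
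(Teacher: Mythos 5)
Your proposal is correct and shares the paper's central idea: extract the explicit heat-type kernel $K(\cdot;\mathrm{i}\Omega)$ from $u_0$, identify the full line integral in $\Omega$ with the heat kernel $2\pi\Gamma_t$ via the Bromwich inversion (cf.~\eqref{e: split u0 integral} and the full contour $\bar{\Gamma}^{\mathrm{full}}_{\Omega_0}$), and control the near-origin correction $\mathcal{E}_{\Omega_0}$ by a semicircular deformation into the right half-plane, which is precisely the paper's $\bar{\Gamma}^0_{\Omega_0}$. The only genuine structural divergence is in how you make the correction uniform in $t$. The paper deforms $\bar{\Gamma}^0_{\Omega_0}$ one step further to the contour $\bar{\Gamma}^2$, which trades radius $\Omega_0$ for radius $\delta_0 = \eps\gamma/4$ on the semicircle; this buys the bound $|\re^{\lambda t}| \leq \re^{\eps\gamma t/4}$, which is absorbed by the prefactor $\re^{-\eps\gamma t/2}$, giving all of $t>0$ in a single argument. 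You instead split in time: for $t\leq 1$ the cruder semicircle bound $|\re^{\mu t}|\leq \re^{\Omega_0}$ suffices, while for $t\geq 1$ you bypass the kernel decomposition altogether and integrate by parts in $\Omega$, exploiting $\partial_\Omega u_0 = \ri\, K\ast K\ast g_1$ with $\|K\ast K\|_{L^1}\lesssim|\Omega|^{-2}$, which is integrable on $\{|\Omega|>\Omega_0\}$ and produces the $t^{-1}$ gain. Both arguments are complete and close the same estimate; the paper's one-contour version is more uniform in spirit and a bit shorter, while your integration-by-parts step is a more elementary oscillatory-integral technique that avoids the extra contour deformation, at the price of a case split on $t$. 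One cosmetic remark: the real part $b_0$ is fixed at $-\eps\gamma/2$ on $\Gamma^{1,\pm}_R$, not ranging over an interval, though nothing in your argument depends on this.
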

\begin{proof}
    Recalling that the contour $\Gamma^{1,-}_R$ is the line segment connecting the point $-\eps \gamma/2 - \ri R$ with $-\eps \gamma/2 - \ri \Omega_0$ and $\Gamma^{1,+}_R$ joins $-\eps\gamma/2 + \ri \Omega_0$ with $-\eps\gamma/2 + \ri R$, we have by~\eqref{e: spectral mapping u0}
    \begin{align*}
        \left[ \lim_{R \to \infty} \int_{\Gamma^{1,+}_R \cup \Gamma^{1,-}_R} \re^{\lambda t} u_0 (\lambda) \de  \lambda \right] (\xi) = \ri \re^{- \frac{\eps \gamma}{2} t} \int_{|\Omega| > \Omega_0} \re^{\ri \Omega t} \frac{1}{\sqrt{|\Omega|}} \int_\R G^\mathrm{p}\left(\sqrt{|\Omega|} (\xi-\zeta)\right) g_1(\zeta) \de  \zeta \de   \Omega,
    \end{align*}
    where we use the notation
   \begin{align*}
        \int_{|\Omega|>\Omega_0} f(\Omega) \de  \Omega = \int_{-\infty}^{-\Omega_0} f(\Omega) + \int_{\Omega_0}^{\infty} f(\Omega) \de  \Omega,
    \end{align*}
    that is, the orientation is consistent with the second panel of Figure~\ref{fig:contour shifting spectral mapping}. 
    
    Using the formula~\eqref{e: spectral mapping G parabolic} for $G^\mathrm{p}$, we obtain 
    \begin{align*}
        \left[\lim_{R \to \infty} \int_{\Gamma^{\mathrm{new},+}_R \cup \Gamma^{\mathrm{new},-}_R} \re^{\lambda t} u_0 (\lambda) \de  \lambda \right] (\xi) = \ri \re^{-\frac{\eps \gamma}{2} t} \int_{|\Omega| > \Omega_0} \re^{\ri \Omega t} \int_\R \frac{\re^{- \sqrt{\ri \Omega} |\xi-\zeta|}}{2 \sqrt{\ri \Omega}} g_1(\zeta) \de  \zeta \de \Omega.
    \end{align*} 
    
    We swap the order of integration and rewrite the inner integral as
    \begin{align}
    \begin{split}
         \ri \re^{-\frac{\eps \gamma}{2} t} \int_\R g_1(\zeta) \int_{|\Omega| > \Omega_0} \re^{\ri \Omega t} \frac{\re^{- \sqrt{\ri \Omega} |\xi-\zeta|}}{2 \sqrt{\ri \Omega}} \de  \Omega \de \zeta &=  \re^{-\frac{\eps \gamma}{2} t} \int_\R g_1(\zeta) \int_{\bar{\Gamma}^\mathrm{full}_{\Omega_0}} \re^{\lambda t} \frac{\re^{- \sqrt{\lambda} |\xi-\zeta|}}{2 \sqrt{\lambda}} \de  \lambda \de \zeta\\
         &\qquad - \re^{-\frac{\eps \gamma}{2} t} \int_\R g_1(\zeta) \int_{\bar{\Gamma}_{\Omega_0}^0} \re^{\lambda  t} \frac{\re^{- \sqrt{\lambda} |\xi-\zeta|}}{2 \sqrt{\lambda}} \de  \lambda \de \zeta,
         \end{split} \label{e: split u0 integral}
    \end{align}
    where for any $\delta > 0$ we denote by $\bar{\Gamma}_\delta^0$ the right semi-circle centered at the origin with radius $\delta$, and by $\bar{\Gamma}_\delta^\mathrm{full}$ the contour that joins this semi-circle to the vertical rays extending to $\pm \ri \infty$; see  Figure~\ref{fig:contour shifting spectral mapping}. Since the singularity $\lambda^{-1/2}$ is integrable, we find that 
    \begin{align*} 
    \begin{split}
        \lim_{\delta \to 0^+} \int_{\bar{\Gamma}^\mathrm{full}_\delta} \re^{\lambda t} \frac{\re^{- \sqrt{\lambda} |\xi-\zeta|}}{2 \sqrt{\lambda}} \de  \lambda &= \ri \, \mathrm{p.v.} \int_\R \re^{\ri \Omega t} \frac{\re^{-\sqrt{\ri \Omega} |\xi-\zeta|}}{2 \sqrt{\ri \Omega}} \de  \Omega 
        = \ri \sqrt{\frac{\pi}{t}} \re^{-|\xi-\zeta|^2/4t} 
        =: \ri G^\mathrm{heat}(t, \xi-\zeta), 
        \end{split}
    \end{align*}    
    recognizing the integral as the inverse Fourier transform (in time) of the Fourier representation of the heat kernel. On the other hand, the value of the integral is independent of $\delta > 0$ since the integrand is analytic away from the origin in the closed right half-plane. In particular, we obtain 
    \begin{align*}
        \int_{\bar{\Gamma}^\mathrm{full}_{\Omega_0}} \re^{\lambda t} \frac{\re^{- \sqrt{\lambda} |\xi-\zeta|}}{2 \sqrt{\lambda}} \de  \lambda = \ri G^\mathrm{heat}(t, \xi-\zeta).
    \end{align*}
    Now to bound the integral over $\bar{\Gamma}_{\Omega_0}^0$ in~\eqref{e: split u0 integral} we use that the integrand is analytic away from the negative real axis to deform $\bar{\Gamma}_{\Omega_0}^0$ to a new contour 
    \begin{align*} \bar{\Gamma}^2 = \bar{\Gamma}^1_- \cup \bar{\Gamma}_{\delta_0}^0 \cup \bar{\Gamma}^1_+\end{align*}
    where $\delta_0 = \frac{\eps \gamma}{4}$, $\bar{\Gamma}^1_-$ is the line segment joining $- \ri \Omega_0$ with $- \ri \delta_0$, and $\bar{\Gamma}^1_+$ is the line segment joining $\ri \delta_0$ with $\ri \Omega_0$. There exist ($\eps$-, $\Omega_0$- and $\gamma$-dependent) constants $C, \alpha > 0$ such that for $\lambda \in \bar{\Gamma}^2$ we have 
    \begin{align}|\lambda|^{-1/2} \left|\re^{-\sqrt{\lambda} |\xi-\zeta|}\right| \leq C \re^{- \alpha |\xi-\zeta|}, \label{e: contour Gamma 2 estimate} \end{align} 
    while also $|\re^{\lambda t}| \leq \re^{\frac{\eps \gamma}{4} t}$. Applying Young's convolution inequality we obtain 
    \begin{align*}
        \left\| \xi \mapsto \int_\R g_1(\zeta)  \int_{\bar{\Gamma}^2} \re^{\lambda t} \frac{\re^{-\sqrt{\lambda} |\xi-\zeta|}}{2 \sqrt{\lambda}} \de  \lambda \de \zeta \right\|_{L^p} \lesssim \re^{\frac{\eps \gamma}{4} t}\| g_1 \|_{L^p},
    \end{align*}
    for $t > 0$ and $g_1 \in C_c^\infty(\R)$. Absorbing the slowly exponentially growing factor into the factor of $\re^{-\frac{\eps \gamma}{2} t}$ from~\eqref{e: split u0 integral} gives the desired estimate for the second term 
    \begin{align*}
\ri \re^{-\frac{\eps \gamma}{2} t} \int_\R g_1(\zeta) \int_{\bar{\Gamma}_{\Omega_0}^0} \re^{\lambda  t} \frac{\re^{- \sqrt{\lambda} |\xi-\zeta|}}{2 \sqrt{\lambda}} \de  \lambda \de \zeta = \ri \re^{-\frac{\eps \gamma}{2} t} \int_\R g_1(\zeta) \int_{\bar{\Gamma}^2} \re^{\lambda  t} \frac{\re^{- \sqrt{\lambda} |\xi-\zeta|}}{2 \sqrt{\lambda}} \de  \lambda \de \zeta,
\end{align*}    
    in~\eqref{e: split u0 integral}. Similarly, the estimate
    \begin{align*}
        \left\| \xi \mapsto \int_\R g_1(\zeta) \ri G^\mathrm{heat}(t, \xi-\zeta) \de  \zeta \right\|_{L^p} \lesssim \| G^\mathrm{heat}(t, \cdot) \|_{L^1} \| g_1 \|_{L^p} \lesssim \| g_1 \|_{L^p} \lesssim \| \g \|_{L^p}
    \end{align*}
    for $t > 0$ and $\g = (g_1,g_2)^\top \in \C_c^\infty(\R,\R^2)$, leads to the desired estimate for the remaining term in~\eqref{e: split u0 integral}, completing the proof.
\end{proof}

Before estimating the other contributions identified in Lemma~\ref{l: spectral mapping large omega expansions}, we first record some basic estimates on $\tilde{G}^\mathrm{tr}$ which can be readily inferred from~\eqref{e: tilde G def}. 
\begin{lemma}[$\tilde{G}^\mathrm{tr}$ estimates]\label{l: tilde G tr estimates}
    Let $\tilde{G}^\mathrm{tr}(\xi,\zeta)$ be defined by~\eqref{e: tilde G def}. There exists a constant $C > 0$ such that
    \begin{align*}
        \| \tilde{G}^\mathrm{tr}(\cdot, \cdot) \|_{L^\infty (\R^2, \R)} &\leq C, \qquad
        \sup_{\zeta \in \R} \| \tilde{G}^\mathrm{tr} (\cdot, \zeta) \|_{L^1} \leq C. 
    \end{align*}
    As a consequence, it follows by Minkowski's integral inequality that for any $1 \leq p \leq \infty$ and $g_2 \in C_c^\infty(\R)$, we have
    \begin{align*}
        \left\| \int_\R \tilde{G}^\mathrm{tr}(\cdot, \zeta) g_2 (\zeta) \de  \zeta \right\|_{L^p} \leq \left(\sup_{\zeta \in \R} \| \tilde{G}^\mathrm{tr} (\cdot, \zeta) \|_{L^1}\right) \| g_2 \|_{L^p} \leq C \| g_2 \|_{L^p}. 
    \end{align*}
\end{lemma}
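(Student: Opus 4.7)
The proof will be a direct computation based on the explicit formula~\eqref{e: tilde G def} and the sign structure of the coefficient $b_0$ generated by conjugation with the weight $\omega$. The key algebraic observation is that since $\omega$ is smooth, positive, and monotonically increasing, we have $(\omega^{-1})' \leq 0$ pointwise, and consequently $b_0 = c\omega(\omega^{-1})' \leq 0$ everywhere (using $c > 0$). Moreover, since $\omega(\xi) = 1$ for $\xi \leq 0$, we in fact have $b_0 \equiv 0$ on $(-\infty,0]$. On the contour $\Gamma^{1,\pm}_R$, the real part of $\lambda$ is pinned at $-\tfrac{\eps\gamma}{2}$, so the parameter $\tilde{b} = b + \eps\gamma$ appearing in~\eqref{e: tilde G def} satisfies $\tilde{b} = \tfrac{\eps\gamma}{2} > 0$.

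With these observations in hand, I would first prove the $L^\infty$ bound. For $\xi \leq \zeta$, both exponential factors in~\eqref{e: tilde G def} have nonpositive exponents: the factor $\re^{\tilde{b}(\xi-\zeta)/c}$ because $\tilde b > 0$ and $\xi-\zeta \leq 0$, and the factor $\re^{\frac{1}{c}\int_\xi^\zeta b_0(z)\de z}$ because $b_0 \leq 0$ and the interval of integration is oriented positively. Hence $|\tilde G^{\mathrm{tr}}(\xi,\zeta)| \leq 1/c$ uniformly in $(\xi,\zeta)$ with $\xi \leq \zeta$, and the remaining case $\xi > \zeta$ is trivial.

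For the $L^1$ bound uniform in $\zeta$, I discard the nonpositive factor involving $b_0$ to obtain $|\tilde G^{\mathrm{tr}}(\xi,\zeta)| \leq \frac{1}{c} \re^{\tilde{b}(\xi-\zeta)/c}$ on $\{\xi \leq \zeta\}$ and integrate explicitly:
\[
\int_\R |\tilde G^{\mathrm{tr}}(\xi,\zeta)|\,\de \xi \leq \int_{-\infty}^\zeta \frac{1}{c}\re^{\tilde b(\xi-\zeta)/c}\,\de\xi = \frac{1}{\tilde b} \leq \frac{2}{\eps\gamma},
\]
which is independent of $\zeta$. The final convolution inequality is exactly Minkowski's integral inequality applied to the kernel $\tilde G^{\mathrm{tr}}$, with operator norm bounded by $\sup_\zeta \|\tilde G^{\mathrm{tr}}(\cdot,\zeta)\|_{L^1}$.

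No significant obstacles are anticipated: the lemma is essentially a bookkeeping exercise, and the monotonicity of $\omega$ (hence sign of $b_0$) together with the lower bound on $\tilde b$ inherited from the location of the contour are precisely what make both bounds uniform. The only mild subtlety is to keep track of which parameters are fixed (the contour position $b = -\eps\gamma/2$) versus variable, but this is built into the definition of $\tilde b$ and causes no difficulty.
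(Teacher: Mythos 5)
Your proof is correct and is essentially the argument the paper has in mind — the paper states the $L^\infty$ and uniform-$L^1$ bounds ``can be readily inferred from~\eqref{e: tilde G def}'' without elaborating, and your explicit computation (sign of $b_0$ from monotonicity of $\omega$, positivity of $\tilde b$ from the location of the contour, then an elementary exponential integral) is precisely that inference. One small caveat worth flagging to yourself, although it affects the paper's own phrasing as much as yours: the final $L^p$-boundedness is not literally an application of Minkowski's integral inequality. Minkowski gives $\|\int K(\cdot,\zeta) g(\zeta)\,\de\zeta\|_{L^p} \leq \int \|K(\cdot,\zeta)\|_{L^p}|g(\zeta)|\,\de\zeta$, which yields an $L^1\to L^p$ bound, not the stated $L^p\to L^p$ bound. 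The correct route is the one your argument actually supplies implicitly: since $b_0\leq 0$, dominate $|\tilde G^{\mathrm{tr}}(\xi,\zeta)|\leq \tfrac1c \re^{\tilde b(\xi-\zeta)/c}\,\mathbf 1_{\{\xi\leq\zeta\}}=:K(\xi-\zeta)$ by a translation-invariant $L^1$ kernel with $\|K\|_{L^1}=1/\tilde b$, and then invoke Young's convolution inequality (or Schur's test). Your proof already produces this dominating kernel when you discard the $b_0$ factor for the $L^1$ estimate, so no substantive change is needed — only the name of the inequality.
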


\begin{lemma}[$v_0$ estimates]
    Fix $1 \leq p \leq \infty$. Let $v_0$ be as in Lemma~\ref{l: spectral mapping large omega expansions}. There exists a constant $C > 0$ such that 
    \begin{align*}
        \left\| \lim_{R \to \infty} \int_{\Gamma^{1,+}_R \cup \Gamma^{1,-}_R} \re^{\lambda t} v_0 (\lambda) \de  \lambda \right\|_{L^p} \leq C \re^{-\frac{\eps \gamma}{2} t} \| \g \|_{L^p}
    \end{align*}
    for all $t > 0$ and all $\g = (g_1, g_2)^\top \in C^\infty_c (\R, \R^2)$. 
\end{lemma}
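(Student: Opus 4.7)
My plan is to recognize $v_0(\cdot;\lambda)$ as the resolvent of an explicit transport-type operator whose semigroup on $L^p(\R)$ decays like $\re^{-\eps\gamma t}$, and then to reconstruct the semigroup by Laplace inversion along the vertical line $\Re \lambda = -\eps\gamma/2$. Concretely, writing $\tilde b = b + \eps\gamma$ in~\eqref{e: tilde G def} and combining the exponential factors in~\eqref{e: spectral mapping v0} via the identity $\tilde b + \ri\Omega = \lambda + \eps\gamma$, I will verify that $v_0(\cdot;\lambda) = (\lambda - A)^{-1} g_2$, where
\begin{align*}
A = c\partial_\xi + (b_0 - \eps\gamma).
\end{align*}
By direct differentiation, $A$ generates the $C^0$-semigroup
\begin{align*}
\re^{At} f(\xi) = \re^{-\eps\gamma t}\exp\!\left(\tfrac{1}{c}\int_\xi^{\xi+ct} b_0(z)\,\de z\right) f(\xi+ct)
\end{align*}
on $L^p(\R)$ for $1 \leq p < \infty$ and on $C_0(\R)$. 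Since $\omega$ is non-decreasing, the weight coefficient $b_0 = c\omega(\omega^{-1})'$ is non-positive, so the integral factor is bounded by $1$ for $t \geq 0$, yielding the contraction-type estimate $\|\re^{At}\|_{L^p \to L^p} \leq \re^{-\eps\gamma t}$. In particular, the spectrum of $A$ lies in $\{\Re\lambda \leq -\eps\gamma\}$ and the chosen contour at $\Re \lambda = -\eps\gamma/2$ lies strictly to its right.

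Next, because $g_2 \in C_c^\infty(\R) \subset D(A^k)$ for every $k \in \N$, the classical Laplace inversion formula (e.g.~\cite[Chapter~1, Corollary~7.5]{Pazy}) will apply with absolutely convergent integral to give
\begin{align*}
\re^{At} g_2 = \frac{1}{2\pi\ri}\lim_{R \to \infty}\int_{-\eps\gamma/2 - \ri R}^{-\eps\gamma/2 + \ri R} \re^{\lambda t}\, v_0(\lambda)\, \de\lambda.
\end{align*}
Subtracting off the contribution from the compact segment $[-\eps\gamma/2 - \ri\Omega_0, -\eps\gamma/2 + \ri\Omega_0]$, which is the portion of the vertical line missing from $\Gamma^{1,-}_R \cup \Gamma^{1,+}_R$, then yields
\begin{align*}
\lim_{R \to \infty}\int_{\Gamma^{1,-}_R \cup \Gamma^{1,+}_R} \re^{\lambda t}\, v_0(\lambda)\, \de\lambda = 2\pi\ri\,\re^{At} g_2 - \int_{-\eps\gamma/2 - \ri\Omega_0}^{-\eps\gamma/2 + \ri\Omega_0} \re^{\lambda t}\, v_0(\lambda)\, \de\lambda.
\end{align*}

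It will then remain to bound each term in $L^p$. The first is controlled by $2\pi\re^{-\eps\gamma t}\|g_2\|_{L^p} \leq C\re^{-\eps\gamma t/2}\|\g\|_{L^p}$ via the semigroup estimate above. For the second, the contour is compact of length $2\Omega_0$, we have $|\re^{\lambda t}| = \re^{-\eps\gamma t/2}$ uniformly along it, and Lemma~\ref{l: tilde G tr estimates} gives $\|v_0(\cdot;\lambda)\|_{L^p} \lesssim \|g_2\|_{L^p}$ with constants independent of $\lambda$ on this segment; hence this term is bounded by $C\re^{-\eps\gamma t/2}\|\g\|_{L^p}$ as well. The main subtlety I anticipate is the case $p = \infty$, where the translation semigroup is not strongly continuous on $L^\infty$ and Pazy's inversion formula cannot be invoked directly there. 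To handle this, I will instead apply the formula on $C_0(\R)$, where $A$ does generate a $C^0$-semigroup and to which $g_2 \in C_c^\infty$ belongs; the desired $L^\infty$-bound then follows from the continuous embedding $C_0(\R) \hookrightarrow L^\infty(\R)$ with matching norms.
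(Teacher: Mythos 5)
Your proposal is correct, and it repackages the paper's computation in what is arguably a cleaner conceptual form. The paper evaluates the full-line Fourier integral via the distributional identity $\mathrm{p.v.}\!\int_\R \re^{\ri\Omega(\xi-\zeta+ct)/c}\,\de\Omega = 2\pi c\,\delta(\xi-\zeta+ct)$ and then extracts $g_2(\xi+ct)\tilde{G}^\mathrm{tr}(\xi,\xi+ct)$; you instead identify $v_0(\cdot;\lambda)=(\lambda-A)^{-1}g_2$ with $A=c\partial_\xi+(b_0-\eps\gamma)$ (a computation that does check out, using $\tilde b + \ri\Omega = \lambda + \eps\gamma$) and invoke Laplace inversion for the explicit damped shift semigroup $\re^{At}$. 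The two routes produce exactly the same leading term --- $2\pi c\,\re^{-\eps\gamma t/2}g_2(\xi+ct)\tilde{G}^\mathrm{tr}(\xi,\xi+ct)$ is precisely $2\pi\,[\re^{At}g_2](\xi)$ --- and the same compact-segment remainder, and the final $L^p$ bounds are identical. What your framing buys is that the hypotheses and conclusion are those of a named theorem, avoiding the informal principal-value/delta manipulation; what it loses is that you must be careful about which Banach space to use, which you correctly handle by switching to $C_0(\R)$ when $p=\infty$. One small inaccuracy: the inversion integral is not absolutely convergent (the $\lambda^{-1}g_2$ tail of the resolvent is only conditionally integrable along a vertical line), so you should rely on the principal-value form of Pazy's Corollary 7.5, which is exactly what the lemma's $\lim_{R\to\infty}$ requires; this does not affect the validity of the argument.
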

\begin{proof}
    Here we must take advantage of the symmetry in the principal value integral. The indefinite integrals here are all to be understood in the principal value sense. We have by~\eqref{e: spectral mapping v0} 
    \begin{align*}
        \left[\lim_{R \to\infty} \int_{\Gamma^{1,+}_R \cup \Gamma^{1,-}_R} \re^{\lambda t} v_0 (\lambda) \de  \lambda \right] (\xi) = \ri \re^{- \frac{\eps \gamma}{2} t} \int_{|\Omega|> \Omega_0} \re^{\ri \Omega t} \int_\R  \tilde{G}^\mathrm{tr} (\xi,\zeta) \re^{\ri \frac{\Omega}{c} (\xi-\zeta)} g_2(\zeta) \de \zeta \de \Omega
    \end{align*}
    where $\tilde{G}^\mathrm{tr}(\xi,\zeta)$ is given in~\eqref{e: tilde G def}. 
    Swapping the order of integration, we have 
    \begin{align*}
         \left[ \int_{\Gamma^{1,+}_R \cup \Gamma^{1,-}_R} \re^{\lambda t} v_0 (\lambda) \de  \lambda \right] (\xi) = \ri \re^{-\frac{\eps \gamma}{2} t} \int_\R g_2(\zeta) \tilde{G}^\mathrm{tr} (\xi,\zeta)  \int_{|\Omega| > \Omega_0} \re^{\ri \frac{\Omega}{c} (\xi-\zeta+ct)} \de  \Omega \de\zeta. 
    \end{align*}
    Note that 
    \begin{align*}
        \mathrm{p.v. } \int_{|\Omega| > \Omega_0} \re^{\ri \frac{\Omega}{c} (\xi-\zeta + ct)} \de  \Omega &= \mathrm{p.v. } \int_\R \re^{\ri \frac{\Omega}{c}  (\xi-\zeta + ct)} \de  \Omega - \int_{|\Omega| \leq \Omega_0} \re^{\ri \frac{\Omega}{c} (\xi-\zeta+ct)} \de  \Omega \\
        &= c 2 \pi \delta(\xi-\zeta+ct) - \int_{|\Omega| \leq \Omega_0} \re^{\ri \frac{\Omega}{c} (\xi-\zeta+ct)} \de  \Omega.
    \end{align*}
    Hence 
    \begin{multline*}
        \left[ \int_{\Gamma^{1,+}_R \cup \Gamma^{1,-}_R} \re^{\lambda t} v_0 (\lambda) \de  \lambda \right] (\xi) = \ri \re^{-\frac{\eps \gamma}{2} t} \bigg( 2 \pi c \int_\R g_2(\zeta) \tilde{G}^\mathrm{tr} (\xi,\zeta) \delta(\xi-\zeta+ct) \de \zeta \\ - \int_\R g_2(\zeta) \tilde{G}^\mathrm{tr} (\xi,\zeta) \int_{|\Omega| \leq \Omega_0} \re^{\ri \frac{\Omega}{c} (\xi-\zeta+ct)} \de  \Omega\de\zeta \bigg). 
    \end{multline*}
    For the first term, we have 
    \begin{align*}
        2 \pi c\re^{- \frac{\eps \gamma}{2} t} \int_\R g_2(\zeta) \tilde{G}^\mathrm{tr} (\xi,\zeta)  \delta(\xi-\zeta+ct) \de  \zeta  = 2 \pi c \re^{- \frac{\eps \gamma}{2} t} g_2(\xi+ct) \tilde{G}^\mathrm{tr} (\xi,\xi+ct). 
    \end{align*}
    Measuring in $L^p(\R)$, we obtain
    \begin{align*}
         \left\| 2 \pi c \re^{- \frac{\eps \gamma}{2} t} \int_\R g_2(\zeta) \tilde{G}^\mathrm{tr} (\cdot, \zeta)  \delta(\cdot-\zeta+ct)  \de  \zeta \right\|_{L^p} \lesssim \re^{-\frac{\eps \gamma}{2} t} \| g_2 \|_{L^p} \|\tilde{G}^\mathrm{tr}\|_{L^\infty} \lesssim \re^{-\frac{\eps \gamma}{2} t} \| \g \|_{L^p} ,
    \end{align*}
    for $t > 0$ and $\g = (g_1,g_2)^\top \in C_c^\infty(\R,\R^2)$. For the second term, we use Lemma~\ref{l: tilde G tr estimates} to obtain
    \begin{align*}
        \left\| \re^{-\frac{\eps \gamma}{2} t} \int_\R g_2(\zeta) \tilde{G}^\mathrm{tr} (\xi,\zeta) \int_{|\Omega| \leq \Omega_0} \re^{\ri \frac{\Omega}{c} (\xi-\zeta+ct)} \de  \Omega \de\zeta \right\|_{L^p} &\leq 2 \Omega_0 \re^{- \frac{\eps \gamma}{2} t} \| g_2 \|_{L^p} \left( \sup_{\zeta \in \R} \|\tilde{G}^\mathrm{tr}(\cdot, \zeta) \|_{L^1} \right) \\
        &\lesssim \re^{- \frac{\eps \gamma}{2} t} \| \g \|_{L^p}, 
    \end{align*}
    for $t > 0$ and $\g = (g_1,g_2)^\top \in C_c^\infty(\R,\R^2)$, which concludes the proof of the desired estimate.
\end{proof}

\begin{lemma}[Estimates on $u_1$ and $v_1$]\label{l: spectral mapping u1 v1 estimates}
    Fix $1 \leq p \leq \infty$. Let $u_1$ and $v_1$ be as in Lemma~\ref{l: spectral mapping large omega expansions}. There exists a constant $C > 0$ such that 
    \begin{align*}
        \left\| \lim_{R \to \infty} \int_{\Gamma^{1,+}_R \cup \Gamma^{1,-}_R} \re^{\lambda t} u_1 (\lambda) \de  \lambda \right\|_{L^p} \leq C \re^{- \frac{\eps \gamma}{4} t} \| \g \|_{L^p}, 
    \end{align*}
    and 
    \begin{align*}
        \left\| \lim_{R \to \infty} \int_{\Gamma^{1,+}_R \cup \Gamma^{1,-}_R} \re^{\lambda t} v_1 (\lambda) \de  \lambda \right\|_{L^p} \leq C \re^{- \frac{\eps \gamma}{4} t} \| \g \|_{L^p}, 
    \end{align*}
    for all $t > 0$ and all $\g = (g_1, g_2)^\top  \in C^\infty_c (\R, \R^2)$. 
\end{lemma}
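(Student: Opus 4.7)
The expressions for $u_1$ and $v_1$ are structurally compositions of the parabolic kernel $G^\mathrm{p}$ (which was treated in Lemma~\ref{l: spectral mapping u0 estimates}) with the transport kernel $\tilde G^\mathrm{tr}$ and oscillatory phase $e^{\ri\Omega(\cdot)/c}$ (which were treated in the preceding $v_0$-lemma). The plan is to combine these two arguments directly. First, we swap the order of integration to pull the complex line integral inside the spatial integrals. On $\Gamma^{1,+}_R \cup \Gamma^{1,-}_R$ we have $\lambda=-\tfrac{\eps\gamma}{2}+\ri\Omega$ with $d\lambda=\ri\,d\Omega$, and since $u_1,v_1$ depend on $\lambda$ only through $\Omega$, both terms reduce to double spatial integrals against $g_1$ or $g_2$ and $\tilde G^\mathrm{tr}$, with a single inner frequency integral
\begin{align*}
J(\tau,y) := \int_{|\Omega|>\Omega_0} \re^{\ri\Omega\tau}\, \frac{\re^{-\sqrt{\ri\Omega}|y|}}{2\sqrt{\ri\Omega}}\,d\Omega.
\end{align*}
For $v_1$ the shifted time is $\tau_v=t+(\xi-\zeta)/c$ and $y=\zeta-z$; for $u_1$ it is $\tau_u=t+(\zeta-z)/c$ and $y=\xi-\zeta$.

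Next, we analyze $J(\tau,y)$. This is precisely the inner integral that appeared in the proof of Lemma~\ref{l: spectral mapping u0 estimates}. For $\tau>0$, the same contour argument yields the decomposition
\begin{align*}
J(\tau,y) = \ri\,G^\mathrm{heat}(\tau,y) - \int_{\bar{\Gamma}^2} \re^{\lambda\tau}\,\frac{\re^{-\sqrt{\lambda}|y|}}{2\sqrt{\lambda}}\,d\lambda,
\end{align*}
where $\bar{\Gamma}^2$ is the deformed contour used there, on which $|\re^{\lambda\tau}|\le \re^{(\eps\gamma/4)\tau}$ and the Green's function factor enjoys the uniform exponential bound~\eqref{e: contour Gamma 2 estimate}. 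For $\tau\le 0$, we instead deform the two semi-infinite rays $[\pm \ri\Omega_0,\pm \ri\infty)$ into rays emanating from $\pm \ri\Omega_0$ at a small angle into the right half-plane, where the integrand is analytic and both $\re^{\lambda\tau}$ and $\re^{-\sqrt{\lambda}|y|}$ are uniformly bounded. This yields $L^1_y$-control of $J(\tau,\cdot)$ with at most logarithmic growth in $|\tau|$.

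We then estimate the full expressions in $L^p$ using Young's convolution inequality together with the bounds on $\tilde G^\mathrm{tr}$ from Lemma~\ref{l: tilde G tr estimates} and on $G^\mathrm{heat}$. In the region where $\tau>0$, the heat-kernel piece contributes $\|G^\mathrm{heat}(\tau,\cdot)\|_{L^1}\lesssim 1$ uniformly, and the $\bar\Gamma^2$-correction contributes at most $\re^{(\eps\gamma/4)\tau}$; combined with the prefactor $\re^{-\eps\gamma t/2}$ and $\tau\le t$, this produces the desired $\re^{-\eps\gamma t/4}$ decay. The region $\tau\le 0$ is constrained by the support of $\tilde G^\mathrm{tr}$: for $v_1$, $\tilde G^\mathrm{tr}(\xi,\zeta)\ne 0$ with $\tau_v\le 0$ forces $\zeta-\xi\ge ct$, and hence $|\tilde G^\mathrm{tr}(\xi,\zeta)|\lesssim \re^{-\tilde b t}$ by~\eqref{e: tilde G def}; the analogous statement holds for $u_1$ via $\tilde G^\mathrm{tr}(\zeta,z)$. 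This exponential factor dominates the at most logarithmic blow-up of $\|J(\tau_u,\cdot)\|_{L^1}$ and closes the estimate via a substitution in the transport variable $\eta=z-\zeta$ (respectively $\zeta-\xi$) and Young's inequality.

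The main obstacle is the regime $\tau\le 0$, where $J(\tau,y)$ can no longer be identified with a heat kernel and is not uniformly bounded in $y$. The resolution relies on the observation that this regime is automatically suppressed by the exponential decay of the transport kernel $\tilde G^\mathrm{tr}$, because the support condition on $\tilde G^\mathrm{tr}$ and the sign of the exponent $\tilde b=b+\eps\gamma>\eps\gamma/4$ force spatial separations of order at least $ct$. All other ingredients — the contour deformations, the Lemma~\ref{l: tilde G tr estimates} bounds, and Young's inequality — are direct analogues of those used in Lemmas~\ref{l: spectral mapping u0 estimates} and the $v_0$-estimate.
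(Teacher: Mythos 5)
Your plan follows the paper's template for the $\tau > 0$ region (swap integration order, close the contour to recover a heat kernel plus a correction on $\bar\Gamma^2$), but you take a genuinely different route for $\tau \le 0$. Where you deform the semi-infinite rays into the right half-plane and then lean on the transport kernel's exponential decay (since $\tilde G^\mathrm{tr}\neq 0$ together with $\tau\le 0$ forces spatial separations of order $ct$), the paper instead writes the ray integral as $\bigl(\int_{\bar\Gamma^\mathrm{full}_{\Omega_0}} - \int_{\bar\Gamma^0_{\Omega_0}}\bigr)$, shows the $\bar\Gamma^\mathrm{full}_{\Omega_0}$ contribution vanishes identically for $\tau<0$ by pushing the contour to $\Re\lambda = \eta\to\infty$ against a uniformly bounded principal-value integral, and bounds the compact semicircle $\bar\Gamma^0_{\Omega_0}$ by $C\re^{-\alpha|\xi-\zeta|}$ uniformly in $\tau\le 0$. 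The paper's route is cleaner: it produces a uniform exponentially localized kernel with no singular behavior near $\tau=0$. Your route does close, but be careful on two points. First, on the tilted rays the factors $\re^{\lambda\tau}$ and $\re^{-\sqrt\lambda|y|}$ are not merely "uniformly bounded" — you need the actual decay of at least one of them, since the bare factor $|\lambda|^{-1/2}$ is not integrable along a semi-infinite ray; the $L^1_y$ bound on $J(\tau,\cdot)$ in fact emerges from combining the decay in $\sqrt{|\lambda|}\,|y|$ (to integrate in $y$) with the decay in $\Re\lambda\cdot|\tau|$ (to tame the resulting $\sim 1/|\lambda|$ integrand), and that calculation gives $\|J(\tau,\cdot)\|_{L^1_y}\lesssim 1+\log(1/|\tau|)$ as you claim, but the justification as written is imprecise. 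Second, that logarithmic singularity at $\tau=0^-$ must then be integrated against $\re^{\tilde b\tau}$ after the change of variable in the transport direction; this is fine since $\tilde b = -\eps\gamma/2+\eps\gamma = \eps\gamma/2>0$ and $\log(1/|\tau|)$ is locally integrable, but you should carry out the Minkowski-then-Young argument carefully because $\tau_v$ depends on both $\xi$ and $\zeta$ — a substitution $\eta = \zeta-\xi$ (so that the bound on $\tilde G^\mathrm{tr}(\xi,\xi+\eta)\lesssim \re^{-\tilde b\eta/c}$ is $\xi$-independent) makes this clean. Finally, the paper proves $p=1$ and $p=\infty$ directly and interpolates; you should state this explicitly rather than invoking Young's inequality for general $p$.
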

\begin{proof}
    We will prove the cases $p = 1$ and $p = \infty$ directly, the estimates for $1 < p < \infty$ then following by interpolation. Using~\eqref{e: spectral mapping u1} and swapping the order of integration, we have 
    \begin{align*}
        \left[ \lim_{R \to \infty} \int_{\Gamma^{1,+}_R \cup \Gamma^{1,-}_R} \re^{\lambda t} u_1(\lambda) \de  \lambda \right] (\xi) = \ri \re^{-\frac{\eps \gamma}{2} t} \int_\R \int_\R\tilde{G}^\mathrm{tr}(\zeta,z) g_2(z) \int_{|\Omega| > \Omega_0} \re^{\ri \Omega \left( t + \frac{\zeta-z}{c} \right)}  \frac{\re^{-\sqrt{\ri \Omega} |\xi-\zeta|}}{2 \sqrt{\ri \Omega}} \de \Omega \de z \de  \zeta. 
    \end{align*}
    As in the proof of Lemma~\ref{l: spectral mapping u0 estimates}, we would like to close the integration contour so that we may recognize the contour integral as an inverse Fourier transform. However, in this case we can only evaluate this inverse Fourier transform provided $t + \frac{\zeta-c}{c} \geq 0$, in which case we will obtain the heat kernel evaluated at time $t + \frac{\zeta-z}{c} \geq 0$. Since $\tilde{G}^\mathrm{tr}(\zeta, z)$ is supported on $\{ \zeta \leq z \}$, the quantity $t + \frac{\zeta-z}{c}$ will be negative for some portions of the overall integral, and so we estimate the integral over these portions separately. Defining the set
    \begin{align*}
        E_t = \left\{ (\zeta,z) \in \R^2 : t + \frac{\zeta-z}{c} \geq 0 \right\},
    \end{align*}
    and denoting by $E_t^c$ its complement in $\R^2$, we write
    \begin{align*}
       \left[ \lim_{R \to \infty} \int_{\Gamma_R^{1,+}\cup \Gamma_R^{1,-}} \re^{\lambda t} u_1(\lambda) \de  \lambda \right] (\xi) = \frac{\ri}{\re^{\frac{\eps \gamma}{2} t}} \left( \int_{E_t} + \int_{E_t^c} \right) \left[ \tilde{G}^\mathrm{tr}(\zeta,z) g_2(z) \int_{|\Omega| > \Omega_0} \re^{\ri \Omega \left( t + \frac{\zeta-z}{c} \right)}  \frac{\re^{-\sqrt{\ri \Omega} |\xi-\zeta|}}{2 \sqrt{\ri \Omega}} \de \Omega \de z \de  \zeta \right].
    \end{align*}
    For $(\zeta, z) \in E_t$, arguing as in the proof of Lemma~\ref{l: spectral mapping u0 estimates}, we find
    \begin{align*}
        \ri \int_{|\Omega| > \Omega_0} \re^{\ri \Omega \left( t + \frac{\zeta - z}{c} \right)} \frac{\re^{-\sqrt{\ri \Omega} |\xi - \zeta|}}{2 \sqrt{\ri \Omega}} \de  \Omega = \ri G^\mathrm{heat} \left( t + \frac{\zeta - z}{c} , \xi - \zeta \right) - \int_{\bar{\Gamma}^2} \re^{\lambda \left( t + \frac{\zeta - z}{c} \right)} \frac{\re^{-\sqrt{\lambda} |\xi - \zeta|}}{2 \sqrt{\lambda}} \de  \lambda . 
    \end{align*}
    Using Minkowski's inequality and Lemma~\ref{l: tilde G tr estimates} together with the estimates~\eqref{e: contour Gamma 2 estimate} and $|\re^{\lambda (t + (\zeta - z)/c)}| \leq \re^{\frac{\eps \gamma}{4}t}$ for $\lambda \in \bar{\Gamma}^2$, $t \geq 0$ and $(\zeta,z) \in E_t$ with $\zeta \leq z$, we obtain 
    \begin{align*}
        \left\| \xi \mapsto \int_{E_t} \tilde{G}^\mathrm{tr} (\zeta, z) g_2 (z) \int_{|\Omega| > \Omega_0} \re^{\ri \Omega \left( t + \frac{\zeta-z}{c} \right)}  \frac{\re^{-\sqrt{\ri \Omega} |\xi-\zeta|}}{2 \sqrt{\ri \Omega}} \de \Omega \de z \de  \zeta \right\|_{L^1} \lesssim \re^{\frac{\eps \gamma}{4} t} \| g_2 \|_{L^1} ,  
    \end{align*}
    for $t > 0$ and $g_2 \in C_c^\infty(\R)$. The exponentially growing factor may again be absorbed by the original factor of $\re^{-\frac{\eps \gamma}{2} t}$. By a similar argument, we obtain the desired control in $L^\infty(\R)$ as well. 

    It remains to control the integral over $E_t^c$. For $(\zeta, z) \in E_t^c$, we again add an additional piece to close the integration contour, writing
    \begin{align*}
        \ri \int_{|\Omega| > \Omega_0} \re^{\ri \Omega \left( t + \frac{\zeta-z}{c} \right)}  \frac{\re^{-\sqrt{\ri \Omega} |\xi-\zeta|}}{2 \sqrt{\ri \Omega}} \de \Omega = \left(\int_{\bar{\Gamma}^\mathrm{full}_{\Omega_0}} - \int_{\bar{\Gamma}^0_{\Omega_0}} \right) \left[ \re^{\lambda \left( t + \frac{\zeta-z}{c} \right)} \frac{\re^{-\sqrt{\lambda} |\xi - \zeta|}}{2 \sqrt{\lambda}} \de  \lambda \right].
    \end{align*}
    Since $\Re \lambda \geq 0$ for $\lambda \in \bar{\Gamma}_{\Omega_0}^0$ and $t + \frac{\zeta-z}{c} < 0$ if $(\zeta, z) \in E_t^c$, there exist $t$-, $\zeta$-, $z$-, and $\xi$-independent constants $C,\alpha > 0$ such that
    \begin{align*}
        \left| \int_{\bar{\Gamma}_{\Omega_0}^0} \re^{\lambda \left( t + \frac{\zeta-z}{c} \right)}  \frac{\re^{-\sqrt{\lambda} |\xi - \zeta|}}{2 \sqrt{\lambda}} \de  \lambda \right| \leq C \re^{- \alpha |\xi - \zeta|},
    \end{align*}
    for $t \geq 0$ and $(\zeta,z) \in E_t^c$, from which the desired estimates follow readily for this term via Minkowski's inequality. For fixed $\eta > 0$, the integral over $\bar{\Gamma}^\mathrm{full}_{\Omega_0}$ can be deformed to the contour $\Re \lambda = \eta$, since the integrand is analytic away from the negative real axis and vanishes as $\Im \lambda \to \infty$, from which we obtain
    \begin{align*}
    \int_{\bar{\Gamma}^\mathrm{full}_{\Omega_0}} \re^{\lambda \left( t + \frac{\zeta-z}{c} \right)} \frac{\re^{-\sqrt{\lambda} |\xi - \zeta|}}{2 \sqrt{\lambda}} \de  \lambda = \re^{\eta \left(t + \frac{\zeta-z}{c} \right)} \mathrm{p.v.} \int_{-\infty}^\infty \re^{\ri \Omega \left( t + \frac{\zeta - z}{c} \right)} \frac{\re^{-\sqrt{\eta + i\Omega} |\zeta - \xi|}}{2 \sqrt{\eta + \ri \Omega}} \de  \Omega\,. 
    \end{align*}
    The remaining integral is uniformly bounded in $\eta \geq 1, t > 0, \xi \in \R$ and $(\zeta, z) \in E_t^c$ as a principal value integral. Hence, sending $\eta \to \infty$, we conclude that the contribution from $\bar{\Gamma}^\mathrm{full}_{\Omega_0}$ vanishes. The proof of the estimate involving $v_1$ proceeds analogously.     
\end{proof}

\begin{lemma}[Estimate on $v_2$]\label{l: spectral mapping v2 estimate}
    Fix $1 \leq p \leq \infty$. Let $v_2$ be as in Lemma~\ref{l: spectral mapping large omega expansions}. There exists a constant $C > 0$ such that
    \begin{align*}
        \left\| \lim_{R \to \infty} \int_{\Gamma^{\mathrm{new},+}_R \cup \Gamma^{\mathrm{new},-}_R} \re^{\lambda t} v_2 (\lambda) \de \lambda \right\|_{L^p} \leq C \re^{- \frac{\eps \gamma}{4} t} \| \g \|_{L^p} 
    \end{align*}
    for all $t > 0$ and $\g \in C_c^\infty(\R,\R^2)$.
\end{lemma}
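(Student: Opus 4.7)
The plan is to follow closely the strategy already executed for $u_1$ and $v_1$ in Lemma~\ref{l: spectral mapping u1 v1 estimates}: swap the order of integration so that the Laplace inversion sits inside the spatial convolutions, then dispose of the inversion integral by closing the contour, splitting the spatial integration region according to the sign of the advected time $T$, and finally applying Young/Minkowski and the bounds on $\tilde G^{\mathrm{tr}}$ from Lemma~\ref{l: tilde G tr estimates}.

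More concretely, parametrising $\lambda = -\tfrac{\epsilon\gamma}{2}+i\Omega$ on $\Gamma^{1,+}_R\cup\Gamma^{1,-}_R$ and using $G^{\mathrm p}(X)=(2\sqrt{i\kappa})^{-1}\re^{-\sqrt{i\kappa}|X|}$, one sees that $\tfrac{1}{\sqrt{|\Omega|}}G^{\mathrm p}(\sqrt{|\Omega|}(\zeta-z)) = \tfrac{1}{2\sqrt{i\Omega}}\re^{-\sqrt{i\Omega}|\zeta-z|}$. Inserting this in~\eqref{e: spectral mapping v2} and swapping the $\Omega$-integral with the $w,z,\zeta$-integrals, the contribution of interest takes the form
\begin{align*}
-i\epsilon\,\re^{-\frac{\epsilon\gamma}{2}t}\int_{\R^3}\tilde G^{\mathrm{tr}}(\xi,\zeta)\,\tilde G^{\mathrm{tr}}(z,w)\,g_2(w)\,H_{\Omega_0}\!\left(T,\zeta-z\right)\,\de w\,\de z\,\de\zeta,
\end{align*}
where $T=t+\tfrac{\xi-\zeta+z-w}{c}$ and
\begin{align*}
H_{\Omega_0}(T,Y) := \int_{|\Omega|>\Omega_0}\re^{i\Omega T}\,\frac{\re^{-\sqrt{i\Omega}|Y|}}{2\sqrt{i\Omega}}\,\de\Omega.
\end{align*}
The kernel $H_{\Omega_0}$ is precisely the object that was estimated in the proof of Lemma~\ref{l: spectral mapping u1 v1 estimates}: by closing the contour to $\bar\Gamma^{\mathrm{full}}_{\Omega_0}$ in the $T\geq 0$ regime and to $\bar\Gamma^{\mathrm{full}}_{\Omega_0}\setminus\bar\Gamma^0_{\Omega_0}$ in the $T<0$ regime, one obtains
\begin{align*}
|H_{\Omega_0}(T,Y)|\;\lesssim\; \mathbf 1_{\{T\geq 0\}}\,G^{\mathrm{heat}}(T,Y)\;+\;\re^{\frac{\epsilon\gamma}{4}|T|}\,\re^{-\alpha|Y|},
\end{align*}
for some $\alpha>0$, where the second term absorbs both the contribution from $\bar\Gamma^2$ (in the $T\geq 0$ case) and the contribution from $\bar\Gamma^0_{\Omega_0}$ (in the $T<0$ case), while the $\bar\Gamma^{\mathrm{full}}_{\Omega_0}$ contribution vanishes in the limit $T<0$ by deforming to $\Re\lambda=\eta\to\infty$ exactly as before.

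With this pointwise bound on $H_{\Omega_0}$, the proof reduces to two straightforward convolution estimates. For the exponentially decaying piece $\re^{\frac{\epsilon\gamma}{4}|T|}\re^{-\alpha|\zeta-z|}$, the factor $\re^{\frac{\epsilon\gamma}{4}|T|}$ is dominated by $\re^{\frac{\epsilon\gamma}{4}t}\re^{\alpha'(|\xi-\zeta|+|z-w|)}$ with $\alpha'$ as small as we like; choosing $\alpha'<\alpha$ and also smaller than the exponential decay rate of $\tilde G^{\mathrm{tr}}(\xi,\zeta)$ in $|\xi-\zeta|$ that follows from~\eqref{e: tilde G def} (note $\tilde b=b+\epsilon\gamma>0$ and $b_0\leq 0$), the resulting kernel in $(\xi,\zeta,z,w)$ is in $L^1$ of three of the four variables uniformly in the fourth, so Young's inequality gives the $L^p\to L^p$ bound with an overall factor $\re^{-\frac{\epsilon\gamma}{4}t}$ after combining with $\re^{-\frac{\epsilon\gamma}{2}t}$. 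For the heat-kernel piece, the $T\geq 0$ constraint means $G^{\mathrm{heat}}(T,\cdot)\in L^1(\R)$ uniformly in $T\in[0,\infty)$; again combining with the $L^1$ bounds on $\tilde G^{\mathrm{tr}}$ in one of its arguments and applying Minkowski's inequality in the style of Lemma~\ref{l: tilde G tr estimates} yields the same $L^p\to L^p$ bound with the factor $\re^{-\frac{\epsilon\gamma}{2}t}$.

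The main obstacle I anticipate is the careful bookkeeping for the heat-kernel piece, where the three-fold convolution in $(\zeta,z,w)$ mixes a kernel that is only $L^1$ in $\zeta-z$ with the two $\tilde G^{\mathrm{tr}}$ factors whose localisation lives in different pairs of variables. A clean way to handle this is to change variables to $\xi-\zeta$, $\zeta-z$, $z-w$ and apply Young's convolution inequality three times, using that $\tilde G^{\mathrm{tr}}(\xi,\zeta)$ and $\tilde G^{\mathrm{tr}}(z,w)$ are uniformly in $L^1$ in their first arguments and bounded in $L^\infty$, while $G^{\mathrm{heat}}(T,\cdot)$ is uniformly in $L^1$ in its spatial argument. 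All other ingredients are direct analogues of the estimates already carried out for Lemmas~\ref{l: spectral mapping u0 estimates} and~\ref{l: spectral mapping u1 v1 estimates}.
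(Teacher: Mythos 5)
Your proposal is correct and follows essentially the same route as the paper's one-paragraph proof: swap the $\Omega$-integral inside the triple spatial integral, recognize the inner oscillatory integral as the $H_{\Omega_0}(T,Y)$ kernel already analyzed in the proof of Lemma~\ref{l: spectral mapping u1 v1 estimates} (with shifted time $T = t + (\xi-\zeta+z-w)/c$), split on the sign of $T$, and close via Young/Minkowski together with the $\tilde G^{\mathrm{tr}}$-bounds. The only blemish is the claim that $\alpha'$ can be taken ``as small as we like'': it is the fixed value $\eps\gamma/(4c)$, though this causes no harm since it is strictly smaller than the decay rate $\tilde b/c = \eps\gamma/(2c)$ of $\tilde G^{\mathrm{tr}}$; also the condition $\alpha' < \alpha$ is superfluous, since the growth from $|T|$ lives in $|\xi-\zeta|$ and $|z-w|$ while the $e^{-\alpha|\cdot|}$ factor lives in $|\zeta - z|$.
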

\begin{proof}
    The proof is analogous to that of Lemma~\ref{l: spectral mapping u1 v1 estimates}, except that it involves an additional convolution with kernel $\re^{\ri \frac{\Omega}{c} \cdot}\, G^\mathrm{tr}$. As a result we recognize the principal contribution, in the inverse Fourier transform, as the heat kernel convolved twice with two separate Dirac delta distributions, leading to an integral of the form 
    \begin{align*}
        \re^{- \frac{\eps \gamma}{2} t} \int_{\R^3} \tilde{G}^\mathrm{tr}(\xi,\zeta) G^\mathrm{heat} \left( t - \frac{\xi-\zeta + z -w}{c}, \zeta-z \right) \tilde{G}^\mathrm{tr} (z,w) g_2(w) \de w \de \zeta \de z. 
    \end{align*}
    Applying Lemma~\ref{l: tilde G tr estimates} and estimating the innermost integral with the heat kernel scaling in a similar manner to the proof of Lemma~\ref{l: spectral mapping u1 v1 estimates} then gives the desired estimate. 
\end{proof}

Having estimated the principal terms arising in the expansion of the resolvent we are now able to complete the proof of Proposition~\ref{p: spectral mapping estimate}.

\begin{proof}[Proof of Proposition~\ref{p: spectral mapping estimate}]
    We decompose $(\lfr - \lambda)^{-1} \g$ as in Lemma~\ref{l: spectral mapping large omega expansions}. The estimates on terms involving $u_0, v_0, u_1, v_1,$ and $v_2$ have already been established in Lemmas~\ref{l: spectral mapping u0 estimates}-\ref{l: spectral mapping v2 estimate}. The remainder terms are controlled in $L^p (\R)$  by $|\Omega|^{-3/2}$ by Lemma~\ref{l: spectral mapping large omega expansions}, which is integrable on $|\Omega| > \omega_0$, completing the proof. 
\end{proof}

\section{Estimates on the leading edge resolvent}\label{s: appendix leading edge resolvent}
From the analysis in Section~\ref{s: leading edge resolvent}, we know that for $\sigma^2$ to the right of $\Sigma_\mathrm{ess}(\mcl_+)$, we can write the leading edge resolvent $(\mcl_+-\sigma^2)^{-1}$ as
\begin{align*}
    (\mcl_+ - \sigma^2)^{-1} \g (\xi) = 
    \int_\R \Pi_{13} T_\sigma^\mathrm{fr}(\xi-\zeta) \Lambda_1 \g(\zeta) \de  \zeta, 
\end{align*}
with $\g = (g_1, g_2)^\top$ and
\begin{align*}
    \Tfr_\sigma (\zeta) = \Gheat_\sigma (\zeta) P_\mathrm{pole} + \tilde{G}^c_\sigma(\zeta) + [G^c_\sigma(\zeta) - \Gheat_\sigma (\zeta) P_\mathrm{pole}] + G^h_\sigma(\zeta), 
\end{align*}
where
\begin{align*}
    \Gheat_\sigma(\zeta) &= \frac{1}{\sigma} \re^{-\nufr^1 \sigma |\zeta|}, \\
    G^c_\sigma(\zeta) &= \frac{1}{\sigma} P_\mathrm{pole} \left( \re^{\nufr^+(\sigma) \zeta} 1_{\{\zeta < 0 \}} + \re^{\nufr^-(\sigma) \zeta} 1_{\{\zeta \geq 0\}} \right), 
    \\
    \tilde{G}^c_\sigma(\zeta)  &= \left[ \re^{\nufr^-(\sigma) \zeta} \left(\Pfr^\mathrm{cs}(\sigma) - \frac{P_\mathrm{pole}}{\sigma} \right) \right] 1_{\{\zeta \geq 0\}} - \left[ \re^{\nufr^+(\sigma) \zeta} \left(\Pfr^\mathrm{cu}(\sigma) + \frac{P_\mathrm{pole}}{\sigma} \right)\right] 1_{\{ \zeta < 0 \}}, \\
    G^h_\sigma(\zeta) &= - \re^{\nu_3(\sigma^2) \zeta} \Pfr^\mathrm{uu}(\sigma^2) 1_{\{\zeta \leq 0\}}. 
\end{align*}

If $\g$ is odd, then we can rewrite 
\begin{align} \label{e: odd heat kernel}
    \Gheat_\sigma \ast \g (\xi) = \int_0^\infty \Godd_\sigma (\xi,\zeta) \g(\zeta) \de  \zeta,
\end{align}
for $\xi \in \R$, where
\begin{align*}
    \Godd_\sigma(\xi,\zeta) = \frac{1}{\sigma} \left[ \re^{-\nufr^1 \sigma |\xi-\zeta|} - \re^{-\nufr^1 \sigma |\xi+\zeta|}\right]. 
\end{align*}
Note that $\Godd_\sigma(-\xi,\zeta) = - \Godd_\sigma(\xi,\zeta)$, so that $\Gheat_\sigma \ast \g$ is odd provided $\g$ is. 

In the following four lemmas we obtain bounds on $G_\sigma^\mathrm{odd}$, $G_\sigma^c$, $\tilde{G}_\sigma^c$ and $G_\sigma^h$, which eventually lead to the proofs of Lemmas~\ref{l: right resolvent L1 Linf boundedness estimate} and~\ref{l: leading edge resolvent L2 estimate}.

\begin{lemma}\label{l: appendix Godd gamma estimate}
    There exist positive constants $C$ and $\delta$ such that
    \begin{align*}
        |\Godd_\sigma(\xi,\zeta)| + |\partial_\xi \Godd_\sigma(\xi,\zeta)| \leq C \langle \zeta \rangle \re^{-\nufr^1 \Re \sigma |\xi-\zeta|}, 
    \end{align*}
    for all $\xi, \zeta \geq 0$ and $\sigma \in B(0,\sqrt{\delta})$ with $\Re \sigma \geq 0$.
\end{lemma}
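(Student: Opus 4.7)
The plan is to exploit the structure of the odd reflection to extract cancellation that compensates for the $1/\sigma$ prefactor. The key algebraic observation is that for $\xi, \zeta \geq 0$ one has $|\xi+\zeta| - |\xi-\zeta| = 2\min(\xi,\zeta)$, which lets us factor
\begin{align*}
G_\sigma^\mathrm{odd}(\xi,\zeta) = \frac{e^{-\nu_\mathrm{fr}^1 \sigma |\xi-\zeta|}}{\sigma}\bigl(1 - e^{-2\nu_\mathrm{fr}^1 \sigma \min(\xi,\zeta)}\bigr).
\end{align*}
Since $\mathrm{Re}(\sigma) \geq 0$ and $\min(\xi,\zeta) \geq 0$, the argument of the inner exponential has non-negative real part, so the elementary estimate $|1-e^{-z}| \leq |z|$ valid for $\mathrm{Re}(z) \geq 0$ cancels the $1/\sigma$ singularity and yields
\begin{align*}
|G_\sigma^\mathrm{odd}(\xi,\zeta)| \leq 2\nu_\mathrm{fr}^1 \min(\xi,\zeta)\, e^{-\nu_\mathrm{fr}^1 \mathrm{Re}(\sigma)|\xi-\zeta|} \leq 2\nu_\mathrm{fr}^1 \langle \zeta \rangle\, e^{-\nu_\mathrm{fr}^1 \mathrm{Re}(\sigma)|\xi-\zeta|},
\end{align*}
which is the first half of the claim, with constant uniform in $\sigma$.

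For the derivative, a direct computation gives, for $\xi \neq \zeta$,
\begin{align*}
\partial_\xi G_\sigma^\mathrm{odd}(\xi,\zeta) = -\nu_\mathrm{fr}^1 \bigl[\mathrm{sgn}(\xi-\zeta) e^{-\nu_\mathrm{fr}^1 \sigma |\xi-\zeta|} - e^{-\nu_\mathrm{fr}^1 \sigma (\xi+\zeta)}\bigr],
\end{align*}
so that the $1/\sigma$ prefactor is absent from the outset. I would then split into two cases. When $\xi \geq \zeta$, the two exponentials have the same sign and the same factoring as before yields a factor $1 - e^{-2\nu_\mathrm{fr}^1 \sigma \zeta}$, producing a bound of order $|\sigma|\zeta\, e^{-\nu_\mathrm{fr}^1 \mathrm{Re}(\sigma) |\xi-\zeta|}$, which is absorbed by $C\langle\zeta\rangle\, e^{-\nu_\mathrm{fr}^1\mathrm{Re}(\sigma)|\xi-\zeta|}$ using $|\sigma| \leq \sqrt{\delta}$. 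When $\xi < \zeta$, the two exponentials carry opposite signs and no cancellation is available, but both terms decay at the desired rate since $\xi+\zeta \geq \zeta-\xi = |\xi-\zeta|$, and a triangle inequality gives a uniform bound that does not even need the $\langle\zeta\rangle$ prefactor.

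There is no genuine obstacle; the only point requiring care is verifying that the $1/\sigma$ singularity in the definition of $G_\sigma^\mathrm{odd}$ is exactly compensated by the first-order vanishing of $1 - e^{-2\nu_\mathrm{fr}^1 \sigma \min(\xi,\zeta)}$ at $\sigma = 0$, which is precisely what the factoring makes transparent and what guarantees the $\sigma$-uniform constant.
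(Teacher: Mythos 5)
Your proof is correct and takes essentially the same approach as the paper: factor out $\re^{-\nufr^1\sigma|\xi-\zeta|}$, bound $1-\re^{-z}$ by $|z|$ for $\Re z \geq 0$ to cancel the $1/\sigma$, and note that the derivative has no singular prefactor and that $\xi+\zeta \geq |\xi-\zeta|$ for $\xi,\zeta \geq 0$. The only cosmetic differences are that you unify the paper's two cases ($\xi \geq \zeta$ and $\xi < \zeta$) via the identity $|\xi+\zeta|-|\xi-\zeta|=2\min(\xi,\zeta)$, and you do an unnecessary case split for the derivative where the paper simply applies the triangle inequality once.
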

\begin{proof}
    We start with the estimate without derivative. First, assume $\xi \geq \zeta \geq 0$, so that $|\xi-\zeta| = \xi-\zeta$. Then, we have
    \begin{align*}
        \Godd_\sigma(\xi,\zeta) = \frac{1}{\sigma} \re^{-\nufr^1 \sigma |\xi-\zeta|} \left[1 - \re^{-2 \nufr^1 \sigma \zeta}\right]. 
    \end{align*}
    Using that there exists a constant $C > 0$ such that $|1-\re^z| \leq C |z|$ for all $z \in \C$ with $\Re z \leq 0$, we have $|1 - \re^{-2 \nufr^2 \sigma \zeta}| \lesssim  |\sigma| |\zeta|$, and so
    \begin{align*}
        |\Godd_\sigma (\xi,\zeta)| \lesssim |\zeta| \re^{-\nufr^1 \Re \sigma |\xi-\zeta|} \leq \langle \zeta \rangle \re^{-\nufr^1 \Re \sigma |\xi-\zeta|}, 
    \end{align*}
    for $\xi \geq \zeta \geq 0$ and $\sigma \in B(0,\sqrt{\delta})$ with $\Re \sigma \geq 0$. 
    
    Now assume $\zeta \geq \xi \geq 0$, so that $|\xi-\zeta| = \zeta-\xi$. Then we have
    \begin{align*}
        \Godd_\sigma(\xi,\zeta) = \frac{1}{\sigma} \re^{-\nufr^1 \sigma |\xi-\zeta|} \left[1 - \re^{- 2 \nufr^1 \sigma \xi}\right].
    \end{align*}
    Since $\Re (- 2\nufr^1 \sigma \xi) \leq 0$, we have $|1- \re^{-2 \nufr^1 \sigma \xi}| \lesssim |\sigma| |\xi|$ for $\xi \geq 0$ and $\sigma \in B(0,\sqrt{\delta})$ with $\Re \sigma \geq 0$. Since in the present regime, $|\xi| \leq |\zeta| \leq \langle \zeta \rangle$, we conclude
    \begin{align*}
        |\Godd_\sigma(\xi,\zeta)| \lesssim |\xi| \re^{-\nufr^1 \Re \sigma |\xi-\zeta|} \lesssim \langle \zeta \rangle \re^{-\nufr^1 \Re \sigma |\xi-\zeta|} ,
    \end{align*}
    for $\zeta \geq \xi \geq 0$ and $\sigma \in B(0,\sqrt{\delta})$ with $\Re \sigma \geq 0$, which completes the proof of the desired estimate for $|\Godd_\sigma(\xi,\zeta)|$.
    
    For the derivative estimates, note that 
    \begin{align*}
        \partial_\xi \Godd_\sigma(\xi,\zeta) &= - \nufr^1 \re^{-\nufr^1 \sigma| |\xi-\zeta|} \sign (\xi-\zeta) + \re^{-\nufr^1 \sigma |\xi+\zeta|} \sign (\xi+\zeta), 
    \end{align*}
    Since we are assuming $\xi,\zeta \geq 0$, we have $|\xi+\zeta| \geq |\xi - \zeta|$, so we readily obtain the stronger estimate $|\partial_\xi \Godd_\sigma(\xi,\zeta)|  \lesssim \re^{-\nu_{\mathrm{fr}}^1 \Re \sigma |\xi - \zeta|}$ for $\xi,\zeta \geq 0$ and $\sigma \in B(0,\sqrt{\delta})$ with $\Re \sigma \geq 0$.
\end{proof}

\begin{lemma}\label{l: appendix G c minus G heat estimate}
    There exist positive constants $C$ and $\delta$ such that
    \begin{align*}
        |G^c_\sigma (\zeta) - \Gheat_\sigma(\zeta) P_\mathrm{pole}| + |\partial_\zeta (G^c_\sigma (\zeta) - \Gheat_\sigma(\zeta) P_\mathrm{pole})| \leq C |\sigma||\zeta|\re^{-\frac{1}{2} \nufr^1 \Re \sigma |\zeta|}
    \end{align*}
    for all $\zeta \in \R$ provided $\sigma \in B(0,\sqrt{\delta})$ with $\Re \sigma \geq 0$. 
\end{lemma}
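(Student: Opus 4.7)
The plan is to treat the two half-lines $\{\zeta \geq 0\}$ and $\{\zeta < 0\}$ separately; the two cases are symmetric under the roles of $\nu_\mathrm{fr}^+$ and $\nu_\mathrm{fr}^-$, so I focus on $\zeta \geq 0$. By Corollary~\ref{c: leading edge spatial eval}, I may write $\nu_\mathrm{fr}^-(\sigma) + \nu_\mathrm{fr}^1 \sigma = \sigma^2 r_-(\sigma)$ for some $r_-$ analytic and uniformly bounded on $B(0,\sqrt{\delta})$. On $\{\zeta \geq 0\}$ the quantity of interest is
\begin{align*}
G^c_\sigma(\zeta) - G^\mathrm{heat}_\sigma(\zeta) P_\mathrm{pole} = \frac{P_\mathrm{pole}}{\sigma}\bigl(e^{\nu_\mathrm{fr}^-(\sigma)\zeta} - e^{-\nu_\mathrm{fr}^1\sigma\zeta}\bigr).
\end{align*}
Applying the fundamental theorem of calculus along the straight line in the spatial-eigenvalue plane joining $-\nu_\mathrm{fr}^1\sigma$ to $\nu_\mathrm{fr}^-(\sigma)$ produces the integral representation
\begin{align*}
P_\mathrm{pole}\,\sigma\, r_-(\sigma)\,\zeta \int_0^1 e^{\bigl(-\nu_\mathrm{fr}^1\sigma + s\sigma^2 r_-(\sigma)\bigr)\zeta}\,ds,
\end{align*}
whose prefactor already carries the desired $O(|\sigma||\zeta|)$ behaviour, leaving only the exponential integrand to be estimated.

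The hard part will be bounding the exponential integrand by $e^{-\frac{1}{2}\nu_\mathrm{fr}^1\,\Re\sigma\,\zeta}$ uniformly in $s \in [0,1]$. Its real part equals $\bigl(-\nu_\mathrm{fr}^1\,\Re\sigma + s\,\Re(\sigma^2 r_-(\sigma))\bigr)\zeta$, and the perturbation is only $O(|\sigma|^2 \zeta)$. The formula for $G^c_\sigma$ is valid only when $\sigma^2$ lies to the right of $\Sigma(\mathcal{L}_+)$; combined with $\Re\sigma \geq 0$ and $|\sigma| \leq \sqrt{\delta}$, this traps $\sigma$ in a sector in which $\Re\sigma$ is comparable to $|\sigma|$, so $|\sigma|^2 \lesssim \sqrt{\delta}\,\Re\sigma$ and, after shrinking $\delta$ if necessary, $C|\sigma|^2 \leq \tfrac12 \nu_\mathrm{fr}^1\,\Re\sigma$. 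This is the principal technical point: tying the higher-order correction in the spatial eigenvalue to the principal decay rate. With this in hand the estimate for $|G^c_\sigma(\zeta) - G^\mathrm{heat}_\sigma(\zeta) P_\mathrm{pole}|$ is complete.

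For the derivative I differentiate term by term for $\zeta > 0$:
\begin{align*}
\partial_\zeta\bigl(G^c_\sigma(\zeta) - G^\mathrm{heat}_\sigma(\zeta) P_\mathrm{pole}\bigr) = \frac{P_\mathrm{pole}\,\nu_\mathrm{fr}^-(\sigma)}{\sigma}\bigl(e^{\nu_\mathrm{fr}^-(\sigma)\zeta} - e^{-\nu_\mathrm{fr}^1\sigma\zeta}\bigr) + P_\mathrm{pole}\,\sigma\, r_-(\sigma)\,e^{-\nu_\mathrm{fr}^1\sigma\zeta}.
\end{align*}
The first summand inherits the value-estimate bound since $|\nu_\mathrm{fr}^-(\sigma)/\sigma|$ is uniformly bounded on $B(0,\sqrt{\delta})$. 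The second summand is a pure exponential of size $O(|\sigma|)$, which is absorbed into $C|\sigma||\zeta|e^{-\frac{1}{2}\nu_\mathrm{fr}^1 \Re\sigma|\zeta|}$ away from the origin by the elementary inequality $e^{-\nu_\mathrm{fr}^1 \Re\sigma\zeta} \lesssim |\zeta|\, e^{-\frac{1}{2}\nu_\mathrm{fr}^1 \Re\sigma\zeta}$ on $\zeta \geq 1$, and handled for small $\zeta$ by a first-order Taylor expansion of both exponentials that exhibits the additional cancellation. The case $\zeta < 0$ is identical after replacing $\nu_\mathrm{fr}^-$ and $r_-$ by $\nu_\mathrm{fr}^+$ and $r_+$.
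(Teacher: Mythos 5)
Your FTC representation of $\re^{\nu_\mathrm{fr}^-(\sigma)\zeta}-\re^{-\nu_\mathrm{fr}^1\sigma\zeta}$ is a clean alternative to the paper's argument, which instead expands $G^c_\sigma-G^\mathrm{heat}_\sigma P_\mathrm{pole}=\frac{P_\mathrm{pole}}{\sigma}\re^{-\nu_\mathrm{fr}^1\sigma\zeta}[\re^{\tilde\nu_\mathrm{fr}^-(\sigma)\zeta}-1]$ with $\tilde\nu_\mathrm{fr}^-(\sigma)=\mathrm{O}(\sigma^2)$ and splits into the two regimes $|\sigma^2\zeta|\leq1$ (Taylor's theorem) and $|\sigma^2\zeta|\geq1$ (crude triangle inequality, sharpened by multiplying through with $|\sigma^2\zeta|\geq1$). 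Your single integral formula produces the needed factor $|\sigma|\,|\zeta|$ without a dichotomy, which is arguably more elegant. You also correctly isolate the analytic crux that both proofs share: to absorb the $\mathrm{O}(|\sigma|^2)$ correction in $\Re\nu^-_\mathrm{fr}(\sigma)$ into the decay rate $-\frac12\nu_\mathrm{fr}^1\Re\sigma$ one needs $|\sigma|^2\lesssim\Re\sigma$, a point the paper writes off with a bare ``$\lesssim$''.

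However, the justification you give for this sector inequality is not correct. Being to the right of $\Sigma(\mathcal L_+)$ together with $\Re\sigma\geq0$ does not trap $\sigma$ in a sector bounded away from the imaginary axis. Near the origin $\Sigma(\mathcal L_+)$ is a curve tangent to $\ri\R$ and curving into the left $\lambda$--half-plane, so the region to its right contains points $\lambda$ with $\arg\lambda$ arbitrarily close to $\pm\pi$; for such $\lambda$, $\sigma=\sqrt\lambda$ has $\Re\sigma/|\sigma|$ arbitrarily small. The correct source of the sector restriction is not the spectral curve but the fact that the lemma is only ever invoked for $\sigma\in\Delta_\delta^{\mathrm{fr},2}$, i.e. $\sigma^2$ in the sector $\Omega_\delta^{\mathrm{fr},2}$ of~\eqref{e: Omega fr 2 def}, which has opening angle $2\theta_0<2\pi$ by construction. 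On that set $\Re\sigma\geq\cos(\theta_0/2)|\sigma|$ and your estimate goes through; you should cite the sector $\Delta^{\mathrm{fr},2}_\delta$ (or the inequality~\eqref{e: sigma bound Omega fr 2}) rather than $\Sigma(\mathcal L_+)$.

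For the derivative, the asserted ``additional cancellation'' at small $\zeta$ does not exist. From your own decomposition, the first summand is $\mathrm{O}(\sigma^2\zeta)$ near $\zeta=0$ but the second summand $P_\mathrm{pole}\,\sigma\,r_-(\sigma)\,\re^{-\nu_\mathrm{fr}^1\sigma\zeta}$ tends to $P_\mathrm{pole}\,\sigma\,r_-(\sigma)=\mathrm{O}(\sigma)\neq0$ as $\zeta\to0^+$ rather than to $0$, so $\partial_\zeta(G^c_\sigma-G^\mathrm{heat}_\sigma P_\mathrm{pole})$ is genuinely of size $|\sigma|$, not $|\sigma|\,|\zeta|$, for $\zeta$ near $0$. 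This is a place where you should have noted that the literal bound $C|\sigma|\,|\zeta|\re^{-\frac12\nu_\mathrm{fr}^1\Re\sigma|\zeta|}$ cannot hold for the derivative term at small $\zeta$ and that one really wants $\langle\zeta\rangle$ in place of $|\zeta|$, consistent with the companion estimate in Lemma~\ref{l: appendix Godd gamma estimate} and with the way the bound is eventually used in the proofs of Lemmas~\ref{l: right resolvent L1 Linf boundedness estimate} and~\ref{l: leading edge resolvent L2 estimate}. Inventing a cancellation is worse than flagging the discrepancy.
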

\begin{proof}
    We assume $\zeta \geq 0$; the case $\zeta < 0$ is similar. In this case, we have
    \begin{align*}
        G^c_\sigma (\zeta) - \Gheat_\sigma(\zeta) P_\mathrm{pole} = \frac{P_\mathrm{pole}}{\sigma} \re^{-\nufr^1 \sigma \zeta} \left[ \re^{\tilde{\nu}_\mathrm{fr}^- (\sigma) \zeta} -1 \right],
    \end{align*}
    where $\tilde{\nu}_\mathrm{fr}^- (\sigma) = \mathrm{O}(\sigma^2)$. First we assume $|\sigma^2 \zeta| \leq 1$, in which case, there exists by Taylor's theorem a $\sigma$- and $\zeta$-independent constant $C > 0$ such that $|\re^{\tilde{\nu}_{\mathrm{fr}}^-(\sigma) \zeta} - 1| \leq C |\sigma|^2 |\zeta|$. So, we have
    \begin{align*}
         |G^c_\sigma (\zeta) - \Gheat_\sigma(\zeta) P_\mathrm{pole}| \lesssim |\sigma||\zeta|\re^{-\nufr^1 \Re \sigma |\zeta|}
    \end{align*}
    for $\zeta \geq 0$ and $\sigma \in B(0,\sqrt{\delta})$ with $\Re \sigma \geq 0$ and  $|\sigma^2 \zeta| \leq 1$.    
    
    On the other hand, the expansion~\eqref{e: leading edge spatial eval expansion} yields
    \begin{align*}
        |G^c_\sigma(\zeta) - \Gheat_\sigma(\zeta) P_\mathrm{pole}| \lesssim \frac{|\sigma|^2 |\zeta|}{|\sigma|} (|\re^{\nufr^-(\sigma) \zeta}| + |\re^{-\nufr^1 \sigma \zeta}|) \lesssim |\sigma||\zeta|\re^{-\frac{1}{2} \nufr^1 \Re \sigma |\zeta|},
    \end{align*}
    for $\zeta \geq 0$ and $\sigma \in B(0,\sqrt{\delta})$ with $\Re \sigma \geq 0$ and $|\sigma^2 \zeta| \geq 1$. So we obtain the desired estimate in either case. The estimate on the derivative is similar, in fact easier since we gain a factor of $\sigma$ after differentiating. 
\end{proof}

\begin{lemma}
    There exist positive constants $C$ and $\delta$ such that 
    \begin{align*}
        |\tilde{G}^c_\sigma (\zeta)| + |\partial_\zeta \tilde{G}^c_\sigma (\zeta)| \leq C \re^{-\frac{1}{2} \nufr^1 \Re \sigma |\zeta|},
    \end{align*}
    for all $\zeta \in \R$ provided $\sigma \in B(0,\sqrt{\delta})$ with $\Re \sigma \geq 0$.
\end{lemma}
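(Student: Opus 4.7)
The strategy is to read off the bound directly from the explicit formula for $\tilde{G}^c_\sigma$, combined with the two structural facts already established in Section~\ref{s: leading edge resolvent}: the pole cancellation inside the bracketed projections, and the Taylor expansion of the spatial eigenvalues $\nu_\mathrm{fr}^\pm(\sigma)$ near $\sigma = 0$.

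First, by Lemma~\ref{l: center projections pole}, the combinations appearing inside the brackets have removable singularities at $\sigma = 0$: both
\[
\Pfr^\mathrm{cs}(\sigma) - \tfrac{1}{\sigma}P_\mathrm{pole} = \mathrm{O}(1), \qquad \Pfr^\mathrm{cu}(\sigma) + \tfrac{1}{\sigma}P_\mathrm{pole} = \mathrm{O}(1),
\]
are analytic (in $\sigma$) on $B(0,\sqrt{\delta})$, so there is a constant $M > 0$, independent of $\sigma$, bounding both. Next, from Corollary~\ref{c: leading edge spatial eval} and~\eqref{e: leading edge spatial eval expansion}, we have $\nu_\mathrm{fr}^\pm(\sigma) = \pm \nufr^1 \sigma + \mathrm{O}(\sigma^2)$, so by taking $\delta$ smaller if necessary we may ensure
\[
\Re \nufr^-(\sigma) \leq -\tfrac{1}{2}\nufr^1 \Re \sigma, \qquad \Re \nufr^+(\sigma) \geq \tfrac{1}{2} \nufr^1 \Re \sigma,
\]
for all $\sigma \in B(0,\sqrt{\delta})$ with $\Re \sigma \geq 0$. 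Consequently, for $\zeta \geq 0$ one has $|\re^{\nufr^-(\sigma)\zeta}| \leq \re^{-\frac{1}{2}\nufr^1 \Re\sigma\,|\zeta|}$, and for $\zeta < 0$ one has $|\re^{\nufr^+(\sigma)\zeta}| = \re^{\Re \nufr^+(\sigma)\zeta} \leq \re^{-\frac{1}{2}\nufr^1\Re\sigma\,|\zeta|}$. Combining these two pointwise bounds with the uniform bound $M$ on the bracketed matrix factors directly yields the desired estimate on $|\tilde G^c_\sigma(\zeta)|$ for all $\zeta \neq 0$ (and hence everywhere away from the jump).

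For the derivative, we work on each half-line separately. Differentiating pulls down a factor of $\nufr^\pm(\sigma) = \mathrm{O}(\sigma)$, which is uniformly bounded as $\sigma \to 0$ and in particular no worse than $\mathrm{O}(1)$ in $\sigma$. Thus the same argument as above, applied to $\partial_\zeta \tilde G^c_\sigma$ on $\zeta > 0$ and on $\zeta < 0$, gives the bound with a (possibly larger) constant $C$. Since the estimate is pointwise and the exponential majorant is unchanged, this completes the proof. The only subtlety worth flagging is that $\tilde G^c_\sigma$ has a jump of size $\Pfr^\mathrm{cs}(\sigma) + \Pfr^\mathrm{cu}(\sigma) = I - \Pfr^\mathrm{uu}(\sigma^2)$ at $\zeta = 0$; this jump is compensated by the jump of $G^h_\sigma$ (so that the total matrix Green's function $T^\mathrm{fr}_\sigma$ has the correct jump $I$), and the distributional delta contribution to $\partial_\zeta$ at $\zeta = 0$ is understood to be excluded from the pointwise estimate, in line with how these kernels are used in the subsequent convolution arguments.
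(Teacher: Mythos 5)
Your proof is correct and follows essentially the same route as the paper: you invoke precisely the analyticity of the pole-corrected projections from Lemma~\ref{l: center projections pole} together with the sign estimates $\Re \nufr^-(\sigma) \leq -\tfrac12 \nufr^1 \Re\sigma$ and $\Re \nufr^+(\sigma) \geq \tfrac12 \nufr^1 \Re\sigma$ from the expansion~\eqref{e: leading edge spatial eval expansion}, and handle the $\zeta$-derivative by noting it only brings down a bounded factor $\nufr^\pm(\sigma)$. The additional remark on the jump at $\zeta = 0$ is a correct clarification that the paper leaves implicit.
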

\begin{proof}
    This follows readily from the following facts: 1. $\Pfr^\mathrm{cs}(\sigma) - \frac{P_\mathrm{pole}}{\sigma}$ and $\Pfr^\mathrm{cu}(\sigma) + \frac{P_\mathrm{pole}}{\sigma}$ are analytic in $\sigma$ in a full neighborhood of the origin by Lemma~\ref{l: center projections pole}; 2. we have $\Re \nufr^-(\sigma) \leq -\frac{1}{2} \nufr^1 \Re \sigma$ and $\Re \nufr^+(\sigma) \geq \frac{1}{2}\nufr^1 \Re \sigma$ for $\sigma \in B(0,\sqrt{\delta})$ with $\Re \sigma \geq 0$. 
\end{proof}

\begin{lemma}\label{l: appendix G h gamma estimate}
    There exist positive constants $C, \delta$, and $\mu$ such that 
    \begin{align*}
        |G^h_\sigma (\zeta)| + |\partial_\zeta G^h_\sigma (\zeta)| \leq C \re^{-\mu |\zeta|},
    \end{align*}
    for all $\zeta \in \R$ provided $\sigma \in B(0,\sqrt{\delta})$.
\end{lemma}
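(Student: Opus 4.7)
The plan is to read off the bound directly from the explicit formula $G^h_\sigma(\zeta) = -\re^{\nu_3(\sigma^2)\zeta} \Pfr^\mathrm{uu}(\sigma^2) 1_{\{\zeta \leq 0\}}$ using the properties of $\nu_3(\sigma^2)$ and $\Pfr^\mathrm{uu}(\sigma^2)$ already established. In particular, Corollary~\ref{c: leading edge spatial eval} tells us that $\nu_3(\sigma^2)$ is analytic in $\sigma^2$ on a neighborhood of the origin, and that $\Re \nu_3(\sigma^2)$ is uniformly bounded away from the imaginary axis with strictly positive real part, for $|\sigma|$ small. Hence, shrinking $\delta > 0$ if necessary, we can pick a constant $\mu > 0$ such that $\Re \nu_3(\sigma^2) \geq \mu$ for all $\sigma \in B(0,\sqrt{\delta})$.

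With $\mu > 0$ fixed, I would argue as follows. For $\zeta > 0$ both $G^h_\sigma(\zeta)$ and $\partial_\zeta G^h_\sigma(\zeta)$ vanish, so there is nothing to estimate. For $\zeta \leq 0$, the exponential factor satisfies $|\re^{\nu_3(\sigma^2) \zeta}| = \re^{\Re \nu_3(\sigma^2) \zeta} \leq \re^{\mu \zeta} = \re^{-\mu|\zeta|}$. Since $\Pfr^\mathrm{uu}(\sigma^2)$ is analytic in $\sigma^2$ in a full neighborhood of the origin (standard spectral theory, as used in Section~\ref{s: leading edge resolvent}), and therefore uniformly bounded on $B(0,\sqrt{\delta})$, the first part of the estimate follows immediately with $C = \sup_{\sigma \in B(0,\sqrt{\delta})} |\Pfr^\mathrm{uu}(\sigma^2)|$. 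For the derivative, one computes $\partial_\zeta G^h_\sigma(\zeta) = -\nu_3(\sigma^2) \re^{\nu_3(\sigma^2)\zeta} \Pfr^\mathrm{uu}(\sigma^2)$ on $\zeta < 0$, and the analyticity of $\nu_3$ at the origin provides a uniform upper bound on $|\nu_3(\sigma^2)|$; combining with the same exponential estimate yields the claimed bound after possibly enlarging $C$.

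There is no real obstacle here. The lemma serves as a companion to the preceding three lemmas in this appendix, collectively providing termwise bounds on the decomposition of $T^\mathrm{fr}_\sigma$, and the content is just that the strong unstable piece is exponentially localized with rate uniform in the small spectral parameter. Once all four lemmas are in place, I would combine them to prove Lemma~\ref{l: right resolvent L1 Linf boundedness estimate} and Lemma~\ref{l: leading edge resolvent L2 estimate} by expressing $(\mcl_+ - \sigma^2)^{-1} \g$ as the convolution of $\Pi_{13} T^\mathrm{fr}_\sigma \Lambda_1$ against the odd extension, writing the heat-kernel part via the odd representation~\eqref{e: odd heat kernel} and applying Lemma~\ref{l: appendix Godd gamma estimate}, and then applying Young's convolution inequality to the remaining pieces using Lemmas~\ref{l: appendix G c minus G heat estimate} through~\ref{l: appendix G h gamma estimate} — but that assembly belongs to the proofs of those lemmas, not to the one at hand.
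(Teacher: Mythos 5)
Your proof is correct and follows essentially the same approach as the paper, which simply cites the fact that $\nu_3(\sigma^2)$ stays in the open right half-plane, uniformly bounded away from the imaginary axis for $|\sigma|$ small, as a consequence of Corollary~\ref{c: leading edge spatial eval}. You have just spelled out the standard computation in slightly more detail, including the uniform bound on $\Pfr^\mathrm{uu}(\sigma^2)$ from its analyticity.
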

\begin{proof}
    This follows from the fact that the eigenvalue $\nu_3(\sigma^2)$ stays in the open right half-plane, uniformly bounded away from the imaginary axis, for $|\sigma|$ small, which in turn follows from Corollary~\ref{c: leading edge spatial eval}. 
\end{proof}

We are now able to prove Lemmas~\ref{l: right resolvent L1 Linf boundedness estimate} and~\ref{l: leading edge resolvent L2 estimate}, which provide control over the leading edge resolvent.

\begin{proof}[Proof of Lemma~\ref{l: right resolvent L1 Linf boundedness estimate}]
Let $\u$ satisfy $(\mcl_+-\sigma^2) \u = \g$. 

First we prove~\eqref{e: right resolvent L101 estimate}. Observe that there exists a constant $\tilde{\mu} > 0$ such that 
\begin{align} \label{e: sigma bound Omega fr 2}
- \Re \sigma \leq -\tilde{\mu}|\sigma|,
\end{align}
for all $\sigma \in \Delta^{\mathrm{fr},2}_\delta$. Hence, combining Lemmas~\ref{l: appendix Godd gamma estimate} through~\ref{l: appendix G h gamma estimate} and recalling~\eqref{e: odd heat kernel} and the fact that we have $G^\mathrm{odd}_\sigma(-\xi,\zeta) = -G^\mathrm{odd}_\sigma(\xi,\zeta)$ for $\xi \in \R$ and $\zeta \geq 0$, we obtain a constant $\mu > 0$ such that the estimate
\begin{align}
        |\u(\xi;\sigma)| + |\partial_\xi \u(\xi;\sigma)| \lesssim \int_{\R} \re^{- \mu |\sigma| |\xi-\zeta|}  \langle \zeta \rangle |\g(\zeta)| \de  \zeta \label{e: leading edge resolvent pointwise estimate}
\end{align}
holds for $\sigma \in \Delta^{\mathrm{fr},2}_\delta$ and $\xi \in \R$. Recalling that $\g$ is odd, we obtain
\begin{align*}
  \|\u(\cdot;\sigma)\|_{W^{1,\infty}} \lesssim \|\g\|_{L_{0,1}^1}
\end{align*}
for $\sigma \in \Delta^{\mathrm{fr},2}_\delta$. To upgrade to an $W^{2,\infty}$ estimate in the first component, note that if $\u = (u, v)^\top$, then we also have
    \begin{align}
        u_\xi(\xi; \sigma) = \int_\R \Pi_2 \Tfr_\sigma (\xi-\zeta) \Lambda_1 \begin{pmatrix} g_1 (\zeta) \\ g_2(\zeta) \end{pmatrix} \de  \zeta. \label{e: leading edge resolvent derivative}
    \end{align}
Hence we can apply the same argument to estimate $u_\xi$ in $W^{1,\infty}$, and thereby get the desired $W^{2,\infty}$ control on $u$, which completes the proof of estimate~\eqref{e: right resolvent L101 estimate}.

To prove~\eqref{e: right resolvent Linf -1 estimate}, note that $\Godd(\xi,\zeta) = \Godd(\zeta,\xi)$ and, thus, Lemma~\ref{l: appendix Godd gamma estimate} yields
   \begin{align*}
        |\Godd_\sigma(\xi,\zeta)| \leq C \langle \xi \rangle \re^{-\nufr^1 \Re \sigma |\xi-\zeta|} ,
    \end{align*}
    for $\xi,\zeta \geq 0$ and $\sigma \in \Delta_\delta^{\mathrm{fr},2}$. Then, combining the latter with Lemmas~\ref{l: appendix G c minus G heat estimate} through~\ref{l: appendix G h gamma estimate}, while recalling~\eqref{e: odd heat kernel} and~\eqref{e: sigma bound Omega fr 2} and the fact that $G^\mathrm{odd}_\sigma(-\xi,\zeta) = -G^\mathrm{odd}_\sigma(\xi,\zeta)$ holds for $\xi \in \R$ and $\zeta \geq 0$, we establish
\begin{align*}
        |\u(\xi;\sigma)| + |\partial_\xi \u(\xi;\sigma)| \lesssim \int_{\R}  \langle \xi \rangle |\g(\zeta)| \de  \zeta ,
\end{align*}
for $\sigma \in \Delta_\delta^{\mathrm{fr},2}$ and $\xi \in \R$. We can control $\chi_+(\xi) \langle \xi \rangle$ with the weight $\rho_{0, -1}(\xi)$ in the $L^\infty_{0,-1}$ norm, leading readily to~\eqref{e: right resolvent Linf -1 estimate}, again extending to higher regularity by examining $u_\xi$ through~\eqref{e: leading edge resolvent derivative}. 
\end{proof}

\begin{proof}[Proof of Lemma~\ref{l: leading edge resolvent L2 estimate}]
As above, combining Lemmas~\ref{l: appendix Godd gamma estimate} through~\ref{l: appendix G h gamma estimate} gives the pointwise estimate~\eqref{e: leading edge resolvent pointwise estimate}. Using Young's convolution inequality we readily obtain
\begin{align*}
        \| \u(\cdot,\sigma) \|_{L^2} \lesssim \| \g \|_{L^1_{1,1}} \| \re^{- \mu |\sigma| \cdot} \|_{L^2} \lesssim \frac{1}{|\sigma|^{\frac12}} \| \g \|_{L^1_{0,1}},
\end{align*}
for $\sigma \in \Delta_\delta^{\mathrm{fr},2}$. 
\end{proof}

\section{Proof of pointwise estimates}\label{s: pointwise estimates}
In this appendix, we give a detailed exposition of a proof of Proposition~\ref{p: pointwise estimate}. We emphasize that the proof we give to a large extent rephrases arguments from~\cite[Section 8]{ZumbrunHoward}. Note that we only have to consider $\xi - \zeta \leq 0$ due to the presence of the term $\chi_-(\xi-\zeta)$ on the left-hand side of~\eqref{e: ptwise estimate}. Recall from Proposition~\ref{prop: wake spectral curve expansion} that the critical Floquet exponent $\nuwt(\lambda) = \nu(\lambda)$ occurring in~\eqref{e: ptwise estimate} is analytic in $\lambda$ on $B(0,\delta)$, has positive real part for $\lambda \in B(0,\delta)$ lying to the right of $\Sigma(\lwt)$, and admits the expansion
\begin{align} \nu(\lambda) = \nu_1 \lambda - \nu_2 \lambda^2 + \mathrm{O}(\lambda^3), \label{e: critical Floquet expansion}
\end{align}
with coefficients $\nu_{1,2} > 0$, where we have abbreviated $\nu(\lambda) = \nuwt(\lambda)$ and $\nu_{1,2} = \nu^{1,2}_\mathrm{wt}$ for simplicity of notation. We further assume that $t > 1$, since estimates for small times may be readily obtained using any fixed contour in $B(0,\delta)$, which lies to the right of $\Sigma(\lwt)$ and connects $\lambda_0^*$ to $\lambda_0$. Indeed, for $t \in [0,1]$, $\lambda$ in such a contour, and $\xi \leq \zeta$ we have that $\Re(\lambda t + \nu(\lambda)(\xi-\zeta))$ is uniformly bounded, which readily leads to the estimate~\eqref{e: ptwise estimate} for $t \in [0,1]$.
    
\noindent \textbf{Choice of pointwise contour.} The basic idea is to choose a contour which approximately minimizes the exponent $\Re (\lambda t + \nu(\lambda) (\xi-\zeta))$ in~\eqref{e: ptwise estimate}. We start by computing the minimum of the quadratic approximation $\Re [ \lambda t  + (\nu_1 \lambda-\nu_2 \lambda^2) (\xi-\zeta)]$, restricting to $\lambda \in \R$. From a simple calculation, we find that the minimum is attained at $\lambda = \lambda_{\mathrm{min}}(\xi,\zeta,t)$ with
    \begin{align*}
        \lambda_\mathrm{min}(\xi,\zeta,t) = \frac{t + \nu_1(\xi-\zeta)}{2\nu_2(\xi-\zeta)}.
    \end{align*}
The objective is to integrate along a short segment of the line $\Re \lambda = \lambda_\mathrm{min}$, which then approximately minimizes the exponent $\Re(\lambda t + \nu(\lambda) (\xi - \zeta))$.
    
    \begin{figure}
        \centering
        \includegraphics[width=0.75\textwidth]{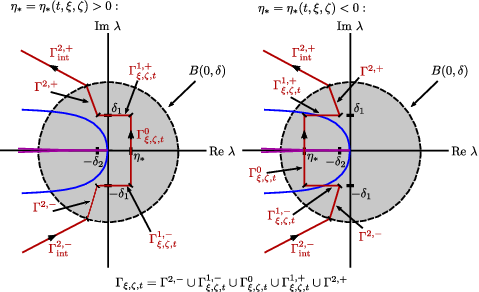}
        \caption{Fredholm borders of $\lfr$ (blue, purple), together with pointwise contours $\Gamma^0_{\xi,\zeta,t}$ (dark red), in the cases where $\eta_*(t,\xi,\zeta) > 0$ (left) and where $\eta_*(t,\xi,\zeta) < 0$ (right). Note that the figure is not to scale, since in particular we require $|\eta_*| + \delta_2 \ll \delta_1^2$.}
        \label{fig: pointwise contours}
    \end{figure}
    
    We first fix $\delta_1 > 0$ small, and then fix $\delta_0, \delta_2 > 0$ such that $\delta_0 + \delta_2 \ll \delta_1^2$, and such that the points $-\delta_2 + \ri \delta_1$ lie in $B(0,\delta)$ to the right of $\Sigma_\mathrm{ess}(\lwt)$. We define 
    \begin{align*}
        \eta_*(t,\xi,\zeta) = \begin{cases}
        \lambda_{\mathrm{min}}, & |\lambda_{\mathrm{min}}| \leq \delta_0, \\
        \delta_0, &\lambda_{\mathrm{min}} > \delta_0, \\
        -\delta_0, &\lambda_{\mathrm{min}} < - \delta_0. 
        \end{cases} 
    \end{align*}
    Let $\Gamma^0_{\xi,\zeta,t}$ be the straight line segment from $\eta_* - \ri \delta_1$ to $\eta_*+\ri\delta_1$. Let $\Gamma^{1,+}_{\xi,\zeta,t}$ be the line segment connecting $\eta_* + \ri \delta_1$ to $-\delta_2 + \ri \delta_1$, and let $\Gamma^{2,+}$ be a fixed curve to the right of $\Sigma_\mathrm{ess}(\lwt)$ which connects $-\delta_2 + \ri \delta_1$ to $\lambda_0$. Similarly, let $\Gamma^{1,-}_{\xi,\zeta,t}$ be the line segment connecting $\eta_* - \ri \delta_1$ to $-\delta_2 - \ri \delta_1$, and let $\Gamma^{2,-}$ be a fixed curve to the right of $\Sigma_\mathrm{ess}(\lwt)$ which connects $-\delta_2 - \ri \delta_1$ to $\lambda_0^*$. Let 
    \begin{align*}
        \Gamma_{\xi,\zeta,t} = \Gamma^{2,-} \cup \Gamma^{1,-}_{\xi,\zeta,t} \cup \Gamma^0_{\xi,\zeta,t} \cup \Gamma^{1,+}_{\xi,\zeta,t} \cup \Gamma^{2, +}_{\xi,\zeta,t}. 
    \end{align*}
    See Figure~\ref{fig: pointwise contours} for schematic of the contours. Note in particular that when $\eta_* < 0$, the contour $\Gamma_{\xi,\zeta,t}$ passes into the essential spectrum of $\lfr$. 
    
    To prove Proposition~\ref{p: pointwise estimate}, we estimate the contribution from each portion of the contour, breaking into the cases $|\lambda_\mathrm{\min}| \leq \delta_0$, and $|\lambda_\mathrm{min}| > \delta_0$. Throughout, we will use the expansion
    \begin{align}
        \Re \nu(\lambda) = \nu_1 \Re \lambda - \nu_2 \left[ (\Re \lambda)^2 - (\Im \lambda)^2 \right] + \mathrm{O}(|\lambda|^3), \label{e: critical Floquet expansion real part} 
    \end{align}
    obtained by taking the real part of~\eqref{e: critical Floquet expansion}.

    \noindent \textbf{Case I: $|\lambda_{\mathrm{min}}| \leq \delta_0$.}
    
    \begin{lemma}\label{l: appendix small eta critical}
     Fix non-negative integers $\ell$ and $j$, and let $g$ satisfy the assumptions of Proposition~\ref{p: pointwise estimate} for some non-negative integer $m$. Suppose $t > 1$ and $\xi, \zeta \in \R$ are such that $\xi \leq \zeta$ and $|\lambda_{\mathrm{min}}(\xi,\zeta,t) | \leq \delta_0$. Then, there exist $\xi$-, $\zeta$- and $t$-independent constants $C, M > 0$ such that
        \begin{align*}
            \int_{\Gamma^0_{\xi,\zeta,t}} |\lambda|^j |\nuwt(\lambda)|^\ell \re^{\Re (\lambda t + \nuwt(\lambda) (\xi-\zeta))} |g(\xi,\zeta,\lambda)| \chi_-(\xi-\zeta) \, |\de \lambda| \leq C\frac{\chi_-(\xi-\zeta)}{(1+t)^{\frac{1}{2} + \frac{\ell + j + m}{2}}} \re^{- \frac{ (\xi-\zeta+\nu_1^{-1} t)^2}{Mt}}.
        \end{align*}
    \end{lemma}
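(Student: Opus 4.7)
The plan is to parameterize the vertical segment $\Gamma^0_{\xi,\zeta,t}$ by $\lambda = \eta_* + \mathrm{i}k$ with $k \in [-\delta_1,\delta_1]$, and in the present regime $\eta_* = \lambda_{\min}(\xi,\zeta,t) = (t+\nu_1(\xi-\zeta))/(2\nu_2(\xi-\zeta))$. Substituting into~\eqref{e: critical Floquet expansion real part} and using $\xi-\zeta < 0$ (so the term $\nu_2 k^2(\xi-\zeta)$ yields Gaussian damping), I compute
\begin{align*}
\Re(\lambda t + \nu(\lambda)(\xi-\zeta)) = \eta_* t + (\nu_1 \eta_* - \nu_2 \eta_*^2)(\xi-\zeta) + \nu_2 k^2 (\xi-\zeta) + \mathrm{O}(|\eta_*+\mathrm{i}k|^3)(\xi-\zeta).
\end{align*}
A direct calculation shows that the first two summands combine into $\nu_2 \eta_*^2 (\xi-\zeta) = -(t+\nu_1(\xi-\zeta))^2/(4\nu_2|\xi-\zeta|)$, which by the identity $t+\nu_1(\xi-\zeta) = \nu_1(\xi-\zeta+\nu_1^{-1}t)$ equals $-\nu_1^2(\xi-\zeta+\nu_1^{-1}t)^2/(4\nu_2|\xi-\zeta|)$. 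This is precisely the targeted Gaussian shape once $|\xi-\zeta|$ is replaced by $t$.

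That replacement follows from the hypothesis $|\lambda_{\min}| \leq \delta_0$: rearranging $|t+\nu_1(\xi-\zeta)| \leq 2\nu_2\delta_0|\xi-\zeta|$ and using $t > 0$ gives, for $\delta_0$ small enough, the two-sided bound $(\nu_1-2\nu_2\delta_0)|\xi-\zeta| \leq t \leq (\nu_1+2\nu_2\delta_0)|\xi-\zeta|$, so $|\xi-\zeta| \asymp t$. Consequently the leading contribution is bounded above by $\mathrm{e}^{-(\xi-\zeta+\nu_1^{-1}t)^2/(M't)}$ for a suitable constant $M' > 0$. For the cubic remainder, taking $\delta_0,\delta_1$ sufficiently small so that $|\mathrm{O}(|\eta_*+\mathrm{i}k|^3)| \leq \frac{\nu_2}{2}(|\eta_*|^2+k^2)$, the error is absorbed into both the Gaussian in $k$ (keeping an effective coefficient $\nu_2/2$) and into a harmless perturbation of $M'$.

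With this exponential control in hand, I invoke $|g(\xi,\zeta,\lambda)| \leq C|\lambda|^m$ and $|\nu(\lambda)| \lesssim |\lambda|$ to bound the integrand on $\Gamma^0_{\xi,\zeta,t}$ by a constant multiple of
\begin{align*}
\mathrm{e}^{-(\xi-\zeta+\nu_1^{-1}t)^2/(Mt)} \, (|\eta_*|+|k|)^{j+\ell+m}\, \mathrm{e}^{-c k^2 t},
\end{align*}
with $M,c>0$ independent of $\xi,\zeta,t$. Splitting $(|\eta_*|+|k|)^{j+\ell+m}$ into powers of $|\eta_*|$ and of $|k|$ and integrating over $k \in [-\delta_1,\delta_1]$, the $|k|^\alpha \mathrm{e}^{-ck^2 t}$ pieces contribute at most $(1+t)^{-(\alpha+1)/2}$. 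To handle factors of $|\eta_*|^\beta$ I use that $|\eta_*| \lesssim |\xi-\zeta+\nu_1^{-1}t|/t$, so $|\eta_*|^\beta \mathrm{e}^{-(\xi-\zeta+\nu_1^{-1}t)^2/(Mt)} \lesssim t^{-\beta/2} \mathrm{e}^{-(\xi-\zeta+\nu_1^{-1}t)^2/(M''t)}$ after slightly weakening the Gaussian constant. Collecting the worst case $\alpha+\beta = j+\ell+m$ yields the claimed bound $(1+t)^{-1/2-(\ell+j+m)/2}$ times the Gaussian factor, with the cutoff $\chi_-(\xi-\zeta)$ kept throughout.

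The main technical obstacle is the bookkeeping around the cubic remainder in $\nu(\lambda)$: one must verify simultaneously that $\nu_2 k^2 (\xi-\zeta) + \mathrm{O}(|\lambda|^3)(\xi-\zeta) \leq -c k^2 |\xi-\zeta|$ and that the expansion error around $\eta_* = \lambda_{\min}$ does not spoil the sharp Gaussian exponent. Both require $\delta_0+\delta_1$ chosen small relative to $\nu_2$ and uniformly in $(t,\xi,\zeta)$, and relying on $|\xi-\zeta|\asymp t$ so that the cubic remainder $|\lambda|^3|\xi-\zeta|$ is a strict perturbation of $\nu_2(k^2+\eta_*^2)|\xi-\zeta|$ rather than a correction of comparable size.
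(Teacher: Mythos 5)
Your proof is correct and follows essentially the same route as the paper's: the same parameterization $\lambda = \eta_* + \mathrm{i}k$, the same algebraic cancellation showing $\eta_* t + (\nu_1\eta_* - \nu_2\eta_*^2)(\xi-\zeta) = \nu_2\eta_*^2(\xi-\zeta)$ giving the Gaussian exponent, the same use of $|\lambda_{\min}|\le\delta_0$ to obtain $|\xi-\zeta|\asymp t$, and the same split of the polynomial factor into $|\eta_*|^\beta$ (converted to $t^{-\beta/2}$ at the cost of a slightly weaker Gaussian constant) and $|k|^\alpha$ (integrated against $\re^{-ck^2 t}$). Your packaging of the cubic remainder — bounding $|\mathrm{O}(|\lambda|^3)|$ directly by $\frac{\nu_2}{2}(|\eta_*|^2 + k^2)$ before multiplying by $|\xi-\zeta|$ — is a minor variation of the paper's two-step absorption (first into $(\Im\lambda)^2 t$, then into the spatial Gaussian using $|\eta_*|^2 t \lesssim (\xi-\zeta+\nu_1^{-1}t)^2/t$) but amounts to the same estimate.
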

    \begin{proof}
        For $\lambda \in \Gamma^0_{\xi,\zeta,t}$, we have $\Re \lambda = \eta_*$, so by~\eqref{e: critical Floquet expansion real part} we have
        \begin{align*}
            \Re \nu(\lambda) = \nu_1 \eta_* - \nu_2 \eta_*^2 + \nu_2 (\Im \lambda)^2 + \mathrm{O}(|\lambda|^3). 
        \end{align*}
        Noting that $\eta_* = \lambda_{\mathrm{min}}$ in the regime, we find after some algebra
        \begin{align*}
            \eta_* t + \nu_1 \eta_* (\xi-\zeta) - \nu_2 \eta_*^2 (\xi-\zeta) = - \frac{\nu_1^2}{\nu_2} \frac{(\xi-\zeta+\nu_1^{-1} t)^2}{4 |\xi - \zeta|},
        \end{align*}
        and so we have
        \begin{align*}
            \Re (\lambda t + \nuwt (\lambda) (\xi-\zeta)) = - \frac{\nu_1^2}{\nu_2} \frac{(\xi-\zeta+\nu_1^{-1} t)^2}{4 |\xi-\zeta|} + \nu_2 (\Im \lambda)^2 (\xi-\zeta) + \mathrm{O}(|\lambda|^3) (\xi - \zeta). 
        \end{align*}
        The assumption $| \lambda_{\mathrm{min}}| \leq \delta_0$ implies that there exist $t$-, $\xi$- and $\zeta$-independent constants $C_1, C_2 > 0$ such that 
        \begin{align} C_1 (\zeta-\xi) \leq t \leq C_2 (\zeta-\xi),\label{e: regime lmin less than delta0}
        \end{align}
        We obtain
        \begin{align*}
            \Re (\lambda t + \nuwt (\lambda) (\xi-\zeta)) \leq - \frac{(\xi-\zeta+\nu_1^{-1} t)^2}{Mt} - \mu_1 (\Im \lambda)^2 t + \mathrm{O}(|\lambda|^3) (\xi - \zeta)
        \end{align*}
        for some $\lambda$-, $t$-, $\xi$- and $\zeta$-independent constants $M,\mu_1 > 0$. For $\lambda \in \Gamma^0_{\xi,\zeta,t}$ we have $|\lambda|^2 = \eta_*^2 + (\Im \lambda)^2$. There then  exists a $\lambda$-, $t$-, $\xi$- and $\zeta$-independent constant $C_3 > 0$ such that $|\lambda|^3 \leq C_3 [|\eta_*|^3 + |\Im \lambda|^3]$ for $\lambda \in \Gamma^0_{\xi,\zeta,t}$. Hence, we arrive at
        \begin{align*}
            \Re (\lambda t + \nuwt (\lambda) (\xi-\zeta)) \leq - \frac{(\xi-\zeta+\nu_1^{-1} t)^2}{Mt} - \mu_1 (\Im \lambda)^2 t + \mu_2 [|\eta_*|^3 + |\Im \lambda|^3] t
        \end{align*}
        for some $\lambda$-, $t$-, $\xi$- and $\zeta$-independent constant $\mu_2 > 0$. Since $\Im \lambda$ is small, we can absorb $\mu_2 |\Im \lambda|^3 t$ into $- \mu_1 (\Im \lambda)^2 t$ at the cost of changing $\mu_1$. Similarly, note that~\eqref{e: regime lmin less than delta0} yields
        \begin{align}
            |\eta_*|^2 t \leq \frac{C_2^2\nu_1^2}{4\nu_2^2} \frac{(\xi-\zeta + \nu_1^{-1} t)^2}{t}. \label{e: eta squared t estimate}
        \end{align}
        Then, since $|\eta_*| < \delta_0$, we can absorb the factor of $\mu_2 |\eta_*|^3 t$ into the first term by adjusting $M$, provided $\delta_0 > 0$ is sufficiently small. Altogether, we obtain
        \begin{align*}
            \re^{\Re (\lambda t + \nu(\lambda) (\xi-\zeta))} \leq \re^{-\frac{(\xi-\zeta+\nu_1^{-1} t)^2}{Mt}} \re^{-\mu_1 (\Im \lambda)^2 t}
        \end{align*}
        for some  $\lambda$-, $t$-, $\xi$- and $\zeta$-independent constants $M, \mu_1 > 0$. 
        
        For the contour integral over $\Gamma^0_{\xi,\zeta,t}$, we use this together with~\eqref{e: critical Floquet expansion} and the fact that, for any non-negative integer $k$, there exists a $\lambda$-, $t$-, $\xi$- and $\zeta$-independent constant  $C_k > 0$ such that $|\lambda|^k \leq C_k (|\eta_*|^k + |\Im \lambda|^k)$ for $\lambda \in \Gamma^0_{\xi,\zeta,t}$, to find a $t$-, $\xi$- and $\zeta$-independent constant $\tilde{C} > 0$ such that
        \begin{multline}
            \int_{\Gamma^0_{\xi,\zeta,t}} |\lambda|^j |\nuwt(\lambda)|^\ell \re^{\Re (\lambda t + \nuwt(\lambda) (\xi-\zeta))} |g(\xi,\zeta,\lambda)| \chi_-(\xi-\zeta) \, |\de \lambda| \\ \leq \tilde{C} \re^{-\frac{(\xi-\zeta+\nu_1^{-1} t)^2}{Mt}} \int_{\Gamma^0_{\xi,\zeta,t}} (|\eta_*|^{\ell + j + m} + |\Im \lambda|^{\ell + j + m} ) \re^{-\mu_1 (\Im \lambda)^2 t} \, |\de \lambda|. \label{e: ptwise appendix case i gamma 0 estimate}
        \end{multline} 
        Using~\eqref{e: eta squared t estimate} and writing $z = (\xi-\zeta+\nu_1^{-1}t)/\sqrt{t}$, we establish a $t$-, $\xi$- and $\zeta$-independent constant $C > 0$ such that
        \begin{align*}
            \re^{-\frac{(\xi-\zeta+\nu_1^{-1} t)^2}{Mt}} \int_{\Gamma^0_{\xi,\zeta,t}} |\eta_*|^{\ell + j + m}  \re^{-\mu_1 (\Im \lambda)^2 t} \, |\de \lambda| &\leq C \frac{|z|^{\frac{\ell + j + m}{2}}}{t^{\frac{\ell + j + m}{2}}} \re^{-z^2/M} \int_{\Gamma^0_{\xi,\zeta,t}}  \re^{-\mu_1 (\Im \lambda)^2 t} \, |\de \lambda| \\
            &\leq \frac{C}{t^{\frac{\ell + j + m}{2}}} \re^{-z^2/M} \int_{\Gamma^0_{\xi,\zeta,t}}  \re^{-\mu_1 (\Im \lambda)^2 t} |\de \lambda| ,
        \end{align*}
        upon adjusting the values of the constants $C$ and $M$ to absorb the polynomially growing factor in the last inequality. Now, for any non-negative integer $k$, we have the scaling estimate
        \begin{align*}
            \int_{\Gamma^0_{\xi,\zeta,t}} |\Im \lambda|^k \re^{-\mu_1 (\Im \lambda)^2 t} \, | \de \lambda| \leq K \int_{-\infty}^\infty |\xi|^k \re^{-\mu_1 \xi^2 t} \de  \xi \leq \frac{\tilde{C}_k}{t^{\frac{1}{2}+\frac{k}{2}}}. 
        \end{align*}
        for some constants $K, \tilde{C}_k > 0$, which are independent of $\xi$, $\zeta$ and $t$. Using these two estimates in~\eqref{e: ptwise appendix case i gamma 0 estimate}, we obtain the desired result upon noting that $t > 1$. 
    \end{proof}
    
\begin{lemma} 
    Fix non-negative integers $\ell$ and $j$, and let $g$ satisfy the assumptions of Proposition~\ref{p: pointwise estimate} for some non-negative integer $m$. Suppose $t > 1$ and $\xi, \zeta \in \R$ are such that $\xi \leq \zeta$ and $|\lambda_{\mathrm{min}}(\xi,\zeta,t) | \leq \delta_0$. Then, there exist $\xi$-, $\zeta$- and $t$-independent constants $C, \mu_2 > 0$ such that
    \begin{align*}
        \int_{\Gamma^{1,\pm}_{\xi,\zeta,t}} |\lambda|^j |\nuwt(\lambda)|^\ell \re^{\Re (\lambda t + \nuwt(\lambda) (\xi-\zeta))} |g(\xi,\zeta,\lambda)| \chi_-(\xi-\zeta) \, |\de \lambda| \leq C \re^{-\mu_2 t} \re^{-\mu_2 |\xi-\zeta|} \chi_- (\xi - \zeta).
    \end{align*} 
\end{lemma}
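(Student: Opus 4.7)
The plan is to parametrize $\Gamma^{1,\pm}_{\xi,\zeta,t}$ by $\lambda = s \pm \ri \delta_1$ with $s$ ranging over the interval with endpoints $\eta_*$ and $-\delta_2$, and to show that on this segment we control the exponent by
\begin{align*}
\Re \bigl(\lambda t + \nuwt(\lambda)(\xi-\zeta)\bigr) \leq -\mu_2 t - \mu_2 |\xi-\zeta|,
\end{align*}
for some $\xi$-, $\zeta$-, $t$-independent $\mu_2 > 0$. Since the prefactor $|\lambda|^j|\nuwt(\lambda)|^\ell |g(\xi,\zeta,\lambda)|$ is bounded uniformly on $\Gamma^{1,\pm}_{\xi,\zeta,t}$ (as $|\lambda| \leq \delta_0 + \delta_1$) and the length of the contour is at most $\delta_0 + \delta_2$, the asserted estimate follows at once.

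The key input is the expansion~\eqref{e: critical Floquet expansion real part}, which on the segment reads
\begin{align*}
\Re \nuwt(\lambda) = \nu_1 s - \nu_2\bigl(s^2 - \delta_1^2\bigr) + \mathrm{O}(|\lambda|^3).
\end{align*}
Because we arranged $\delta_0 + \delta_2 \ll \delta_1^2$ when choosing the contour, and because $|s| \leq \max(\delta_0,\delta_2)$ and $|\lambda| \leq \delta_0 + \delta_1$, the terms $\nu_1 s$, $\nu_2 s^2$, and the cubic remainder $\mathrm{O}(|\lambda|^3) = \mathrm{O}(\delta_1^3)$ can all be absorbed into $\nu_2\delta_1^2$ provided $\delta_0, \delta_1, \delta_2$ are taken sufficiently small, yielding
\begin{align*}
\Re \nuwt(\lambda) \geq \tfrac{1}{2}\nu_2 \delta_1^2.
\end{align*}
Since $\xi - \zeta \leq 0$, this gives $\Re\bigl(\nuwt(\lambda)(\xi-\zeta)\bigr) \leq -\tfrac{1}{2}\nu_2\delta_1^2 (\zeta-\xi)$. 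The only remaining term, $\Re(\lambda t) = s t$, is dangerous only when $s > 0$, in which case $s \leq \eta_* \leq \delta_0$ and we invoke the regime hypothesis $|\lambda_{\mathrm{min}}| \leq \delta_0$, which via~\eqref{e: regime lmin less than delta0} yields $t \leq C_2(\zeta-\xi)$. Thus $s t \leq \delta_0 C_2 (\zeta-\xi)$, and choosing $\delta_0$ sufficiently small compared with $\delta_1^2$ we conclude
\begin{align*}
\Re \bigl(\lambda t + \nuwt(\lambda)(\xi-\zeta)\bigr) \leq \left(\delta_0 C_2 - \tfrac{1}{2}\nu_2 \delta_1^2\right)(\zeta-\xi) \leq -\tfrac{1}{4}\nu_2 \delta_1^2 \, |\xi-\zeta|.
\end{align*}

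To turn this spatial exponential decay into decay in both $t$ and $|\xi-\zeta|$, I would use again the regime inequality~\eqref{e: regime lmin less than delta0}: since $t \leq C_2|\xi-\zeta|$, half of the spatial decay rate can be traded for a time decay rate, i.e.,
\begin{align*}
-\tfrac{1}{4}\nu_2\delta_1^2|\xi-\zeta| \leq -\tfrac{1}{8}\nu_2\delta_1^2|\xi-\zeta| - \tfrac{\nu_2\delta_1^2}{8 C_2}\, t,
\end{align*}
from which the desired $\re^{-\mu_2 t}\re^{-\mu_2|\xi-\zeta|}$ bound follows with $\mu_2 = \min\bigl(\tfrac{1}{8}\nu_2\delta_1^2, \tfrac{\nu_2\delta_1^2}{8C_2}\bigr)$. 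I expect no serious obstacle here; the only delicate point is making sure the smallness hierarchy $\delta_0 + \delta_2 \ll \delta_1^2$ imposed in the construction of the contour is enough to dominate both the linear term $\nu_1 s$ and the cubic remainder $\mathrm{O}(|\lambda|^3)$ in the expansion of $\Re\nuwt$, so that the $\nu_2 \delta_1^2$ gain from $(\Im\lambda)^2$ survives.
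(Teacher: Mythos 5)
Your proposal is correct and takes essentially the same route as the paper: both isolate the $\nu_2(\Im\lambda)^2(\xi-\zeta)$ contribution from~\eqref{e: critical Floquet expansion real part}, use the smallness hierarchy $\delta_0 + \delta_2 \ll \delta_1^2$ to show it dominates the linear term $\nu_1\Re\lambda(\xi-\zeta)$, the $(\Re\lambda)^2$ term, and the cubic remainder, and then control $\Re\lambda\, t$ via the regime inequality $t \leq C_2(\zeta-\xi)$ forced by $|\lambda_\mathrm{min}|\leq\delta_0$. The only difference is presentational: the paper asserts the resulting bound $-\mu_2(\Im\lambda)^2|\xi-\zeta| - \mu_2(\Im\lambda)^2 t$ directly, whereas you first collect everything into $-c\,|\xi-\zeta|$ and then trade half of it into $t$-decay using the same regime inequality, which amounts to the same thing.
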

\begin{proof}
    For $\lambda \in \Gamma^{1,\pm}_{\xi,\zeta,t}$, we have $|\Im \lambda| = \delta_1$, and $|\Re \lambda| \leq \max (\delta_2, \delta_0)$. Also, the assumption $|\lambda_{\mathrm{min}}| < \delta_0$ implies~\eqref{e: regime lmin less than delta0}. If we assume that $\delta_2$ and $\delta_0$ are sufficiently small relative to $\delta_1$, we can, upon recalling~\eqref{e: critical Floquet expansion real part}, control all terms in
    \begin{align*}
        \Re (\lambda t + \nu(\lambda)(\xi-\zeta)) = \Re \lambda t + \nu_1 \Re \lambda (\xi-\zeta) - \nu_2 (\Re \lambda)^2 (\xi-\zeta) + \nu_2 (\Im \lambda)^2 (\xi-\zeta) + \mathrm{O}(|\lambda|^3) (\xi - \zeta)
    \end{align*}
    by $-\mu_2 (\Im \lambda)^2 |\xi-\zeta| - \mu_2 (\Im \lambda)^2 t$, from which the desired result readily follows. 
\end{proof}

\noindent \textbf{Case II:} $|\lambda_{\mathrm{min}}| > \delta_0$. 

\begin{lemma}
 Fix non-negative integers $\ell$ and $j$, and let $g$ satisfy the assumptions of Proposition~\ref{p: pointwise estimate} for some non-negative integer $m$. Suppose $t > 1$ and $\xi, \zeta \in \R$ are such that $\xi \leq \zeta$ and $|\lambda_{\mathrm{min}}(\xi,\zeta,t)| > \delta_0$. Then, there exist $\xi$-, $\zeta$- and $t$-independent constants $C, \mu > 0$ such that
    \begin{align}
        \int_{\Gamma^0_{\xi,\zeta,t}} |\lambda|^j |\nuwt(\lambda)|^\ell \re^{\Re (\lambda t + \nuwt(\lambda) (\xi-\zeta))} |g(\xi,\zeta,\lambda)| \chi_-(\xi-\zeta) \, |\de \lambda| \leq C \re^{-\mu t} \re^{-\mu |\xi-\zeta|} \chi_-(\xi-\zeta). \label{e: appendix ptwise large eta estimate}
    \end{align}
\end{lemma}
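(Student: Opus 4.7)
The plan is to reduce the integral bound to a pointwise estimate on the exponent $\Re(\lambda t + \nuwt(\lambda)(\xi-\zeta))$ along the contour $\Gamma^0_{\xi,\zeta,t}$. Since this contour is the vertical segment $\{\eta_* + \ri \eta : \eta \in [-\delta_1, \delta_1]\}$ of length at most $2\delta_1$ independent of $\xi,\zeta,t$, and since $|\lambda|$, $|\nuwt(\lambda)|$ and $|g(\xi,\zeta,\lambda)| \leq C|\lambda|^m$ are all uniformly bounded on $B(0,\delta)$, it suffices to establish
\[ \Re(\lambda t + \nuwt(\lambda)(\xi-\zeta)) \leq -\mu t - \mu|\xi-\zeta|\]
uniformly for $\lambda \in \Gamma^0_{\xi,\zeta,t}$, $t > 1$ and $\xi \leq \zeta$ with $|\lambda_{\min}| > \delta_0$, for some fixed $\mu > 0$.

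Substituting $\lambda = \eta_* + \ri \eta$ into the expansion~\eqref{e: critical Floquet expansion real part} yields
\[\Re(\lambda t + \nuwt(\lambda)(\xi-\zeta)) = \eta_* t + \bigl[\nu_1 \eta_* - \nu_2\eta_*^2 + \nu_2 \eta^2 + \mathrm{O}((\delta_0+\delta_1)^3)\bigr](\xi-\zeta).\]
I would then treat the two subcases separately, the essential point being that each forces a quantitative relation between $t$ and $\zeta-\xi$. The subcase $\lambda_{\min} > \delta_0$ (so $\eta_* = +\delta_0$) forces $\zeta - \xi \geq t/(\nu_1 - 2\nu_2 \delta_0)$, and evaluation at the boundary $\zeta - \xi = t/(\nu_1 - 2\nu_2\delta_0)$ produces, via exact cancellation of the leading-order linear terms in $\delta_0$,
\[\Re(\lambda t + \nuwt(\lambda)(\xi-\zeta)) \leq -\tfrac{\nu_2\delta_0^2}{\nu_1-2\nu_2\delta_0}\,t + \mathrm{O}(\delta_0^3)\,t,\]
while for $\zeta-\xi$ strictly larger than this boundary, the linear-in-$(\zeta-\xi)$ coefficient $c_0 \sim \nu_1 \delta_0$ supplies additional decay proportional to $(\zeta-\xi) - t/(\nu_1-2\nu_2\delta_0)$. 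Combining the floor $\mathrm{O}(\delta_0^2)t$ with a sufficiently small fraction of the excess $(\zeta-\xi)$-decay and using $\zeta-\xi = (\zeta-\xi - t/(\nu_1-2\nu_2\delta_0)) + t/(\nu_1-2\nu_2\delta_0)$ then yields the target bound with some $\mu$ of order $\delta_0^2$. The subcase $\lambda_{\min} < -\delta_0$ (so $\eta_* = -\delta_0$) forces $\zeta-\xi \leq t/(\nu_1+2\nu_2\delta_0)$; here the linear term $-\delta_0 t$ already has the favorable sign, the opposing term $\nu_1\delta_0(\zeta-\xi)$ is dominated via this constraint to give $-c\delta_0^2 t$, and since $|\xi-\zeta|$ is then bounded by $t/\nu_1$, trading a small fraction of the $t$-decay recovers the desired $|\xi-\zeta|$-decay.

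The main obstacle is the delicate sign balancing in the subcase $\eta_* = +\delta_0$: the term $\delta_0 t$ has the wrong sign, and any estimate that discards the quadratic correction $\nu_2 \delta_0^2$ from $\Re \nuwt(\lambda)$ fails to yield any decay in $t$. Indeed, at leading order in $\delta_0$, the positive contribution $\delta_0 t$ is only just canceled by $\nu_1\delta_0(\xi-\zeta)$ evaluated at the boundary of the regime, leaving zero net decay. The $\mathrm{O}(\delta_0^2)$ residual produced by exact arithmetic at that boundary is the entire source of temporal decay, and consequently the rate $\mu$ scales like $\delta_0^2$. This forces $\delta_0$ to remain fixed (though small) throughout; once this quadratic correction is carefully tracked, however, the remaining estimates are elementary and the argument closes.
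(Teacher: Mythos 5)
Your argument follows the same route as the paper's: on the vertical segment $\Gamma^0_{\xi,\zeta,t}$ one has $\Re\lambda = \eta_* = \pm\delta_0$, the condition $|\lambda_{\mathrm{min}}| > \delta_0$ forces the quantitative relation between $t$ and $\zeta-\xi$ that you identify, and the residual after cancelling the leading order in $\delta_0$ is a negative term of size $\sim \nu_2\delta_0^2$, from which one splits off $t$-decay and $|\xi-\zeta|$-decay. That is exactly the paper's computation, so the structure is correct.

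There is one genuine technical slip in how you handle the cubic remainder. You uniformize the $\mathrm{O}(|\lambda|^3)$ error on $\Gamma^0_{\xi,\zeta,t}$ to $\mathrm{O}((\delta_0+\delta_1)^3)$ and then, in the displayed bound, replace it by $\mathrm{O}(\delta_0^3)\,t$. These two statements are inconsistent: in this section of the paper the smallness hierarchy is $\delta_0 + \delta_2 \ll \delta_1^2$, so in particular $\delta_0 \ll \delta_1^2 \ll \delta_1$, and therefore $(\delta_0+\delta_1)^3 \sim \delta_1^3 \gg \delta_0^2$. Taken at face value, $\mathrm{O}((\delta_0+\delta_1)^3)(\zeta-\xi)$ is \emph{larger} than the $-\nu_2\delta_0^2(\zeta-\xi)$ residual which, as you correctly observe, is the entire source of the decay, so the absorption you claim would fail. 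The correct treatment retains the pointwise bound $|\lambda|^3 \leq C_3\bigl(\delta_0^3 + |\Im\lambda|^3\bigr)$ along the contour and absorbs the two pieces separately: $C_3\delta_0^3$ into the $-\nu_2\delta_0^2(\zeta-\xi)$ residual (which works once $\delta_0$ is small, independently of $\delta_1$), and $C_3|\Im\lambda|^3$ into the favourable $-\nu_2(\Im\lambda)^2(\zeta-\xi)$ term from~\eqref{e: critical Floquet expansion real part} (which works once $\delta_1 < \nu_2/(2C_3)$). Your displayed formula drops the favourable $\nu_2\eta^2(\xi-\zeta)$ contribution and the $|\eta|$-dependence of the error at the moment it is needed; once you reinstate both, the argument closes exactly as you intend and in the same way as the paper.
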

\begin{proof}
    First, we assume $\lambda_\mathrm{min} > \delta_0$, which implies
    \begin{align}
        t < (\zeta-\xi) (\nu_1 - 2 \delta_0 \nu_2). \label{e: appendix ptwise large eta 1}
    \end{align}
    For $\lambda \in \Gamma^0_{\xi,\zeta,t}$ in this regime, we have $\Re \lambda = \delta_0$. Combining these two facts with~\eqref{e: critical Floquet expansion real part}, we find
    \begin{align*}
        \Re (\lambda t + \nu(\lambda)(\xi-\zeta)) &= \delta_0 t + \nu_1 \delta_0 (\xi-\zeta) - \nu_2 \delta_0^2 (\xi-\zeta) + \nu_2 (\Im \lambda)^2 (\xi-\zeta) + \mathrm{O}(|\lambda|^3) (\xi - \zeta) \\
        &\leq \delta_0 (\zeta-\xi) [\nu_1 - 2 \delta_0 \nu_2 - \nu_1 + \nu_2 \delta_0] - \nu_2 (\Im \lambda)^2 (\zeta-\xi) + \mathrm{O}(|\lambda|^3 |\xi-\zeta|) \\
        &= - \nu_2 \delta_0^2 |\zeta-\xi| - \nu_2 (\Im \lambda)^2 |\zeta-\xi| + \mathrm{O}(|\lambda|^3 |\xi-\zeta|). 
    \end{align*}
    Since there exists a $\lambda$-, $\xi$-, $\zeta$- and $t$-independent constant $C_3 >0$ such that $|\lambda|^3 \leq C_3 (|\delta_0|^3 + |\Im \lambda|^3)$ for $\lambda \in \Gamma^0_{\xi,\zeta,t}$, we can absorb the $\mathrm{O}(|\lambda|^3)$ factor into the other two, obtaining
    \begin{align*}
        \Re (\lambda t + \nu(\lambda) (\xi-\zeta)) \leq - \mu_1 |\zeta-\xi| - \mu_2 (\Im \lambda)^2 |\zeta-\xi| \leq - \mu_1 |\zeta-\xi|,
    \end{align*}
    for some $\lambda$-, $\xi$-, $\zeta$- and $t$-independent constants $\mu_1, \mu_2 > 0$. Note that~\eqref{e: appendix ptwise large eta 1} implies that there exists a $\xi$-, $\zeta$- and $t$-independent constant $\tilde{C} > 0$ such that $-|\zeta-\xi| \leq -\tilde{C}t$, so that we may estimate
    \begin{align*}
        \Re (\lambda t + \nu(\lambda) (\xi-\zeta)) \leq -\frac{\mu_1}{2} |\zeta-\xi| - \frac{\mu_1}{2} |\zeta-\xi| \leq -\frac{\mu_1}{2} |\zeta-\xi| - \tilde{C} \frac{\mu_1}{2} t,
    \end{align*}
    from which we conclude the desired estimate~\eqref{e: appendix ptwise large eta estimate} in the case $\lambda_{\mathrm{min}} > \delta_0$. 
    
    The argument for the case $\lambda_{\mathrm{min}} < -\delta_0$ is similar. Specifically, the restriction~\eqref{e: appendix ptwise large eta 1} is replaced by 
    \begin{align}
        (\zeta-\xi) < \frac{t}{\nu_1 + 2 \delta_0 \nu_2}, \label{e: appendix ptwise large eta 2}
    \end{align}
    which leads for $\lambda \in \Gamma_{\xi,\zeta,t}^0$ to an estimate
    \begin{align*}
        \Re (\lambda t + \nu(\lambda) (\xi-\zeta)) &= -\delta_0 t - \nu_1 \delta_0 (\xi-\zeta) - \nu_2 \delta_0^2 (\xi-\zeta) + \nu_2 (\Im \lambda)^2 (\xi-\zeta) + \mathrm{O}(|\lambda|^3) (\xi - \zeta) \\
        &\leq \delta_0 t \left( -1 + \frac{\nu_1 + \delta_0 \nu_2}{\nu_1 + 2 \delta_0 \nu_2} \right) - \nu_2 (\Im \lambda)^2 |\zeta-\xi| + \mathrm{O}(|\lambda|^3 |\xi-\zeta|) \\
        &\leq - M \delta_0^2 t - \nu_2 (\Im \lambda)^2 |\zeta-\xi| + \mathrm{O}(|\lambda|^3) |\xi-\zeta|,
    \end{align*}
    for some $\lambda$-, $\xi$-, $\zeta$-, and $t$-independent constant $M > 0$. Since now in this regime there exists a $\xi$-, $\zeta$- and $t$-independent constant $\tilde{C} > 0$ such that we have $-t \leq -\tilde{C} |\zeta-\xi|$ and we have $|\lambda|^3 \leq C_3 (|\delta_0|^3 + |\Im \lambda|^3)$ for $\lambda \in \Gamma^0_{\xi,\zeta,t}$, we can extract a factor of $-\mu_3 |\xi-\zeta|$ from the first term, and use this term together with the $-\nu_2 (\Im \lambda)^2 |\zeta-\xi|$ term to control the $\mathrm{O}(|\lambda|^3)$ term, and thereby obtain
    \begin{align*}
        \Re (\lambda t + \nu(\lambda) (\xi-\zeta)) \leq -\mu_2 t - \mu_3 |\xi-\zeta|,
    \end{align*}
    for some $\lambda$-, $\xi$-, $\zeta$- and $t$-independent constants $\mu_2, \mu_3 > 0$, from which the desired estimate~\eqref{e: appendix ptwise large eta estimate} follows. 
\end{proof}

\begin{lemma} 
 Fix non-negative integers $\ell$ and $j$, and let $g$ satisfy the assumptions of Proposition~\ref{p: pointwise estimate} for some non-negative integer $m$. Suppose $t > 1$ and $\xi, \zeta \in \R$ are such that $\xi \leq \zeta$ and $|\lambda_{\mathrm{min}}(\xi,\zeta,t)| > \delta_0$. Then, there exist $\xi$-, $\zeta$- and $t$-independent constants $C, \mu > 0$ such that
    \begin{align*}
         \int_{\Gamma^{1, \pm}_{\xi,\zeta,t}} |\lambda|^j |\nuwt(\lambda)|^\ell \re^{\Re (\lambda t + \nuwt(\lambda) (\xi-\zeta))} |g(\xi,\zeta,\lambda)| \chi_-(\xi-\zeta) \, |\de \lambda| \leq C \re^{-\mu t} \re^{-\mu |\xi-\zeta|} \chi_-(\xi-\zeta) .
    \end{align*}
\end{lemma}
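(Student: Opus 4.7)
The plan is to parametrize the segments $\Gamma^{1,\pm}_{\xi,\zeta,t}$ by their real part, noting that along them $|\Im\lambda| = \delta_1$ is \emph{fixed} and bounded away from zero while $\Re\lambda$ varies over an interval of length at most $\delta_0 + \delta_2$. The decisive observation is that the expansion
\begin{align*}
\Re\bigl(\lambda t + \nu(\lambda)(\xi-\zeta)\bigr) = \Re\lambda\, t + \nu_1 \Re\lambda\,(\xi-\zeta) - \nu_2\bigl[(\Re\lambda)^2 - \delta_1^2\bigr](\xi-\zeta) + \mathrm{O}(|\lambda|^3)(\xi-\zeta),
\end{align*}
contains the contribution $-\nu_2 \delta_1^2 |\zeta-\xi|$ (recall $\xi - \zeta \le 0$), which is a $\delta_1$-sized source of \emph{spatial} decay that is present at every point of $\Gamma^{1,\pm}_{\xi,\zeta,t}$. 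The rest of the proof is bookkeeping to show that this good term dominates the remaining terms and can in addition be converted into the required combined spatial/temporal decay.

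I would split into the two sub-cases used in the previous lemma. In the case $\lambda_{\min} > \delta_0$ (where $\eta_* = \delta_0$), we have $t < (\zeta-\xi)(\nu_1 - 2\delta_0\nu_2)$, while on $\Gamma^{1,\pm}$ we have $\Re\lambda \in [-\delta_2, \delta_0]$. Rewriting the first two terms of the exponent as $\Re\lambda\,[t - \nu_1(\zeta-\xi)]$, we see this is at most (in absolute value) of order $\max(\delta_0,\delta_2)\cdot \nu_1(\zeta-\xi)$. Since $\delta_0 + \delta_2 \ll \delta_1^2$ by our choice of constants, this contribution is absorbed by $-\nu_2\delta_1^2(\zeta-\xi)$; similarly the quadratic term $-\nu_2(\Re\lambda)^2(\xi-\zeta)$ and the cubic error $\mathrm{O}(|\lambda|^3)(\xi-\zeta)$ are dominated by $-\tfrac{1}{2}\nu_2\delta_1^2(\zeta-\xi)$. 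We therefore obtain $\Re(\lambda t + \nu(\lambda)(\xi-\zeta)) \le -\mu_3|\zeta-\xi|$ for some $\mu_3>0$. Using the regime constraint $|\zeta-\xi| \ge t/(\nu_1-2\delta_0\nu_2)$, we split $e^{-\mu_3|\zeta-\xi|} \le e^{-\tfrac{\mu_3}{2}|\zeta-\xi|}\, e^{-\mu t}$ for an appropriate $\mu>0$.

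In the case $\lambda_{\min} < -\delta_0$, we have $t > (\zeta-\xi)(\nu_1 + 2\nu_2\delta_0)$, and on $\Gamma^{1,\pm}$ we have $\Re\lambda \in [-\delta_0, -\delta_2]$, so $\Re\lambda$ is strictly negative. Here the term $\Re\lambda\,[t - \nu_1(\zeta-\xi)] \le -\delta_2\bigl[t - \nu_1(\zeta-\xi)\bigr] \le -\delta_2(2\nu_2\delta_0)(\zeta-\xi) = -2\nu_2\delta_0\delta_2|\zeta-\xi|$ is itself nonpositive and contributes additional spatial decay, while the remaining terms are absorbed as before by $-\nu_2\delta_1^2(\zeta-\xi)$. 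Moreover, since $\Re\lambda \le -\delta_2$ and $t < (\zeta-\xi)\nu_1/(\nu_1+2\nu_2\delta_0) + \text{(stuff)}$, we can extract temporal decay directly from the $\Re\lambda\,t$ piece: $\Re\lambda\,t \le -\delta_2 t$. Combining gives $\Re(\lambda t + \nu(\lambda)(\xi-\zeta)) \le -\mu t - \mu|\zeta-\xi|$ for some $\mu > 0$.

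In both cases the integrand is controlled by $|\lambda|^j|\nu(\lambda)|^\ell |g(\xi,\zeta,\lambda)| \lesssim 1$ uniformly on the compact contour pieces $\Gamma^{1,\pm}_{\xi,\zeta,t}$ (whose arclength is bounded by $\delta_0+\delta_2$, independent of $\xi,\zeta,t$), so the desired bound follows after multiplying by $\chi_-(\xi-\zeta)$. The main technical obstacle I expect is the first case $\lambda_{\min}>\delta_0$: the term $\Re\lambda\,[t-\nu_1(\zeta-\xi)]$ can genuinely be \emph{positive} when $\Re\lambda<0$, and a careful quantitative use of the a priori smallness $\delta_0+\delta_2 \ll \delta_1^2$ (chosen precisely for this purpose in the definition of the contours) is needed to guarantee it is swallowed by the dominant $-\nu_2\delta_1^2|\zeta-\xi|$ term, without losing the margin that later allows us to split spatial decay into combined spatial/temporal decay.
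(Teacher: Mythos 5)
Your argument is correct and follows essentially the same route as the paper: both subcases hinge on the dominant good term $-\nu_2\delta_1^2|\xi-\zeta|$ coming from the fixed $|\Im\lambda|=\delta_1$, the a priori smallness $\delta_0+\delta_2\ll\delta_1^2$ to absorb the $\mathrm{O}(\max(\delta_0,\delta_2))\cdot|\xi-\zeta|$ contributions, and in the first subcase the regime constraint $t<(\nu_1-2\delta_0\nu_2)|\xi-\zeta|$ to split the spatial decay into combined spatiotemporal decay, while in the second subcase $\Re\lambda\le-\delta_2<0$ hands you temporal decay directly from the $\Re\lambda\,t$ term. The only slip is cosmetic: in the second subcase you group $\Re\lambda\,t$ with $\nu_1\Re\lambda(\xi-\zeta)$ and then invoke $\Re\lambda\,t$ a second time for temporal decay, which double-counts it; the clean version (as in the paper) keeps $\Re\lambda\,t$ ungrouped so it supplies the $-\delta_2 t$ factor, and bounds $\nu_1\Re\lambda(\xi-\zeta)$ on its own by $\nu_1\delta_0|\xi-\zeta|$, absorbed under $\delta_0\ll\delta_1^2$.
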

\begin{proof}
    First, assume $\lambda_{\mathrm{min}} > \delta_0$, so that~\eqref{e: appendix ptwise large eta 1} holds. For $\lambda \in \Gamma^{1,\pm}_{\xi,\zeta,t}$, we have $|\Im \lambda|^2 = \delta_1^2$. Using~\eqref{e: critical Floquet expansion real part} we arrive at
    \begin{align*}
        \Re (\lambda t + \nu(\lambda) (\xi-\zeta)) = \Re \lambda t + \nu_1 \Re \lambda(\xi-\zeta) - \nu_2 (\Re \lambda)^2 (\xi-\zeta) - \nu_2 \delta_1^2 |\xi-\zeta| + \mathrm{O}(|\lambda|^3) |\xi-\zeta|
    \end{align*}
    for $\lambda \in \Gamma^{1,\pm}_{\xi,\zeta,t}$. If $\Re \lambda > 0$, then the function $t \mapsto \Re \lambda t$ is increasing, while if $\Re \lambda \leq 0$, then $\Re \lambda t \leq 0$. Combining this with~\eqref{e: appendix ptwise large eta 1}, we obtain
    \begin{multline*}
        \Re (\lambda t + \nu(\lambda) (\xi-\zeta)) \leq \max\{0, \Re \lambda\} |\xi-\zeta| (\nu_1 - 2 \delta_0 \nu_2) - \nu_1 \Re \lambda |\xi-\zeta| + \nu_2 (\Re \lambda)^2 |\xi-\zeta| - \nu_2 \delta_1^2 |\xi-\zeta| \\ + \mathrm{O}(|\lambda|^3) |\xi-\zeta|
    \end{multline*}
    Since $|\Re \lambda| \leq \max\{\delta_0, \delta_2\}$, and we are assuming $\delta_0 + \delta_2 \ll \delta_1^2$, we conclude
    \begin{align*}
        \Re  (\lambda t + \nu(\lambda) (\xi-\zeta)) \leq - C \delta_1^2 |\xi-\zeta| \leq  -\mu |\xi-\zeta| - \mu t,
    \end{align*}
    for some $\lambda$-, $\xi$-, $\zeta$- and $t$-independent constants $C, \mu > 0$, using~\eqref{e: appendix ptwise large eta 1} to convert some of the $-|\xi-\zeta|$ localization into $-t$ decay. 
    
    Now assume $\lambda_{\mathrm{min}} < - \delta_0$, so that instead~\eqref{e: appendix ptwise large eta 2} holds. Note that in this case, the contours $\Gamma^{1,\pm}_{\xi,\zeta,t}$ are contained strictly in the left half-plane. Since also $|\Re \lambda| \ll |\Im \lambda|^2 = \delta_1^2$ for $\lambda \in \Gamma^{1,\pm}_{\xi,\zeta,t}$, we readily obtain
    \begin{align*}
        \Re (\lambda t + \nu(\lambda) (\xi-\zeta)) \leq \Re \lambda t - C \delta_1^2 |\xi-\zeta| \leq - \mu t - \mu |\xi-\zeta|,
    \end{align*}
     for some $\lambda$-, $\xi$-, $\zeta$- and $t$-independent constants $C, \mu > 0$, which leads to the desired estimate. 
\end{proof}

Having dealt with the contours $\Gamma_{\xi,\zeta,t}^0$ and $\Gamma_{\xi,\zeta,t}^{1,\pm}$ in both the case $|\lambda_\mathrm{min}| \leq \delta_0$ and $|\lambda_0| > \delta_0$, all that remains is to estimate the integral along the contours $\Gamma^{2,\pm}$, where it is not necessary to distinguish between these cases.

\begin{lemma} \label{l: appendix gamma 2}
    Fix non-negative integers $\ell$ and $j$, and let $g$ satisfy the assumptions of Proposition~\ref{p: pointwise estimate} for some non-negative integer $m$. There exist constants $C, \mu_3 > 0$ such that
    \begin{align*}
        \int_{\Gamma^{2,\pm}} |\lambda|^j |\nuwt(\lambda)|^\ell \re^{\Re (\lambda t + \nuwt(\lambda) (\xi-\zeta))} |g(\xi,\zeta,\lambda)| \chi_-(\xi-\zeta) \, |\de \lambda| \leq C \re^{-\mu_3 t} \re^{-\mu_3 |\xi-\zeta|} \chi_- (\xi - \zeta),
    \end{align*}
    for all $t > 1$ and $\xi,\zeta \in \R$.
\end{lemma}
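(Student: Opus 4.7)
The key observation is that, in contrast to $\Gamma^0_{\xi,\zeta,t}$ and $\Gamma^{1,\pm}_{\xi,\zeta,t}$, the curve $\Gamma^{2,\pm}$ is fixed, independent of $\xi,\zeta,t$, and so the proof does not require any delicate choice of contour. By construction, $\Gamma^{2,\pm}$ is a compact curve contained in $B(0,\delta)$, lies strictly to the right of $\Sigma_{\mathrm{ess}}(\lwt)$, and, as is clear from the definition of the overall contour $\Gamma_{\xi,\zeta,t}$ and the accompanying schematic in Figure~\ref{fig: pointwise contours}, sits strictly in the open left half-plane (its endpoints are $-\delta_2 \pm \ri\delta_1$ and $\lambda_0^{(*)}$, both of which have negative real part, and this extends to the entire curve by a suitable fixed choice).

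The plan is to extract two uniform bounds on $\Gamma^{2,\pm}$ and then estimate each factor in the integrand. First, since $\Gamma^{2,\pm}$ is compact and disjoint from the imaginary axis, there exists a constant $\mu_1 > 0$ with $\Re\lambda \leq -\mu_1$ on $\Gamma^{2,\pm}$. Second, since $\Gamma^{2,\pm}$ is compact and disjoint from the Fredholm border $\Sigma(\lwt)$, across which $\Re \nuwt$ vanishes (Proposition~\ref{prop: wake spectral curve expansion}), the continuity of $\nuwt$ on $B(0,\delta)$ yields a constant $\mu_2 > 0$ with $\Re \nuwt(\lambda) \geq \mu_2$ on $\Gamma^{2,\pm}$. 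Consequently, for any $\xi \leq \zeta$ (so that $\chi_-(\xi-\zeta) \neq 0$) and any $\lambda \in \Gamma^{2,\pm}$,
\begin{align*}
\Re\bigl(\lambda t + \nuwt(\lambda)(\xi-\zeta)\bigr) = \Re\lambda \, t - \Re \nuwt(\lambda)\,|\xi - \zeta| \leq -\mu_1 t - \mu_2 |\xi - \zeta|.
\end{align*}

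The remaining factors in the integrand are all uniformly bounded on $\Gamma^{2,\pm}$: boundedness of $\Gamma^{2,\pm}$ controls $|\lambda|^j$, analyticity of $\nuwt$ on $B(0,\delta)$ controls $|\nuwt(\lambda)|^\ell$, and the hypothesis $\|g(\cdot,\cdot,\lambda)\|_{L^\infty} \leq C|\lambda|^m$ gives a uniform bound on $|g(\xi,\zeta,\lambda)|$. Integrating the resulting uniform bound over the finite arclength of $\Gamma^{2,\pm}$ yields the claim with $\mu_3 = \min(\mu_1,\mu_2)$. I do not anticipate any obstacle: all the subtlety of the pointwise estimates concentrates near the branch-like behavior at the origin and was already handled in Lemmas~\ref{l: appendix small eta critical} through the preceding one, and the $\Gamma^{2,\pm}$ contributions record only the harmless ``tail'' contributions where both temporal damping and spatial localization are uniformly available.
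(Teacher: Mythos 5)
Your proof is correct and follows essentially the same reasoning the paper gives (albeit in a single sentence): on the fixed compact contours $\Gamma^{2,\pm}$, which lie strictly in the open left half-plane and strictly to the right of $\Sigma_{\mathrm{ess}}(\lwt)$, one has uniform bounds $\Re\lambda \leq -\mu_1 < 0$ and $\Re\nuwt(\lambda) \geq \mu_2 > 0$, so the exponential contributes both temporal decay and spatial localization while the remaining factors are uniformly bounded. Nothing is missing; the only step you spell out that the paper leaves implicit is the finite-arclength integration and absorption of the $\lambda$-dependent prefactors into the constant $C$.
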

\begin{proof}
    The compact contours $\Gamma^{2, \pm}$ lie in the left half-plane but strictly to the right of $\Sigma_\mathrm{ess}(\lwt)$, so $\re^{\Re \lambda t}$ contributes uniform exponential decay in time, while $\re^{\Re \nu(\lambda) (\xi-\zeta)}\chi_-(\xi-\zeta)$ contributes uniform spatial localization by Proposition~\ref{prop: wake spectral curve expansion}, leading to the desired estimate. 
\end{proof}

With the obtained control from Lemmas~\ref{l: appendix small eta critical} through~\ref{l: appendix gamma 2} on the integral on the left-hand side of~\eqref{e: ptwise estimate} along all the different portions of the contour $\Gamma_{\xi,\zeta,t}$ we are finally able to prove Proposition~\ref{p: pointwise estimate}.

\begin{proof}[Proof of Proposition~\ref{p: pointwise estimate}]
    Combining Lemmas~\ref{l: appendix small eta critical} through~\ref{l: appendix gamma 2}, we obtain constants $C, \mu, M > 0$ such that the estimate
    \begin{multline*}
        \int_{\Gamma_{\xi,\zeta,t}} |\lambda|^j |\nuwt(\lambda)|^\ell \re^{\Re (\lambda t + \nuwt(\lambda) (\xi-\zeta))} |g(\xi,\zeta,\lambda)| \chi_- (\xi-\zeta) \, |\de \lambda| \\ \leq C \chi_-(\xi-\zeta) \left(\frac{1}{(1+t)^{\frac{1}{2} + \frac{\ell + j + m}{2}}} \re^{-\frac{(\xi-\zeta+\nu_1^{-1} t)^2}{Mt}} + \re^{-\mu t} \re^{-\mu |\xi-\zeta|} \right)
    \end{multline*}
    holds for all $t > 1$ and $\xi,\zeta \in \R$. By a basic scaling argument, one readily finds that the right hand side satisfies the desired $L^p$ estimates of Proposition~\ref{p: pointwise estimate}. 
\end{proof}
 
\section{Derivation of the equation for the inverse-modulated perturbation}\label{app: modulated sys derivation}

Following the strategy in~\cite[Section~4.1]{JONZ}, we derive an equation for the inverse-modulated perturbation $\v(t)$ given by~\eqref{e:modpert}. First, inserting the solution $\ufr$ in~\eqref{e: fhn comoving} and taking spatial derivatives yields
\begin{align}
0 = D \ufr''' + c \ufr'' + F'(\ufr) \ufr' = \mathcal{A}_{\rm fr} \left(\ufr'\right). \label{e:pe0} 
\end{align}
Next, we set $\ut(\xi,t) = \u(\xi-\psi(\xi,t),t)$, $\ut_1(\xi,t) = \u_\xi(\xi-\psi(\xi,t),t), \ut_2(\xi,t) = \u_t(\xi-\psi(\xi,t),t)$ and $\ut_{11}(\xi,t) = \u_{\xi\xi}(\xi-\psi(\xi,t),t)$. Using that the perturbed solution $\u(t)$ solves~\eqref{e: fhn comoving}, we obtain
\begin{align}
\begin{split}
\ut_t - D\ut_{\xi\xi} - c \ut_\xi - F(\ut) &= - \ut_1\psi_t + D\left(\ut_1 \psi_{\xi\xi} + \ut_{11} \psi_\xi (2-\psi_\xi)\right) + c \ut_1 \psi_\xi\\
&= -\ut_1\psi_t  + \ut_2\psi_\xi + D\left(\ut_1 \psi_{\xi\xi} + \ut_{{\color{red}1}1} \psi_\xi (1-\psi_\xi)\right) -  F(\ut)\psi_\xi\\ 
&\qquad + \, (D \ut_{11} + c \ut_1 + F(\ut) - \ut_2)\psi_\xi\\
&= - \ut_1\psi_t +  \ut_2\psi_\xi + D(\ut_1 \psi_\xi)_\xi -  F(\ut)\psi_\xi.
\end{split} \label{e:pe1}
\end{align}
On the other hand, recalling that $\ufr$ solves~\eqref{e: fhn comoving} and applying~\eqref{e:pe0}, we compute
\begin{align}
 \begin{split}
(\partial_t - \mathcal{A}_{\rm fr})\left[\ufr' \psi\right] 
&= \ufr'  \psi_t - \psi \mathcal{A}\left[\ufr'\right] - D\left(2 \ufr''  \psi_\xi + \ufr'  \psi_{\xi\xi}\right) - c \ufr'  \psi_\xi\\
&= \ufr'  \psi_t - \left(D \ufr'' + c\ufr' + F(\ufr)\right)\psi_\xi + F(\ufr)\psi_\xi - D \left(\ufr''  \psi_\xi + \ufr'  \psi_{\xi\xi}\right) \\
&= \ufr'  \psi_t - D\left(\ufr'  \psi_\xi\right)_\xi + F(\ufr)\psi_\xi.
\end{split} \label{e:pe2}
\end{align}
Set $\z(t) := \omega^{-1} \v(t) = \ut(t) - \ufr$. We thus establish
\begin{align}
 \begin{split} \label{e:pe3}
 (\partial_t - \mathcal{A}_{\rm fr})\left[\z \psi_\xi\right] = \z_t \psi_\xi + \z \psi_{\xi t} - D\left(\z\psi_\xi\right)_{\xi\xi} - c\left(\z \psi_\xi\right)_\xi - F'(\ufr) \z {\color{red} \psi_\xi}.
 \end{split}
\end{align}
Next, we express the temporal and spatial derivatives of $\z(t)$ as
\begin{align*}
\z_t = \ut_2 - \ut_1 \psi_t, \qquad
\z_\xi = \ut_1(1-\psi_\xi) - \ufr',
\end{align*}
implying
\begin{align*}
\ut_2 - \z_t &= \ut_1 \psi_t = \frac{\left(\z_\xi + \ufr'\right)\psi_t}{1-\psi_\xi}, \qquad
\ut_1 - \ufr' - \z_\xi = \ut_1 \psi_\xi = \frac{\left(\z_\xi + \ufr'\right)\psi_\xi}{1-\psi_\xi}.
\end{align*}

Hence, with the aid of~\eqref{e:pe1},~\eqref{e:pe2} and~\eqref{e:pe3} we obtain
\begin{align*}
\ut_t - D\ut_{\xi\xi} - c \ut_\xi - F(\ut) &= -(\partial_t - \mathcal{A}_{\rm fr})\left[\ufr'  \psi\right] - \left(\ut_1 - \ufr'\right)\psi_t +  \ut_2\psi_\xi + D\left(\left(\ut_1 - \ufr'\right) \psi_\xi\right)_\xi\\ 
&\qquad - \, \left(F(\ut) - F(\ufr)\right)\psi_\xi\\
&= -(\partial_t - \mathcal{A}_{\rm fr})\left[\ufr'  \psi\right] - \z_\xi \psi_t - \ut_1 \psi_\xi \psi_t + \z_t\psi_\xi + \ut_1\psi_\xi\psi_t\\ 
&\qquad + D \left(\frac{\left(\z_\xi + \ufr'  \psi_\xi\right)\psi_\xi}{1-\psi_\xi}\right)_\xi - \left(F(\ufr + \z) - F(\ufr)\right)\psi_\xi\\
&= (\partial_t - \mathcal{A}_{\rm fr})\left[\z\psi_\xi-\ufr'  \psi \right] + \left(\z\left(c\psi_\xi - \psi_t\right) +  D\left(\frac{\left(\z_\xi + \ufr'  \psi_\xi\right)\psi_\xi}{1-\psi_\xi} + \left(\z \psi_\xi\right)_\xi\right)\right)_\xi\\ &\qquad - \, \left(F(\ufr + \z) - F(\ufr) - F'(\ufr) \z\right)\psi_\xi.
    \end{align*}
Therefore, using that $\ufr$ solves~\eqref{e: fhn comoving}, we establish
\begin{align} \label{e:pe4}
\begin{split}
 \left(\partial_t - \mathcal{A}_{\rm fr}\right) \z &= \left(\partial_t - \mathcal{A}_{\rm fr}\right) \left(\ut - \ufr\right)\\ 
 &= \left(\ut_t - D \ut_{\xi\xi} - c \ut_\xi - F(\ut)\right) +  \left(D \ufr'' + c \ufr' + F(\ufr)\right) +  F(\ut) - F(\ufr) - F'(\ufr) \z\\
 &= (\partial_t - \mathcal{A}_{\rm fr})\left[\z\psi_\xi-\ufr'  \psi\right] + \mathcal{Q}(\z,\psi) + \partial_\xi \mathcal{R}(\z,\psi,{\color{blue}\psi_t}),
\end{split}
\end{align}
where the nonlinearities $\mathcal{Q}(\z,\psi)$ and $\mathcal{R}(\z,\psi,{\color{blue}\psi_t})$ are defined in~\eqref{e:defNLQ} and~\eqref{e:defNLR}, respectively. Finally, to establish the equation~\eqref{e:modpertbeq} for the inverse-modulated perturbation, we multiply~\eqref{e:pe4} with $\omega^{-1}$ and recall $\z(t) = \omega \v(t)$. 

\section{Local Well-Posedness of the Phase Modulation} \label{app:local}

We provide a proof for Proposition~\ref{p:psi} yielding local well-posedness of the phase modulation. To this end, we denote by $Y_k(\R)$ the closed subspace
\begin{align*}
Y_k(\R) = \big\{f \in H^k(\R) : f(\xi) = 0 \text{ for } \xi \in [-1,\infty)\big\}
\end{align*}
of $H^k(\R)$. We start by proving the following auxiliary result.

\begin{lemma} \label{L:loc_gamma}
Let $\vt(t)$ and $T_{\max}$ be as in Proposition~\ref{p:local_unmod}. Take a ball $B$ in $Y_3(\R)$ centered at the origin such that for all $\psi \in B$ it holds $\|\psi\|_{Z_2} \leq \frac12$. Then, the map $V \colon B \times [0,T_{\max}) \to H^2(\R) \times H^1(\R)$ given by
\begin{align*}V(\psi,t)[\xi] &= \vt\left(\xi-\psi(\xi),t\right) + \omega(\xi)\left(\u_{\mathrm{fr}}(\xi-\psi(\xi)) - \u_{\mathrm{fr}}(\xi)\right),\end{align*}
is well-defined, continuous, and Lipschitz continuous in $\psi$ (uniformly in $t$ on compact subintervals of $[0,T_{\max})$).
\end{lemma}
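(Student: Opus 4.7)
The proof plan is to rely on a mean-value-style integral representation that turns the apparent composition into a manageable product structure, and then to exploit the fact that $\vt(t) \in H^3 \times H^2$ carries one derivative more than the target regularity. The key identity, obtained from the fundamental theorem of calculus along the straight path $\xi \mapsto \xi - s\psi(\xi)$, reads
\begin{align*}
V(\psi,t)[\xi] = \vt(\xi,t) - \psi(\xi) \int_0^1 \left[\vt_\xi(\xi - s\psi(\xi),t) + \ufr'(\xi - s\psi(\xi))\right] \de s .
\end{align*}
Writing $V(\psi,t) = \vt(t) - \psi \cdot G(\psi,t)$ with $G$ the bracketed integral, the task reduces to controlling the bounded measurable kernel $G$ and its first two derivatives in terms of $\psi$ in $H^2$.

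The first step is to prove well-definedness. By Sobolev embedding $H^2(\R) \hookrightarrow W^{1,\infty}(\R)$, both $\psi$ and $\psi_\xi$ are bounded, so $\xi \mapsto \xi - s\psi(\xi)$ is a proper perturbation of the identity. Since $\vt(t) \in H^3 \times H^2$ forces $\vt_\xi \in H^2 \times H^1 \hookrightarrow L^\infty$, and $\ufr'$ is smooth and bounded (as $\ufr$ decays exponentially on $[0,\infty)$ and is asymptotic to a bounded wave train on $(-\infty,0]$), the kernel $G(\psi,t)$ and, after a chain/product-rule computation, its first two spatial derivatives lie in $L^\infty$ with bounds depending only on $\|\psi\|_{H^2}$ and on $\|\vt(t)\|_{Z_3 \times Z_2}$. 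Multiplying by $\psi \in H^2 \hookrightarrow L^2$ yields $\psi G(\psi,t) \in H^2$, and adding $\vt(t)$ gives $V(\psi,t) \in H^2$. The derivatives must be computed carefully because they involve factors of $(1-s\psi_\xi)$ and produce terms of the form $\psi \psi_{\xi\xi} (\vt_{\xi\xi} + \ufr'')(\cdot - s\psi)$; using $\vt_{\xi\xi,1} \in H^1 \hookrightarrow L^\infty$ for the first component and pairing $\psi_{\xi\xi} \in L^2$ with $L^\infty$ factors for the second component allows every contribution to be placed into $L^2$.

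Continuity in $t$ is essentially inherited from $\vt \in C([0,T_{\max}), Z_3(\R) \times Z_2(\R))$: both $\vt(t)$ and $\vt_\xi(t)$ depend continuously on $t$ in $L^\infty$ and in $L^2$, and $\psi$ is independent of $t$, so the bounds above depend continuously on $t$. For the local Lipschitz dependence on $\psi$, fix $R>0$ and a compact interval $I \subset [0,T_{\max})$, and take $\psi_1,\psi_2 \in H^2(\R)$ with $\|\psi_i\|_{H^2} \leq R$. Using the same MVT trick in the $\psi$-variable along the straight path $\psi_s = s\psi_1 + (1-s)\psi_2$, one obtains
\begin{align*}
V(\psi_1,t) - V(\psi_2,t) = -\left(\psi_1 - \psi_2\right) \int_0^1 \left[\vt_\xi + \ufr'\right]\!(\cdot - \psi_s,t) \de s ,
\end{align*}
and differentiating this identity twice produces an expression in which $\psi_1 - \psi_2$ and its derivatives appear linearly, multiplied by factors bounded in $L^\infty$ uniformly in $s \in [0,1]$ and in $(\psi_1,\psi_2) \in B_R \times B_R$ and $t \in I$. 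Standard $H^2$ bookkeeping then gives $\|V(\psi_1,t)-V(\psi_2,t)\|_{H^2} \leq C(R,I) \|\psi_1-\psi_2\|_{H^2}$.

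The main technical obstacle is that composition with $\xi \mapsto \xi - \psi(\xi)$ is not an $H^2$-bounded operation for arbitrary $H^2$ shifts. This is precisely where the extra regularity of $\vt$ (namely $Z_3 \times Z_2$ rather than $Z_2 \times Z_1$) and the smoothness of $\ufr$ are decisive: the MVT representation converts each composition into a product against $\psi$, trading the problematic $H^2$-composition of $\vt$ for the $L^\infty$-composition of $\vt_\xi$, which is harmless because $\vt_\xi \in L^\infty$. Once this reformulation is in place, all remaining steps are routine chain-rule/product-rule estimates, and the uniformity of the Lipschitz constant on compact time intervals follows from the uniform continuity of $\vt$ on such intervals in $Z_3 \times Z_2$.
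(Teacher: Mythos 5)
Your argument is essentially the same as the paper's: both reduce well-definedness and Lipschitz dependence to bounded compositions of $\vt_\xi$ and $\ufr'$ via a mean-value/fundamental-theorem-of-calculus representation, combined with the embedding $H^1(\R)\hookrightarrow L^\infty(\R)$. The paper applies the mean value theorem directly to get
\begin{align*}
\left\|V(\psi_1,t) - V(\psi_2,t)\right\|_{H^2} \lesssim \left(\|\vt_\xi(t)\|_{W^{2,\infty}} + \left\|\ufr'\right\|_{W^{2,\infty}}\right) \|\psi_1 - \psi_2\|_{H^2},
\end{align*}
and then obtains well-definedness by setting $\psi_2=0$ and noting $V(0,t)=\vt(t)$; you present the same estimate in explicit FTC form along the two straight paths (in $s\psi$ for well-definedness, in $\psi_s$ for Lipschitz). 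These are the same argument in different bookkeeping. For continuity in $t$, the paper writes out a quantitative bound that inherits continuity from $\vt\in C\big([0,T_{\max}),Z_3\times Z_2\big)$; your sketch is the same observation without the explicit display.

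One point deserves care in your write-up: when you form $V(\psi,t) = \vt(t) - \psi G(\psi,t)$ and differentiate twice, the term $\psi G''$ brings in $\vt_{\xi\xi\xi}(\cdot - s\psi)$. This is available for the first component ($\vt_1 \in Z_3 \subset W^{3,\infty}$) but \emph{not} for the second ($\vt_2 \in Z_2 \subset W^{2,\infty}$ only), which is the case you gloss over with ``pairing $\psi_{\xi\xi}\in L^2$ with $L^\infty$ factors.'' The same regularity mismatch lurks in the paper's reference to $\|\vt_\xi\|_{W^{2,\infty}}$. The consistent reading is that the target space is the mixed-regularity pair $H^2(\R,\R)\times H^1(\R,\R)$ (matching the convention for $Y^p_\eta$ and matching the downstream use in Lemma~\ref{l: nonlinear estimates}, where only $\v\in Z_2\times Z_1$ is needed). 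With that reading the chain rule only demands $\vt_{1,\xi}\in W^{2,\infty}$ and $\vt_{2,\xi}\in W^{1,\infty}$, both of which hold. Equivalently, for well-definedness alone you can avoid $G''$ entirely: compute $\partial_\xi^2\bigl(V(\psi,t)-\vt(t)\bigr)$ directly and observe that the most singular contribution $\vt_{\xi\xi}(\cdot-\psi)(1-\psi_\xi)^2 - \vt_{\xi\xi}$ is in $L^2$ by a change of variables, with no third derivative of $\vt$ needed; this is worth saying explicitly so that the second component does not appear to require more regularity than $\vt_2\in Z_2$ provides.
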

\begin{proof}
Let $\psi_{1,2} \in B$. Then, we have $\|\psi_1'\|_{L^\infty} \leq \frac12$. Therefore, the function $h_1 \colon \R \to \R$ given by $h_1(\xi) = \xi - \psi_1(\xi)$ is strictly increasing and invertible. Hence, using the fundamental theorem of calculus and H\"older's inequality, we obtain the estimate
\begin{align} \label{e:Vineq}
\begin{split}
&\int_\R \left|v(\xi - \psi_1(\xi)) - v(\xi - \psi_2(\xi))\right|^2 \de \xi
= \int_\R \left|\int_0^{\psi_2(\xi)-\psi_1(\xi)} v'(\xi - \psi_1(\xi) - y) \de y\right|^2 \de \xi\\
&\qquad\leq \int_{-\|\psi_2-\psi_1\|_{L^\infty}}^{\|\psi_2-\psi_1\|_{L^\infty}}\int_{-\|\psi_2-\psi_1\|_{L^\infty}}^{\|\psi_2-\psi_1\|_{L^\infty}} \int_\R |v'(\xi - \psi_1(\xi) - y) | |v'(\xi - \psi_1(\xi) - z)| \de \xi \de y \de z\\
&\qquad= \int_{-\|\psi_2-\psi_1\|_{L^\infty}}^{\|\psi_2-\psi_1\|_{L^\infty}}\int_{-\|\psi_2-\psi_1\|_{L^\infty}}^{\|\psi_2-\psi_1\|_{L^\infty}} \int_\R \frac{|v'(w - y) | |v'(w - z)|}{1-\psi_1'\left(h_1^{-1}(w)\right)} \de w \de y \de z\\
&\qquad \leq \frac{4\|v'\|_{L^2}^2}{1-\|\psi_1'\|_{L^\infty}} \|\psi_2-\psi_1\|_{L^\infty}^2
\end{split}
\end{align}
for $v \in C_c^\infty(\R)$. By density of test functions, the latter inequality holds for all $v \in H^1(\R)$. Using the mean value theorem, the continuous embedding $H^1(\R) \hookrightarrow L^\infty(\R)$, and the estimate~\eqref{e:Vineq}, we arrive at
\begin{align}\label{e:estV1}
\begin{split}
\left\|V(\psi_1,t) - V(\psi_2,t)\right\|_{H^2 \times H^1} \lesssim \left(\|\vt(t)\|_{H^3 \times H^2} + \left\|\ufr'\right\|_{W^{2,\infty}}\right) \|\psi_1 - \psi_2\|_{H^2}
\end{split}
\end{align}
for $t \in [0,T_{\max})$ and $\psi_{1,2} \in B$. Hence, using Proposition~\ref{p:local_unmod}, we conclude that $V$ is Lipschitz continuous in $\psi$ (uniformly in $t$ on compact subintervals of $[0,T_{\max})$). Moreover, setting $\psi_2 = 0$ in~\eqref{e:estV1} and noting $V(0,t) = \vt(t) \in H^2(\R) \times H^1(\R)$ by Proposition~\ref{p:local_unmod}, shows that $V$ is well-defined. Finally, using~\eqref{e:Vineq},~\eqref{e:estV1} and the continuous embedding $H^1(\R) \hookrightarrow L^\infty(\R)$, we obtain
\begin{align*}
\|V(\psi_1,t) - V(\psi_2,s)\|_{H^2 \times H^1} &\leq \|V(\psi_2,t) - V(\psi_2,s) - \left(V(0,t) - V(0,s)\right)\|_{H^2 \times H^1}\\ 
&\qquad +  \, \left\|V(0,t) - V(0,s)\right\|_{H^2 \times H^1} + \left\|V(\psi_1,t) - V(\psi_2,t)\right)\|_{H^2 \times H^1}\\
&\lesssim \|\vt(t) - \vt(s)\|_{H^3 \times H^2} + \left(\|\vt(t)\|_{H^3 \times H^2} + \left\|\ufr'\right\|_{W^{2,\infty}}\right) \|\psi_1 - \psi_2\|_{H^2}
\end{align*}
for $\psi_{1,2} \in B$ and $s,t \in [0,T_{\max})$. Continuity of $V$ now follows by combining the latter inequality with Proposition~\ref{p:local_unmod}.
\end{proof}

We are now in the position to establish local well-posedness of the phase modulation.

\begin{prop} 
Let $\vt(t)$ and $T_{\max}$ be as in Proposition~\ref{p:local_unmod}. Then, there exist a constant $r_0 > 0$ and a maximal time $\tau_{\max} \in (0,T_{\max}]$ such that the integral system
\begin{align} \label{e:intsyspsi}
\begin{split}
\psi(t) &= s_p(t)\v_0 + \int_0^t s_p(t-s) \mathcal{N}(V(\psi(s),s),\psi(s),\psi_t(s))\de s, \\
\psi_t(t) &= \partial_t s_p(t)\v_0 + \int_0^t \partial_t s_p(t-s) \mathcal{N}(V(\psi(s),s),\psi(s),\psi_t(s))\de s, 
\end{split}
\end{align}
admits a unique maximally defined solution
\begin{align} (\psi,\psi_t) \in C\big([0,\tau_{\max}),Y_4(\R) \times Y_2(\R)\big) \label{e:maxpsi}\end{align}
satisfying $\|(\psi(t),\psi_t(t))\|_{H^4 \times H^2} < r_0$ and $\|\psi(t)\|_{Z_2} \leq \frac12$ for all $t \in [0,\tau_{\max})$, where $V$ is as in Lemma~\ref{L:loc_gamma}. Moreover, $\tau_{\max} < T_{\max}$ yields
\begin{align} \label{e:psiblowup2}
\limsup_{t \nearrow \tau_{\max}} \left\|\left(\psi(t),\psi_t(t)\right)\right\|_{H^4 \times H^2} = r_0.
\end{align}
In addition, we have $\psi(t) = 0$ for all $t \in [0,\tau_{\max})$ with $t \leq 1$. Finally, it holds $\psi \in C\big([0,\tau_{\max}),Y_{4+m}(\R)\big) \cap C^{1+j}\big([0,\tau_{\max}),Y_{2+l}(\R)\big)$ for any $j,l,m \in \mathbb N_0$ with $\partial_t \psi(t) = \psi_t(t)$ for $t \in [0,\tau_{\max})$.
\end{prop}
\begin{proof} The proof follows from a standard contraction mapping argument applied to the integral system~\eqref{e:intsyspsi}. We collect the relevant details. First, we observe that the identities~\eqref{e: sp gamma} and~\eqref{e: sp t def} and the estimates in Theorem~\ref{t: linear estimates} imply that the propagators $s_p(t) \colon L_{0,1}^1(\R) \to Y_4(\R)$ and $\partial_t s_p(t) \colon L_{0,1}^1(\R) \to Y_2(\R)$ are well-defined and $t$-uniformly bounded. Second, we note that all terms in the nonlinearity $\mathcal{N}(\v,\psi,\psi_t)$, which are nonlinear in $\v$ have the prefactor $\frac1\omega$, which is exponentially localized on $[0,\infty)$. Third, the continuous embedding $H^1(\R) \hookrightarrow L^\infty(\R)$ yields a constant $r_0 > 0$ such that if $\psi \in H^3(\R)$ satisfies $\|\psi\|_{H^3} \leq r_0$, then we have $\|\psi\|_{Z_2} \leq \frac12$. Fourth, all terms in $\mathcal{N}(\v,\psi,\psi_t)$ which contain a factor $\psi$ or $\psi_t$ vanish on $[-1,\infty)$ for $(\psi,\psi_t) \in Y_4(\R) \times Y_2(\R)$. Hence, letting $B$ be the ball in $Y_4(\R) \times Y_2(\R)$ of radius $r_0$ centered at the origin and using Lemma~\ref{L:loc_gamma}, the continuous embedding $H^1(\R) \hookrightarrow L^\infty(\R)$ and the inequality $\|wv\|_{L^1} \leq \|w\|_{L^2}\|v\|_{L^2}$ for $w, v \in L^2(\R)$, we conclude that the nonlinear map $\check{\mathcal{N}} \colon B \times [0,T_{\max}) \to L_{0,1}^1(\R)$ given by
\begin{align*}
\check{\mathcal{N}}(\psi,\psi_t,t) = \mathcal{N}\left(V(\psi,t),\psi,\psi_t\right)
\end{align*}
is continuous in $t$ and Lipschitz continuous in $(\psi,\psi_t)$, uniformly in $t$ on compact subintervals of $[0,T_{\max})$. 

Using these observations, one readily obtains with standard arguments, see e.g.~\cite[Proposition 4.3.3]{CA98} or~\cite[Theorem 6.1.4]{Pazy}, that the right-hand side of the integral system~\eqref{e:intsyspsi} induces for any $\theta \in (0,r_0)$ a contraction mapping on a closed ball $\mathcal{B}_\theta$ of radius $r_0 - \theta$ centered at the origin in $C\big([0,\tau],Y_4(\R) \times Y_2(\R)\big)$ for some $\tau > 0$. By the Banach fixed point theorem this yields a unique solution $(\psi,\psi_t) \in \mathcal{B}_\theta$ of~\eqref{e:intsyspsi}. Letting $\tau_{\max} \in (0,T_{\max}]$ be the supremum of all such $\tau$, we obtain the maximally defined solution~\eqref{e:maxpsi}. 

The condition~\eqref{e:psiblowup2} can be derived arguing by contradiction, that is,~one assumes that $\tau_{\max} < T_{\max}$ and~\eqref{e:psiblowup2} does not hold. Using a standard procedure, see~\cite[Proposition~B.2]{HJPR21}, one then finds a constant $\delta > 0$ such that the maximally defined solution can be extended to a larger interval $[0,\tau_{\max} + \delta]$, contradicting the maximality of $\tau_{\max}$. 

By Theorem~\ref{t: linear estimates} we can differentiate the integral equation for $\psi(t)$ with respect to $t$. Using $s_p(0) = 0$, we obtain $\partial_t \psi(t) = \psi_t(t)$ for $t \in [0,\tau_{\max})$. Finally, the stated regularity properties of $\psi$, as well as the fact that $\psi(t) = 0$ for all $t \in [0,\tau_{\max})$ with $t \leq 1$, follow as the propagators $\partial_t^\ell s_p(t) \colon L_{0,1}^1(\R) \to Y_i(\R)$ are $t$-uniformly bounded for any $i,\ell \in \mathbb N_0$ and vanish for $t \in [0,1]$ by Theorem~\ref{t: linear estimates} and identities~\eqref{e: sp gamma} and~\eqref{e: sp t def}. This completes the proof.
\end{proof}

\bibliographystyle{abbrv}
\bibliography{references}
\end{document}